\pgfplotsset{width=7cm,compat=1.8}
\declaretheorem[name=Theorem,numberwithin=section]{theorem}
\declaretheorem[name=Remark,style=remark,sibling=theorem]{remark}
\declaretheorem[name=Lemma,sibling=theorem]{lemma}
\declaretheorem[name=Proposition,sibling=theorem]{proposition}
\declaretheorem[name=Definition,style=definition,sibling=theorem]{definition}
\declaretheorem[name=Corollary,sibling=theorem]{corollary}
\declaretheorem[name=Theorem,numbered=no]{theorem*}
\declaretheorem[name=Corollary,numbered=no]{corollary*}
\declaretheorem[name=Question,numbered=no]{question*}
\declaretheorem[style=remark,name=Remark,numbered=no]{remark*}
\declaretheorem[style=definition,name=Definition,numbered=no]{definition*}
\declaretheorem[style=definition,name=Notation,numbered=no]{notation*}
\declaretheorem[name=Lemma,numbered=no]{lemma*}
\newcounter{mt}
\declaretheorem[name=Theorem,sibling=mt]{MainTheorem}
\declaretheorem[name=Corollary,sibling=mt]{MainCorollary}
\newcommand{\tpitchfork}{%
	\vbox{
		\baselineskip\z@skip
		\lineskip-.52ex
		\lineskiplimit\maxdimen
		\m@th
		\ialign{##\crcr\hidewidth\smash{$-$}\hidewidth\crcr$\pitchfork$\crcr}
	}%
}
\newcommand{\CC}{\mathbb{C}}
\newcommand{\NN}{\mathbb{N}}
\newcommand{\RR}{\mathbb{R}}
\newcommand{\ZZ}{\mathbb{Z}}
\newcommand{\Iind}{\mathcal{I}}
\newcommand{\I}{\sqrt{-1}}
\newcommand{\pair}[2]{\left\langle #1, #2 \right\rangle}
\newcommand{\dt}{\left.\frac{d}{dt}\right|_{t=0}}
\DeclareMathOperator{\GL}{GL}
\DeclareMathOperator{\SL}{SL}
\DeclareMathOperator{\GW}{GW}
\newcommand{\gl}{\mathfrak{gl}}
\newcommand{\sln}{\mathfrak{sl}}
\newcommand{\so}{\mathfrak{so}}
\DeclareMathOperator{\Val}{Val}
\DeclareMathOperator{\PVal}{PVal}
\DeclareMathOperator{\SO}{SO}
\DeclareMathOperator{\OO}{O}
\DeclareMathOperator{\vol}{vol}
\DeclareMathOperator{\id}{id}
\DeclareMathOperator{\supp}{supp}
\DeclareMathOperator{\Sym}{Sym}
\DeclareMathOperator{\sign}{sign}
\DeclareMathOperator{\im}{im}
\DeclareMathOperator{\tr}{\mathrm{tr}}
\DeclareMathOperator{\Ad}{Ad}
\DeclareMathOperator{\nc}{nc}
\DeclareMathOperator{\vsupp}{v\text{-}supp}
\newcommand{\K}{\mathcal{K}}
\renewcommand{\S}{\mathbb{S}}
\newcommand{\LieDer}{\mathcal{L}}
\newcommand{\Conv}{\mathrm{Conv}}
\newcommand{\VConv}{\mathrm{VConv}}
\title[Localization of valuations and Alesker's irreducibility theorem]{Localization of valuations and\\ Alesker's irreducibility theorem}
\author{Georg C. Hofst\"atter}
\address{Institute f. Discrete Mathematics and Geometry, TU Wien, 1040 Vienna, Austria}
\email{georg.hofstaetter@tuwien.ac.at}
\author{Jonas Knoerr}
\address{Institute f. Discrete Mathematics and Geometry, TU Wien, 1040 Vienna, Austria}
\email{jonas.knoerr@tuwien.ac.at}
\thanks{{\it MSC classification}:
	52B45, 
	53C65, 
	43A75. 
	\\\indent
	The research of the first-named author was funded in part by the Austrian Science Fund (FWF), 10.55776/ESP9378724. The research of the second-named author was funded in part by the Austrian Science Fund (FWF), 10.55776/PAT4205224}
\begin{document}

\begin{abstract}
	We provide a new proof of Alesker's Irreducibility Theorem. We first introduce a new localization technique for polynomial valuations on convex bodies, which we use to independently prove that smooth and translation invariant valuations are representable by integration with respect to the normal cycle. This allows us to reduce the statement to a corresponding result for the representation of $\mathfrak{sl}(n)$ on the space of these differential forms.
\end{abstract}

\maketitle

\tableofcontents

\section{Introduction}
Let $\K(\RR^n)$ denote the space of convex bodies, that is, the set of all nonempty, convex, and compact subsets of $\RR^n$ equipped with the Hausdorff metric. A map $\varphi: \K(\RR^n) \to \CC$  is called a \emph{valuation} if
\begin{align*}
	\varphi(K \cup L) + \varphi(K \cap L) = \varphi(K) + \varphi(L)
\end{align*}
whenever $K, L, K \cup L \in \K(\RR^n)$. This notion goes back to Dehn's solution to Hilbert's third problem on the non-equidecomposability of convex polytopes, but has since become a very active area of research with a variety of applications to geometric problems, in particular in integral geometry, see, e.g., \cites{Bernig2011, Bernig2014,Bernig2021,Alesker2003, Alesker2009, Bernig2022, Faifman2023,Kotrbaty2020,Kotrbaty2022,Kotrbaty2022c, Bernig2023} for an overview. This relation was already established by Hadwiger \cite{Hadwiger1957} using his famous characterization of continuous and rigid motion invariant on convex bodies, but has seen significant progress over the last 25 years.\\

Most of these advances are based on the foundational work by Alesker \cite{Alesker2001} on the structure of the space of continuous translation invariant valuations on $\mathcal{K}(\RR^n)$, which we denote by $\Val(\RR^n)$. This space admits a natural decomposition, as shown by McMullen~\cite{McMullen1977}: If we denote by $\Val_r(\RR^n)$ the subspace of all $r$-homogeneous valuations, that is, all $\varphi\in\Val(\RR^n)$ such that $\varphi(tK)=t^r\varphi(K)$ for $t\ge 0$, $K\in\mathcal{K}(\RR^n)$, then
\begin{align*}
	\Val(\RR^n)=\bigoplus_{r=0}^n\Val_r(\RR^n).
\end{align*}
We may further decompose these spaces as $\Val_r(\RR^n)=\Val^+_r(\RR^n)\oplus \Val^-_r(\RR^n)$ into even and odd valuations, where $\varphi\in\Val(\RR^n)$ is called even resp. odd if $\varphi(-K)=\pm\varphi(K)$ for $K\in\mathcal{K}(\RR^n)$.\\

Since $\mathcal{K}(\RR^n)$ is a locally compact metric space, $\Val(\RR^n)$ naturally carries the structure of a Fr\' echet space (it is in fact a Banach space). We have a natural continuous representation of the general linear group $\GL(n,\RR)$ on this space, given by
\begin{align*}
	(g \cdot \varphi)(K) = \varphi(g^{-1}K), \quad K \in \K(\RR^n),
\end{align*}
where $g \in \GL(n,\RR)$ and $\varphi \in \Val(\RR^n)$. The following result is now known as Alesker's Irreducibility Theorem.
\begin{theorem}[\cite{Alesker2001}]\label{thm:AleskerIrredThm}
	For $0 \leq r \leq n$, the spaces $\Val_r^+(\RR^n)$ and $\Val_r^-(\RR^n)$ are topologically irreducible $\GL(n,\RR)$-representations, that is, every nontrivial $\GL(n,\RR)$-invariant subspace is dense.
\end{theorem}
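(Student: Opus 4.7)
The plan is to follow the three-step strategy announced in the abstract. Fix $0 \leq r \leq n$ and a parity $\varepsilon \in \{+,-\}$, and let $U \subset \Val_r^\varepsilon(\RR^n)$ be a nonzero closed $\GL(n,\RR)$-invariant subspace; the goal is $U = \Val_r^\varepsilon(\RR^n)$. As a preliminary reduction, I would pass to the dense subspace $\Val_r^{\varepsilon,\infty}(\RR^n)$ of smooth vectors. Since $\GL(n,\RR)$ acts continuously on a Fr\'echet space, the smooth vectors are dense, and $U \cap \Val_r^{\varepsilon,\infty}(\RR^n)$ is a nonzero $\GL(n,\RR)$-invariant subspace whose closure is $U$. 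It therefore suffices to prove the irreducibility statement at the smooth level, where the $\GL(n,\RR)$-action can be differentiated to a $\mathfrak{gl}(n,\RR)$-action.

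The central step is a direct proof that every $\varphi \in \Val_r^{\varepsilon,\infty}(\RR^n)$ is represented by the normal cycle. This is where the new localization technique for polynomial valuations should enter: to each point-direction pair $(x,\xi) \in \RR^n \times S^{n-1}$ one attaches a polynomial-valued germ of $\varphi$ that captures its infinitesimal behavior at support points with outer normal $\xi$. These germs should vary smoothly in $(x,\xi)$ and assemble into a translation invariant differential form $\omega \in \Omega^{n-1}(\RR^n \times S^{n-1})$ with
\begin{equation*}
    \varphi(K) = \int_{\nc(K)} \omega \quad \text{for all } K \in \K(\RR^n).
\end{equation*}
This provides a continuous, $\GL(n,\RR)$-equivariant surjection from an explicit space of translation invariant $(n-1)$-forms onto $\Val^\infty(\RR^n)$, compatible with the parity and homogeneity decompositions.

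In the final step the problem is transported to this geometric model. The $\GL(n,\RR)$-action on forms is the natural one induced from the action on $\RR^n \times S^{n-1}$, and its derivative is an explicit first-order differential operator. Decomposing $C^\infty(S^{n-1})$ into $\SO(n)$-isotypic components via spherical harmonics and tracking how the non-compact generators of $\mathfrak{sl}(n,\RR)$ couple neighbouring harmonic degrees should yield irreducibility on the form side; this descends through the surjection and, after taking closures, produces \autoref{thm:AleskerIrredThm}.

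The main obstacle is the second step. In Alesker's original argument the normal cycle representation of smooth valuations is deduced from the irreducibility theorem, so the logical direction must be reversed: the form $\omega$ has to be constructed from $\varphi$ alone, without any structural classification of $\Val^\infty(\RR^n)$. Making the localization sharp enough to reconstruct a global form, and verifying that the resulting integral truly reproduces $\varphi$ on all convex bodies (and not merely on some generating subclass such as smooth bodies or polytopes), is where the real work will lie.
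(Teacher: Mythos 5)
Your outline reproduces the strategy the paper announces, but the decisive step -- that every $\GL(n,\RR)$-smooth $\varphi\in\Val_r(\RR^n)$ is an integral over the normal cycle -- is exactly where your proposal has a genuine gap, and the sketch you offer for it would not work as stated. Attaching ``polynomial-valued germs'' of $\varphi$ at pairs $(x,\xi)$ and letting them ``assemble into a differential form'' presupposes that the Goodey--Weil distributions of $\varphi$ are given by smooth densities on the sphere bundle, which is essentially the statement to be proved; smoothness of $\varphi$ as a $\GL(n,\RR)$-vector gives no direct pointwise regularity of these distributions. The actual proof proceeds quite differently: first a ``partition of unity'' for continuous polynomial valuations (nontrivial, because $\rho\, h_K$ is not a support function; one needs the multilinear extension of the polarization and the correction bodies $L(K,\rho)$ to show that the localized functionals are again continuous polynomial valuations with vertical support in $\supp\rho$); then the identification, via the embedding $T:\VConv(\RR^{n-1})\to\Val(\RR^{n})$ given by $K\mapsto\mu(h_K(\cdot,-1))$, of valuations with vertical support in an open half-sphere with dually epi-translation invariant valuations on convex functions; and finally the external analytic input of the Paley--Wiener--Schwartz-type regularity theorem of Knoerr, which says that smooth valuations on convex functions are represented by the differential cycle. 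Without this last ingredient (or a substitute of comparable depth) there is no known way to manufacture $\omega$ from $\varphi$, so acknowledging the obstacle does not close it. Note also that an induction over the polynomial degree is needed even after localization, since the localized pieces are only polynomial, not translation invariant.

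Two further points where your outline diverges from what can actually be proved. First, the form produced for a smooth translation invariant valuation need not be translation invariant (for $r=n$ it cannot be), so your claim of a translation invariant $\omega$ is too strong; invariant forms for $1\le r\le n-1$ are obtained afterwards from the Bernig--Br\"ocker description of $\ker D$ and $\im D$. Second, the representation-theoretic endgame is more delicate than coupling spherical-harmonic degrees: the relevant $\SO(n)$-decomposition of the form model involves all the types $\lambda_{k,m}$ (and $\lambda_{-l,m}$ for $n=2r=2l$), and one moves between them both by explicit Lie derivatives along elements $Y_{ab}\in\sln(n)_\CC$ applied to the Kotrbat\'y--Wannerer highest weight vectors and, in the opposite direction, by a nondegenerate $\SL(n,\RR)$-invariant pairing on the image of the Rumin operator; moreover the descent from the smooth/$K$-finite level to topological irreducibility of the Banach representation is made through the $(\sln(n),\SO(n))$-module formalism and admissibility, not merely by taking closures of a $\gl(n)$-invariant subspace. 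Finally, this route only yields the theorem for $n\ge 3$ and $1\le r\le n-1$ (the cases $r=0,n$ being one-dimensional), since the $\SL(2,\RR)$-statement fails for $n=2$; your reduction should record this restriction.
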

Let us add some comments on this result. It was originally used to obtain an affirmative solution of a conjecture by McMullen \cite{McMullen1980} that mixed volumes (see \cite{Schneider2014} for the definition) span a dense subspace of $\Val_{r}(\RR^n)$. This conjecture was known to hold for $r=0$ and $r=n$, since $\Val_r(\RR^n)$ is $1$-dimensional and spanned by the Euler characteristic and the Lebesgue measure \cite{Hadwiger1957} respectively in this case, as well as for $r=1$ and $r=n-1$, which follows from results by Goodey and Weil \cite{Goodey1984} and McMullen \cite{McMullen1980} respectively. The Irreducibility \autoref{thm:AleskerIrredThm} directly implies the conjecture for all $0\le r\le n$, since linear combinations of mixed volumes form a $\GL(n,\RR)$-invariant subspace of $\Val(\RR^n)$ that intersects $\Val_r^\pm(\RR^n)$ nontrivially. In fact, it can be used to obtain much stronger versions of the conjecture -- it is sufficient to take simplices or ellipsoids (in the even case) as reference bodies for the mixed volumes \cite{Alesker2001}, or combinations of mixed volumes with a bounded number of terms \cite{Knoerr2023d}.\\

Alesker's proof of \autoref{thm:AleskerIrredThm} relies on the existence of three embeddings due to Goodey and Weil \cite{Goodey1984}, Klain \cite{Klain2000}, and Schneider \cite{Schneider1996} in order to apply highly sophisticated tools from the representation theory of real reductive groups. In particular, the proof uses the Beilinson--Bernstein localization theorem to reduce the statement to some purely algebraic computations for certain associated $\mathcal{D}$-modules.\\

In this article, we present a new proof of \autoref{thm:AleskerIrredThm} that relies on more elementary methods. While we hope that this makes our approach accessible to non-experts, this does have the cost of requiring some rather tedious (although simple) calculations involving differential forms, see \autoref{sec:actLieAlgdf}. These calculations do, however, carry slightly more information on the representation of $\GL(n,\RR)$ on $\Val_r^\pm(\RR^n)$ than \autoref{thm:AleskerIrredThm} since they directly relate different $\SO(n)$-irreducible subrepresentations by the action of the Lie algebra $\sln(n)$ of the special linear group $\SL(n,\RR)$. This might have further applications.\\

Let us discuss our approach. First recall that a valuation $\varphi\in\Val(\RR^n)$ is called smooth (or more precisely, $\GL(n,\RR)$-smooth) if the map
\begin{align*}
	\GL(n,\RR)&\rightarrow \Val(\RR^n)\\
	g &\mapsto g \cdot \varphi		
\end{align*}
is a smooth map, i.e.\ if $\varphi$ is a smooth vector of the representation of $\GL(n,\RR)$ on $\Val(\RR^n)$. Since $\Val(\RR^n)$ is complete, smooth valuations form a dense subspace of $\Val(\RR^n)$, compare the discussion in \autoref{sec:infDimRep}. We denote the corresponding subspace by $\Val^\infty(\RR^n)$.\\

Smooth valuations can be constructed by integration with respect to the normal cycle $\nc(\cdot)$, which is defined for $K \in \K(\RR^n)$ by
\begin{align*}
	\nc(K) = \{(x,v):\, x \in K, \pair{y}{v} \leq \pair{x}{v} \, \forall y \in K\}\subset S\RR^n,
\end{align*}
where $S\RR^n=\RR^n\times \S^{n-1}$ denotes the sphere bundle, see \autoref{sec:ValRepByIntNC}. Any smooth differential $(n-1)$-form $\omega\in \Omega^{n-1}(S\RR^n)$ with complex coefficients then induces a continuous valuation on $\mathcal{K}(\RR^n)$ by
\begin{align*}
	K\mapsto \int_{\nc(K)}\omega,
\end{align*} 
compare \cite{Alesker2006b}. We will call valuations of this type \emph{representable by integration with respect to the normal cycle} and denote the corresponding space of continuous valuations by $\mathcal{V}^{\infty}$. Note that such a valuation is not necessarily translation invariant. It follows from \cite{Alesker2006b}*{Prop. 5.1.9} that the subset of $\Val(\RR^n)$ given by
\begin{align*}
	\mathcal{V}^{\infty,\mathrm{tr}}:=\mathcal{V}^{\infty}\cap \Val(\RR^n)
\end{align*}
is a subset of $\Val^\infty(\RR^n)$. In fact, Alesker showed in \cite{Alesker2006b}*{Thm.~5.2.1} that these two spaces coincide, i.e.\ that every $\GL(n,\RR)$-smooth valuation in $\Val(\RR^n)$ is representable by integration with respect to the normal cycle (and for homogeneous valuations of degree $0\le r\le n-1$, the differential  form can also be chosen to be translation invariant). However, this result is obtained by combining the Irreducibility \autoref{thm:AleskerIrredThm} with the Casselman--Wallach Theorem \cite{Casselman1989}, and thus not directly available to us. Indeed, we establish this characterization independently, which is our first main result.
\begin{MainTheorem}\label{mthm:smoothValsByDiffform}
	Suppose that $\varphi \in \Val(\RR^n)$ is $\GL(n,\RR)$-smooth. Then there exists a  differential form $\omega \in \Omega^{n-1}(S\RR^n)$ such that
	\begin{align}\label{eq:defSmValDiffForm}
		\varphi(K) = \int_{\nc(K)} \omega, \quad K \in \K(\RR^n).
	\end{align}
\end{MainTheorem}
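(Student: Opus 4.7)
The plan is to apply the localization technique for polynomial valuations developed earlier in the paper to the given smooth translation invariant valuation $\varphi$, viewed trivially as a polynomial valuation of translation degree zero. The $\GL(n,\RR)$-smoothness of $\varphi$ should play the role of the regularity condition required for the localization to produce a globally smooth differential form, while the translation invariance should correspond to invariance of the resulting form under the lifted $\RR^n$-action on $S\RR^n$.

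First, by McMullen's decomposition, reduce to the case $\varphi \in \Val_r^\infty(\RR^n)$. The cases $r=0$ and $r=n$ follow from Hadwiger's classification: the relevant spaces are one-dimensional and spanned by the Euler characteristic and the Lebesgue measure, both of which admit an obvious representation of the form \eqref{eq:defSmValDiffForm}. It therefore suffices to treat $1 \le r \le n-1$.

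For the substantive step, apply the localization to $\varphi$: at each point $(x,v) \in S\RR^n$, the technique should extract a value in $\Lambda^{n-1}T^*_{(x,v)} S\RR^n$, providing a pointwise candidate for the representing form. The $\GL(n,\RR)$-smoothness of $\varphi$ should, via the equivariance of the localization procedure with respect to the natural $\GL(n,\RR)$-actions on valuations and on $S\RR^n$, translate into smoothness of this pointwise candidate in $(x,v)$, yielding a global section $\omega \in \Omega^{n-1}(S\RR^n)$. Translation invariance of $\varphi$ then forces $\omega$ to be invariant under the lifted action of the translation group. The representation formula \eqref{eq:defSmValDiffForm} can finally be verified on a suitable dense test class (for instance, bodies with $C^2_+$-boundary, whose normal cycles are smooth submanifolds and for which both sides reduce to classical boundary integrals) and extended to all $K \in \K(\RR^n)$ by continuity of both sides on $\K(\RR^n)$.

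The main obstacle is the passage from the pointwise output of the localization procedure to a bona fide smooth global form: one needs to establish both that the localization depends smoothly on the base point $(x,v)$, with a precise dictionary between $\GL(n,\RR)$-smoothness of $\varphi$ and smoothness of the associated section, and that the form so constructed reproduces $\varphi$ exactly under integration over $\nc(K)$. Since polynomial valuations of higher translation degree are handled by the same localization machinery, a sharp version of the technique for that broader class is likely needed; the translation invariant statement of the theorem should then emerge as the degree-zero specialization.
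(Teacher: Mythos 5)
Your outline correctly anticipates the general architecture (homogeneous decomposition, the role of the localization technique, and the fact that the statement is really proved for polynomial valuations with the translation invariant case as the degree-zero instance), but the central step is missing, and the role you assign to the localization is not what it can deliver. The localization procedure of \autoref{mthm:locVal} does not ``extract a value in $\Lambda^{n-1}T^*_{(x,v)}S\RR^n$ at each point'': it decomposes a (polynomial) valuation into finitely many polynomial valuations whose \emph{vertical supports} are subordinate to an open cover of $\S^{n-1}$. It produces valuations, not pointwise data for a differential form, and by itself it gives no mechanism by which $\GL(n,\RR)$-smoothness of $\varphi$ turns into a smooth section of $\Lambda^{n-1}T^*S\RR^n$. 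That passage --- from an abstractly smooth valuation to a representing form --- is precisely the hard analytic content of the theorem, and your proposal leaves it as a hope (``the smoothness should translate into smoothness of the pointwise candidate'') without any argument.

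In the paper this gap is closed by a transfer to valuations on convex functions: after localizing so that the vertical support lies in an open hemisphere, one uses the isomorphism $T:\VConv_A(\RR^{n-1})\to\Val_{P(A)}(\RR^{n})$ (\autoref{thm:defTmapProp}) to identify the piece with a dually epi-translation invariant valuation on $\Conv(\RR^{n-1},\RR)$, shows that $\GL(n,\RR)$-smoothness of the piece forces smoothness of that valuation under translations (\autoref{lem:TmuGLsmThenMuAffSm}), and then invokes the independently proved regularity theorem of \cite{Knoerr2025} (\autoref{thm:SmVConvGivenByDiffForm}) stating that such smooth valuations are given by integration against the differential cycle; pulling the form back along the map $Q$ yields the representation by the normal cycle (\autoref{lem:diffformSmIffValVConv}). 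Your proposal contains no substitute for this input, and without it (or an equivalent Paley--Wiener--Schwartz-type regularity statement) the construction of $\omega$ does not get off the ground. Two further points: the reason polynomial valuations enter is not that one ``specializes a sharp polynomial version to degree zero'' --- rather, localization destroys translation invariance, so even for $\varphi\in\Val(\RR^n)$ the localized pieces are genuinely polynomial, and the support-restricted representation theorem (\autoref{thm:PolySmDiffForm}) must be proved for them by induction on the polynomial degree, with the degree-zero, support-restricted case as the base handled by the $\VConv$ transfer. Finally, the cases $r=0,n$ need no separate treatment once the representing forms exist for the pieces; the issue is never the verification of \eqref{eq:defSmValDiffForm} on $C^2_+$ bodies, which is immediate, but the existence of $\omega$ itself.
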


The proof of \autoref{mthm:smoothValsByDiffform} is based on a corresponding regularity result for certain spaces of valuations on convex functions obtained by the second named author in \cite{Knoerr2025} using a Paley--Wiener--Schwartz-type characterization result for certain distributions associated to these valuations. In particular, the results in \cite{Knoerr2025} do not use the Irreducibility \autoref{thm:AleskerIrredThm}.\\

We then exploit that these spaces of valuations on convex functions can be related to translation invariant valuations on convex bodies that satisfy restrictions on their vertical support (see \autoref{sec:bghomDecoGW} for the definition), as was investigated in \cites{Knoerr2020a,Knoerr2020b}. Not every valuation in $\Val(\RR^n)$ satisfies these conditions, but we are going to show that any valuation can be written as a sum of valuations satisfying the support restrictions using a localization procedure introduced in \autoref{sec:localization}. This procedure breaks the translation invariance, so we need to work on a larger space of valuations.

Let us call a valuation $\varphi:\mathcal{K}(\RR^n)\rightarrow\CC$  a  \emph{polynomial valuation} if $x \mapsto \varphi(K+x)$ is a polynomial in $x \in \RR^n$ for every $K \in \K(\RR^n)$, where the degree of $\varphi(K+x)$ is uniformly bounded, i.e. independent of $K$. Our second main result is a ''partition of unity``-type result for polynomial valuations.
\begin{MainTheorem}\label{mthm:locVal}
	Let $\varphi:\mathcal{K}(\RR^n)\rightarrow\CC$ be a continuous polynomial valuation and $(U_\alpha)_{\alpha\in\mathcal{A}}$  an open cover of $\S^{n-1}$. Then there exist $M\in\NN$ and continuous polynomial valuations $\varphi_1,\dots,\varphi_M:\mathcal{K}(\RR^n)\rightarrow\CC$ such that the vertical supports $\vsupp\varphi_j$, $1\leq j\leq M$, are subordinate to the cover $(U_\alpha)_{\alpha\in\mathcal{A}}$ and such that
	\begin{align*}
		\varphi=\varphi_1+\dots+\varphi_M.
	\end{align*}
\end{MainTheorem}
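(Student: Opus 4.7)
The plan combines a compactness reduction on $\S^{n-1}$ with a partition-of-unity localization of the Goodey--Weil polarization of $\varphi$. Since $\S^{n-1}$ is compact, I would first pass to a finite subcover $U_1,\dots,U_N$ of $(U_\alpha)_{\alpha\in\mathcal{A}}$ and fix a smooth partition of unity $\rho_1,\dots,\rho_N\in C^\infty(\S^{n-1})$ with $\supp\rho_j\subset U_j$ and $\sum_j\rho_j\equiv 1$. The statement then reduces to producing continuous polynomial valuations $\varphi_j$ with $\vsupp\varphi_j\subset \overline{U_j}$ and $\sum_j\varphi_j=\varphi$.

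Using McMullen's polynomial homogeneous decomposition, applied to the expansion $\varphi(K+x)=\sum_\alpha\varphi_\alpha(K)\,x^\alpha$ and to each translation invariant coefficient $\varphi_\alpha$, I would next reduce to the case where $\varphi$ is homogeneous of some degree $r$ in $K$ and $\varphi(K+\cdot)$ has fixed polynomial degree $d$ in the translation. In this bi-homogeneous setting $\varphi$ admits a symmetric polarization $\bar\varphi$, which by a Goodey--Weil-type extension in the polynomial setting extends continuously to a distribution $T_\varphi$ on a product of copies of $\S^{n-1}$, with the property that the projection of $\supp T_\varphi$ onto any single factor coincides with $\vsupp\varphi$. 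Expanding $1=\bigl(\sum_j\rho_j\bigr)^{\otimes r}$ splits $T_\varphi$ into pieces supported in products $\overline{U_{j_1}}\times\cdots\times\overline{U_{j_r}}$, and the diagonal pieces ($j_1=\cdots=j_r=j$) give the natural candidates for $\varphi_j$.

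The main obstacle is disposing of the off-diagonal pieces, whose support lies in a product of distinct $\overline{U_{j_i}}$'s and whose diagonal evaluation at $h_K$ is neither obviously a valuation nor obviously of vertical support inside a single $\overline{U_j}$. My plan is to iteratively refine the partition of unity, subdividing each $U_j$ by finer open sets $U_j'\subset U_j$ with associated partitions of unity, and absorbing mixed terms into diagonal ones of the refined partition at every step; finiteness of the polynomial degree $r+d$ bounds the number of distinct multi-index patterns, so the procedure should terminate after finitely many refinements. The most delicate point will be verifying that the resulting pieces remain genuine continuous polynomial valuations, not merely continuous multilinear functionals in the support functions; this will presumably require a Minkowski-calculus reinterpretation of the localized distribution, and is, I expect, the technical core of the new localization technique announced in the introduction.
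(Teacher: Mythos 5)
Your setup matches the paper's: pass to a finite subcover, fix a smooth partition of unity $\rho_1,\dots,\rho_N$ subordinate to it, reduce to $r$-homogeneous $\varphi\in\PVal_r(\RR^n)$ via the homogeneous decomposition, and expand the polarization $\tilde\varphi(h_K,\dots,h_K)$ through $1=\bigl(\sum_j\rho_j\bigr)^{\otimes r}$. You have also correctly identified the technical core of the theorem---showing the localized pieces are genuine continuous polynomial valuations. However, the resolution you propose for the off-diagonal terms is misguided and leaves a gap.

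The off-diagonal terms $\varphi_{\rho_{j_1},\dots,\rho_{j_r}}(K):=\tilde\varphi(\rho_{j_1}h_K,\dots,\rho_{j_r}h_K)$ are not a nuisance to be absorbed by refinement: they are themselves continuous polynomial valuations with vertical support in $\bigcap_i\supp\rho_{j_i}$, which is contained in any single $\supp\rho_{j_i}\subset U_{j_i}$ (and if the intersection is empty, the Goodey--Weil distribution vanishes and the piece is zero). So every term in the expansion already has vertical support subordinate to the cover, and no iteration is needed. Your proposed iterative refinement would in any case keep producing off-diagonal terms at every stage; the assertion that the polynomial degree $r+d$ bounds the number of multi-index patterns and forces termination is not justified, and I do not see how it could work.

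What actually needs proving is that $K\mapsto\tilde\varphi(\rho_1 h_K,\dots,\rho_r h_K)$ satisfies the valuation additivity for an \emph{arbitrary} $r$-tuple of smooth functions, including distinct ones. The paper does this in two steps. First, for a single \emph{nonnegative} $\rho$, one uses that $\rho\cdot\max\{h_K,h_L\}=\max\{\rho h_K,\rho h_L\}$ and similarly for $\min$, together with the observation that $\rho h_K$ is a difference of support functions (so that $h_M+t\rho h_K$ is convex for a sufficiently large reference body $M$ and $|t|\leq1$), to reduce the valuation property of $K\mapsto\tilde\varphi(\rho h_K,\dots,\rho h_K)$ to that of $\varphi$ via a derivative in $t$. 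Second, the general case is reduced to the nonnegative diagonal case by a polynomial-interpolation argument in auxiliary parameters $t_1,\dots,t_r$, replacing $\rho_j$ by the nonnegative function $\sum_i\|\rho_i\|_\infty+\sum_i t_i\rho_i$ and extracting the mixed derivative at $0$. Neither step appears in your outline; this is the mechanism that makes the localization close under the valuation property and renders the refinement scheme unnecessary.
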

Note that while this result follows easily for valuations that are representable by integration over the normal cycle, this is nontrivial for general continuous valuations. Moreover, let us point out that for $\Val_n(\RR^n)$ a decomposition into translation-invariant valuations with these support restrictions is not possible.\\

\autoref{mthm:smoothValsByDiffform} allows us to prove \autoref{thm:AleskerIrredThm} by working with the corresponding action on differential forms. More precisely, we show a corresponding result for the space 
\begin{align*}
	\mathcal{V}_{r,\pm}^{\infty,\mathrm{tr}}:=\mathcal{V}^{\infty,\mathrm{tr}}\cap \Val^\pm_r(\RR^n)
\end{align*}
under the action of the special linear group $\SL(n,\RR)$. This space was investigated from a purely differential geometric viewpoint in \cite{Bernig2007}, which was then used in \cite{Alesker2011} to obtain its decomposition into its $\SO(n)$-isotypical components. The corresponding highest weight vectors were constructed in \cite{Kotrbaty2022}. The relevant parts of these articles do not use the Irreducibility \autoref{thm:AleskerIrredThm}, compare the discussion below.\\

The space $\left(\mathcal{V}_{r,\pm}^{\infty,\mathrm{tr}}\right)^{\SO(n)-\mathrm{fin}}$ of $\SO(n)$-finite vectors (see \autoref{sec:infDimRep} for the definition) carries a joint action of the Lie algebra $\sln(n)$ of $\SL(n,\RR)$ and $\SO(n)$, which gives it the structure of an $(\sln(n),\SO(n))$-module. By calculating the action of certain distinguished elements in the complexification $\sln(n)_\CC:=\sln(n)\otimes\CC$ on the highest weight vectors, we establish the following result.
\begin{MainTheorem}\label{mthm:imDIrred}
	Let $n\ge 3$, $1 \leq r \leq n-1$. Then the subspaces $(\mathcal{V}_{r,\pm}^{\infty,\mathrm{tr}})^{\SO(n)-\mathrm{fin}}$
	of $\Val^\infty(\RR^n)$ are algebraically irreducible $(\mathfrak{sl}(n),\SO(n))$-modules. 
\end{MainTheorem}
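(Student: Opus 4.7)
The plan is to combine the $\SO(n)$-isotypical decomposition of $\mathcal{V}_{r,\pm}^{\infty,\mathrm{tr}}$ from \cite{Alesker2011} with the explicit highest weight vectors constructed in \cite{Kotrbaty2022}, and then to verify by direct computation that the action of $\sln(n)_\CC$ connects all $\SO(n)$-types occurring in the decomposition. Since Alesker's decomposition is multiplicity-free, any $(\sln(n),\SO(n))$-submodule is automatically a direct sum of a subset of these $\SO(n)$-isotypical components, so irreducibility reduces to showing that no proper nonempty such subset can be stable under the $\sln(n)_\CC$-action.

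To analyse the Lie algebra action, I would use the Cartan decomposition $\sln(n) = \so(n) \oplus \mathfrak{p}$, where $\mathfrak{p}$ denotes the space of traceless symmetric matrices, viewed as an $\SO(n)$-representation. The $\SO(n)$-branching (Pieri-type) rules for $\mathfrak{p}_\CC \otimes V_\lambda$ imply that $\mathfrak{p}_\CC \cdot V_\lambda$ can only contribute to $\SO(n)$-types $V_\mu$ whose highest weights differ from $\lambda$ in a controlled way. It then suffices to verify, for each pair $V_\lambda, V_\mu$ of $\SO(n)$-types appearing in $\mathcal{V}_{r,\pm}^{\infty,\mathrm{tr}}$ that are adjacent in this sense, that the projection of $\mathfrak{p}_\CC \cdot V_\lambda$ onto $V_\mu$ is nontrivial. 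A straightforward graph-connectedness argument then yields irreducibility, provided that the resulting ``adjacency graph'' of $\SO(n)$-types is connected, which is where the hypothesis $n \geq 3$ enters --- for $n = 2$ the structure of $\SO(n)$-types and their branching is too degenerate for this argument to go through unchanged.

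The technical heart of the proof is the non-vanishing assertion in the previous paragraph. Using \autoref{mthm:smoothValsByDiffform}, each highest weight vector is represented by an explicit differential form on the sphere bundle $S\RR^n$. The action of $X \in \sln(n)_\CC$ on such a valuation is given by integration of the Lie derivative of the corresponding form along the fundamental vector field of $X$, up to boundary-type corrections controlled by the structure of the normal cycle. I would choose convenient elements of $\mathfrak{p}_\CC$ (raising/lowering operators in the complexified symmetric-tensor representation) and evaluate their action on the highest weight forms from \cite{Kotrbaty2022}, identifying the projection onto each neighbouring $\SO(n)$-type as an explicit combinatorial coefficient. Verifying that these coefficients do not vanish is the main obstacle of the proof: it requires careful but elementary differential-form calculations on $S\RR^n$, which are carried out in \autoref{sec:actLieAlgdf} and are the reason for the qualifier ``tedious (although simple)'' in the introduction.
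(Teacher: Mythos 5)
Your plan follows the same skeleton as the paper's proof: use the multiplicity-free $\SO(n)$-decomposition of \cite{Alesker2011}, the explicit highest weight vectors of \cite{Kotrbaty2022}, and show that the $\sln(n)_\CC$-action links all occurring $\SO(n)$-types, so that any nonzero $(\sln(n),\SO(n))$-submodule contains every isotypical component. The one substantive difference is the mechanism for the non-vanishing step. You propose to compute, via Pieri-type branching for $\mathfrak{p}_\CC\otimes V_\lambda$, the isotypic projection of $\mathfrak{p}_\CC\cdot V_\lambda$ onto each adjacent type and check a coefficient; the paper avoids computing such projections (the Lie derivative of a highest weight vector is not itself a highest weight vector, and extracting its component in a given type directly is awkward). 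Instead, only the ``raising'' direction in $m$ is obtained by a closed formula ($\LieDer_{\widetilde{Y}_{11}}\omega_{r,k,m}$ is a multiple of $\omega_{r,k,m+2}$), while all other adjacencies are detected by pairing $\LieDer_{\widetilde{Y}}D\omega_{r,k,m}$ against a highest weight vector $D\omega_{n-r,k',m'}$ in the \emph{complementary degree} under the $\SL(n,\RR)$-invariant sesquilinear pairing of \autoref{sec:pairing}, and invoking the Schur-type \autoref{lem:NonvanishingPairingImpliesSubset}; this reduces each non-vanishing claim to a single explicit spherical integral (\autoref{cor:pairKp1NotZero}), at the cost of working with $V_r$ and $V_{n-r}$ simultaneously. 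You should also be aware that two $\SO(n)$-types need dedicated treatment that your adjacency graph does not automatically supply: the trivial type $\lambda=0$ (intrinsic volumes), for which the paper constructs the forms $\omega_{r,0,m}$ and their Rumin differentials in \autoref{sec:weight0Form}, and the type $\lambda_{-l,m}$ occurring only when $n=2l=2r$, which is linked to $\lambda_{l-1,m}$ via the reflection trick of \autoref{lem:RonOmegarkm} and \autoref{prop:pairYllOmlml}. With those two additions your outline matches the paper's argument.
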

Let us point out that the notion of irreducibility in \autoref{mthm:imDIrred} differs from the previous one: an $(\mathfrak{sl}(n),\SO(n))$-module is algebraically irreducible if it does not contain any nontrivial subspace that is invariant under the joint action of $\mathfrak{sl}(n)$ and $\SO(n)$. Note that it follows from general results from representation theory that this statement is in general stronger than topological irreducibility of the associated representation of $\SL(n,\RR)$, however, for the type of representation considered here these notions are in fact equivalent, see the discussion in \autoref{sec:infDimRep}. \autoref{mthm:smoothValsByDiffform} and \autoref{mthm:imDIrred} together therefore imply the following version of \autoref{thm:AleskerIrredThm}.
\begin{MainCorollary}\label{mcor:ValslIrred}
	Let $n\ge 3$, $1 \leq r \leq n-1$. Then the spaces $\Val_r^\pm(\RR^n)$ are topologically irreducible representations of $\SL(n,\RR)$.
\end{MainCorollary}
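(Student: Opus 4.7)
The plan is to deduce the corollary by combining the two preceding main theorems with standard density arguments. Fix $1\le r\le n-1$ and let $W\subset\Val_r^\pm(\RR^n)$ be a closed, nontrivial $\SL(n,\RR)$-invariant subspace; the goal is to show $W=\Val_r^\pm(\RR^n)$. Since $\Val_r^\pm(\RR^n)$ is a Banach space on which $\SL(n,\RR)$ acts continuously, G{\aa}rding's theorem guarantees that the subspace $W^\infty$ of $\SL(n,\RR)$-smooth vectors in $W$ is dense in $W$ and is preserved by the induced action of the Lie algebra $\sln(n)$. Because $\SL(n,\RR)$ and $\GL(n,\RR)$ differ only by the central scaling, which acts on $r$-homogeneous valuations by a power of $|\det|$, the $\SL(n,\RR)$-smooth and $\GL(n,\RR)$-smooth vectors agree, so \autoref{mthm:smoothValsByDiffform} yields $W^\infty\subset \mathcal{V}_{r,\pm}^{\infty,\mathrm{tr}}$.

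Next I would pass to $\SO(n)$-finite vectors. Since $\SO(n)$ is compact and acts continuously on the Banach space $W^\infty$ (in its natural topology), the subspace $(W^\infty)^{\SO(n)\text{-}\mathrm{fin}}$ is dense in $W^\infty$ by the Peter--Weyl theorem applied to matrix coefficients. In particular, $(W^\infty)^{\SO(n)\text{-}\mathrm{fin}}$ is nontrivial and, being simultaneously stable under $\sln(n)$ and $\SO(n)$, it forms a nonzero $(\sln(n),\SO(n))$-submodule of $(\mathcal{V}_{r,\pm}^{\infty,\mathrm{tr}})^{\SO(n)\text{-}\mathrm{fin}}$. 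By \autoref{mthm:imDIrred}, the latter is irreducible, so
\begin{align*}
(W^\infty)^{\SO(n)\text{-}\mathrm{fin}}=(\mathcal{V}_{r,\pm}^{\infty,\mathrm{tr}})^{\SO(n)\text{-}\mathrm{fin}}.
\end{align*}

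Finally, I would close the chain of densities. From the previous step,
\begin{align*}
W\supset \overline{(W^\infty)^{\SO(n)\text{-}\mathrm{fin}}}=\overline{(\mathcal{V}_{r,\pm}^{\infty,\mathrm{tr}})^{\SO(n)\text{-}\mathrm{fin}}},
\end{align*}
where the closure is taken in $\Val_r^\pm(\RR^n)$. By Peter--Weyl, $(\mathcal{V}_{r,\pm}^{\infty,\mathrm{tr}})^{\SO(n)\text{-}\mathrm{fin}}$ is dense in the smooth vectors $\mathcal{V}_{r,\pm}^{\infty,\mathrm{tr}}=(\Val_r^\pm(\RR^n))^\infty$, which in turn are dense in $\Val_r^\pm(\RR^n)$ by another application of G{\aa}rding's theorem. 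Therefore $W=\Val_r^\pm(\RR^n)$, which is the claimed topological irreducibility.

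The main conceptual obstacle has really been handled by the earlier results: \autoref{mthm:smoothValsByDiffform} is what allows one to transfer questions about abstract smooth vectors to the concrete space of normal-cycle-representable valuations, and \autoref{mthm:imDIrred} carries the representation-theoretic content. Beyond these, the only delicate point is justifying that topological irreducibility of the $\SL(n,\RR)$-representation is equivalent to algebraic irreducibility of its associated $(\sln(n),\SO(n))$-module; this is a standard fact for admissible representations of real reductive groups, with admissibility in the present setting following from the finite-dimensionality of the $\SO(n)$-isotypic components of $\mathcal{V}_{r,\pm}^{\infty,\mathrm{tr}}$ established in \cite{Alesker2011}, and is discussed in \autoref{sec:infDimRep}.
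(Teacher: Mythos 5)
Your proposal is correct and follows essentially the same route as the paper: identify the $\SO(n)$-finite smooth vectors of $\Val_r^\pm(\RR^n)$ with $(\mathcal{V}_{r,\pm}^{\infty,\mathrm{tr}})^{\SO(n)-\mathrm{fin}}$ via \autoref{mthm:smoothValsByDiffform}, invoke \autoref{mthm:imDIrred} for irreducibility of the $(\sln(n),\SO(n))$-module, and deduce topological irreducibility by passing to $K$-finite smooth vectors in any nontrivial closed invariant subspace and using density. The only difference is cosmetic: you unpack the short abstract argument that the paper isolates as \autoref{lemma:EquivIrredu}, and you helpfully make explicit the identification of $\GL(n,\RR)$-smooth and $\SL(n,\RR)$-smooth vectors, which the paper leaves implicit.
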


Let us point out that for $n=2$, the two previous results are both incorrect, since the corresponding representations on $1$-homogeneous odd valuations contain two nonisomorphic irreducible subrepresentations of $\SL(2,\RR)$ (as pointed out in \cite{Alesker2011b}). We discuss this case separately in \autoref{sec:prfAlirred}.\\

As a final remark, we note that, since they are not used in the proof, the characterization results by Klain~\cite{Klain2000} and Schneider~\cite{Schneider1996} can be obtained as a simple consequence of \autoref{mcor:ValslIrred}.

\subsection*{Discussion of relevant results for the proof}
As our proof of \autoref{mcor:ValslIrred} relies heavily on previous results, let us give a short overview.

For the first step, \autoref{mthm:locVal}, we need homogeneous decompositions, polarizations and Goodey--Weil distributions for polynomial valuations (\cites{Goodey1984,Khovanskii1993,Knoerr2024,McMullen1975,McMullen1977,Alesker2000}). \autoref{mthm:smoothValsByDiffform} is then deduced using a construction from \cite{Knoerr2020a} from an analogous, highly nontrivial result for valuations on convex functions, proved in \cite{Knoerr2025}.

Next, we heavily use that the space $\mathcal{V}^{\infty,\mathrm{tr}}$ was studied in \cites{Bernig2007,Alesker2011,Kotrbaty2022}. In \cite{Bernig2007}, all results are actually shown for $\mathcal{V}^{\infty}$ and the Irreducibility \autoref{thm:AleskerIrredThm} is only used to relate the results to $\Val^\infty(\RR^n)$. The same is true for the relevant sections in \cite{Alesker2011} (Section~4 and 5) and \cite{Kotrbaty2022}. We will therefore state their results as results for $\mathcal{V}^{\infty,\mathrm{tr}}$ and not for $\Val^\infty(\RR^n)$ later on.

We then use a similar strategy as in \cite{Howe1999} to relate the different $\SO(n)$-isotypical components by the action of suitable Lie algebra elements from $\sln(n)_\CC$ on the highest weight vectors. By an explicit calculation, we show that the Lie algebra action may be used to pass from a given highest weight vector to all representations with larger highest weight. In order to pass to representations with smaller highest weight, we use an $\SL(n,\RR)$-invariant pairing on the relevant spaces of differential forms in order to argue by duality. This pairing coincides with the pairing induced by the Alesker product~\cite{Alesker2004b}, however, we will not rely on this fact and instead establish the necessary properties directly.

\subsection*{Plan of the article}
\autoref{mthm:locVal} and \autoref{mthm:smoothValsByDiffform} are proved in \autoref{part1}, while \autoref{mthm:imDIrred} and \autoref{mcor:ValslIrred} are proved in \autoref{part2}. \autoref{part1} is independent of \autoref{part2}, and can be read separately. Necessary background is reviewed in \autoref{sec:bg}. The relation between the different notions of irreducibility for infinite dimensional representations is discussed in \autoref{sec:infDimRep}.

\section{Background material and Preliminaries}\label{sec:bg}
In this section we recall some background from representation theory, valuation theory and differential geometry, as well as fix the notation. Moreover, we will prove some preliminary results used in the proofs of the main theorems.

	\subsection{Representation Theory -- Highest weight vectors}\label{sec:HWV}
	Throughout this section, we assume that $n\ge 3$.\\
	
	In the proof of \autoref{mthm:imDIrred}, we will use the decomposition of the given representations into its $\SO(n)$-isotypical components. This decomposition was determined in \cite{Alesker2011} and the corresponding highest weight vectors were constructed in \cite{Kotrbaty2022}. These isotypical components are parametrized by their highest weights, given as tuples of length $l = \lfloor\frac{n}{2}\rfloor$, and contain (after some choices) distinguished vectors called highest weight vectors. We will later use that it is essentially enough to know the $\SL(n,\RR)$-representation on these highest weight vectors in order to show irreducibility.
	
	As a highest weight vector depends on some extra structure on the representation (in particular, a choice of a Cartan subalgebra and a set of positive weights), we will shortly summarize this here. For an introduction to the theory of highest weights, we refer to \cite{Knapp1986}*{Ch.~IV}. More details on our setting can be found in \cite{Kotrbaty2022}, where the highest weight vectors of the representations of smooth valuations are constructed.
	
	\medskip
	
	First, assume that $n=2l$ is even. Then we choose the subgroup of $\SO(n)$ as maximal torus consisting of block-diagonal matrices, with $2\times2$ blocks of the form
	\begin{align*}
		\begin{pmatrix}
			\cos(t) & -\sin(t)\\
			\sin(t) & \cos(t)
		\end{pmatrix}, \quad \text{ for some } t \in \RR.
	\end{align*}
	As this subgroup is abelian and maximal, its Lie algebra $\mathfrak{t}$ is also an abelian and maximal subalgebra of $\so(n)$, that is, a Cartan subalgebra. A basis of its complexification $\mathfrak{t}_\CC$ is given by the elements $H_1, \dots, H_l$, where 
	\begin{align*}
		H_i = \sqrt{-1} E_{2i-1,2i} - \sqrt{-1}E_{2i,2i-1},
	\end{align*}
	and where $E_{i,j} \in \CC^{n \times n}$ denotes the matrix with $1$ at position $(i,j)$ and zeros everywhere else. We denote the dual basis to $\{H_1, \dots, H_l\}$ by $\epsilon_1, \dots, \epsilon_l \in \mathfrak{t}_\CC^\ast$.
	
	The elements of $\mathfrak{t}_\CC$ act on $\so(n)_\CC$ by the adjoint representation $\mathrm{ad}$, that is, for $H \in \mathfrak{t}_\CC$, $\mathrm{ad}_H(X) = [H,X] =HX - XH$, $X \in \so(n)_\CC$. As $\mathfrak{t}_\CC$ is an abelian Lie group, the operators $\mathrm{ad}_{H}$, $H \in \mathfrak{t}_\CC$, all commute and their eigenvalues depend linearly on $H$. A short calulation shows that these eigenvalues, called roots of $\so(n)$, are given by
	\begin{align*}
		\Delta = \{\pm \epsilon_i \pm \epsilon_j: \, 1 \leq i < j \leq l\} \cup \{0\} \subset \mathfrak{t}_\CC^\ast.
	\end{align*}	
	The Lie algebra $\so(n)_\CC$ itself decomposes into common eigenspaces $\so(n)_\alpha$, $\alpha \in \Delta$, called root spaces. We declare the set of positive roots $\Delta^+$ to be
	\begin{align*}
		\Delta^+ = \{\epsilon_i \pm \epsilon_j: \, 1 \leq i < j \leq l\} \subset \Delta,
	\end{align*}
	and write $\mathfrak{n}^\pm = \bigoplus_{\alpha \in \Delta^+} \so(n)_{\pm \alpha}$. Note that $\Delta^+$ induces an ordering of the roots, and that $\so(n)_\CC = \mathfrak{n}^- \oplus \mathfrak{t}_\CC\oplus \mathfrak{n}^+$.
	
	Suppose now that $(V,\pi)$ is an irreducible representation of $\SO(n)$. Then $V$ decomposes into a family of common eigenspaces of the operators $d\pi(H)$, $H \in \mathfrak{t}_\CC$. The appearing eigenvalues (elements of $\mathfrak{t}_\CC^\ast$) are called the weights of the representation. The theorem of the highest weight (see, e.g., \cite{Knapp1986}*{Thm.~4.28}) then states that there exists a unique highest weight that already uniquely determines the representation.	Its weight space is one-dimensional, the non-zero vectors are called highest weight vectors of the representation $(V, \pi)$. It turns out that a highest weight vector $v \in V$ with highest weight $\lambda \in \mathfrak{t}_\CC^\ast$ is characterized by
	\begin{align}\label{eq:defHWV}
		d\pi(H)v = \lambda(H) v, \text{ for all } H \in \mathfrak{t}_\CC, \quad \text{ and } \quad d\pi(X) v = 0, \text{ for all } X \in \mathfrak{n}^+.
	\end{align}
	Let us point out that if $V$ and $W$ are irreducible representations with highest weight vectors $v$ and $w$ (respectively), then \eqref{eq:defHWV} shows that $v \otimes w \in V \otimes W$ is a highest weight vector of some irreducible component of $V \otimes W$ and its weight is given by the sum of the corresponding weights.
	
	As a weight $\lambda = \sum_{i =1}^l \lambda_i \epsilon_i$ is identified with the tuple $(\lambda_1, \dots, \lambda_l)$, all irreducible representations of $\SO(n)$ (up to isomorphism) can be indexed by such tuples $(\lambda_1, \dots, \lambda_l) \in \ZZ^l$. It can be shown that $\SO(n)$ has irreducible representations exactly for the following set $\Lambda$ of highest weights:
	\begin{align*}
		\Lambda = \{(\lambda_1, \dots, \lambda_l) \in \ZZ^l: \, \lambda_1 \geq \lambda_2 \geq \dots \geq \lambda_{l-1} \geq |\lambda_l| \}, \quad n = 2l.
	\end{align*}
	
	\medskip
	
	If $n=2l+1$ is odd, then the maximal torus of $\SO(n)$ is chosen to consist of all block-diagonal matrices with $2 \times 2$ blocks as before in the the first $n-1=2l$ (standard) coordinates and a $1$ in the $(n,n)$-entry. As before, we denote by $\mathfrak{t}_\CC$ the corresponding Cartan subalgebra, with basis given by $H_1, \dots, H_l$ and dual basis $\epsilon_1, \dots, \epsilon_l$. The set of roots is now given by $\Delta = \{\pm \epsilon_i \pm \epsilon_j: \, 1 \leq i < j \leq l\} \cup \{\pm \epsilon_i: \, 1 \leq i \leq l\}$, and we define the sets of positive roots to be
	\begin{align*}
		\Delta^+ = \{\epsilon_i \pm \epsilon_j: \, 1 \leq i < j \leq l\} \cup \{ \epsilon_i: \, 1 \leq i \leq l\} \subset \mathfrak{t}_\CC^\ast.
	\end{align*}
	We can therefore represent all highest weights of irreducible $\SO(n)$ representations by tuples $(\lambda_1, \dots, \lambda_l) \in \ZZ^l$ from the index set
	\begin{align*}
		\Lambda = \{(\lambda_1, \dots, \lambda_l) \in \ZZ^l: \, \lambda_1 \geq \lambda_2 \geq \dots \geq \lambda_l \geq 0 \}, \quad n = 2l+1.
	\end{align*}

	\subsection{Valuations representable by integration with respect to the normal cycle}\label{sec:ValRepByIntNC}
	In this section, we recall the construction of valuations using differential forms on the sphere bundle $S\RR^n = \RR^n \times \S^{n-1}$ of $\RR^n$ from \cite{Alesker2008}.
	
	First, recall that the normal cycle $\nc(K)$ of $K \in \K(\RR^n)$ is defined as the set of pairs $(x,v)$ where $x$ is a boundary point of $K$ and $v$ is an outer unit normal at $x$, that is, $\nc(K) = \{(x,v) \in S\RR^n:\, x \in K, \pair{y}{v} \leq \pair{x}{v} \, \forall y \in K\}$. Then $\nc(K)$ is an $(n-1)$-dimensional Lipschitz submanifold of $S\RR^n$ with a natural orientation induced from the orientation of $\RR^n$. Moreover, if we interpret $\nc(K)$ as an integral current, then the map $K \mapsto \nc(K)$ is a continuous valuation with respect to the flat metric topology (see \cite{Alesker2008}*{Sec.~2} for the definition). This implies in particular that $K\to \int_{\nc(K)}\omega$ defines a continuous valuation for every smooth differential form $\omega\in \Omega^{n-1}(S\RR^n)$. We will call valuations of this type \emph{representable by integration with respect to the normal cycle}. Note that every transla\-tion-invariant differential form $\omega \in \Omega^{n-1}(S\RR^n)^{\mathrm{\mathrm{tr}}}$ induces a valuation $\varphi_\omega \in \Val(\RR^n)$. Moreover, valuations of this type are smooth vectors of the representation of $\GL(n,\RR)$ on $\Val(\RR^n)$, compare \cite{Alesker2006b}*{Prop.~5.1.9}.
	
	The kernel of this procedure was determined in \cite{Bernig2007} and relies on the Rumin differential defined on the contact manifold  $S\RR^n$ (see \cite{Rumin1994} for the definition on a general contact manifold). Recall that $S\RR^n$ is a contact manifold of dimension $2n-1$ with (global) contact form $\alpha \in \Omega^1(S\RR^n)$ given by
	\begin{align}\label{eq:defAlphSRn}
		\alpha|_{(x,v)}(X) = \pair{d\pi_1(X)}{v}, \quad (x,v) \in S\RR^n, X \in T_{(x,v)}S\RR^n,
	\end{align}
	where $\pi_1:S\RR^n \to \RR^n$ is the canonical projection on the first factor.
	
	One can show that for $\omega \in \Omega^{n-1}(S\RR^n)$, there exists $\xi \in \Omega^{n-2}(S\RR^n)$ such that $d(\omega + \alpha \wedge \xi)$ is a multiple of $\alpha$. The form $\alpha\wedge\xi$ is uniquely determined by this property, and the Rumin differential $D\omega$ is then defined as
	\begin{align*}
		D\omega = d(\omega + \alpha \wedge \xi) \in \Omega^n(S\RR^n).
	\end{align*}
	Note that the Rumin differential only depends on the contact distribution $\ker \alpha$, not on the specific $1$-form $\alpha$. In particular, if $\Phi:S\RR^n\rightarrow S\RR^n$ is a contactomorpism, that is, a diffeomorphism satisfying $\Phi^*\alpha= f\alpha$ for some $f\in C^\infty(S\RR^n)$,  then $\Phi^\ast( D\omega) = D(\Phi^\ast \omega)$.
	\begin{theorem}[\cite{Bernig2007}*{Thm.~1}]\label{thm:BernigBroeckerKernel}
		Let $\omega \in \Omega^{n-1}(S\RR^n)$. Then $\int_{\nc(K)}\omega=0$ for all $K\in\K(\RR^n)$ if and only if 
		\begin{enumerate}
			\item $D \omega = 0$ and
			\item\label{it:0homBed} $\int_{\{x\}\times \S^{n-1}}\omega = 0$ for all $x\in\RR^n$.
		\end{enumerate}
	\end{theorem}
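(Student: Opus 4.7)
The plan is to prove both directions using the structure of the normal cycle as a Legendrian cycle combined with Stokes' theorem.

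For the sufficient direction, assume $D\omega=0$ and condition (2). By continuity of $K\mapsto\nc(K)$ in the flat topology, it suffices to establish $\int_{\nc(K)}\omega=0$ for a smooth strictly convex body $K$, since such bodies are dense in $\K(\RR^n)$. Fixing $x_0\in\interior K$, I would consider the filling chain $W=\Psi_*([0,1]\times\nc(K))$ induced by the linear homotopy $\Psi(t,(x,v))=(tx+(1-t)x_0,v)$. Since $\nc(K)$ is in this case the graph of $v\mapsto\nabla h_K(v)$, one verifies $\partial W=\nc(K)-\{x_0\}\times\S^{n-1}$ with consistent orientations. Stokes' theorem and condition (2) then yield
\begin{align*}
	\int_{\nc(K)}\omega=\int_{\partial W}\omega=\int_W d\omega.
\end{align*}
The definition of the Rumin differential provides $d\omega=D\omega-d(\alpha\wedge\xi)$, which by condition (1) reduces to $-d(\alpha\wedge\xi)$. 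Applying Stokes a second time gives $\int_W d(\alpha\wedge\xi)=\int_{\partial W}\alpha\wedge\xi$, and this vanishes because $\alpha$ restricts to zero on $\nc(K)$ (which is Legendrian) as well as on $\{x_0\}\times\S^{n-1}$ (where $d\pi_1$ annihilates every tangent vector). Hence $\int_{\nc(K)}\omega=0$.

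For the necessary direction, condition (2) is immediate by specializing the hypothesis to $K=\{x\}$, noting that $\nc(\{x\})=\{x\}\times\S^{n-1}$. To derive $D\omega=0$, my plan is a first-variation argument: for smooth strictly convex $K$ and smooth convex $L$ with support function $h_L$, the family $K_s=K+sL$ satisfies $\nc(K_s)=\Phi_s(\nc(K))$, where $\Phi_s(x,v)=(x+s\nabla h_L(v),v)$ is in fact a strict contactomorphism by the Euler identity $\pair{\nabla h_L(v)}{v}=h_L(v)$. Differentiating the identity $\int_{\nc(K_s)}\omega=0$ at $s=0$, applying Cartan's formula together with Stokes on the closed manifold $\nc(K)$ produces $\int_{\nc(K)}i_X d\omega=0$ for the horizontal vector field $X_{(x,v)}=(\nabla h_L(v),0)$. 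Decomposing $d\omega=D\omega-d(\alpha\wedge\xi)$ and exploiting that both $\alpha$ and $d\alpha$ restrict trivially to $\nc(K)$ by the Legendrian property, the $d(\alpha\wedge\xi)$ contributions simplify considerably, leaving a pairing involving $D\omega$ that we must show forces $D\omega\equiv 0$.

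The main obstacle is this concluding step: extracting the pointwise vanishing of $D\omega$ from the vanishing of integrals over the family $\{\nc(K)\}_K$. Since $D\omega$ is divisible by $\alpha$ by construction, it vanishes identically on every Legendrian submanifold, and information about $D\omega$ is recovered only through the contraction $i_X$. To close the argument, I would rely on two non-degeneracy inputs: first, that the normal cycles of smooth strictly convex bodies sweep out an open dense subset of $S\RR^n$ as $K$ varies, so that pointwise vanishing at each $(x,v)$ can be tested; and second, that variation vector fields of the form $(\nabla h_L,0)$ generated by smooth support functions span a family rich enough to pair non-trivially against every primitive form. Together these reductions should yield $D\omega=0$ on all of $S\RR^n$, completing the proof.
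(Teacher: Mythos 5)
First, a remark on the comparison itself: the paper does not prove this statement — it is quoted verbatim from Bernig--Br\"ocker (\cite{Bernig2007}, Thm.~1) and used as an external input — so there is no in-paper proof to measure you against; I can only assess your argument on its own terms.

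Your sufficiency direction is correct and is the standard argument: for smooth strictly convex $K$ the normal cycle is a closed Legendrian graph over $\S^{n-1}$, the linear homotopy to $\{x_0\}\times\S^{n-1}$ gives a filling chain, and two applications of Stokes together with $\alpha|_{\nc(K)}=0$, $\alpha|_{\{x_0\}\times\S^{n-1}}=0$ and condition (2) yield $\int_{\nc(K)}\omega=0$; density plus continuity of $K\mapsto\int_{\nc(K)}\omega$ finishes it. In the necessity direction, condition (2) from $K=\{x\}$ is fine, and the first-variation identity $\int_{\nc(K)}i_X\,d\omega=0$ with $X=(\nabla h_L,0)$ is correct. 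A small repair: your claim that the $d(\alpha\wedge\xi)$-terms ``simplify considerably'' is not accurate as written, since $i_X\alpha=h_L(v)$ and $(i_Xd\alpha)|_{\nc(K)}$ need not vanish; the clean fix is to replace $\omega$ by $\omega+\alpha\wedge\xi$ from the outset (it induces the same valuation because $\alpha$ restricts to zero on every normal cycle), so that the variation gives directly $\int_{\nc(K)}i_X\,D\omega=0$.

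The genuine gap is the concluding step, which you yourself flag: passing from $\int_{\nc(K)}i_X\,D\omega=0$, for all smooth strictly convex $K$ and all admissible $X$, to the pointwise vanishing $D\omega=0$. Your two ``non-degeneracy inputs'' do not suffice as stated. The problem is not whether normal cycles sweep out $S\RR^n$ (they cover it entirely), but that (i) the hypothesis only gives \emph{integrated} information, so one must localize, e.g.\ by taking perturbations $f$ with small support on $\S^{n-1}$ and bodies with prescribed boundary point, normal and Hessian, to extract a statement at a single $(x,v)$; and (ii) the tangent planes of normal cycles of convex bodies realize only the Legendrian planes corresponding to positive (semi)definite second fundamental forms, so even after localization one only learns that the vertical form $D\omega=\alpha\wedge\psi$ annihilates $X\wedge\tau$ for $\tau$ in the \emph{positive} part of the Lagrangian Grassmannian. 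To conclude one still needs: a polynomiality/openness argument in the symmetric matrix parametrizing $\tau$ to extend the vanishing from the positive cone to all graphical Lagrangian planes, a density/continuity step to reach all Legendrian planes, and finally the linear-algebra fact that a \emph{primitive} $(n-1)$-form on the contact hyperplane vanishing on all Lagrangian subspaces is zero (this is exactly where the normalization built into the Rumin differential is used). None of these ingredients appears in your sketch, and without them the implication ``$\int_{\nc(K)}\omega=0$ for all $K$ $\Rightarrow$ $D\omega=0$'' — which is the substantial content of the theorem — remains unproven.
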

	If we restrict ourselves to translation invariant forms, this results simplifies further: The space of translation invariant differential $(n-1)$-forms on $S\RR^n$ naturally decomposes into a direct sum $\Omega^{n-1}(S\RR^n)^{\mathrm{tr}}=\bigoplus_{r=0}^{n-1}\Lambda^r(\RR^n)^*\otimes \Omega^{n-1-r}(\S^{n-1})$ of forms of a given bi-degree $(r,n-r-1)$.  If we consider only differential forms of fixed bi-degree $(r,n-r-1)$, $0 \leq r \leq n-1$, then the corresponding valuation is $r$-homogeneous. Hence, condition \eqref{it:0homBed} is equivalent to saying that the $0$-homogeneous component of the given valuation vanishes. Indeed, this condition corresponds to the evaluation of the valuation in the convex body $\{x\}$, whose normal cycle is $\{x\}\times \S^{n-1}$. In particular,  \eqref{it:0homBed} is only relevant for the component of $\omega$ with bi-degree $(0,n-1)$, since it is automatically satisfied for $1\leq r\leq n-1$.	

	\begin{remark}
		 \autoref{thm:BernigBroeckerKernel} can be used to show that the subspace of $\Val_r(\RR^n)$, $1\leq r\leq n-1$, consisting of valuations representable by integration with respect to the normal cycle is isomorphic to a subspace of the image of the Rumin differential. This subspace was described by Bernig and Bröcker in \cite{Bernig2007}*{Thm.~3.3}, compare \autoref{thm:BernigBroeckerImageRumin} below.
	\end{remark}
	
	\subsection{The representation of $\GL(n,\RR)$ on $\mathcal{V}^{\infty, \mathrm{tr}}$}\label{sec:GLRnrepOnVal}
	Let the group $\GL(n,\RR)$ act on $S\RR^n = \RR^n \times \S^{n-1}$ as follows: If $(x,v) \in S\RR^n$ and $g \in \GL(n,\RR)$, then
	\begin{align*}
		g \cdot(x,v) = G_g(x,v) := \left(gx, \frac{g^{-T} v}{\|g^{-T} v\|}\right).
	\end{align*}
	This action is defined such that $\nc(g K) = \sign(\det(g))(G_{g})_\ast(\nc(K))$ for all $g \in \GL(n,\RR)$ and $K \in \K(\RR^n)$, where we interpret $\nc(K)$ as a current and $G_{g*}(\nc(K))$ denotes the pushforward of currents.  Note that the additional sign reflects the change of orientation if $\det(g)<0$. Moreover, since $G_g^\ast \alpha|_{(x,v)} = \frac{1}{\|g^{-T} v\|} \alpha|_{(x,v)}$ for all $(x,v) \in S\RR^n$, $G_g$ is a contactomorphism for every $g\in \GL(n,\RR)$ and thus intertwines with the Rumin differential.
	
	For $W \in \gl(n)$, the Lie algebra of $\GL(n,\RR)$, consider the elements $\phi_t = \exp(-tW) \in \GL(n,\RR)$, $t \in \RR$, and the induced map on $S\RR^n$ given by
	\begin{align*}
		\psi_t(x,v) = \left(\phi_t(x), \frac{d\phi_t(x)^{-T} v}{\|d\phi_t(x)^{-T} v\|}\right).
	\end{align*}
	Then the fundamental vector field induced by $W$ is given by
	\begin{align}\label{eq:LieAlgactOnSRn}
	\widetilde{W}_{(x,v)}: = \dt \psi_t(x,v) = (-W x, W^T v - v \pair{v}{W^T v})\in T_{(x,v)}S\RR^n,
	\end{align}
	where we identify $T_{(x,v)}S\RR^n \cong \RR^n\oplus T_v\S^{n-1}$. Let $\varphi = \int_{\nc(\cdot)} \omega$ be a valuation representable by integration with respect to the normal cycle, where $\omega \in \Omega^{n-1}(S\RR^n)$. Then the action of $g \in \GL(n,\RR)$ is given by
	\begin{align}
		\label{eq:actGLOnVal}
		\begin{split}
			(g\cdot \varphi)(K) &= \varphi(g^{-1}K) = \int_{\nc(g^{-1}K)} \omega = \sign (\det g) \int_{G_{g^{-1}*}(\nc(K))}\omega\\
			&= \sign (\det g)\int_{\nc(K)} (G_{g^{-1}})^\ast \omega.
		\end{split}
	\end{align}
	The additional sign can be avoided if one twists the construction with the orientation bundle of $\RR^n$ (as for example in \cite{Alesker2006b}*{Section~5}), however, since we are mainly interested in the action of $\SL(n,\RR)$, where this additional sign does not play a role, we will omit this step.\\
	
	The infinitesimal action of $W \in \gl(n)$ is thus given by
	\begin{align*}
		(W\cdot \varphi)(K) = \dt \varphi(\exp(-tX) K) = \int_{\nc(K)} \dt(G_{\phi_t})^\ast \omega = \int_{\nc(K)} \LieDer_{\widetilde{W}}\omega,
	\end{align*}
	that is, $\gl(n)$ acts on the differential form by the Lie derivative with respect to the fundamental vector fields. We extend this operation to the complexification $\gl(n)_\CC$ of $\gl(n)$ by linearity.
	\subsection{An invariant pairing}\label{sec:pairing}
	In this section we examine a pairing on the image of the restriction of the Rumin operator to translation invariant forms. It follows from \cite{Bernig2007} that this pairing naturally corresponds to a pairing on a certain subspace of $\mathcal{V}^{\infty,\mathrm{tr}}$. In fact, it is shown in \cite{Bernig2009}*{Thm.~4.1} and \cite{Wannerer2014b}*{Prop.~4.2} that this pairing corresponds precisely to the pairing induced by the Alesker product \cite{Alesker2004b} and the Bernig--Fu convolution \cite{Bernig2006}. Since these results partially rely on the irreducibilty \autoref{thm:AleskerIrredThm}, we will consider the pairing on differential forms only and show how the properties we require in the following sections can be obtained from this definition directly.\\
	
	The pairing will depend on the choice of a positive volume form $\vol$ on $\RR^n$, which corresponds to the choice of a linear map $\langle \cdot,\vol\rangle :\Lambda^n(\RR^n)^*\rightarrow \CC$ satisfying $\langle a\vol,\vol\rangle=a$ for $a\in\CC$. We use this map to define 
	\begin{align*}
		\pair{\cdot}{\vol}:\Omega^{*}(S\RR^n)^{\mathrm{tr}}\rightarrow \Omega^{*-n}(\S^{n-1})
	\end{align*}
	by 
	\begin{align*}
		\pair{\pi_1^\ast \tau_1 \wedge \pi_2^\ast \tau_2}{\vol} = \begin{cases}
			\pair{\tau_1}{\vol} \pi_2^\ast \tau_2,& s=n,\\
			0, & \text{else},
		\end{cases}
	\end{align*}
	where $\tau_1 \in \Lambda^s(\RR^n)^*$, $\tau_2 \in \Omega^r(\S^{n-1})$, and $\pi_1:S\RR^n \to \RR^n$ and $\pi_2: S\RR^n \to \S^{n-1}$ denote the natural projections. In particular, for $\omega_1,\omega_2 \in \Omega^{*}(S\RR^{n})^{\mathrm{\mathrm{tr}}}$, $i=1,2$, we have
	\begin{align*}
		d\left(\pair{\omega_1 \wedge \omega_2}{\vol}\right) = (-1)^{n}\pair{d(\omega_1 \wedge \omega_2)}{\vol}.
	\end{align*}
	Note that if $\omega_1$ has bi-degree $(r, n-r-1)$ and $\omega_2$ has bi-degree $(n-r,r-1)$, then $\overline{\omega_1} \wedge \omega_2$ has bi-degree $(n, n-1)$, so $\pair{\overline{\omega_1} \wedge \omega_2}{\vol}$ is a volume form on $\S^{n-1}$.\\
	
	\begin{remark}
		\label{remark:PotinwisePairingEqiv}
		For $g\in \GL(n,\RR)$, we have $G_g^*\pi_1^*\vol=\det g\ \pi_1^*\vol$. In particular,
		\begin{align*}
			\pair{G^*_g\omega}{\vol}=\det g\ G^*_g\pair{\omega}{\vol}
		\end{align*}
		for $\omega\in \Omega^{*}(S\RR^n)^{\mathrm{tr}}$.
	\end{remark}
	
	Let $\im(D)^{\mathrm{tr}}\subset \Omega^n(S\RR^n)$ denote the space of translation invariant forms that are contained in the image of the Rumin operator. Then 
	\begin{align*}
		\im (D)^{\mathrm{tr}}=\bigoplus_{r=1}^{n-1}\im (D)^{\mathrm{tr}}_{r,n-r}
	\end{align*}
	where we set $\im(D)^{\mathrm{tr}}_{r,n-r}=\im D\cap \Omega^{r,n-r}(S\RR^n)^{\mathrm{tr}}$ following the convention in \cite{Bernig2007}. We will consider the subspace 
	\begin{align*}
		V_r:=\im (D:\Omega^{r,n-1-r}(S\RR^n)^{\mathrm{tr}}\rightarrow \Omega^{r,n-r}(S\RR^n)^{\mathrm{tr}}),
	\end{align*}
	that is, $V_r$ is the image of the restriction of $D$ to translation invariant forms. In particular, $V_r\subset (\im D)^{\mathrm{tr}}_{r,n-r}$, however, this inclusion is strict in general (compare the discussion in \autoref{sec:prelimImD}). We set $V_\bullet:=\bigoplus_{r=1}^{n-1}V_r$ and define a pairing on $V_\bullet$ in the following way: For $\tau_1,\tau_2\in V_\bullet$, choose $\omega_1, \omega_2 \in \Omega^{n-1}(S\RR^n)^{\mathrm{tr}}$ such that $\tau_i=D\omega_i$. Then we set
	\begin{align}\label{eq:defPairingForm}
		(\tau_1, \tau_2) = \int_{\{0\} \times \S^{n-1}} \pair{\overline{\omega_1} \wedge D\omega_2}{\vol} \in \CC.
	\end{align}
	With this definition $(\tau_1,\tau_2) = 0$ if the bi-degrees of $\omega_1$ and $D\omega_2$ do not add up to $(n,n-1)$, that is, the pairing is only nontrivial on $V_r \times V_{n-r}$.
	
	Note that we need to verify that Eq.~\eqref{eq:defPairingForm} is well defined, i.e. does not depend on the choice of $\omega_1$. This follows from the following result.
	
	\begin{lemma}\label{lem:pairFormWellDef}
		The pairing in Eq.~\eqref{eq:defPairingForm} is well-defined and sesquilinear. Moreover, for $\tau_1,\tau_2\in V_\bullet$, 
		\begin{align}\label{eq:pairingSymm}
			(\tau_1, \tau_2) = (-1)^{n}\overline{(\tau_2, \tau_1)}.
		\end{align}
	\end{lemma}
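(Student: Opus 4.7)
The plan is to derive all three claims from a single symmetry identity obtained via integration by parts on the closed manifold $\S^{n-1}$. Given representatives $\omega_i \in \Omega^{n-1}(S\RR^n)^{\mathrm{tr}}$ with $\tau_i = D\omega_i$, I would choose $\xi_i \in \Omega^{n-2}(S\RR^n)^{\mathrm{tr}}$ so that $\tilde\omega_i := \omega_i + \alpha \wedge \xi_i$ satisfies $d\tilde\omega_i = \tau_i$, and then apply the Leibniz rule to $\overline{\tilde\omega_1} \wedge \tilde\omega_2$, obtaining
\begin{align*}
	d(\overline{\tilde\omega_1} \wedge \tilde\omega_2) = \overline{\tau_1} \wedge \tilde\omega_2 + (-1)^{n-1} \overline{\tilde\omega_1} \wedge \tau_2.
\end{align*}

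I would then apply $\pair{\cdot}{\vol}$ and integrate over $\{0\} \times \S^{n-1}$. The left-hand side vanishes: the relation $d\pair{\cdot}{\vol} = (-1)^n \pair{d(\cdot)}{\vol}$ recorded in the preceding discussion combines with Stokes' theorem on the closed sphere. For the right-hand side, the crucial simplification is that each $\tau_j = D\omega_j$ is a multiple of $\alpha$ by construction, so any term of the form $\alpha \wedge \xi_i \wedge \tau_j$ vanishes by $\alpha \wedge \alpha = 0$. Hence inside the pairing one may replace $\tilde\omega_i$ by $\omega_i$, arriving at the key identity
\begin{align*}
	(\tau_1, \tau_2) = \int_{\{0\} \times \S^{n-1}} \pair{\overline{\omega_1} \wedge \tau_2}{\vol} = (-1)^n \int_{\{0\} \times \S^{n-1}} \pair{\overline{\tau_1} \wedge \omega_2}{\vol}.
\end{align*}

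From this identity, well-definedness in $\omega_1$ is immediate since the right-hand side depends only on $\tau_1$; independence from $\omega_2$ is already clear from the original definition. Sesquilinearity then follows directly from linearity of $\omega_i \mapsto \tau_i = D\omega_i$ together with conjugate-linearity of $\omega_1 \mapsto \overline{\omega_1}$.

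For the symmetry \eqref{eq:pairingSymm} I would take the complex conjugate of $(\tau_2, \tau_1) = \int \pair{\overline{\omega_2} \wedge \tau_1}{\vol}$, producing $\int \pair{\omega_2 \wedge \overline{\tau_1}}{\vol}$. Swapping the two factors introduces the sign $(-1)^{|\tau_1| |\omega_2|} = (-1)^{n(n-1)} = 1$ irrespective of the parity of $n$, and comparison with the identity above yields $\overline{(\tau_2, \tau_1)} = (-1)^n (\tau_1, \tau_2)$, equivalent to the claim. The main technical point throughout is the careful bookkeeping between $\omega_i$ and its lift $\tilde\omega_i$: although the Rumin construction forces one to pass to $\tilde\omega_i$ in order to apply $d$, the $\alpha$-divisibility of $\tau_j$ cleanly eliminates all correction terms, and this is precisely what makes the symmetric identity manifest.
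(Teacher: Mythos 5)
Your argument is correct and is essentially the paper's own proof: both form $\overline{(\omega_1+\alpha\wedge\xi_1)}\wedge(\omega_2+\alpha\wedge\xi_2)$, use $\overline{\alpha}=\alpha$ together with the $\alpha$-divisibility of the Rumin image to discard the correction terms, apply $\pair{\cdot}{\vol}$ and Stokes' theorem on $\S^{n-1}$ to obtain the identity $\int\pair{\overline{\omega_1}\wedge D\omega_2}{\vol}=(-1)^n\int\pair{\overline{D\omega_1}\wedge\omega_2}{\vol}$, and deduce well-definedness and \eqref{eq:pairingSymm} from it, with sesquilinearity being immediate. The only detail the paper makes explicit that you assert tacitly is that $\alpha\wedge\xi_i$ may be taken translation invariant (it is uniquely determined by the translation invariant $\omega_i$), which is needed for $\pair{\cdot}{\vol}$ to apply.
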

	\begin{proof}
		Let $\omega_1,\omega_2\in\Omega^{n-1}(S\RR^n)^{\mathrm{tr}}$ be chosen such that $D\omega_i=\tau_i$.	Let $\xi_1, \xi_2$ be differential forms such that $D\omega_i = d(\omega_i + \alpha \wedge \xi_i)$, $i=1,2$. Then, since $\overline{\alpha} = \alpha$ and $\alpha \wedge D\omega_2 = 0$, 
		\begin{align*}
			\overline{\omega_1} \wedge D \omega_2 = \overline{(\omega_1 + \alpha \wedge \xi_1)} \wedge d(\omega_2 + \alpha \wedge \xi_2).
		\end{align*}
		Next, note that $\alpha\wedge\xi_i$ is translation invariant since this form is uniquely determined by the translation invariant form $\omega_i$. Thus all forms are translation invariant, and we obtain
		\begin{align*}
			(-1)^nd\pair{\overline{(\omega_1 + \alpha \wedge \xi_1)} \wedge (\omega_2 + \alpha \wedge \xi_2)}{\vol}  
			=&\pair{\overline{D\omega_1} \wedge (\omega_2 + \alpha \wedge \xi_2) + (-1)^{n-1}\overline{\omega_1} \wedge D\omega_2}{\vol}\\
			=&\pair{\overline{D\omega_1} \wedge \omega_2}{\vol} + (-1)^{n-1}\pair{\overline{\omega_1} \wedge D\omega_2}{\vol}.
		\end{align*}
		Consequently, $\pair{\overline{D\omega_1} \wedge \omega_2}{\vol} + (-1)^{n-1}\pair{\overline{\omega_1} \wedge D\omega_2}{\vol}$ is an exact form on $\{0\}\times\S^{n-1}$, so Stoke's theorem implies 
		\begin{align}\label{eq:pairDiffHerm}
			\int_{\{0\} \times \S^{n-1}}\pair{\overline{D\omega_1} \wedge \omega_2}{\vol} = (-1)^{n}\int_{\{0\} \times \S^{n-1}}\pair{\overline{\omega_1} \wedge D\omega_2}{\vol},
		\end{align}
		which also shows \eqref{eq:pairingSymm}. In particular, if $D\omega_1 = 0$, then the right-hand side of \eqref{eq:pairDiffHerm} vanishes, so \eqref{eq:defPairingForm} does not depend on the choices $\omega_1$ and $\omega_2$.
				
		As the pairing is obviously sesquilinear, this concludes the proof.
	\end{proof}
	
	An important property of the pairing is its invariance under $\SL(n,\RR)$. More generally, it transforms under $\GL(n,\RR)$ as follows.
	\begin{proposition}\label{prop:pairingFormsSLinv}
		For $g \in \GL(n,\RR)$ and $\tau_1,\tau_2\in V_\bullet$,
		\begin{align*}
			(G_g^\ast \tau_1, G_g^\ast \tau_2) = |\det g| (\tau_1, \tau_2),
		\end{align*}
		where $G_g(x,v) = \left(gx, \frac{g^{-T} v}{\|g^{-T} v\|}\right)$, $(x,v) \in S\RR^n$.
	\end{proposition}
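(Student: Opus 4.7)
The plan is to unwind the definition of the pairing, push the pullback $G_g^*$ inside everything, and then track how the volume form and the orientation of the sphere transform under $G_g$. The key inputs are that $G_g$ intertwines with the Rumin operator, the identity $G_g^* \pi_1^* \vol = \det g \cdot \pi_1^* \vol$ recorded in \autoref{remark:PotinwisePairingEqiv}, and a change-of-variables argument on $\{0\}\times\S^{n-1}$ that accounts for the remaining sign $\sgn(\det g)$.

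First, I would observe that if $\omega \in \Omega^{n-1}(S\RR^n)^{\mathrm{tr}}$, then $G_g^*\omega$ is again translation invariant: since $G_g \circ T_x = T_{gx} \circ G_g$, where $T_x$ denotes translation by $x$ in the base, we have $T_x^* G_g^*\omega = G_g^* T_{gx}^* \omega = G_g^*\omega$. Pick $\omega_i \in \Omega^{n-1}(S\RR^n)^{\mathrm{tr}}$ with $D\omega_i = \tau_i$. Then $G_g^*\omega_i$ is translation invariant, and because $G_g$ is a contactomorphism, $G_g^*\tau_i = G_g^*D\omega_i = D(G_g^*\omega_i)$, so $G_g^*\omega_i$ is an admissible representative computing $(G_g^*\tau_1, G_g^*\tau_2)$.

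Next, since $G_g$ is a real map, $\overline{G_g^*\omega_1} = G_g^*\overline{\omega_1}$, and pullback is compatible with wedge products, so
\begin{align*}
\overline{G_g^*\omega_1} \wedge D(G_g^*\omega_2) = G_g^*\bigl(\overline{\omega_1} \wedge D\omega_2\bigr).
\end{align*}
Applying \autoref{remark:PotinwisePairingEqiv} (which holds in this bi-degree because $\overline{\omega_1}\wedge D\omega_2$ has horizontal degree $n$) yields
\begin{align*}
\bigl\langle G_g^*(\overline{\omega_1}\wedge D\omega_2), \vol\bigr\rangle = \det g \cdot G_g^*\bigl\langle \overline{\omega_1}\wedge D\omega_2, \vol\bigr\rangle.
\end{align*}

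Finally, since $G_g(0,v) = (0, g^{-T}v/\|g^{-T}v\|)$, the map $G_g$ sends $\{0\}\times\S^{n-1}$ diffeomorphically onto itself. This self-map is orientation-preserving exactly when $g^{-T}$ preserves orientation on $\RR^n$, that is, when $\det g > 0$; in general the change-of-variables formula gives
\begin{align*}
\int_{\{0\}\times\S^{n-1}} G_g^*\eta = \sgn(\det g)\int_{\{0\}\times\S^{n-1}} \eta
\end{align*}
for any top-degree form $\eta$ on the sphere. Combining,
\begin{align*}
(G_g^*\tau_1, G_g^*\tau_2) = \det g \cdot \sgn(\det g)\,(\tau_1,\tau_2) = |\det g|\,(\tau_1,\tau_2),
\end{align*}
which is the claimed identity. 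The only subtlety to watch is the orientation behavior of $G_g$ restricted to the unit sphere; everything else is a formal consequence of naturality of pullback and the preceding remark.
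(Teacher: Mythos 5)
Your argument is correct, and it reaches the same conclusion as the paper's, but the final step handles the orientation sign by a genuinely different route. The paper lifts the computation to the product $\RR^n \times (\RR^n\setminus\{0\})$ via the projection $\pi$, so that $G_g$ is replaced by the \emph{linear} map $\widehat G_g = (g,g^{-T})$. Being linear, $\widehat G_g$ obviously carries the sphere to the image $g^{-T}(\S^{n-1})$ with orientation behavior governed by $\det g$; the paper then uses that the integrand is closed together with a homotopy between $g^{-T}(\S^{n-1})$ and $\S^{n-1}$ inside $\RR^n\setminus\{0\}$ to move the domain of integration back. You instead stay on $\S^{n-1}$ throughout, note that $G_g$ restricts to the \emph{nonlinear} self-map $\phi(v) = g^{-T}v/\|g^{-T}v\|$, and assert directly that $\phi$ has degree $\sgn(\det g)$. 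That assertion is correct (for instance by polar decomposition: $g^{-T}=QP$ with $Q$ orthogonal, $P$ positive definite, and deforming $P\rightsquigarrow \mathrm{Id}$ gives a homotopy from $\phi$ to $Q|_{\S^{n-1}}$), but it is a slightly less self-evident statement than the paper's linear-algebra observation, so you should include a one-line justification. In exchange, you dispense with the ambient lift and the closedness/Stokes step entirely, making the argument more compact. Both routes are valid; the trade-off is: the paper moves the orientation question to a linear map at the cost of a homotopy argument, while you accept a small degree-theoretic fact about normalized linear maps and keep everything on the sphere. One minor inaccuracy to note: \autoref{remark:PotinwisePairingEqiv} is not restricted to any bi-degree (both sides vanish outside horizontal degree $n$), so the parenthetical qualifier in your third paragraph is unnecessary, though harmless.
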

	\begin{proof}
		Fix $\omega_1,\omega_2\in \Omega^{n-1}(S\RR^n)^{\mathrm{tr}}$ with $\tau_i=D\omega_i$. Then $\pair{\overline{\omega_1} \wedge D\omega_2}{\vol}$ belongs to $\Omega^{n-1}(\S^{n-1})$ and is thus a closed form, i.e. $d\pair{\overline{\omega_1} \wedge D\omega_2}{\vol}=0$.
		
		Let $\pi: \RR^n \times (\RR^n \setminus \{0\}) \to S\RR^n$, $\pi(x,y) = \left(x, \frac{y}{\|y\|}\right)$, be the natural projection. Then 
		\begin{align}\label{eq:prfSLinvPair1}
			\int_{\{0\} \times \S^{n-1}} \pair{\overline{\omega_1} \wedge D\omega_2}{\vol} = \int_{\{0\} \times \S^{n-1}} \pi^\ast\pair{\overline{\omega_1} \wedge D\omega_2}{\vol},
		\end{align}
		where we consider $\{0\}\times \S^{n-1}$ as a submanifold of $\RR^n\times(\RR^n\setminus\{0\})$ on the right hand side. Note that the following diagram commutes:
		\begin{equation}
			\begin{tikzcd}
				\RR^n \times (\RR^n \setminus \{0\}) \arrow[r, "\pi"]\arrow[d,"{\widehat{G}_g = (g,g^{-T})}" left] & S\RR^n \arrow[d, "{G_g}"]\\
				\RR^n \times (\RR^n \setminus \{0\}) \arrow[r, "\pi"] & S\RR^n
			\end{tikzcd}
		\end{equation}
		\autoref{remark:PotinwisePairingEqiv} and Eq.~\eqref{eq:prfSLinvPair1} thus imply
		\begin{align*}
			(G_g^\ast \tau_1, G_g^\ast \tau_2) &= \int_{\{0\} \times \S^{n-1}} \pi^\ast\pair{G_g^\ast\overline{ \omega_1} \wedge G_g^\ast D\omega_2}{\vol} \\
			&=\det g \int_{\{0\} \times \S^{n-1}} \pi^\ast G_g^\ast\pair{\overline{ \omega_1} \wedge  D\omega_2}{\vol}\\
			&=\det g \int_{\{0\} \times \S^{n-1}} \widehat{G}_g^\ast\pi^\ast\pair{\overline{ \omega_1} \wedge  D\omega_2}{\vol}.
		\end{align*} 
		The restriction of $\widehat{G}_g$ to a map $\{0\}\times \S^{n-1}\rightarrow \{0\}\times g^{-T}(\S^{n-1})$ is orientation preserving (respectively, reversing) if and only if $\det g>0$ (respectively, $\det g<0$). Consequently,
		\begin{align*}
			(G_g^\ast \tau_1, G_g^\ast \tau_2) &=|\det g| \int_{\{0\} \times g^{-T}(\S^{n-1})} \pi^\ast\pair{\overline{ \omega_1} \wedge  D\omega_2}{\vol}.
		\end{align*}
		Next, note that $g^{-T}(\S^{n-1})$ is homotopic to $\S^{n-1}$ on $\RR^n\setminus\{0\}$. Since the form $\pair{\overline{ \omega_1} \wedge  D\omega_2}{\vol}$ is closed, so is its pullback along $\pi$, and we obtain
		\begin{align*}
			(G_g^\ast \tau_1, G_g^\ast \tau_2) = |\det g|\int_{\{0\} \times \S^{n-1}} \pi^\ast\pair{\overline{\omega_1} \wedge  D\omega_2}{\vol},
		\end{align*}
		so the claim follows from Eq.~\eqref{eq:prfSLinvPair1}.
	\end{proof}
	We thus obtain the following for the action of $\sln(n)_\CC$ on this space. 
	\begin{corollary}\label{cor:pairFormsSLnInvLiealg}
		Let $\tau_1,\tau_2\in V_\bullet$. If $W \in \sln(n)_\CC$, then
		\begin{align*}
			(\LieDer_{\widetilde{W}} \tau_1, \tau_2) = -(\tau_1, \LieDer_{\widetilde{\overline{W}}}\tau_2).
		\end{align*}
	\end{corollary}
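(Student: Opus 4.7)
The plan is to differentiate the transformation rule from \autoref{prop:pairingFormsSLinv} along a one-parameter subgroup of $\SL(n,\RR)$, and then extend the resulting real identity to the complexification $\sln(n)_\CC$ using the sesquilinearity of the pairing.

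First I would handle the case of a real element $W\in\sln(n)$. Consider $g_t=\exp(-tW)\in\SL(n,\RR)$; since $\tr W=0$, we have $|\det g_t|=1$, so \autoref{prop:pairingFormsSLinv} gives
\begin{align*}
(G_{g_t}^\ast \tau_1, G_{g_t}^\ast \tau_2)=(\tau_1,\tau_2)\quad\text{for all }t\in\RR.
\end{align*}
Before differentiating at $t=0$, I should check that both $G_{g_t}^\ast\tau_i$ remain in $V_\bullet$, so that the pairing is actually defined along the curve. Translation invariance is preserved because $G_{g_t}\circ T_y=T_{g_ty}\circ G_{g_t}$, where $T_y$ denotes translation on $S\RR^n$; and membership in the image of $D$ is preserved because $G_{g_t}$ is a contactomorphism and hence commutes with the Rumin differential, as recalled in \autoref{sec:ValRepByIntNC}. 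Differentiating at $t=0$ using the definition \eqref{eq:LieAlgactOnSRn} of the fundamental vector field $\widetilde{W}$ and the standard identity $\dt G_{g_t}^\ast \omega=\LieDer_{\widetilde{W}}\omega$ in each slot yields
\begin{align*}
(\LieDer_{\widetilde{W}}\tau_1,\tau_2)+(\tau_1,\LieDer_{\widetilde{W}}\tau_2)=0\quad\text{for all }W\in\sln(n).
\end{align*}

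For the complexification, I would write $W=W_1+iW_2$ with $W_1,W_2\in\sln(n)$, so that $\LieDer_{\widetilde{W}}=\LieDer_{\widetilde{W_1}}+i\LieDer_{\widetilde{W_2}}$ by $\CC$-linear extension. The pairing \eqref{eq:defPairingForm} is conjugate-linear in the first argument and linear in the second by \autoref{lem:pairFormWellDef}, so a factor of $i$ extracted from the first slot changes to $-i$, while a factor extracted from the second stays $+i$. Combining this sign asymmetry with the real identity applied to $W_1$ and $W_2$ separately produces
\begin{align*}
(\LieDer_{\widetilde{W}}\tau_1,\tau_2)=-(\tau_1,\LieDer_{\widetilde{W_1-iW_2}}\tau_2)=-(\tau_1,\LieDer_{\widetilde{\overline{W}}}\tau_2),
\end{align*}
which is the claim.

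The argument is essentially an infinitesimal version of \autoref{prop:pairingFormsSLinv}, so I do not expect a serious obstacle. The only mildly delicate point is the bookkeeping in the complexification step: one has to track carefully that the appearance of $\overline{W}$ (rather than $W$) on the right-hand side is exactly what the mismatch between conjugate-linearity in the first slot and linearity in the second produces, and that this is consistent with the sign conventions coming from $\phi_t=\exp(-tW)$.
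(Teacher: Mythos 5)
Your argument is correct and matches the paper's proof exactly: both differentiate the invariance from \autoref{prop:pairingFormsSLinv} along $t\mapsto\exp(-tW)$ at $t=0$ and then pass to $\sln(n)_\CC$ via sesquilinearity. The extra verification that $G_{g_t}^\ast\tau_i$ stays in $V_\bullet$ is a reasonable addition but not something the paper spells out.
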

	\begin{proof}
		For $W\in \sln(n)$, this follows directly from \autoref{prop:pairingFormsSLinv} by differentiating $t\mapsto \left(\left(G_{\exp(-tW)}\right)^\ast\tau_1,\left(G_{\exp(-tW)}\right)^\ast\tau_2\right)$ in $t=0$. The general case is then a consequence of the sesquilinearity of the pairing.
	\end{proof}

	\subsection{Homogeneous decomposition and Goodey--Weil distributions for polynomial valuations}\label{sec:bghomDecoGW} 
	
	This section contains the background results on polynomial valuations necessary for the proof of \autoref{mthm:locVal}.\\
	Recall that a valuation $\mu:\mathcal{K}(\RR^n)\rightarrow\CC$ is called a polynomial  valuation (of degree at most $d$) if there exists $d\in\NN_0$ such that the map
	\begin{align*}
		x\mapsto \mu(K+x),\quad x\in\RR^n,
	\end{align*}
	is a polynomial of degree at most $d$ for every $K\in\mathcal{K}(\RR^n)$. Let $\PVal(\RR^n)$ denote the space of all continuous polynomial valuations and  $\mathrm{P}_d\Val(\RR^n)$ the subspace of all polynomial valuations of degree at most $d\in\NN$. We let $\mathrm{P}_d\Val_r(\RR^n)\subset \mathrm{P}_d\Val(\RR^n)$ denote the subspace of all $r$-homogeneous valuations of degree at most $d$.\\

	The following decomposition is a direct consequence of results by Khovanskii and Pukhlikov \cite{Khovanskii1993} (see also \cite{Knoerr2024}*{Thm.~2.2}). 
	\begin{theorem}
		\label{thm:HomDecompPVal}
		 Let $d \in \NN_0$. Then
		\begin{align*}
			\mathrm{P}_d\Val(\RR^n)=\bigoplus_{r=0}^{d+n}\mathrm{P}_d\Val_r(\RR^n).
		\end{align*}
	\end{theorem}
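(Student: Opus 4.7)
The plan is to adapt the classical argument for McMullen's homogeneous decomposition of $\Val(\RR^n)$ to the polynomial setting, with the degree bound supplied by the Khovanskii--Pukhlikov polynomiality theorem.

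First, I would invoke the consequence of \cite{Khovanskii1993} recorded in \cite{Knoerr2024}*{Thm.~2.2}: for every $\varphi \in \mathrm{P}_d\Val(\RR^n)$ and every $K \in \K(\RR^n)$, the function $t \mapsto \varphi(tK)$ is the restriction to $[0,\infty)$ of a polynomial in $t$ of degree at most $n+d$. This is the only non-routine input. The shift from the degree bound $n$ (translation-invariant case) to $n+d$ reflects the polynomial dependence on translations, and the bound is sharp, as the example $\varphi(K) = \int_K p\, d\vol$ for a polynomial $p$ of degree $d$ illustrates.

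Given this, for $0 \le r \le n+d$ I would define $\varphi_r(K)$ to be the coefficient of $t^r$ in the polynomial $t \mapsto \varphi(tK)$. Fixing $n+d+1$ distinct positive numbers $t_0, \dots, t_{n+d}$ and solving the resulting Vandermonde system yields constants $c_{j,r} \in \RR$, independent of $K$, such that
\begin{align*}
	\varphi_r(K) = \sum_{j=0}^{n+d} c_{j,r} \varphi(t_j K).
\end{align*}
From this explicit representation it is then routine to verify the required properties: $\varphi_r$ is continuous (as a finite linear combination of continuous functions of $K$); it is a valuation (since each $K \mapsto \varphi(t_j K)$ is one); and $\varphi_r(K+x) = \sum_j c_{j,r}\, \varphi(t_j K + t_j x)$ is polynomial in $x \in \RR^n$ of degree at most $d$. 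The $r$-homogeneity of $\varphi_r$ follows by substituting $sK$ in place of $K$ in $\varphi(tK)$ and comparing coefficients of $t^r$ on both sides.

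Evaluating the identity $\varphi(tK) = \sum_r t^r \varphi_r(K)$ at $t=1$ gives $\varphi = \sum_{r=0}^{n+d} \varphi_r$, which shows that $\mathrm{P}_d\Val(\RR^n)$ is the sum of the spaces $\mathrm{P}_d\Val_r(\RR^n)$. Directness of the sum is the final routine step: if $\sum_r \psi_r = 0$ with $\psi_r \in \mathrm{P}_d\Val_r(\RR^n)$, then for every $K$ the polynomial $t \mapsto \sum_r t^r \psi_r(K) = \sum_r \psi_r(tK)$ vanishes on $[0,\infty)$, forcing each $\psi_r(K) = 0$. The genuine obstacle is thus concentrated in establishing the degree bound $n+d$, which is packaged in the cited polynomiality theorem; everything else mirrors the standard McMullen argument.
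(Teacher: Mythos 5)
Your proposal is correct and matches what the paper intends. The paper does not spell out a proof but merely states that the decomposition is a direct consequence of Khovanskii--Pukhlikov (via \cite{Knoerr2024}*{Thm.~2.2}); your sketch is precisely the standard unpacking of that remark (extract the $t^r$-coefficient of $\varphi(tK)$ via a Vandermonde system, verify that the resulting components inherit continuity, the valuation property, and the degree-$d$ polynomiality in translations, and deduce directness from the uniqueness of polynomial coefficients), and all the individual steps check out.
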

	We equip $\mathrm{P}_d\Val(\RR^n)$ with the topology of uniform convergence on compact subsets. As in the case of translation invariant valuations, the homogeneous decomposition has the following direct consequence.
	\begin{corollary}
		$\mathrm{P}_d\Val(\RR^n)$ is a Banach space. The topology is induced by the norm 
		\begin{align*}
			\|\varphi\|:=\sup_{K\subset B_1(0)}|\varphi(K)|,\quad \varphi\in \mathrm{P}_d\Val(\RR^n),
		\end{align*}
		where $B_1(0)$ denotes the Euclidean ball of radius $1$ around the origin.
	\end{corollary}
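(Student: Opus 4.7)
My plan is to use the homogeneous decomposition from \autoref{thm:HomDecompPVal} together with a Vandermonde-type interpolation argument to establish that the proposed quantity is a norm equivalent to the topology of uniform convergence on compact subsets, and then to deduce completeness from pointwise limits together with the closedness of polynomial valuations of bounded degree. This essentially mirrors the classical argument for the Banach space structure on $\Val(\RR^n)$, adapted to the polynomial setting.

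First, I would verify that $\|\varphi\|$ is finite: the set $\mathcal{C}_1:=\{K\in\mathcal{K}(\RR^n):K\subset B_1(0)\}$ is compact in the Hausdorff topology by Blaschke's selection theorem, so any continuous $\varphi$ attains a finite supremum on it. To see that this defines a norm (i.e., separates points) I would combine homogeneity with the interpolation step below. Next, I would prove equivalence with the topology of uniform convergence on compact subsets. One direction is trivial since $\mathcal{C}_1$ is itself compact. For the reverse, note that every compact $\mathcal{C}\subset\mathcal{K}(\RR^n)$ lies in some $\mathcal{C}_R:=\{K:K\subset B_R(0)\}$. Writing $\varphi=\sum_{r=0}^{d+n}\varphi_r$ via \autoref{thm:HomDecompPVal} and using $r$-homogeneity, for $K\in\mathcal{C}_R$ we get
\begin{align*}
|\varphi_r(K)|=R^r|\varphi_r(R^{-1}K)|\leq R^r\|\varphi_r\|.
\end{align*}
To control $\|\varphi_r\|$ in terms of $\|\varphi\|$, I would pick $d+n+1$ distinct scalars $t_0,\dots,t_{d+n}\in(0,1]$, observe that $t_jK\in\mathcal{C}_1$ whenever $K\in\mathcal{C}_1$, and exploit the polynomial identity $\varphi(t_jK)=\sum_{r}t_j^r\varphi_r(K)$. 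Inverting the Vandermonde matrix yields each $\varphi_r(K)$ as a fixed linear combination of the $\varphi(t_jK)$, so $\|\varphi_r\|\leq C\|\varphi\|$ with $C$ depending only on $n,d$. Combining, $\sup_{K\in\mathcal{C}}|\varphi(K)|\leq C'R^{d+n}\|\varphi\|$, proving equivalence. In particular, the separation of points follows since $\|\varphi\|=0$ forces each $\|\varphi_r\|=0$, hence $\varphi\equiv 0$ on all of $\mathcal{K}(\RR^n)$.

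For completeness, given a Cauchy sequence $(\varphi_k)$ in $\|\cdot\|$, the preceding estimate shows $(\varphi_k)$ is uniformly Cauchy on every $\mathcal{C}_R$, so $\varphi(K):=\lim_k\varphi_k(K)$ exists for all $K\in\mathcal{K}(\RR^n)$. The valuation property and continuity pass to the limit by uniform convergence on compacts. Finally, for each fixed $K$ the sequence of polynomials $x\mapsto\varphi_k(K+x)$ converges pointwise to $\varphi(K+x)$; since the space of polynomials of degree at most $d$ on $\RR^n$ is finite-dimensional (hence closed in $C(\RR^n)$), the limit is again polynomial of degree at most $d$, so $\varphi\in\mathrm{P}_d\Val(\RR^n)$.

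The only genuinely nontrivial step is the Vandermonde estimate bounding the homogeneous components $\varphi_r$ in terms of $\varphi$, which is what makes the supremum over $\mathcal{C}_1$ alone encode the full behavior of $\varphi$. Every other step is either a direct application of the homogeneous decomposition or a routine limit argument, so I expect no essential obstacle.
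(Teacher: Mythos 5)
Your proof is correct and follows exactly the route the paper intends: the statement is presented there as a direct consequence of the homogeneous decomposition in \autoref{thm:HomDecompPVal}, argued as in the translation-invariant case, which is precisely the scaling-plus-Vandermonde estimate and the completeness/finite-dimensionality argument you spell out. No gaps.
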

	
	Let us denote by $\PVal_r(\RR^n)\subset \PVal(\RR^n)$ the subspace of all $r$-homogeneous valuations. Note that the subspaces $\mathrm{P}_d\Val_r(\RR^n)$ form a natural increasing filtration of this space.
	
	The following is proved in \cite{McMullen1977} (see also \cite{Schneider2014}*{Sec.~6.3} and the references therein) for translation invariant valuations and is equivalent to the corresponding homogeneous decomposition for $\Val(\RR^n)$. The same reasoning applies to polynomial valuations. 
	\begin{theorem}
		\label{theorem:polarization}
		Let $\varphi\in\PVal_r(\RR^n)$. There exists a unique continuous symmetric map $\bar{\varphi}:\K(\RR^n)^r\rightarrow\CC$ that is Minkowski additive in each argument such that
		\begin{align*}
			\varphi(\lambda_1K_1+\dots+\lambda_mK_m)=\sum_{r_1,\dots,r_m=0}^r\binom{r}{r_1,\dots, r_m}\lambda_1^{r_1}\dots\lambda_m^{r_m}\bar{\varphi}(K_1[r_1],\dots,K_m[r_m])
		\end{align*}
		for all $K_1,\dots,K_m\in\K(\RR^n)$ and all real $\lambda_1,\dots,\lambda_m\ge 0$.
	\end{theorem}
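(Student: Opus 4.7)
The plan is to adapt McMullen's classical polarization argument to the polynomial-valuation setting, using Theorem \ref{thm:HomDecompPVal} in place of its translation-invariant analogue.

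The crucial first step is to show that, for fixed $K_1, \ldots, K_m \in \K(\RR^n)$, the function
\begin{align*}
P(\lambda_1, \ldots, \lambda_m) := \varphi(\lambda_1 K_1 + \cdots + \lambda_m K_m)
\end{align*}
is a polynomial on $[0, \infty)^m$. For polytopes this is due to Khovanskii and Pukhlikov \cite{Khovanskii1993}, whose argument bootstraps the single-parameter polynomial dependence encoded in Theorem \ref{thm:HomDecompPVal} to arbitrary finite Minkowski combinations; density of polytopes in $\K(\RR^n)$ together with the continuity of $\varphi$ then extends this to arbitrary convex body inputs with uniformly bounded degree. The $r$-homogeneity of $\varphi$ next forces $P(t\lambda_1, \ldots, t\lambda_m) = t^r P(\lambda_1, \ldots, \lambda_m)$, so $P$ is in fact a homogeneous polynomial of total degree $r$. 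Declaring $\bar{\varphi}(K_1[r_1], \ldots, K_m[r_m])$ to be the coefficient of $\lambda_1^{r_1} \cdots \lambda_m^{r_m}$ in $P$, divided by $\binom{r}{r_1, \ldots, r_m}$, produces the desired functional.

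Uniqueness of $\bar{\varphi}$ is immediate from the polynomial identity, and symmetry follows by permuting the indices of $P$ and comparing coefficients. For Minkowski additivity in the $j$-th slot, one applies the polarization identity to the two equal quantities obtained from expanding $\varphi$ on $\lambda_1 K_1 + \cdots + \lambda_j(A+B) + \cdots + \lambda_m K_m$ via the $m$-body input $(K_1, \ldots, A+B, \ldots, K_m)$ and via the $(m+1)$-body input $(K_1, \ldots, A, B, \ldots, K_m)$ with the parameter $\lambda_j$ duplicated; matching the coefficient of the monomial with $r_j = 1$ and all other $r_i$'s unchanged isolates the desired additivity. Continuity of $\bar{\varphi}$ in its arguments follows by extracting its value as a finite linear combination of values of $\varphi$ via finite differences (or Lagrange interpolation) in the $\lambda_i$'s, whence the continuity of $\varphi$ and of Minkowski addition yields joint continuity of $\bar{\varphi}$.

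The main obstacle is the polynomiality of $P$. In the translation-invariant case this is McMullen's classical result, whose proof proceeds through a combinatorial analysis of Minkowski sums of polytopes; in the polynomial-valuation setting, the same strategy works, with Theorem \ref{thm:HomDecompPVal} providing the homogeneous decomposition needed to isolate the degree-$r$ component and confirm that it depends polynomially on the Minkowski scaling parameters.
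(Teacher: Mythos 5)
Your proposal is correct and tracks the paper's own approach, which for this theorem is simply to cite McMullen's polarization argument for translation invariant valuations (as in \cite{McMullen1977} and \cite{Schneider2014}*{Sec.~6.3}) and remark that the same reasoning carries over to the polynomial setting, with Theorem~\ref{thm:HomDecompPVal} supplying the polynomial-valuation analogue of the homogeneous decomposition. Your elaboration — polynomiality of $\lambda \mapsto \varphi(\sum_i \lambda_i K_i)$ first for polytopes, extended by density and continuity with a uniform degree bound, followed by extraction of coefficients, and then uniqueness, symmetry, Minkowski additivity, and continuity by coefficient comparison and Lagrange interpolation / Vandermonde inversion — is exactly the content of the cited reasoning, so there is no gap; the only cosmetic imprecision is in the attribution sentence, since Khovanskii--Pukhlikov's contribution (recorded as Theorem~\ref{thm:HomDecompPVal}) is more naturally read as a consequence of the polytopal multi-parameter polynomiality rather than as something that "bootstraps" single-parameter dependence to it.
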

	We will call $\bar{\varphi}$ the polarization of $\varphi\in\Val_r(\RR^n)$.

	\begin{corollary}\label{cor:PValFiniteDegree}
		For every $r\in\NN$, $\PVal_r(\RR^n)= \mathrm{P}_r\Val_r(\RR^n)$. In particular, $\PVal_r(\RR^n)$ is a Banach space with respect to the norm $\|\cdot\|$.
	\end{corollary}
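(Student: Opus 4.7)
The plan is to show the nontrivial inclusion $\PVal_r(\RR^n) \subseteq \mathrm{P}_r\Val_r(\RR^n)$, i.e.\ that every continuous $r$-homogeneous polynomial valuation $\varphi$ has polynomial degree at most $r$. Once this is established, the Banach space assertion follows immediately: the homogeneous decomposition in \autoref{thm:HomDecompPVal} exhibits $\mathrm{P}_r\Val_r(\RR^n)$ as a direct summand of the Banach space $\mathrm{P}_r\Val(\RR^n)$ and hence as a closed subspace, so it is itself a Banach space with the restricted norm $\|\cdot\|$.

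The key tool is \autoref{theorem:polarization}, which produces a continuous symmetric polarization $\bar{\varphi}:\K(\RR^n)^r\to\CC$ that is Minkowski additive in each argument. Applying it with $m=2$, $K_1=K$, $K_2=\{x\}$ and $\lambda_1=\lambda_2=1$ yields
\begin{align*}
    \varphi(K+x) \;=\; \varphi(K+\{x\}) \;=\; \sum_{k=0}^{r}\binom{r}{k}\,\bar{\varphi}\bigl(K[r-k],\{x\}[k]\bigr).
\end{align*}
So it suffices to argue that for each fixed $K$ and each $0\leq k\leq r$, the map $x\mapsto \bar{\varphi}(K[r-k],\{x\}[k])$ is a (homogeneous) polynomial in $x$ of degree exactly $k$.

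For this, I would exploit Minkowski additivity slot by slot together with the identity $\{x\}+\{y\}=\{x+y\}$. Indeed, fixing all but one $\{x\}$-slot, the map $y\mapsto \bar{\varphi}(K[r-k],\{x\}[k-1],\{y\})$ is additive in $y\in\RR^n$ and continuous (since $\bar{\varphi}$ is continuous and $y\mapsto\{y\}$ is continuous into $\K(\RR^n)$), hence $\RR$-linear (complex-valued). Iterating this argument across the $k$ copies of $\{x\}$, together with the symmetry of $\bar{\varphi}$, shows that $(x_1,\dots,x_k)\mapsto \bar{\varphi}(K[r-k],\{x_1\},\dots,\{x_k\})$ extends to a symmetric $k$-linear form on $\RR^n$, and its diagonal evaluation at $x$ is therefore a homogeneous polynomial of degree $k$ in $x$. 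Summing over $k=0,\dots,r$ gives $\varphi(K+x)\in\mathrm{P}_r\Val_r(\RR^n)$, completing the inclusion.

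There is no real obstacle beyond being careful with the elementary linearization step; the content is essentially that Minkowski additivity on singletons together with continuity forces linearity. With $\PVal_r(\RR^n)=\mathrm{P}_r\Val_r(\RR^n)$ in hand, the Banach space statement is immediate from the previous corollary applied with $d=r$.
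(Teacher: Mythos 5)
Your proposal is correct and follows essentially the same route as the paper: apply the polarization from \autoref{theorem:polarization} with $\lambda_1=\lambda_2=1$, $K_1=K$, $K_2=\{x\}$, and read off from the Minkowski additivity of $\bar{\varphi}$ (together with $\{x\}+\{y\}=\{x+y\}$ and continuity, which upgrades additivity to $\RR$-linearity in each singleton slot) that $x\mapsto\varphi(K+x)$ is a polynomial of degree at most $r$. The paper states this consequence of \autoref{theorem:polarization} in one line and then closes with the observation that $r$-homogeneous valuations form a closed subspace of the Banach space $\mathrm{P}_r\Val(\RR^n)$, which is exactly your direct-summand argument via \autoref{thm:HomDecompPVal}.
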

	\begin{proof}
		\autoref{theorem:polarization} shows that the map $x\mapsto \varphi(K+x)$ is a polynomial of degree at most $r$ for every $\varphi\in\PVal_r(\RR^n)$, which shows the first claim. Since the subspace of $r$-homogeneous valuations is obviously closed in $\mathrm{P}_r\Val(\RR^n)$, the result follows.
	\end{proof}

	We will extend the polarization to a multilinear functional on differences of support functions, where the support function of $K\in\mathcal{K}(\RR^n)$ is the convex function $h_K:\RR^n\rightarrow\RR$ defined by $h_K(y)=\sup_{x\in K}\langle y,x\rangle$ for $x\in\RR^n$, where $\langle\cdot,\cdot\rangle$ denotes the standard inner product on $\RR^n$. Support functions have the following properties (compare \cite{Schneider2014}*{Sec.~1.7}):
	\begin{lemma}\label{lem:PropSupportFunction}
		The following holds for $K,L\in\mathcal{K}(\RR^n)$:
		\begin{enumerate}
			\item $h_K$ is $1$-homogeneous: $h_K(ty)=th_K(y)$, $t\ge 0$, $y\in\RR^n$,
			\item $h_{gK}(y)=h_K(g^{T}y)$ for $g\in\GL(n,\RR)$,
			\item $h_{K+x}(y)=h_K(y)+\langle y,x\rangle$, $K\in\mathcal{K}(\RR^n)$, $x\in\RR^n$,
			\item $h_{K+L}=h_K+h_L$,
			\item $\max\{h_K,h_L\}=h_{\mathrm{conv}(K\cup L)}$, $\min\{h_K,h_L\}=h_{K\cap L}$, where $\mathrm{conv}(A)$ denotes the convex hull of a set $A\subset\RR^n$.
			\item If $(K_j)_j$ is a sequence in $\mathcal{K}(\RR^n)$, then $K_j\rightarrow K$ in the Hausdorff metric if and only if $h_{K_j}\rightarrow h_K$ uniformly on $\S^{n-1}$.
		\end{enumerate}
		Moreover, a function $h:\RR^n\rightarrow\RR$ is the support function of a convex body if and only if $h$ is $1$-homogeneous and convex.
	\end{lemma}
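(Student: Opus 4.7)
The lemma collects foundational properties of support functions; the plan is to verify the items systematically from the definition $h_K(y) = \sup_{x\in K}\langle y,x\rangle$, and to cite or replicate the standard equivalence between Hausdorff and supremum metrics for item (6) and the Hahn--Banach argument for the moreover part.

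Items (1)--(4) each reduce to a direct manipulation of the supremum: for (1), pulling the nonnegative factor $t$ out of the sup; for (2), the substitution $x = gx'$ in $\sup_{x\in gK}\langle y,x\rangle$ together with the adjoint identity $\langle y,gx'\rangle = \langle g^T y, x'\rangle$; for (3), writing $z=x+z_0$ with $z_0\in K$ fixed; for (4), parametrizing $K+L$ by independent pairs $(x,y')\in K\times L$ so the supremum decouples as a sum. None of these requires convexity beyond compactness, which only ensures the sup is attained.

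For (5), the max identity follows from the general fact that a continuous linear functional attains its maximum on a compact set at an extreme point, hence $\sup_{z\in\mathrm{conv}(K\cup L)}\langle y,z\rangle = \sup_{z\in K\cup L}\langle y,z\rangle = \max\{h_K(y), h_L(y)\}$. For the min identity (which requires $K\cup L\in\K(\RR^n)$, in particular convex, so that $K\cup L = \mathrm{conv}(K\cup L)$), I would combine the valuation identity $h_{K\cup L} + h_{K\cap L} = h_K + h_L$ (proved pointwise by checking the four cases of which function attains the max at a given $y$) with the max identity just established.

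For (6), the key observation is that $K \subset L + \varepsilon B_1(0)$ is equivalent to $h_K(y) \leq h_L(y) + \varepsilon$ for all $y \in \S^{n-1}$, using (4) and $h_{B_1(0)}\equiv 1$ on $\S^{n-1}$; symmetrizing gives that the Hausdorff distance equals $\|h_K - h_L\|_{\infty,\S^{n-1}}$, from which the equivalence of convergence follows. For the moreover part, one direction is immediate: $h_K$ is a supremum of linear functionals, hence convex, and $1$-homogeneous by (1). The converse, where I expect the main technical step, requires recovering $K$ from $h$: define $K := \{x\in\RR^n : \langle y,x\rangle \leq h(y) \text{ for all } y\in\RR^n\}$, which is closed, convex, and bounded (the latter since $h$ is finite on $\S^{n-1}$). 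The inequality $h_K\leq h$ is tautological; the reverse inequality $h(y_0)\leq h_K(y_0)$ uses the supporting hyperplane theorem applied to the convex epigraph of $h$ at the boundary point $(y_0,h(y_0))$, producing (after using $1$-homogeneity of $h$) an $x\in K$ with $\langle y_0,x\rangle = h(y_0)$.
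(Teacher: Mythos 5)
The paper gives no proof of this lemma; it is stated as background with a pointer to Schneider's book (Sec.~1.7). Your outline is the standard one and is essentially correct for items (1)--(4), (6), and the ``moreover'' part, so there is no real comparison to make there: the substitutions you describe for (1)--(4) are exactly right, the identity $d_H(K,L)=\|h_K-h_L\|_{\infty,\S^{n-1}}$ (via $K\subset L+\varepsilon B_1(0)\iff h_K\le h_L+\varepsilon$) gives (6), and the Hahn--Banach/supporting-hyperplane recovery of $K$ from $h$ is the standard proof of the characterization. You also correctly flag a hypothesis the paper leaves implicit: the identity $\min\{h_K,h_L\}=h_{K\cap L}$ is false in general and does require $K\cup L$ to be convex (take $K=[0,1]\times\{0\}$ and $L=\{0\}\times[0,1]$; then $h_{K\cap L}\equiv 0$ but $\min\{h_K,h_L\}(1,1)=1$). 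The paper only ever invokes (5) in a situation where $K\cup L$ is convex, so this does not affect the rest of the paper, but your reading of the hypothesis is correct.

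The one genuine gap is in your proof of the min identity in (5). You propose to prove the valuation identity $h_{K\cup L}+h_{K\cap L}=h_K+h_L$ ``pointwise by checking the four cases of which function attains the max'' and then subtract off the max identity. But given the max identity, the valuation identity at a point $y$ is algebraically equivalent to the min identity at $y$: if WLOG $h_K(y)\le h_L(y)$ then $h_{K\cup L}(y)=h_L(y)$, and the claim $h_{K\cup L}(y)+h_{K\cap L}(y)=h_K(y)+h_L(y)$ reduces exactly to $h_{K\cap L}(y)=h_K(y)=\min\{h_K(y),h_L(y)\}$. A case analysis on which of $h_K, h_L$ is larger cannot break this circle. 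The nontrivial inequality $\min\{h_K(y),h_L(y)\}\le h_{K\cap L}(y)$ genuinely uses the convexity of $K\cup L$: pick $x_K\in K$, $x_L\in L$ attaining $h_K(y)$, $h_L(y)$ respectively; the segment $[x_K,x_L]$ lies in $K\cup L$ by convexity, and since $K$, $L$ are closed there is a crossover point $x^*\in K\cap L$ on this segment; then $\langle y,x^*\rangle$ is a convex combination of $h_K(y)$ and $h_L(y)$ and hence at least their minimum, giving $h_{K\cap L}(y)\ge\min\{h_K(y),h_L(y)\}$. Your argument needs this (or an equivalent separation argument) inserted in place of the ``four cases'' remark.
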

	Since support functions are $1$-homogeneous, we identify them with functions on the sphere, i.e. $h_K\in C(\S^{n-1})$ for $K\in\mathcal{K}(\RR^n)$, and we set $\|h_K\|_\infty=\sup_{v\in \S^{n-1}}|h_K(v)|$. We need the following standard estimate for $\bar{\varphi}$.
	\begin{lemma}
		 \label{lem:estimatePolarization}
		 There exists a constant $C(r)>0$ such that for every $\varphi\in \PVal_r(\RR^n)$
		 \begin{align*}
		 	|\bar{\varphi}(K_1,\dots,K_r)|\leq C(r)\|\varphi\| \prod_{j=1}^r\|h_{K_j}\|_\infty
		 \end{align*}
	 	for all $K_1,\dots,K_r\in\K(\RR^n)$.
	\end{lemma}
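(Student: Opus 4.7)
The plan is to use a standard polarization-type identity to express $\bar\varphi(K_1,\dots,K_r)$ as an alternating sum of values of $\varphi$ on Minkowski sums of subsets of the $K_i$, and then to bound each such value using the $r$-homogeneity of $\varphi$ together with the definition of $\|\varphi\|$.

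The first step is the polarization identity. By \autoref{theorem:polarization} applied with $m=r$, the expression $\varphi(\lambda_1 K_1 + \dots + \lambda_r K_r)$ is a homogeneous polynomial of degree $r$ in $(\lambda_1,\dots,\lambda_r)\in[0,\infty)^r$, and the coefficient of the monomial $\lambda_1\cdots\lambda_r$ equals $r!\,\bar\varphi(K_1,\dots,K_r)$. Since the iterated finite-difference operator in the $\lambda_i$ (each acting as evaluation at $1$ minus evaluation at $0$) extracts exactly this coefficient from any polynomial of total degree at most $r$, I obtain
\begin{align*}
r!\,\bar\varphi(K_1,\dots,K_r) = \sum_{I\subseteq\{1,\dots,r\}} (-1)^{r-|I|}\,\varphi\!\left(\sum_{i\in I}K_i\right),
\end{align*}
with the convention $\sum_{i\in\emptyset}K_i = \{0\}$. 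Crucially, only coefficients in $\{0,1\}$ are required, so every sum on the right-hand side is a legitimate Minkowski sum in $\K(\RR^n)$.

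The second step is a normalization. Set $R_i := \|h_{K_i}\|_\infty$. If some $R_i=0$, then $h_{K_i}\equiv 0$ forces $K_i=\{0\}$, and Minkowski additivity of $\bar\varphi$ in the $i$th slot (applied to $\{0\}+\{0\}=\{0\}$) yields $\bar\varphi(K_1,\dots,K_r)=0$, making the inequality trivial. Otherwise, Minkowski additivity combined with continuity of $\bar\varphi$ (first for natural, then rational, then real positive multipliers) gives positive homogeneity in each slot, so that
\begin{align*}
\bar\varphi(K_1,\dots,K_r) = R_1\cdots R_r\,\bar\varphi(K_1/R_1,\dots,K_r/R_r).
\end{align*}
It therefore suffices to prove the estimate under the assumption $R_i=1$ for all $i$.

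The final step is the estimate in the normalized case. From $h_{K_i}(v)\le 1$ for all $v\in\S^{n-1}$ I conclude $K_i\subset B_1(0)$, and hence $\sum_{i\in I}K_i \subset B_{|I|}(0)\subset B_r(0)$. Applying $r$-homogeneity of $\varphi$ with factor $r$ and the definition of $\|\varphi\|$ gives $|\varphi(\sum_{i\in I}K_i)| \le r^r\|\varphi\|$ for every $I$. Plugging this bound into the polarization identity, dividing by $r!$, and counting the $2^r$ subsets yields $|\bar\varphi(K_1,\dots,K_r)|\le \frac{2^r r^r}{r!}\|\varphi\|$ in the normalized case, and the claim follows for the unnormalized case with $C(r) = 2^r r^r/r!$. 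I do not expect a substantive obstacle here; the one subtle point is that the polarization formula must be compatible with the Minkowski framework, which is ensured by evaluating the underlying polynomial only at $\{0,1\}^r$ where no negative combinations of bodies ever appear.
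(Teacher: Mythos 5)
Your proof is correct and follows essentially the same strategy as the paper: express $\bar\varphi(K_1,\dots,K_r)$ as a universal finite linear combination of values $\varphi\bigl(\sum_i j_i K_i\bigr)$ with nonnegative integer coefficients, then conclude via the $r$-homogeneity of $\varphi$, the definition of $\|\varphi\|$, and the $1$-homogeneity of $\bar\varphi$ in each slot. The only difference is cosmetic: you extract the coefficient of $\lambda_1\cdots\lambda_r$ by finite differences at the points $\{0,1\}^r$ (yielding the explicit constant $2^r r^r/r!$), whereas the paper inverts a Vandermonde matrix using evaluation points $\{0,\dots,r\}^r$ and keeps the constants non-explicit.
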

	\begin{proof}
		Since $\bar{\varphi}(K_1,\dots,K_r)$ is essentially the coefficient of the polynomial 
		\begin{align*}
			(\lambda_1,\dots,\lambda_r)\mapsto \varphi(\lambda_1K_1+\dots+\lambda_rK_r)
		\end{align*}
		in front of $\lambda_1\cdots\lambda_r$, where $\lambda_1,\dots,\lambda_r\ge 0$, we may use the inverse of the Vandermonde matrix to obtain constants $c_{j_1,\dots,j_r}$ for $0\leq j_1,\dots,j_r\leq r$ independent of $K_1,\dots,K_r\in\K(\RR^n)$ and $\varphi\in \PVal_r(\RR^n)$ such that
		\begin{align*}
			\bar{\varphi}(K_1,\dots,K_r)=\sum_{j_1,\dots,j_r=0}^r c_{j_1,\dots,j_r}\varphi(j_1K_1+\dots+j_rK_r).
		\end{align*}
		In particular, 
		\begin{align*}
			\sup_{K_1,\dots,K_r\subset B_1(0)}|\bar{\varphi}(K_1,\dots,K_r)|\le& (r+1)^2 \max |c_{j_1,\dots,j_r}| \sup_{K\subset r^2 B_1(0)}|\varphi(K)|\\
			\le&(r+1)^{2+2r}\max |c_{j_1,\dots,j_r}| \cdot\|\varphi\|.
		\end{align*}
		Since $\bar{\varphi}$ is $1$-homogeneous in each argument, this implies the desired inequality.
	\end{proof}

	Let $\mathcal{D}\subset C(\S^{n-1})$ denote the space of all functions that can be written as a difference of support functions. 
	We extend $\bar{\varphi}$ to a multilinear function $\tilde{\varphi}:\mathcal{D}^r\rightarrow \CC$ as follows: For every $\phi\in \mathcal{D}$ choose $K_\phi,L_\phi\in\K(\RR^n)$ such that $\phi=h_{K_\phi}-h_{L_\phi}$. Given $\phi_1,\dots,\phi_r\in \mathcal{D}$, we define
	\begin{align}
		\label{eq:defMultLinExtension}
		\tilde{\varphi}(\phi_1,\dots,\phi_r)=\sum_{l=0}^r \frac{(-1)^{r-l}}{l!(r-l!)}\sum_{\sigma\in S_r} \bar{\varphi}\left(K_{\phi_{\sigma(1)}},\dots,K_{\phi_{\sigma(l)}},L_{\phi_{\sigma(l+1)}},\dots,L_{\phi_{\sigma(r)}}\right),
	\end{align}
	where $S_r$ denotes the group of permutations of $\{1,\dots,r\}$. Since $\bar{\varphi}$ is additive in each argument, it is easy to see that this definition is independent of the specific choice of the bodies $K_{\phi_j}$, $L_{\phi_j}\in \mathcal{K}(\RR^n)$.	Using this fact and that $\bar{\varphi}$ is symmetric, one easily establishes the following properties.
	\begin{lemma}
		For $\varphi\in\PVal_r(\RR^n)$, $\tilde{\varphi}$ has the following properties.
		\begin{enumerate}
			\item $\tilde{\varphi}$ is multilinear.
			\item $\tilde{\varphi}$ is symmetric.
			\item For $K_1,\dots,K_r\in\K(\RR^n)$, $\tilde{\varphi}(h_{K_1},\dots,h_{K_r})=\bar{\varphi}(K_1,\dots,K_r)$.
		\end{enumerate}
	\end{lemma}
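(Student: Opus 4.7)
The plan is to verify all three properties by direct manipulation of the defining formula \eqref{eq:defMultLinExtension}, leveraging the two key inputs already emphasized in the text: the symmetry of $\bar{\varphi}$ and the Minkowski additivity of $\bar{\varphi}$ in each argument. Before tackling the properties themselves, I would first confirm the statement made just before the lemma, namely that \eqref{eq:defMultLinExtension} is independent of the decompositions $\phi_j = h_{K_{\phi_j}} - h_{L_{\phi_j}}$ chosen. Indeed, if $\phi = h_K - h_L = h_{K'} - h_{L'}$, then $h_{K + L'} = h_{K' + L}$ and hence $K + L' = K' + L$, so Minkowski additivity of $\bar{\varphi}$ in a single slot yields $\bar{\varphi}(\dots, K, \dots) - \bar{\varphi}(\dots, K', \dots) = \bar{\varphi}(\dots, L, \dots) - \bar{\varphi}(\dots, L', \dots)$, and the combined signs in \eqref{eq:defMultLinExtension} exactly cancel the ambiguity.

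For symmetry, I would simply observe that the inner sum over $\sigma \in S_r$ in \eqref{eq:defMultLinExtension} makes the value invariant under reordering the $\phi_j$'s; formally, relabeling the arguments amounts to composing $\sigma$ with a fixed permutation, which is absorbed by the sum.

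For multilinearity, I would verify additivity and homogeneity in the first slot (the remaining slots being symmetric). For additivity, given $\phi + \psi \in \mathcal{D}$, I would choose $K_{\phi + \psi} = K_\phi + K_\psi$ and $L_{\phi + \psi} = L_\phi + L_\psi$; Minkowski additivity of $\bar{\varphi}$ then splits each term in \eqref{eq:defMultLinExtension} into two, giving $\tilde{\varphi}(\phi + \psi, \phi_2, \dots) = \tilde{\varphi}(\phi, \phi_2, \dots) + \tilde{\varphi}(\psi, \phi_2, \dots)$. For scalar multiplication by $\lambda \geq 0$, I would use the choice $K_{\lambda \phi} = \lambda K_\phi$, $L_{\lambda \phi} = \lambda L_\phi$, and for $\lambda < 0$, the key is that $\lambda \phi = h_{|\lambda| L_\phi} - h_{|\lambda| K_\phi}$, so that $K_\phi$ and $L_\phi$ swap roles; the factor $(-1)^{r-l}$ in \eqref{eq:defMultLinExtension}, combined with the corresponding change $l \leftrightarrow r - l$ in the summation, produces exactly the sign $\lambda$ that is needed.

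For the extension property, I would make the canonical choice $K_{\phi_j} = K_j$ and $L_{\phi_j} = \{0\}$ for each $j$. Here the decisive observation is that Minkowski additivity forces $\bar{\varphi}(\dots, \{0\}, \dots) = 0$, since $\{0\} + \{0\} = \{0\}$ implies $\bar{\varphi}(\dots, \{0\}, \dots) = 2 \bar{\varphi}(\dots, \{0\}, \dots)$. Thus every term in \eqref{eq:defMultLinExtension} with $l < r$ vanishes, and only the $l = r$ contribution survives:
\begin{align*}
\tilde{\varphi}(h_{K_1}, \dots, h_{K_r}) = \frac{1}{r!} \sum_{\sigma \in S_r} \bar{\varphi}(K_{\sigma(1)}, \dots, K_{\sigma(r)}) = \bar{\varphi}(K_1, \dots, K_r),
\end{align*}
by the symmetry of $\bar{\varphi}$. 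There is no real obstacle here; the only mildly delicate point is the bookkeeping of signs in the homogeneity verification for negative scalars, and this is exactly why the alternating factor $(-1)^{r-l}$ was placed into the definition.
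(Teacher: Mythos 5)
Your proof is correct and follows exactly the route the paper leaves to the reader (the lemma is stated without proof, right after the observation that choice-independence of \eqref{eq:defMultLinExtension} follows from the additivity of $\bar{\varphi}$): establish well-definedness first, then read off symmetry, multilinearity, and the extension property directly from the defining formula, using $\bar{\varphi}(\dots,\{0\},\dots)=0$ for the last point. Two minor points to tidy up: positive homogeneity of $\bar{\varphi}$ in each slot should be invoked explicitly (it follows from Minkowski additivity together with the continuity guaranteed by \autoref{theorem:polarization}), and in the negative-scalar case the sign arises because swapping $K_\phi$ and $L_\phi$ in the \emph{single} affected slot shifts $l$ by one (a parity change by $1$), rather than the substitution $l \leftrightarrow r-l$ you describe — the conclusion $\tilde{\varphi}(\lambda\phi,\dots)=\lambda\tilde{\varphi}(\phi,\dots)$ is unaffected.
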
	
	The following establishes a direct relation between $\varphi$ and $\tilde{\varphi}$.
	\begin{lemma}
		\label{lem:MultLinExtensionAsDerivative}
		For $\phi_1,\dots,\phi_r\in \mathcal{D}$ let $K_1,\dots,K_r\in\K(\RR^n)$ be convex bodies such that 
		\begin{align*}
			h_{L^t_j}:=h_{K_j}+t\phi_j
		\end{align*}
		is convex for all $t\in [0,\varepsilon]$ for some $\varepsilon>0$. Then for every $\varphi\in \PVal_r(\RR^n)$,
		\begin{align*}
			\tilde{\varphi}(\phi_1,\dots,\phi_r)=\frac{1}{r!}\frac{\partial^r}{\partial t_1\dots\partial t_r}\Big|_0 \varphi\left(\sum_{j=1}^r L^{t_j}_j\right).
		\end{align*}
		In particular, 
		\begin{align*}
			\tilde{\varphi}(\phi_1,\dots,\phi_1)=\frac{1}{r!}\frac{d^r}{dt^r}\Big|_0\varphi(L^t_1).
		\end{align*}
	\end{lemma}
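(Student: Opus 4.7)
The plan is to expand $\varphi\left(\sum_j L^{t_j}_j\right)$ as a polynomial in $(t_1,\dots,t_r)$, extract the coefficient of $t_1\cdots t_r$ via the mixed partial derivative, and match the resulting combinatorial sum against the defining formula \eqref{eq:defMultLinExtension} of $\tilde{\varphi}$. The core algebraic tools are the polarization \autoref{theorem:polarization}, the Minkowski additivity of $\bar{\varphi}$ in each argument, and the $r$-homogeneity of $\varphi$.

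First, applying \autoref{theorem:polarization} with all coefficients equal to $1$ yields
\begin{align*}
    \varphi\left(\sum_{j=1}^r L^{t_j}_j\right) = \sum_{(i_1,\dots,i_r)\in\{1,\dots,r\}^r} \bar{\varphi}(L^{t_{i_1}}_{i_1},\dots,L^{t_{i_r}}_{i_r}),
\end{align*}
valid whenever the $L^{t_j}_j$ are convex bodies. Writing $\phi_j = h_{K_{\phi_j}}-h_{L_{\phi_j}}$, the identity $h_{L^{t_j}_j}+t_jh_{L_{\phi_j}}=h_{K_j}+t_jh_{K_{\phi_j}}$ yields the Minkowski equality $L^{t_j}_j+t_jL_{\phi_j}=K_j+t_jK_{\phi_j}$. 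Combined with the Minkowski additivity of $\bar{\varphi}$ in each slot, this lets me substitute each entry $L^{t_{i_k}}_{i_k}$ formally by $K_{i_k}+t_{i_k}(K_{\phi_{i_k}}-L_{\phi_{i_k}})$; iterating in all $r$ arguments turns each term $\bar{\varphi}(L^{t_{i_1}}_{i_1},\dots,L^{t_{i_r}}_{i_r})$ into a polynomial of degree $\leq r$ in $(t_1,\dots,t_r)$.

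Taking $\partial_{t_1}\cdots\partial_{t_r}|_0$ then extracts the coefficient of $t_1\cdots t_r$. Only tuples $(i_1,\dots,i_r)$ that are permutations of $(1,\dots,r)$ contribute, since any repeated index leaves some $t_j$ absent. For each $\sigma\in S_r$, the relevant coefficient equals the iterated difference
\begin{align*}
    \sum_{S\subset\{1,\dots,r\}}(-1)^{r-|S|}\bar{\varphi}(\epsilon^{\sigma,S}_1,\dots,\epsilon^{\sigma,S}_r),
\end{align*}
where $\epsilon^{\sigma,S}_k=K_{\phi_{\sigma(k)}}$ if $k\in S$ and $L_{\phi_{\sigma(k)}}$ otherwise. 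Summing over $\sigma\in S_r$ and regrouping by $l=|S|$, the symmetry of $\bar{\varphi}$ should identify the result with $r!\,\tilde{\varphi}(\phi_1,\dots,\phi_r)$ as given by \eqref{eq:defMultLinExtension}, proving the first formula.

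For the ``in particular'' statement, I would specialize to $K_j=K_1$ and $\phi_j=\phi_1$ for all $j$. Then $\sum_j L^{t_j}_j$ has support function $rh_{K_1}+s\phi_1=rh_{L^{s/r}_1}$ with $s=\sum_jt_j$, so $r$-homogeneity gives $\varphi(\sum_jL^{t_j}_j)=r^r\varphi(L^{s/r}_1)$; the chain rule then converts $\partial_{t_1}\cdots\partial_{t_r}|_0$ into $\frac{d^r}{dt^r}|_0$, as each $\partial_{t_j}$ contributes a factor $1/r$ that exactly cancels the factor $r^r$. The main obstacle I anticipate is the combinatorial bookkeeping in the previous paragraph: one has to reorganize the double sum over $\sigma$ and $S$ and apply the symmetry of $\bar{\varphi}$ carefully enough to recognize the asymmetric-looking formula \eqref{eq:defMultLinExtension}. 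All remaining steps reduce to standard manipulations with polarizations, Minkowski sums and homogeneity.
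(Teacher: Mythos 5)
Your argument is correct, and its core idea coincides with the paper's: expand $\varphi\bigl(\sum_{j=1}^r L^{t_j}_j\bigr)$ as a polynomial in $(t_1,\dots,t_r)$ and read off the coefficient of $t_1\cdots t_r$. The difference is in execution. You work at the level of $\bar{\varphi}$: you invoke the polarization formula with all $\lambda_j=1$, pass from $L^{t_j}_j$ to $K_j$, $K_{\phi_j}$, $L_{\phi_j}$ via the Minkowski identity $L^{t_j}_j+t_jL_{\phi_j}=K_j+t_jK_{\phi_j}$ (which also uses positive homogeneity $\bar{\varphi}(tA,\cdot)=t\,\bar{\varphi}(A,\cdot)$ for $t\ge 0$, a standard consequence of additivity and continuity), and then recover \eqref{eq:defMultLinExtension} by inclusion--exclusion over subsets $S$ and permutations $\sigma$; your regrouping indeed produces the factor $\binom{r}{l}=r!/(l!(r-l)!)$, so the bookkeeping closes. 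The paper instead uses the identity $\varphi(M)=\tilde{\varphi}(h_M,\dots,h_M)$ together with $h_{\sum_j L^{t_j}_j}=\sum_j (h_{K_j}+t_j\phi_j)$ and the previously established multilinearity and symmetry of $\tilde{\varphi}$, so the mixed partial yields $r!\,\tilde{\varphi}(\phi_1,\dots,\phi_r)$ in one line, with no combinatorics to verify. For the second statement, your detour through $r$-homogeneity and the chain rule is valid (the chain rule is licensed because the expression is a polynomial in the $t_j$, as your first part shows), but it is shorter to note directly that $\varphi(L^t_1)=\tilde{\varphi}(h_{K_1}+t\phi_1,\dots,h_{K_1}+t\phi_1)$ is a polynomial in $t$ whose $t^r$-coefficient is $\tilde{\varphi}(\phi_1,\dots,\phi_1)$, which is how the paper concludes.
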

	\begin{proof}
		First note that bodies with this property always exist: If $\phi=h_K-h_L$ for $K,L\in\K(\RR^n)$, then $h_L+t\phi$ is convex for all $t\in[-1,1]$. If we fix such bodies, then 
		\begin{align*}
			\varphi\left(\sum_{j=1}^r L^{t_j}_j\right)=\tilde{\varphi}\left(\sum_{j=1}^r h_{L^{t_j}_j},\dots,\sum_{j=1}^r h_{L^{t_j}_j}\right)
		\end{align*}
		by the definition of $\tilde{\varphi}$ and \autoref{lem:PropSupportFunction}, so the claim follows by multilinearity.
	\end{proof}
	
	Note that every function in $C^\infty(\S^{n-1})$ can be written as difference of support functions, so $\tilde{\varphi}$ restricts to a multilinear functional on $(C^\infty(\S^{n-1}))^r$. It turns out that this functional is continuous and thus extends to a distribution on $(\S^{n-1})^r$, as shown by the following result due to Goodey and Weil.
	\begin{theorem}[\cite{Goodey1984}*{Thm.~2.1}]\label{thm:GWbyGW}
		For every $\varphi\in\PVal_r(\RR^n)$ there exists a unique distribution $\GW(\varphi)$ on $(\S^{n-1})^r$ such that
		\begin{align}
			\label{eq:definingPropertyGW}
			\GW(\varphi)[h_{K_1}\otimes\dots\otimes h_{K_r}]=\bar{\varphi}(K_1,\dots,K_r)
		\end{align}
		for all smooth convex bodies $K_1,\dots,K_r\in\K(\RR^n)$ with strictly positive Gauss curvature.
	\end{theorem}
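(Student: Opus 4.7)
The plan is to construct $\GW(\varphi)$ by showing that the multilinear functional $\tilde{\varphi}$ from equation~\eqref{eq:defMultLinExtension}, already defined on $\mathcal{D}^r$ and in particular on $(C^\infty(\S^{n-1}))^r$, is continuous with respect to the $C^2$-topology and therefore extends to a distribution on $(\S^{n-1})^r$ by the Schwartz kernel theorem. Once this distribution is produced, the defining relation~\eqref{eq:definingPropertyGW} holds by construction of $\tilde{\varphi}$, since it already reproduces $\bar{\varphi}(K_1,\dots,K_r)$ whenever the arguments are support functions. Uniqueness will follow from the fact that differences of support functions of smooth strictly convex bodies are dense in $C^\infty(\S^{n-1})$.

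The key step is therefore the continuity estimate
\[
|\tilde{\varphi}(\phi_1,\dots,\phi_r)|\leq C(n,r)\,\|\varphi\|\prod_{j=1}^r\|\phi_j\|_{C^2(\S^{n-1})},
\]
which I would obtain as follows. Given $\phi\in C^\infty(\S^{n-1})$, extend it by $1$-homogeneity to a function $\widetilde{\phi}$ on $\RR^n$. A standard computation of the Hessian of a $1$-homogeneous function on $\RR^n\setminus\{0\}$ in terms of the spherical Laplacian shows that $\widetilde{\phi}+c\|\cdot\|$ is convex, and hence a support function $h_K$ of a smooth convex body $K$ with strictly positive Gauss curvature, as soon as $c\geq C_1\|\phi\|_{C^2}$ for some constant $C_1$ depending only on $n$. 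This provides a decomposition $\phi=h_K-c\,h_{B_1(0)}$ in which both support functions satisfy $\|h_K\|_\infty,c\,\|h_{B_1(0)}\|_\infty\leq C_2(1+\|\phi\|_{C^2})$. Substituting such decompositions into the expansion~\eqref{eq:defMultLinExtension} of $\tilde{\varphi}$ and applying \autoref{lem:estimatePolarization} to each term yields the required bound.

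With the estimate in hand, $\tilde{\varphi}$ extends continuously from $(C^\infty(\S^{n-1}))^r$ with its Fréchet topology to a separately continuous (and hence jointly continuous, since $C^\infty(\S^{n-1})$ is nuclear) multilinear form. The Schwartz kernel theorem on the compact manifold $(\S^{n-1})^r$ then produces a unique distribution $\GW(\varphi)\in \mathcal{D}'((\S^{n-1})^r)$ with $\GW(\varphi)[\phi_1\otimes\dots\otimes\phi_r]=\tilde{\varphi}(\phi_1,\dots,\phi_r)$. Specializing to $\phi_j=h_{K_j}$ for smooth strictly convex bodies gives~\eqref{eq:definingPropertyGW}.

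The main obstacle is the quantitative convexification in the $C^2$-bound: one needs the additive constant $c$ required to turn $\widetilde{\phi}$ into a support function to depend linearly on $\|\phi\|_{C^2}$. Everything else is formal: homogeneity gives the estimate on the norm of $h_K$, the expansion~\eqref{eq:defMultLinExtension} has a fixed number of terms depending only on $r$, and uniqueness of $\GW(\varphi)$ is immediate once one notes that finite linear combinations of tensor products $h_{K_1}\otimes\dots\otimes h_{K_r}$ of support functions of smooth strictly convex bodies are dense in $C^\infty((\S^{n-1})^r)$, again by the decomposition of smooth functions as differences of such support functions.
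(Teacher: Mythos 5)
Your proposal is correct and is essentially the classical Goodey--Weil argument, which the paper cites (\cite{Goodey1984}*{Thm.~2.1}) rather than reproduces: the quantitative convexification you flag as the main obstacle (that $\phi+c\,h_{B_1(0)}$ is a support function once $c\geq C_1\|\phi\|_{C^2}$) is exactly \autoref{lem:localizedSuppFunctionIsDifference} specialized to $K=B_1(0)$, and combined with \autoref{lem:estimatePolarization}, the kernel theorem, and density of tensor products of differences of support functions of smooth strictly convex bodies, it yields existence and uniqueness just as in the cited proof. No gaps to report.
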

	\begin{remark}
		\label{remark:extensionGWtoC2Functions}
			The distribution $\GW(\varphi)$ is called the Goodey--Weil distribution associated to $\varphi\in \PVal_r(\RR^n)$. Note that $\GW(\varphi)[h_K\otimes\dots\otimes h_K]=\varphi(K)$ for $K$ smooth and with strictly positive Gauss curvature, so since $\varphi$ is continuous, it is uniquely determined by its Goodey--Weil distribution.
	\end{remark}
	The following was shown in \cite{Alesker2000}*{Prop.~3.3} for translation invariant valuations. The proof holds verbatim in the polynomial case.
	\begin{proposition}
		For $\varphi\in \PVal_r(\RR^n)$, the support of $\GW(\varphi)$ is contained in the diagonal in $(\S^{n-1})^r$.
	\end{proposition}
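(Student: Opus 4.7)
The plan is to follow Alesker's argument from \cite{Alesker2000}*{Prop.~3.3} for translation invariant valuations, which transfers verbatim to polynomial valuations since all necessary ingredients — the polarization (\autoref{theorem:polarization}), the multilinear extension $\tilde\varphi$ to differences of support functions, and the Goodey--Weil distribution (\autoref{thm:GWbyGW}) — have been set up in this generality.

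First I would localize the problem: to show that $\supp\GW(\varphi)$ is contained in the diagonal, it suffices to verify, for any two distinct directions $v_1,v_2\in\S^{n-1}$, the existence of disjoint open caps $U_1\ni v_1$ and $U_2\ni v_2$ such that $\tilde\varphi(\phi_1,\phi_2,\phi_3,\dots,\phi_r)=0$ whenever $\phi_1,\phi_2\in C^\infty(\S^{n-1})$ are supported in $U_1,U_2$ respectively and $\phi_3,\dots,\phi_r\in C^\infty(\S^{n-1})$ are arbitrary. Since every smooth function on the sphere is a difference of smooth support functions and $\tilde\varphi$ is multilinear, one may further assume $\phi_j=h_{K_j}$ for $j\geq 3$ with $K_3,\dots,K_r$ smooth and strictly convex.

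The key step is geometric. Fix a smooth strictly convex reference body $P$ (for instance a large ball). By \autoref{lem:MultLinExtensionAsDerivative}, the quantity $\tilde\varphi(\phi_1,\phi_2,h_{K_3},\dots,h_{K_r})$ equals, up to a combinatorial factor, the mixed partial derivative at $0$ of
\[ \varphi\bigl(L_1^{t_1}+L_2^{t_2}+(1+t_3)K_3+\dots+(1+t_r)K_r\bigr), \]
where $h_{L_j^{t_j}}=h_P+t_j\phi_j$ for $j=1,2$. Denote this Minkowski sum by $B(t_1,t_2;t_3,\dots,t_r)$. For $t_1,t_2$ sufficiently small, its support function differs from that of $B(0,0;t_3,\dots,t_r)$ only on $U_1\cup U_2$, so the corresponding modification of the boundary is confined to two disjoint compact regions, one near the face of $B(0,0;t_3,\dots,t_r)$ with outer normal in $U_1$ and one near the face with outer normal in $U_2$. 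For $(t_3,\dots,t_r)$ in a small neighborhood of $0$, one may then choose a single affine hyperplane $H$ separating these two regions and passing through the interior of $B(0,0;t_3,\dots,t_r)$; writing $H^\pm$ for the two corresponding closed half-spaces, the valuation property applied to the decomposition
\[ B(t_1,t_2;\cdot) = \bigl(B(t_1,0;\cdot)\cap H^+\bigr)\cup\bigl(B(0,t_2;\cdot)\cap H^-\bigr), \]
whose intersection equals $B(0,0;\cdot)\cap H$, yields
\[ \varphi(B(t_1,t_2;\cdot)) = \varphi(B(t_1,0;\cdot)\cap H^+) + \varphi(B(0,t_2;\cdot)\cap H^-) - \varphi(B(0,0;\cdot)\cap H). \]
The right-hand side splits into a function of $t_1$ alone, a function of $t_2$ alone, and a term independent of $(t_1,t_2)$, so $\partial_{t_1}\partial_{t_2}\varphi(B)$ vanishes identically; further derivatives in $(t_3,\dots,t_r)$ therefore also give zero.

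The main obstacle is the geometric setup itself: one must take $U_1,U_2$ small, the perturbations $\phi_1,\phi_2$ bounded, and the parameters $t_j$ in a sufficiently small neighborhood of $0$ so that each $L_j^{t_j}$ is a genuine convex body, the two boundary modifications remain trapped in disjoint compact regions uniformly in $(t_3,\dots,t_r)$, and a single separating hyperplane $H$ works throughout. Smoothness and strict convexity of $P$ guarantee a nondegenerate normal Gauss map near the relevant faces, and combined with a standard compactness argument this yields the required uniformity; these estimates are precisely the technical content of \cite{Alesker2000}*{Prop.~3.3}, and nothing in them uses translation invariance.
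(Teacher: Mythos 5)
Your proposal correctly reconstructs the localization argument from \cite{Alesker2000}*{Prop.~3.3}---disjoint caps, a separating hyperplane, and the valuation property killing the mixed derivative---and correctly observes that nothing in it uses translation invariance. This is exactly what the paper does, since its proof is simply the remark that Alesker's argument transfers verbatim to the polynomial setting.
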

	Let $\Delta_r:\S^{n-1}\rightarrow (\S^{n-1})^r$ denote the diagonal embedding. We define the \emph{vertical support} of $\varphi\in\PVal_r(\RR^n)$ to be the subset of $\S^{n-1}$ given by
	\begin{align}\label{eq:defvsuppPVal}
		\vsupp\varphi:=\Delta_r^{-1}(\supp\GW(\varphi)).
	\end{align}
	If $\varphi$ is an arbitrary polynomial valuation, fix $d\ge 0$ such that $\varphi\in \mathrm{P}_d\Val(\RR^n)$ and consider the decomposition $\varphi=\sum_{r=0}^{d+n}\varphi_r$ into its homogeneous components. Then
	\begin{align*}
		\vsupp\varphi:=\bigcup_{r=1}^{d+n}\vsupp\varphi_r.
	\end{align*}
	In particular, the vertical support of a $0$-homogeneous valuation is empty by definition.	The vertical support of a translation-invariant valuation was characterized in \cite[Prop.~6.14]{Knoerr2020a}. The proof holds verbatim in the polynomial case.
	\begin{proposition}\label{prop:CharVsupp}
		\label{prop:characterizationVerticalSupp}
		Let $\varphi\in\PVal(\RR^n)$. The vertical support of $\varphi$ is minimal (with respect to inclusion) among all closed sets $A\subset \S^{n-1}$ with the following property: If $K,L\in\mathcal{K}(\RR^n)$ are two convex bodies with $h_K=h_L$ on a neighborhood of $A$, then $\varphi(K)=\varphi(L)$.
	\end{proposition}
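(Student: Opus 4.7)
I would adapt the proof of \cite{Knoerr2020a}*{Prop.~6.14} to the polynomial setting. The homogeneous decomposition of \autoref{thm:HomDecompPVal} and the definition of the vertical support reduce the statement to the case $\varphi\in\PVal_r(\RR^n)$ with $r\geq 1$. The main technical ingredient is the identity
\begin{equation*}
\tilde{\varphi}(\psi_1,\dots,\psi_r)=\GW(\varphi)[\psi_1\otimes\cdots\otimes\psi_r]\qquad\text{for all }\psi_1,\dots,\psi_r\in C^\infty(\S^{n-1}),
\end{equation*}
which one obtains from \autoref{thm:GWbyGW} and the multilinearity of $\tilde{\varphi}$ by writing each $\psi_j$ as a difference of support functions of two smooth convex bodies with positive Gauss curvature (e.g.\ add a sufficiently large ball).

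For the property itself, suppose $K,L\in\mathcal{K}(\RR^n)$ satisfy $h_K=h_L$ on an open neighborhood $U$ of $\vsupp\varphi$ and set $\phi:=h_K-h_L\in\mathcal{D}$. The Minkowski interpolation $M_t:=(1-t)L+tK$ has support function $h_L+t\phi$, so \autoref{theorem:polarization} and the defining properties of $\tilde{\varphi}$ give
\begin{equation*}
\varphi(M_t)=\tilde{\varphi}(h_L+t\phi,\dots,h_L+t\phi)=\sum_{j=0}^{r}\binom{r}{j}t^{j}\,\tilde{\varphi}(h_L[r-j],\phi[j]).
\end{equation*}
It therefore suffices to show that the coefficient $\tilde{\varphi}(h_L[r-j],\phi[j])$ vanishes for every $1\leq j\leq r$, since then $\varphi(K)=\varphi(M_1)=\varphi(M_0)=\varphi(L)$. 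To prove this, mollify $\phi$ on $\S^{n-1}$ with a kernel of radius strictly less than $\mathrm{dist}(\vsupp\varphi,\S^{n-1}\setminus U)$, producing $\phi_\varepsilon\in C^\infty(\S^{n-1})$ which still vanishes on an open neighborhood $V\subset U$ of $\vsupp\varphi$; simultaneously approximate $L$ by smooth convex bodies $L_\varepsilon$ with positive Gauss curvature. The tensor $h_{L_\varepsilon}^{\otimes(r-j)}\otimes\phi_\varepsilon^{\otimes j}$ vanishes on an open neighborhood of $\Delta_r(\vsupp\varphi)\supset\supp\GW(\varphi)$ (this uses $j\geq 1$), so the displayed identity yields $\tilde{\varphi}(h_{L_\varepsilon}[r-j],\phi_\varepsilon[j])=0$, and \autoref{lem:estimatePolarization} together with formula \eqref{eq:defMultLinExtension} lets me pass to the limit.

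For minimality, suppose a closed set $A\subset\S^{n-1}$ satisfies the property and, for contradiction, pick $v_0\in\vsupp\varphi\setminus A$ and an open neighborhood $U$ of $v_0$ with $\overline{U}\cap A=\emptyset$. Since $\Delta_r(v_0)\in\supp\GW(\varphi)$, the characterization of distributional support combined with density of tensor products in $C^\infty((\S^{n-1})^r)$ yields $\phi_1,\dots,\phi_r\in C^\infty_c(U)$ with $\GW(\varphi)[\phi_1\otimes\cdots\otimes\phi_r]\neq 0$. Choose $R$ so large that $h_{B_R}+t\phi_j$ is the support function of some convex body $L_j^t$ for $|t|$ small, and set $K^{t}:=L_1^{t_1}+\cdots+L_r^{t_r}$, $L:=rB_R$. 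Then $h_{K^t}-h_L=\sum_j t_j\phi_j$ is supported in $U$, and hence vanishes on a neighborhood of $A$. The assumed property of $A$ would force $\varphi(K^t)=\varphi(L)$ for all small $t$, but \autoref{lem:MultLinExtensionAsDerivative} together with the displayed identity gives $\tilde{\varphi}(\phi_1,\dots,\phi_r)=\frac{1}{r!}\partial_{t_1}\cdots\partial_{t_r}|_0\,\varphi(K^t)=0$, contradicting $\GW(\varphi)[\phi_1\otimes\cdots\otimes\phi_r]\neq 0$.

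The delicate point is the smoothing step in the sufficiency part: $\tilde{\varphi}$ is a priori only defined on $\mathcal{D}^r$, while the identification with $\GW(\varphi)$ is natural on smooth functions. One must arrange the mollification so that (i) the localization ``$\phi_\varepsilon=0$ on a neighborhood of $\vsupp\varphi$'' is preserved, (ii) the required decompositions into smooth support functions have norms uniformly controlled in $\varepsilon$ so that \autoref{lem:estimatePolarization} gives a uniform bound in the expansion \eqref{eq:defMultLinExtension}, and (iii) convergence happens in a topology where $\GW(\varphi)$ is continuous. Once these points are in place, the support condition on the Goodey--Weil distribution closes the argument.
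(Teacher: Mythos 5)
Your proposal is correct and is essentially the argument the paper relies on: the paper gives no proof here, simply citing \cite{Knoerr2020a}*{Prop.~6.14} and noting the proof carries over verbatim, and your reconstruction via the Goodey--Weil distribution (vanishing of $\GW(\varphi)$ on tensors supported away from the diagonal of the vertical support for sufficiency, and test functions concentrated near a point of $\supp\GW(\varphi)$ perturbing a large ball for minimality) is exactly that adaptation to the polynomial setting. The limit step you flag is routine: mollify by convolution over $\SO(n)$, which maps differences of support functions to differences of support functions with Hausdorff-convergent decompositions, so the joint continuity of $\bar{\varphi}$ from \autoref{theorem:polarization} closes the argument.
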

	Given a closed subset $A\subset \S^{n-1}$, let $\PVal_{r,A}(\RR^n)$ denote the subspace of all $\varphi\in \PVal_r(\RR^n)$ such that $\vsupp\varphi\subset A$. \autoref{prop:characterizationVerticalSupp} directly implies the following (which is stated in the translation invariant case in \cite[Corollary~6.15]{Knoerr2020a}).
	\begin{corollary}
		\label{cor:SuppRestrClosedSubspace}
		For a closed set $A\subset \S^{n-1}$, $\PVal_{r,A}(\RR^n)$ is a closed subspace of $\PVal_r(\RR^n)$. In particular, it is a Banach space.
	\end{corollary}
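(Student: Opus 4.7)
The plan is to deduce the result directly from the characterization of vertical support in \autoref{prop:characterizationVerticalSupp}. Since $\PVal_r(\RR^n)$ is a Banach space by \autoref{cor:PValFiniteDegree} and closed subspaces of Banach spaces are Banach spaces, the second assertion follows from the first, so the only thing to prove is that $\PVal_{r,A}(\RR^n)$ is closed in $\PVal_r(\RR^n)$.

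To that end, I would take a sequence $(\varphi_j)_{j\in\NN}$ in $\PVal_{r,A}(\RR^n)$ converging to some $\varphi\in\PVal_r(\RR^n)$ in the norm $\|\cdot\|$, and aim to show $\vsupp\varphi\subset A$. Since convergence in $\|\cdot\|$ entails uniform convergence on bounded subsets of $\K(\RR^n)$ (and in particular pointwise convergence, as every $K\in\K(\RR^n)$ is bounded), we have $\varphi_j(K)\to\varphi(K)$ for each $K\in\K(\RR^n)$.

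By the minimality in \autoref{prop:characterizationVerticalSupp}, to conclude $\vsupp\varphi\subset A$ it suffices to verify that $A$ itself satisfies the property stated there for $\varphi$: namely, that $\varphi(K)=\varphi(L)$ whenever $K,L\in\K(\RR^n)$ satisfy $h_K=h_L$ on some open neighborhood $U$ of $A$. Fix such $K,L,U$. Since $\vsupp\varphi_j\subset A\subset U$ for every $j$, \autoref{prop:characterizationVerticalSupp} applied to each $\varphi_j$ gives $\varphi_j(K)=\varphi_j(L)$, and letting $j\to\infty$ yields $\varphi(K)=\varphi(L)$. This shows $A$ has the required property and therefore $\vsupp\varphi\subset A$, so $\varphi\in\PVal_{r,A}(\RR^n)$.

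There is no substantial obstacle here; the argument is essentially a routine transcription of the characterization of vertical support into a closedness statement. The only mild point of care is noting that even though \autoref{prop:characterizationVerticalSupp} a priori only compares support functions on neighborhoods of $\vsupp\varphi_j$, the inclusion $\vsupp\varphi_j\subset A\subset U$ lets us use agreement on the (possibly larger) neighborhood $U$ of $A$ without any loss.
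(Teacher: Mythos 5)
Your argument is correct and matches the paper's intent: the corollary is stated there as a direct consequence of \autoref{prop:characterizationVerticalSupp}, and your proof is precisely the routine verification of that implication (pointwise convergence plus the minimality of the vertical support). Nothing is missing.
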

	
\subsection{Valuations on convex functions}
As a general reference on this section, we refer to \cites{Knoerr2020a, Knoerr2020b, Colesanti2019, Colesanti2019b, Colesanti2020} and the references therein.\\
Let $\Conv(\RR^n,\RR)$ denote the space of all convex functions $f:\RR^n\to\RR$. This space carries a natural topology induced by epi-convergence (which in this setting coincides with pointwise or locally uniform convergence, see \cite{Rockafellar1970}*{Thm.~7.17}). A map $\mu:\Conv(\RR^n, \RR) \to \CC$ is called a valuation if
\begin{align*}
	\mu( \max\{f,g\}) + \mu(\min\{f,g\}) = \mu(f) + \mu(g)
\end{align*}
whenever $f, g, \min\{f,g\} \in \Conv(\RR^n, \RR)$, where $\max\{f,g\}$ resp.\ $\min\{f,g\}$ denote the pointwise maximum resp.\ minimum.\\

We denote by $\VConv(\RR^n)$ the space of all continuous valuations $\mu:\Conv(\RR^n,\RR)\rightarrow \CC$ that are in addition \emph{dually epi-translation invariant}, that is, that satisfy
\begin{align*}
	\mu(f+\ell)=\mu(f)
\end{align*}
for all $f\in\Conv(\RR^n,\RR)$, and $\ell:\RR^n\rightarrow\RR$ affine. This notion is intimately related to translation invariance for valuations on convex bodies, compare \autoref{sec:RelationFctsBodies}.\\

Similar to the construction of valuations on convex bodies in terms of integration with respect to the normal cycle, the following construction of valuations on convex functions was examined in \cite{Knoerr2020b}: Let $D(f)$ denote the differential cycle of $f \in \Conv(\RR^n, \RR)$ as defined by Fu in \cite{Fu1989}. This is an integral current on the cotangent bundle $T^\ast\RR^n$ of $\RR^n$, which for smooth functions coincides with the current given by integration over the graph of the differential of $f$. As shown in \cite{Knoerr2020b}, any smooth differential form $\tau \in \Omega^{n}(T^\ast \RR^n)$ whose support is bounded in the first argument of $T^\ast \RR^n = \RR^n \times (\RR^n)^\ast$ induces a continuous valuation on $\Conv(\RR^n,\RR)$ by setting
\begin{align}\label{eq:defSmValConvDiffForm}
	\mu(f) = D(f)[\tau], \quad f \in \Conv(\RR^n, \RR).
\end{align}
In general, such a valuation will not be dually epi-translation invariant, however, if the differential form is invariant with respect to translations in the second factor  of $T^\ast \RR^n = \RR^n \times (\RR^n)^\ast$, then the valuation belongs to $\VConv(\RR^n)$. Conversely, if a valuation in $\VConv(\RR^n)$ admits such a representation, then the differential form can be chosen to be invariant with respect to translations in the second factor (compare \cite{Knoerr2020b}*{Thm.~5.5}). We will call valuations of this form \emph{representable by integration with respect to the differential cycle}.

\begin{remark}
	In \cite{Knoerr2020b}, valuations representable by integration with respect to the differential cycle were called "smooth valuations" in analogy with \autoref{mthm:smoothValsByDiffform}. Following the convention in \cite{Knoerr2025}, we reserve this terminology for a different space of valuations, which is discussed below. The main result of \cite{Knoerr2025} shows that these two notions are equivalent, but we will distinguish between them to avoid further ambiguity.
\end{remark}

We equip $\VConv(\RR^n)$ with the topology of uniform convergence on compact subsets of $\Conv(\RR^n,\RR)$ (see \cite{Knoerr2020a}*{Prop.~2.4} for a description of these subsets). We have a natural continuous representation of the group of translations (which we identify with $\RR^n$) on $\VConv(\RR^n)$, defined by associating to $x\in\RR^n$, $\mu\in\VConv(\RR^n)$ the valuation $\pi(x)\mu\in\VConv(\RR^n)$ given by
\begin{align*}
	[\pi(x)\mu](f) := \mu(f(\cdot + x))
\end{align*} 
for $f\in\Conv(\RR^n,\RR)$. We will be interested in the smooth vectors of this representation.
\begin{definition}[\cite{Knoerr2025}*{Def.~1.3}]\label{def:affSmValConv}
	A valuation $\mu \in \VConv(\RR^n)$ is called a smooth valuation if the map
	\begin{align*}
		\RR^n &\to \VConv(\RR^n)\\
		x &\mapsto \left[f \mapsto \mu(f(\cdot + x))\right]
	\end{align*}
	is smooth.
\end{definition}

The main result of \cite{Knoerr2025} relates smooth valuations and valuations that are representable by integration with respect to the differential cycle.
\begin{theorem}[\cite{Knoerr2025}*{Thm.~D}]\label{thm:SmVConvGivenByDiffForm}
	Let $\mu \in \VConv_r(\RR^n)$. Then $\mu$ is a smooth valuation in the sense of \autoref{def:affSmValConv} if and only if it is representable by integration with respect to the differential cycle.
\end{theorem}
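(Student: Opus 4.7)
The easy direction (representable implies smooth) should be handled directly from the integral formula $\mu(f)=D(f)[\tau]$. Since $\tau$ is assumed invariant under translations in the cotangent factor of $T^\ast\RR^n=\RR^n\times(\RR^n)^\ast$, one checks that $[\pi(x)\mu](f)=\mu(f(\cdot+x))$ can be rewritten as $D(f)[T_x^\ast\tau]$, where $T_x^\ast\tau$ is the pullback of $\tau$ under the translation $(p,q)\mapsto(p-x,q)$. Because $\tau$ is smooth with bounded base-support, the map $x\mapsto T_x^\ast\tau$ is smooth into any reasonable Fr\'echet topology on such forms, and continuity of the pairing with $D(f)$ transports this smoothness to the map $x\mapsto \pi(x)\mu$.

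The hard direction (smooth implies representable) is the substantive content, and I would approach it by a Goodey--Weil-type construction tailored to $\VConv(\RR^n)$. First, reduce to a fixed degree $r$ of homogeneity. Polarize $\mu\in\VConv_r(\RR^n)$ to obtain a continuous symmetric $r$-multilinear functional on $\Conv(\RR^n,\RR)^r$, extend it by multilinearity to differences of convex functions (analogously to the extension $\bar\varphi\rightsquigarrow\tilde\varphi$ in Section~\ref{sec:bghomDecoGW}), and produce an associated distribution $\Phi_\mu$ on an appropriate product space. Dual epi-translation invariance of $\mu$ forces invariance of $\Phi_\mu$ under addition of affine functions, which, after passing to Fenchel conjugates, localizes the relevant transform on the cotangent side. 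One should then obtain a Goodey--Weil-type statement: $\Phi_\mu$ is a distribution with controlled support governing $\mu$ uniquely on a dense class of convex functions.

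The central and hardest step is to convert smoothness of $\mu$ as a smooth vector of the translation representation on $\VConv(\RR^n)$ into a concrete regularity statement for $\Phi_\mu$, via a Paley--Wiener--Schwartz type characterization: smoothness in the translation parameter should force rapid decay of an appropriate Fourier--Laplace transform of $\Phi_\mu$, which in turn forces $\Phi_\mu$ to be given by integration against a smooth density. One then has to repackage this density as a differential $n$-form $\tau$ on $T^\ast\RR^n$ that is invariant in the cotangent factor and has bounded base-support, and verify that $D(f)[\tau]=\mu(f)$ holds on smooth convex functions (hence everywhere by continuity). The main obstacle I anticipate is precisely this repackaging: matching the analytic decay supplied by Paley--Wiener--Schwartz on the Fenchel-conjugate side to the geometric smoothness needed for $\tau$ on $T^\ast\RR^n$ requires an explicit change of variables relating convex functions to the graphs of their subgradients, and ensuring the resulting $\tau$ reproduces $\mu$ through the differential cycle is a delicate compatibility check rather than a formal manipulation.
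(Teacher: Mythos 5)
This theorem is not proved in the paper; it is cited verbatim from \cite{Knoerr2025}*{Thm.~D}, and the paper's own remark immediately after the statement only notes that the cited proof does not rely on Alesker's irreducibility theorem. There is therefore no internal proof to compare your attempt against. That said, the introduction does give a one-line description of the strategy in \cite{Knoerr2025}: a Paley--Wiener--Schwartz-type characterization of certain distributions associated to valuations on convex functions. Your sketch of the hard direction is consistent with this description --- polarization, a Goodey--Weil-type distribution for $\VConv$, invariance from dual epi-translation invariance, and a Paley--Wiener--Schwartz argument converting smoothness of the translation representation into regularity of that distribution, followed by a repackaging step. Your treatment of the easy direction (writing $[\pi(x)\mu](f)=D(f)[T_x^*\tau]$ and using smoothness of $x\mapsto T_x^*\tau$ together with continuity of the pairing) is correct in spirit, but you should note that passing from smoothness of $x\mapsto T_x^*\tau$ in a Fr\'echet space of forms to smoothness of $x\mapsto\pi(x)\mu$ in the topology of $\VConv(\RR^n)$ requires a quantitative bound of $|D(f)[\tau]|$ by a seminorm of $\tau$, uniformly over $f$ in compact subsets of $\Conv(\RR^n,\RR)$; this mass estimate for the differential cycle is genuine content, not a formality. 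Since the present paper neither proves nor details the cited theorem, I cannot certify the correctness of the harder direction beyond the high-level agreement with the strategy attributed to \cite{Knoerr2025}.
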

\begin{remark}
	Let us again point out that the proof of this result does not rely on Alesker's Irreducibility \autoref{thm:AleskerIrredThm}.
\end{remark}

\subsection{Relation between valuations on convex bodies and convex functions}
\label{sec:RelationFctsBodies}

We will obtain \autoref{mthm:smoothValsByDiffform} from \autoref{thm:SmVConvGivenByDiffForm} using a relation between valuations on convex functions and convex bodies introduced in \cite{Knoerr2020a}. The construction relates $\VConv(\RR^{n})$ with a certain subspace of $\Val(\RR^{n+1})$ defined by restrictions on the vertical support. We will restrict the discussion to the translation invariant case (i.e. to polynomial valuations of degree $d=0$). The construction generalizes to polynomial valuations on convex functions, which were considered in \cite{Knoerr2024}.\\

Similar to the notion of vertical support for polynomial valuations on convex bodies, the support of an element of $\VConv(\RR^n)$ was introduced in \cite{Knoerr2020a} in terms of certain distributions associated to homogeneous valuations, mirroring the construction by Goodey and Weil \cite{Goodey1984}. Similar to \autoref{prop:characterizationVerticalSupp}, we have the following characterization of the support.
\begin{proposition}[\cite{Knoerr2020a}*{Prop.~6.3}]\label{prop:CharSupport}
	Let $\mu\in\VConv(\RR^n)$. The support of $\mu$ is minimal (with respect to inclusion) among all closed sets $A\subset \RR^n$ with the following property: If $f,g\in\Conv(\RR^n, \RR)$ satisfy $f=g$ on an open neighborhood of $A$, then $\mu(f)=\mu(g)$.
\end{proposition}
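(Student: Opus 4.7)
The plan follows the template of \autoref{prop:characterizationVerticalSupp}, but now transplanted to $\VConv(\RR^n)$. First I would reduce to the homogeneous case: by the homogeneous decomposition of $\VConv(\RR^n)$ (analogous to \autoref{thm:HomDecompPVal}) and the fact that the support is defined as the union of the supports of the homogeneous components, it is enough to treat a fixed $\mu\in \VConv_r(\RR^n)$. For such $\mu$, one has the Goodey--Weil-type distribution $\GW(\mu)$ on $(\RR^n)^r$, obtained by extending the symmetric multilinear polarization $\tilde\mu$ (defined on differences of convex functions, exactly as in Eq.~\eqref{eq:defMultLinExtension} but with convex functions in place of support functions) to a distribution whose support lies in the diagonal. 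By definition, $\supp\mu=\Delta_r^{-1}(\supp \GW(\mu))$.

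For the first direction, namely that $\supp\mu$ itself enjoys the locality property, I would let $f,g\in\Conv(\RR^n,\RR)$ agree on an open neighborhood $U$ of $\supp\mu$. Expanding via polarization gives
\begin{align*}
	\mu(f)-\mu(g)=\sum_{k=1}^{r}\binom{r}{k}\tilde\mu(\underbrace{f-g,\dots,f-g}_{k},\underbrace{g,\dots,g}_{r-k}).
\end{align*}
Each summand is an evaluation of $\GW(\mu)$ on a tensor containing at least one factor $f-g$ that vanishes on $U$, while $\supp\GW(\mu)\subset \Delta_r(\supp\mu)\subset \Delta_r(U)$. Approximating $f-g$ by smooth functions that are still identically zero on a slightly smaller neighborhood of $\supp\mu$, the distributional pairing vanishes, so $\mu(f)=\mu(g)$.

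For the minimality direction, suppose $A\subset\RR^n$ is closed with the stated property and fix $x_0\notin A$; I want to show $x_0\notin\supp\mu$, equivalently, $\GW(\mu)$ vanishes on all test tensors supported in $V^r$ for some neighborhood $V$ of $x_0$ disjoint from $A$. By the symmetry of $\GW(\mu)$ and polarization, it suffices to show $\tilde\mu(\phi,\dots,\phi)=0$ for every $\phi\in C^\infty_c(V)$. Given such $\phi$, choose a smooth strictly convex function $g_0$ whose Hessian dominates that of $\pm\phi$ on $V$, so that $g_0+t\phi\in \Conv(\RR^n,\RR)$ for all $t\in[-\varepsilon,\varepsilon]$. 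By the convex-function analogue of \autoref{lem:MultLinExtensionAsDerivative},
\begin{align*}
	\tilde\mu(\phi,\dots,\phi)=\frac{1}{r!}\frac{d^r}{dt^r}\bigg|_{t=0}\mu(g_0+t\phi).
\end{align*}
Since $\supp \phi\subset V$ and $V\cap A=\emptyset$, the function $g_0+t\phi$ agrees with $g_0$ on a neighborhood of $A$; by hypothesis on $A$ this implies $\mu(g_0+t\phi)=\mu(g_0)$ for every $t$, so the derivative vanishes identically and $\tilde\mu(\phi,\dots,\phi)=0$ as desired.

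The main obstacle is the careful bookkeeping needed to upgrade the formal polarization identity to a genuine distributional statement: one must verify that $\GW(\mu)$ is indeed well-defined and continuous in the appropriate topology on smooth differences of convex functions, and that tensor products of test functions of the form $\phi_1\otimes\cdots\otimes\phi_r$ with $\phi_i\in C^\infty_c(V)$ are rich enough to detect $\GW(\mu)$ near $\Delta_r(x_0)$. Once this analytic framework (already developed in \cite{Knoerr2020a}) is in place, the argument becomes a straightforward locality-and-perturbation argument, mirroring the convex-body case essentially word-for-word, with convex functions $g_0+t\phi$ playing the role of the Minkowski perturbations $L_1^t$ used in the proof of \autoref{prop:CharVsupp}.
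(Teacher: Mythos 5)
Your proof is correct and takes precisely the route the paper intends: the paper does not reproduce a proof here but cites \cite{Knoerr2020a}*{Prop.~6.3}, describing the result as ``similar to'' the convex-body version (\autoref{prop:CharVsupp}), and your argument faithfully transplants that proof --- (i) using the diagonal support of the Goodey--Weil distribution together with the polarization expansion to get the locality property of $\supp\mu$, and (ii) testing $\GW(\mu)$ against smooth bumps $\phi\in C_c^\infty(V)$ via the derivative formula $\tilde\mu(\phi,\dots,\phi)=\tfrac{1}{r!}\tfrac{d^r}{dt^r}\big|_0\,\mu(g_0+t\phi)$ to get minimality. Two small points worth tightening if you were to write this out in full: in direction (i), after invoking the polarization identity you must mollify \emph{all} arguments (not only the factors $f-g$, but also the convex function $g$) before the distributional pairing is literally defined, and then invoke the continuity estimate from \cite{Knoerr2020a} to pass to the limit; and in direction (ii), you should note that $\supp\GW(\mu)\subset\Delta_r(\RR^n)$ is what makes vanishing of $\GW(\mu)$ on $V^r$ actually \emph{equivalent} to $x_0\notin\supp\mu$ (otherwise one only gets an implication). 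Both of these are exactly the analytic framework developed in \cite{Knoerr2020a}, which you explicitly flag, so there is no genuine gap.
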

\begin{remark}
	\label{remark:suppCompact}
	For $\CC$-valued valuations, the support is always a compact subset of $\RR^n$, compare \cite{Knoerr2020b}*{Thm.~2}.
\end{remark}
For a given closed set $A \subset \RR^n$, we denote by $\VConv_A(\RR^n) \subset \VConv(\RR^n)$ the subspace of all $\mu \in \VConv(\RR^n)$ with $\supp \mu \subset A$. For a closed set $B \subset \S^{n-1}$, we let $\Val_{B}(\RR^n) = \PVal_B(\RR^n) \cap \Val(\RR^n)$.

\begin{theorem}[\cite{Knoerr2020a}*{Thm.~3.3 \& Thm.~6.18}]\label{thm:defTmapProp}
	The map
	\begin{align}
		T: \VConv(\RR^n) &\to \Val(\RR^{n+1})\\
		\mu &\mapsto \left[ K \mapsto \mu(h_K(\cdot, -1)) \right] \nonumber
	\end{align}
	is well-defined, continuous, and injective. Its image consists precisely of all valuations $\varphi \in \Val(\RR^{n+1})$ with $\vsupp\, \varphi \subset \S^n_-$, where $\S^n_- = \{y \in \S^n:\, y_{n+1}<0\}$ denotes the negative half sphere.
	
	Moreover, $T: \VConv_A(\RR^n) \to \Val_{P(A)}(\RR^{n+1})$ is a topological isomorphism for all compact $A \subset \RR^n$, where $P: \RR^n \to \S^n$ denotes the map $v \mapsto \frac{(v,-1)}{\sqrt{1+\|v\|^2}}$.
\end{theorem}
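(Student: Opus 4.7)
The plan is to handle well-definedness, continuity, injectivity, the image characterization, and the topological isomorphism in turn, relying on \autoref{prop:characterizationVerticalSupp} and the analogous \autoref{prop:CharSupport} as the central tools.

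For well-definedness and continuity, I would note that for each $K\in\K(\RR^{n+1})$ the function $v\mapsto h_K(v,-1)$ is continuous and convex, so it belongs to $\Conv(\RR^n,\RR)$. When $K,L,K\cup L\in\K(\RR^{n+1})$, the identities $h_{K\cup L}=\max\{h_K,h_L\}$ and $h_{K\cap L}=\min\{h_K,h_L\}$ from \autoref{lem:PropSupportFunction} imply that restriction intertwines the two valuation structures, so $T\mu$ is a valuation on $\K(\RR^{n+1})$. Writing $x=(x',x_{n+1})\in\RR^{n+1}$, I would use $h_{K+x}(v,-1)=h_K(v,-1)+\pair{v}{x'}-x_{n+1}$ to exhibit $h_{K+x}(\cdot,-1)$ as $h_K(\cdot,-1)$ plus an affine function in $v$, whence $T\mu(K+x)=T\mu(K)$ by dual epi-translation invariance of $\mu$. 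Continuity of $T$ then follows from the continuous dependence of $K\mapsto h_K(\cdot,-1)$ in the Hausdorff topology (yielding locally uniform convergence on $\RR^n$) combined with the continuity of $\mu$.

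For injectivity, I would observe that for a polytope $K=\mathrm{conv}\{(q_i,s_i)\}_{i=1}^N\subset\RR^{n+1}$ one has $h_K(v,-1)=\max_i(\pair{v}{q_i}-s_i)$, so the range of the restriction map contains every piecewise affine convex function with finitely many pieces. Since such functions are dense in $\Conv(\RR^n,\RR)$ with respect to locally uniform convergence, continuity of $\mu$ forces $\mu\equiv 0$ whenever $T\mu\equiv 0$. For one direction of the image characterization, I would observe that if $h_K=h_L$ on an open conic neighborhood of $\S^n_-\subset\S^n$, then $1$-homogeneity gives $h_K(\cdot,-1)=h_L(\cdot,-1)$ on all of $\RR^n$, so $T\mu(K)=T\mu(L)$, and \autoref{prop:characterizationVerticalSupp} yields $\vsupp T\mu\subset\S^n_-$. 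The localized inclusion $T(\VConv_A(\RR^n))\subset\Val_{P(A)}(\RR^{n+1})$ proceeds analogously using \autoref{prop:CharSupport} and the definition of $P$.

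The main obstacle is the reverse inclusion, which requires constructing the inverse map. Given $\varphi\in\Val(\RR^{n+1})$ with $\vsupp\varphi\subset\S^n_-$ (respectively $\vsupp\varphi\subset P(A)$), I would set $\mu(f):=\varphi(K_f)$, where $K_f\subset\RR^{n+1}$ is any convex body with $h_{K_f}(v,-1)=f(v)$ on a sufficiently large compact set (a compact neighborhood of $A$ in the localized version). Existence of $K_f$ follows from polyhedral approximation of $f$ on the given compact set, using that $\vsupp\varphi$ is compact (being a closed subset of $\S^{n-1}$) and, by assumption, contained in $\S^n_-$. Independence of the choice of $K_f$ follows by applying \autoref{prop:characterizationVerticalSupp} to any two admissible choices, whose support functions agree on a conic neighborhood of $\vsupp\varphi$ after $1$-homogeneous extension. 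I would then verify that $\mu$ is a dually epi-translation invariant, continuous valuation with $\supp\mu\subset A$ via \autoref{prop:CharSupport}, and that $T\mu=\varphi$ by construction. Finally, bijectivity of $T:\VConv_A(\RR^n)\to\Val_{P(A)}(\RR^{n+1})$ combined with the open mapping theorem yields the topological isomorphism, using that $\Val_{P(A)}(\RR^{n+1})$ is Banach by \autoref{cor:SuppRestrClosedSubspace} and $\VConv_A(\RR^n)$ is a closed subspace of the Fr\'echet space $\VConv(\RR^n)$.
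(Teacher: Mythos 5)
Your overall strategy matches the cited reference: construct $T$ directly, exploit $1$-homogeneity to translate between the slice $\{y_{n+1}=-1\}$ and the hemisphere $\S^n_-$, and build the inverse by associating to each $f\in\Conv(\RR^n,\RR)$ a body $K_f\subset\RR^{n+1}$ whose support function on the slice agrees with $f$ over a compact neighborhood of $P^{-1}(\vsupp\varphi)$, then appealing to the support characterization (\autoref{prop:characterizationVerticalSupp}, \autoref{prop:CharSupport}) for well-definedness. The forward direction (well-definedness via \autoref{lem:PropSupportFunction}, translation invariance from dual epi-translation invariance, injectivity via polyhedral density, and the inclusion $\vsupp T\mu\subset P(\supp\mu)\subset\S^n_-$ using compactness of $\supp\mu$) is correct.

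The genuine gap is in the sentence ``I would then verify that $\mu$ is a \dots continuous valuation.'' The step that actually carries the weight of the theorem is hidden there: to transport the valuation identity from $\varphi$ to $\mu(f):=\varphi(K_f)$, you must show that whenever $f,g,\min\{f,g\}\in\Conv(\RR^n,\RR)$, the bodies can be chosen \emph{compatibly}, i.e.\ so that $K_f\cup K_g$ is convex with $K_f\cup K_g=K_{\max\{f,g\}}$ and $K_f\cap K_g=K_{\min\{f,g\}}$. For an arbitrary admissible choice of $K_f$, $K_g$ this is false -- the union of two such bodies need not be convex -- so one cannot simply quote the valuation property of $\varphi$. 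What is needed is a specific construction (for instance taking $K_f$ to be a truncated epigraph of the restricted conjugate $g_f(q)=\sup_{w\in B}(\pair{w}{q}-f(w))$), together with a duality argument: using that $\min\{f,g\}$ convex makes $\langle q,\cdot\rangle+t-\min\{f,g\}$ concave, one shows via the intermediate value theorem that $\{y:\pair{y}{(w,1)}\le\max\{f,g\}(w)\ \forall w\in B\}=\{y:\dots\le f(w)\}\cup\{y:\dots\le g(w)\}$, which translates precisely into the required lattice-compatibility of the $K_f$'s. Continuity of $\mu$ then also needs an argument (the map $f\mapsto K_f$ from this construction is Hausdorff-continuous on compact subsets of $\Conv(\RR^n,\RR)$), and similarly is not a consequence of \autoref{prop:CharSupport} alone. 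These verifications are the substance of the cited theorems and cannot be left implicit. Once they are supplied, the remaining pieces of your proposal (independence of choices, $\supp\mu\subset A$, $T\mu=\varphi$, the open mapping theorem using \autoref{cor:SuppRestrClosedSubspace}) go through as you describe.
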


The previous map admits a direct interpretation for valuations that are representable with respect to the normal and differential cycle. Let $\mathrm{pr}_{\RR^n}:\RR^{n+1}\rightarrow \RR^n$ denote the projection onto the first $n$ coordinates. It was shown in \cite{Knoerr2020b}*{Prop.~6.1} that the map 
\begin{align*}
	Q:\RR^{n+1}\times \S_-^n&\rightarrow \RR^n\times\RR^n\\
	(x,v)&\mapsto (-P^{-1}(v),\mathrm{pr}_{\RR^n}(x))
\end{align*}
satisfies $Q_*(\nc(K)|_{\RR^{n+1}\times \S^n})=(-1)^{n+1}D(h_K(\cdot,-1))$ for every $K\in\mathcal{K}(\RR^{n+1})$ (the additional sign is a consequence of a different choice of orientation in \cite{Knoerr2020b} in comparison with the standard orientation of $\RR^{n+1}$). Here, we identify $\RR^n\times\RR^n\cong \RR^n\times (\RR^n)^*$ using the standard inner product on $\RR^n$. This has the following direct consequence.
\begin{lemma}\label{lem:diffformSmIffValVConv}
	If $\mu\in\VConv_r(\RR^n)$ is representable by integration with respect to the differential cycle, then $T(\mu)$ is representable by integration with respect to the normal cycle. Moreover, in this case, $T(\mu)=\int_{\nc(\cdot)}\omega$ for a translation invariant differential form $\omega\in \Omega^{n}(S\RR^{n+1})^{\mathrm{tr}}$ with support contained in $\RR^{n+1}\times \S^n_-$.
\end{lemma}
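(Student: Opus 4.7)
The plan is to transport the defining form of $\mu$ on $T^*\RR^n$ to the sphere bundle $S\RR^{n+1}$ using the map $Q$ from \cite{Knoerr2020b}*{Prop.~6.1}, and then apply the pushforward identity $Q_*(\nc(K)|_{\RR^{n+1}\times\S^n_-})=(-1)^{n+1}D(h_K(\cdot,-1))$ to recognize the resulting normal-cycle integral as $T(\mu)(K)$.

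Since $\mu$ is dually epi-translation invariant and is representable by integration with respect to the differential cycle, by \cite{Knoerr2020b}*{Thm.~5.5} I can choose a smooth form $\tau\in\Omega^n(T^*\RR^n)$ that is translation invariant in the second factor of $T^*\RR^n=\RR^n\times(\RR^n)^*$, has compact support in the first factor, and satisfies $\mu(f)=D(f)[\tau]$ for all $f\in\Conv(\RR^n,\RR)$. I would then set $\omega:=(-1)^{n+1} Q^*\tau$ on $\RR^{n+1}\times\S^n_-$ and extend $\omega$ by zero to the rest of the sphere bundle.

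Two routine properties need to be verified. First, since $P:\RR^n\to\S^n_-$ is a diffeomorphism, the compact support of $\tau$ in its first factor forces $\supp Q^*\tau\subset \RR^{n+1}\times A$ for some compact $A\subset \S^n_-$ that is bounded away from the equator; this makes the zero extension smooth and ensures $\supp\omega\subset\RR^{n+1}\times\S^n_-$, so $\omega\in\Omega^n(S\RR^{n+1})$. Second, translation invariance of $\tau$ in its second factor means its coefficient functions depend only on the first factor, so the coefficients of $Q^*\tau$ depend only on $v$ through $-P^{-1}(v)$; moreover, the only base differentials occurring in $Q^*\tau$ arise by pulling back $dq^j$ along $\mathrm{pr}_{\RR^n}$, producing $dx^1,\dots,dx^n$, which are translation invariant. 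Thus $\omega\in\Omega^n(S\RR^{n+1})^{\mathrm{tr}}$.

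With these properties in place, the support condition on $\omega$ together with the cited pushforward identity gives, for every $K\in\K(\RR^{n+1})$,
\begin{align*}
\int_{\nc(K)}\omega = (-1)^{n+1}\int_{Q_*(\nc(K)|_{\RR^{n+1}\times\S^n_-})}\tau = D(h_K(\cdot,-1))[\tau] = \mu(h_K(\cdot,-1)) = T(\mu)(K),
\end{align*}
which is the required representation. The only part that demands care is the smoothness of the zero extension across the equator and the bookkeeping of the translation invariance after pullback; once these are verified, no serious obstacle remains, as the two factors of $(-1)^{n+1}$ cancel and the rest is a formal application of the change-of-variables formula for currents.
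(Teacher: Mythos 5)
Your proposal matches the paper's argument: both invoke \cite{Knoerr2020b}*{Thm.~5.5} to obtain a form $\tau$ on $T^*\RR^n$ with bounded support in the first factor and translation invariance in the second, pull back by $Q$ with the sign $(-1)^{n+1}$, observe that compactness of the projected support in the open hemisphere $\S^n_-$ permits a smooth zero extension, and conclude via the pushforward identity from \cite{Knoerr2020b}*{Prop.~6.1}. You spell out the translation invariance and smooth-extension checks a bit more explicitly than the paper, but the route and the cited inputs are the same.
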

\begin{proof}
	If $\mu\in\VConv_r(\RR^n)$ is representable by integration with respect to the differential cycle, then by \cite{Knoerr2020b}*{Thm.~5.5} there exists a differential form $\omega\in \Omega^{n-r}(\RR^n)\otimes\Lambda^r (\RR^n)^*$ with bounded support in the first component, such that $\mu=D(\cdot)[\omega]$. Then $(-1)^{n+1}Q^*\omega$ is a translation invariant form in $\Omega^n(\RR^{n+1}\times \S^n_-)$ and the projection of its support onto $\S^n_-$ is compact. We may in particular extend the form $(-1)^{n+1}Q^*\omega$ by $0$ to a smooth form $\tilde{\omega}$ on $\RR^{n+1}\times \S^n$. Then $T(\mu)=\int_{\nc(\cdot)}\tilde{\omega}$ due to \cite{Knoerr2020b}*{Prop.~6.1}. Obviously, $\tilde{\omega}$ has the desired properties.
\end{proof}
\begin{remark}
	In \cite{Knoerr2020b}*{Prop.~6.4} a stronger version of this result was shown, however, we will only need the weaker version in \autoref{lem:diffformSmIffValVConv}.
\end{remark}

\part{Localization, proofs of \autoref{mthm:smoothValsByDiffform} and \autoref{mthm:locVal}}\label{part1}

\section{Localization of polynomial valuations}
\label{sec:localization}

In this section, we establish \autoref{mthm:locVal}. We will use the multilinear extension of the polarization of a polynomial valuation discussed in \autoref{sec:bghomDecoGW} to define a multilinear functional on support functions with a given support restriction using a partition of unity and then verify that this construction preserves the valuation property.

\medskip

We start by showing that $\rho h_K$ can be written as difference of support functions for all $\rho \in C^2(\S^{n-1})$ and $K \in \K(\RR^n)$.
	
\begin{lemma}
	\label{lem:localizedSuppFunctionIsDifference}
	There exists a constant $C_n>0$ such that the following holds: For every $\rho\in C^2(\S^{n-1})$ and $K\in\K(\RR^n)$, the convex body
	\begin{align*}
		L(K,\rho):=C_n\|\rho\|_{C^2(\S^{n-1})} (K+\|h_K\|_\infty B_1(0))
	\end{align*}
	has the property that $h_{L(K,\rho)}+t\rho h_K$ is the support function of a convex body for every $t\in [-1,1]$.
\end{lemma}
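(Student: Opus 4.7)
The plan is to verify the claim via the infinitesimal convexity criterion on $\S^{n-1}$: a $C^2$ function $\phi:\S^{n-1}\to\RR$ extends to a convex $1$-homogeneous function on $\RR^n$ (equivalently, it is the restriction to $\S^{n-1}$ of the support function of a convex body) if and only if the symmetric bilinear form $\nabla^2\phi+\phi\cdot g$ on $T\S^{n-1}$ is positive semi-definite, where $\nabla^2$ denotes the covariant Hessian on $\S^{n-1}$ and $g$ the round metric. I would first reduce to the case where $K$ is $C^\infty$ with strictly positive Gauss curvature by a standard Hausdorff approximation: if $K_j\to K$ with $K_j$ smooth and positively curved, then $h_{K_j}\to h_K$ and $\|h_{K_j}\|_\infty\to\|h_K\|_\infty$ uniformly, so $h_{L(K_j,\rho)}+t\rho h_{K_j}\to h_{L(K,\rho)}+t\rho h_K$ uniformly on $\S^{n-1}$ for each $t\in[-1,1]$. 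Since convexity and $1$-homogeneity of the $\RR^n$-extensions pass to pointwise (hence uniform) limits, uniform limits of support functions are again support functions, and the claim is preserved in the limit.

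In the smooth case, set $\lambda:=C_n\|\rho\|_{C^2(\S^{n-1})}$ and $r:=\|h_K\|_\infty$. Using $h_{L(K,\rho)}=\lambda h_K+\lambda r|\cdot|$ and the fact that $|\cdot|\equiv 1$ on $\S^{n-1}$, the product rule gives, for $f:=h_{L(K,\rho)}+t\rho h_K$,
\begin{align*}
\nabla^2 f+f\cdot g=(\lambda+t\rho)(\nabla^2 h_K+h_K g)+\lambda r g+t h_K\nabla^2\rho+2t\,\nabla\rho\odot\nabla h_K,
\end{align*}
where $\odot$ denotes the symmetric tensor product. The first summand is positive semi-definite since $\nabla^2 h_K+h_K g\geq 0$ by smooth convexity of $K$, and $\lambda+t\rho\geq\lambda-\|\rho\|_\infty\geq 0$ for $|t|\leq 1$ as long as $C_n\geq 1$. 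It thus remains to control the cross-terms.

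In operator norm on $T\S^{n-1}$, the symmetric $2$-tensor $t h_K\nabla^2\rho+2t\,\nabla\rho\odot\nabla h_K$ is dominated by $\left(|h_K|\cdot\|\nabla^2\rho\|+2|\nabla\rho|\cdot|\nabla h_K|\right)g\leq 3r\|\rho\|_{C^2(\S^{n-1})}\,g$, using the pointwise estimates $|h_K|\leq r$ and $|\nabla h_K|\leq r$ on $\S^{n-1}$. Both of these are consequences of the identity $\|h_K\|_\infty=\max_{y\in K}|y|$: the first is obvious, and the second combines the Lipschitz property of support functions (with Lipschitz constant $\max_{y\in K}|y|$) with the fact that spherical gradients are dominated by ambient Euclidean ones. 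Choosing $C_n$ to be a sufficiently large absolute constant (in fact $C_n=3$ suffices) then makes the positive term $\lambda r g$ dominate, giving $\nabla^2 f+f\cdot g\geq 0$ and hence the lemma. The main subtle point is the a-priori estimate $\|\nabla h_K\|_\infty\leq\|h_K\|_\infty$; the rest is routine sphere calculus combined with the Hausdorff approximation step above.
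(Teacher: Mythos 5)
Your proof is correct and follows essentially the same strategy as the paper: verify positive semi-definiteness of the Hessian in the smooth, positively curved case using the key bound $\|\nabla h_K\|_\infty\leq\|h_K\|_\infty$, and pass to general $K$ by Hausdorff approximation. The only cosmetic difference is that you phrase the convexity criterion via the spherical covariant Hessian $\nabla^2\phi+\phi\, g$ rather than the ambient Euclidean Hessian $D^2$ restricted to $x^\perp$ as in the paper; these are the same criterion.
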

\begin{proof}
	If $K$ is smooth with strictly positive Gauss curvature, then its support function $h_K:\RR^n\rightarrow\RR$ is smooth on $\RR^n\setminus\{0\}$. We extend $\rho$ to a $0$-homogeneous function on $\RR^n\setminus\{0\}$. The Hessian of the $1$-homogeneous function $\rho h_K$ in $x\in \S^{n-1}$ is then given by
	\begin{align*}
		D^2(\rho h_K)=D^2\rho \cdot h_K+\nabla\rho \cdot\nabla h_K^T+\nabla h_K \cdot\nabla \rho^T+\rho D^2 h_K.
	\end{align*}
	In particular, 
	\begin{align*}
		&D(h_{L(K,\rho)}+t\rho h_K)\\
		=&\left(C_n\|\rho\|_{C^2(\S^{n-1})}+t\rho\right) D^2h_K+C_n\|\rho\|_{C^2(\S^{n-1})}\|h_K\|_\infty D^2h_{B_1(0)}\\
		 &+t\left(D^2\rho \cdot h_K+\nabla\rho \cdot\nabla h_K^T+\nabla h_K \cdot\nabla \rho^T\right).
	\end{align*} 
	For $v\in x^\perp$ and $t \in [-1,1]$, we have
	\begin{align*}
		&|t\pair{v}{\left(D^2\rho \cdot h_K+\nabla\rho \cdot\nabla h_K^T+\nabla h_K \cdot\nabla \rho^T\right)v}|\\
		\le& D_n\|\rho\|_{C^2(\S^{n-1})}(\|h_K\|_\infty+2\|\nabla h_K\|_\infty)|v|^2
	\end{align*}
	for some constant $D_n > 0$ independent of $\rho$ and $K$. Since $\nabla h_K:\S^{n-1}\rightarrow\partial K$ is the inverse Gauss map, $\|\nabla h_K\|_\infty=\max_{x\in K}|x|=\max\{r>0: K\subset B_r(0)\}=\|h_K\|_\infty$. Thus for $C_n\ge \max\{1,3D_n\}$, we obtain for $v\in x^\perp$,
	\begin{align*}
		&\pair{v}{D^2(h_{L(K,\rho)}+t\rho h_K)v}\\
		\ge&\left(C_n\|\rho\|_{C^2(\S^{n-1})}-\|\rho\|_\infty\right) \pair{v}{D^2h_Kv} +C_n\|\rho\|_{C^2(\S^{n-1})}\|h_K\|_\infty|v|^2\\
		&-3D_n\|\rho\|_{C^2(\S^{n-1})}\|h_K\|_\infty|v|^2\ge 0.
	\end{align*}
	Since $\rho h_K$ is $1$-homogeneous, this implies that $h_{L(K,\rho)}+t\rho h_K$ is convex for all $t\in [-1,1]$ and thus the support function of a convex body.\\
	
	In the general case, take a sequence $(K_j)_j$ of smooth convex bodies with strictly positive Gauss curvature converging to $K$ in the Hausdorff metric. Then $h_{L(K_j,\rho)}+t\rho h_{K_j}$ is convex for every $t\in[-1,1]$ and $j\in\NN$ by the previous discussion, and from the definition  we directly see that $L(K_j,\rho)$ converges to $L(K,\rho)$ for $j\rightarrow\infty$. Thus $h_{L(K,\rho)}+t\rho h_{K}$ is the pointwise limit of a sequence of convex functions and thus convex as well.
\end{proof}

\autoref{lem:localizedSuppFunctionIsDifference} implies, in particular, that the multilinear extension $\tilde{\varphi}$ can be evaluated in functions of the form $\rho h_K$ for $\rho\in C^2(\S^{n-1})$ and $K\in\K(\RR^n)$.

\begin{lemma}
	\label{lem:localizationPolarization}
	Let $\varphi\in \PVal_r(\RR^n)$. For every $\rho_1,\dots,\rho_r\in C^2(\S^{n-1})$, the map
	\begin{align*}
		\hat{\varphi}_{\rho_1,\dots,\rho_r}:\K(\RR^n)^r&\rightarrow \CC\\
		(K_1,\dots,K_r)&\mapsto \tilde{\varphi}(\rho_1 h_{K_1},\dots,\rho_r h_{K_r})
	\end{align*}
	is well-defined, jointly continuous, and Minkowski additive in each argument. Moreover, there exists a constant $C_{n,r}>0$ such that
	\begin{align*}
		\sup_{K_1,\dots,K_r\subset B_1(0)}|\hat{\varphi}_{\rho_1,\dots,\rho_r}(K_1,\dots,K_r)|\leq C_{n,r}\|\varphi\| \prod_{j=1}^r\|\rho_j\|_{C^2(\S^{n-1})}
	\end{align*}
	for all $\varphi\in \PVal_r(\RR^n)$ and $\rho_1,\dots,\rho_r\in C^2(\S^{n-1})$.
\end{lemma}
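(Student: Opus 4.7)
The plan is to reduce every claim to the corresponding property of the polarization $\bar{\varphi}$ by exhibiting, for each $K_j \in \K(\RR^n)$, an explicit decomposition $\rho_j h_{K_j} = h_{A_j(K_j)} - h_{B_j(K_j)}$ that depends continuously and Minkowski-linearly on $K_j$. The natural choice, enabled by \autoref{lem:localizedSuppFunctionIsDifference}, is
\begin{align*}
h_{A_j(K_j)} := h_{L(K_j,\rho_j)} + \rho_j h_{K_j}, \qquad h_{B_j(K_j)} := h_{L(K_j,\rho_j)},
\end{align*}
where both right-hand sides are support functions by the lemma (the former corresponding to $t=1$, the latter to $t=0$), so in particular $\rho_j h_{K_j} \in \mathcal{D}$ and $\hat{\varphi}_{\rho_1,\dots,\rho_r}$ is well-defined on $\K(\RR^n)^r$.

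Inserting these decompositions into formula~\eqref{eq:defMultLinExtension} expresses $\hat{\varphi}_{\rho_1,\dots,\rho_r}(K_1,\dots,K_r)$ as a finite linear combination (with coefficients and total number of summands depending only on $r$) of values of $\bar{\varphi}$ at $r$-tuples of convex bodies chosen from $\{A_j(K_j), B_j(K_j)\}_{j=1}^r$. Both $K_j \mapsto B_j(K_j)$ and $K_j \mapsto A_j(K_j)$ are Hausdorff-continuous: $B_j(K_j)$ is an explicit Minkowski combination of $K_j$ with a Euclidean ball whose radius depends continuously on $K_j$, and the support function of $A_j(K_j)$ is a uniform-norm continuous expression in $h_{K_j}$, which via \autoref{lem:PropSupportFunction} translates to Hausdorff continuity. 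Joint continuity of $\hat{\varphi}_{\rho_1,\dots,\rho_r}$ on $\K(\RR^n)^r$ then follows from the joint continuity of $\bar{\varphi}$ (\autoref{theorem:polarization}). Minkowski additivity in each argument is immediate from $\rho_j h_{K + K'} = \rho_j h_K + \rho_j h_{K'}$ and the multilinearity of $\tilde{\varphi}$ on $\mathcal{D}^r$.

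For the quantitative bound, I would apply \autoref{lem:estimatePolarization} to each summand in the expansion obtained from \eqref{eq:defMultLinExtension}. Since $h_{B_1(0)}\equiv 1$ on $\S^{n-1}$, the definition of $L(K_j,\rho_j)$ gives $\|h_{L(K_j,\rho_j)}\|_\infty \le 2 C_n \|\rho_j\|_{C^2(\S^{n-1})}\|h_{K_j}\|_\infty$, which combined with the trivial bound $\|\rho_j h_{K_j}\|_\infty \le \|\rho_j\|_{C^2(\S^{n-1})}\|h_{K_j}\|_\infty$ yields
\begin{align*}
\max\bigl\{\|h_{A_j(K_j)}\|_\infty, \|h_{B_j(K_j)}\|_\infty\bigr\} \le (2C_n+1)\|\rho_j\|_{C^2(\S^{n-1})}\|h_{K_j}\|_\infty.
\end{align*}
Restricting to $K_j \subset B_1(0)$ forces $\|h_{K_j}\|_\infty \le 1$, and summing the resulting estimates over the finitely many terms in \eqref{eq:defMultLinExtension} produces the desired constant $C_{n,r}$.

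No step appears genuinely delicate; the only point requiring a moment's care is the verification that $A_j(K_j)$ depends Hausdorff-continuously on $K_j$, but this is transparent from the explicit uniform-norm continuity of $h_{A_j(K_j)}$ as a function of $h_{K_j}$.
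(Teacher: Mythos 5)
Your proposal is correct and follows essentially the same route as the paper: your bodies $A_j(K_j)$ and $B_j(K_j)$ are exactly the paper's $\tilde{L}(K_j,\rho_j)$ and $L(K_j,\rho_j)$, and the paper likewise expands $\hat{\varphi}_{\rho_1,\dots,\rho_r}$ via Eq.~\eqref{eq:defMultLinExtension}, bounds both bodies by a ball of radius comparable to $\|\rho_j\|_{C^2(\S^{n-1})}\|h_{K_j}\|_\infty$ to invoke \autoref{lem:estimatePolarization}, and deduces joint continuity from the Hausdorff continuity of $K\mapsto L(K,\rho)$, $K\mapsto\tilde{L}(K,\rho)$ together with the continuity of the polarization.
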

\begin{proof}
	Since $\rho_jh_{K_j}$ is a difference of support functions by \autoref{lem:localizedSuppFunctionIsDifference}, $\hat{\varphi}_{\rho_1,\dots,\rho_r}$ is well defined. Moreover, since $\tilde{\varphi}$ is multilinear, $\hat{\varphi}_{\rho_1,\dots,\rho_r}$ is Minkowski additive in each argument. If we let $\tilde{L}(K,\rho)\in \K(\RR^n)$ be the convex body with support function 
	\begin{align*}
		h_{\tilde{L}(K,\rho)}=h_{L(K,\rho)}+\rho h_K
	\end{align*} 
	for $K\in\K(\RR^n)$ and $\rho\in C^2(\S^{n-1})$, then Eq.~\eqref{eq:defMultLinExtension} implies that
	\begin{align}
		\label{eq:localizationInTermsOfBodies}
		&\hat{\varphi}_{\rho_1,\dots,\rho_r}(K_1,\dots,K_r)\\
		\notag
		=&\sum_{l=0}^r \frac{(-1)^{r-l}}{l!(r-l!)}\sum_{\sigma\in S_r} \bar{\varphi}\left(\tilde{L}(K_{\sigma(1)},\rho_{\sigma(1)}),\dots,\tilde{L}(K_{\sigma(l)},\rho_{\sigma(l)}),L(K_{\sigma(l+1)},\rho_{\sigma(l+1)}),\dots,L(K_{\sigma(r)},\rho_{\sigma(r)})\right).
	\end{align}
	Since $\tilde{L}(K,\rho)$ and $L(K,\rho)$ are both contained in $3C_n\|\rho\|_{C^2(\S^{n-1})}\|h_K\|_\infty B_1(0)$ by construction, \autoref{lem:estimatePolarization} implies
	\begin{align*}
		|\hat{\varphi}_{\rho_1,\dots,\rho_r}(K_1,\dots,K_r)|\leq 2^nC(r)(3C_n)^r \prod_{j=1}^r \|\rho_j\|_{C^2(\S^{n-1})}\|h_{K_j}\|_\infty,
	\end{align*}
	which shows the desired estimate.
	
	It remains to see that $\hat{\varphi}_{\rho_1,\dots,\rho_r}$ is jointly continuous. Let $(K^m_j)_m$,  be sequences of convex bodies converging to $K_j$ for $1\leq j\leq r$ respectively. Since
	\begin{align*}
		\lim\limits_{m\rightarrow\infty}\tilde{L}(K_j^m,\rho_j)=\tilde{L}(K_j,\rho_j),
		\lim\limits_{m\rightarrow\infty}L(K_j^m,\rho_j)=\tilde{L}(K_j,\rho_j), 
	\end{align*}
	the representation of $\hat{\varphi}_{\rho_1,\dots,\rho_r}$ given by Eq.~\eqref{eq:localizationInTermsOfBodies} shows that
	\begin{align*}
		\lim\limits_{m\rightarrow\infty}\hat{\varphi}_{\rho_1,\dots,\rho_r}(K^m_1,\dots,K^m_r)=\hat{\varphi}_{\rho_1,\dots,\rho_r}(K_1,\dots,K_r),
	\end{align*}
	since the polarization is continuous by \autoref{theorem:polarization}. Consequently, $\hat{\varphi}_{\rho_1,\dots,\rho_r}$ is jointly continuous.
\end{proof}

Next, we are going to use the functionals constructed so far to obtain valuations. We split the construction into two parts.
\begin{proposition}
	\label{prop:localizationValuationProperty}
	Let $\varphi\in \PVal_r(\RR^n)$, and $\rho\in C^2(\S^{n-1})$ be nonnegative. Then $\varphi_{\rho}:\K(\RR^n)\rightarrow\CC$ defined by
	\begin{align*}
		\varphi_\rho(K)=\hat{\varphi}_{\rho,\dots,\rho}(K,\dots,K)
	\end{align*}
	belongs to $\PVal_r(\RR^n)$.
\end{proposition}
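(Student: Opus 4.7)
The proof amounts to verifying four properties of $\varphi_\rho$: continuity, $r$-homogeneity, polynomial dependence of degree at most $r$ under translation, and the valuation identity. The first three are routine: continuity follows by specializing the joint continuity of $\hat\varphi_{\rho,\dots,\rho}$ from \autoref{lem:localizationPolarization} to the diagonal $K_1=\cdots=K_r=K$; $r$-homogeneity is immediate from $h_{tK}=th_K$ and the $r$-multilinearity of $\tilde\varphi$; and the polynomial property comes from $\rho h_{K+x}=\rho h_K+\rho\ell_x$ with $\ell_x(v)=\langle v,x\rangle$, so that $\varphi_\rho(K+x)$ becomes a polynomial of degree at most $r$ in $x$ by multilinearity of $\tilde\varphi$.

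The main task is the valuation identity: given $K,L\in\K(\RR^n)$ with $K\cup L$ convex, show that $\varphi_\rho(K\cup L)+\varphi_\rho(K\cap L)=\varphi_\rho(K)+\varphi_\rho(L)$. My plan is to fix a single convex body $L_0$ large enough (depending on $\rho$ and on a Hausdorff bound for the four bodies) so that $h_{L_0}+t\rho h_X$ is the support function of a convex body $\Sigma^t(X)$ for all $t\in[0,1]$ and every $X\in\{K,L,K\cup L,K\cap L\}$. Applied diagonally (with $\phi_j=\rho h_X$ in every slot), \autoref{lem:MultLinExtensionAsDerivative} then gives
\[
\varphi_\rho(X)=\frac{1}{r!}\left.\frac{d^r}{dt^r}\right|_{t=0}\varphi(\Sigma^t(X)).
\]
Since $\Sigma^t(X)=(1-t)L_0+t\Sigma^1(X)$ as a Minkowski combination, expanding by $r$-homogeneity of $\varphi$ and extracting the coefficient of $t^r$ yields
\[
\varphi_\rho(X)=\sum_{l=0}^r(-1)^{r-l}\binom{r}{l}\bar\varphi\big(L_0[r-l],\Sigma^1(X)[l]\big).
\]
The crucial input is that, for each $l$, the map $Y\mapsto\bar\varphi(L_0[r-l],Y[l])$ is a continuous $l$-homogeneous valuation in $Y$, obtained as a coefficient-wise consequence of the valuation identity of $\varphi$ applied to Minkowski sums $sY+L_0$. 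Combined with the identities $\Sigma^1(K\cap L)=\Sigma^1(K)\cap\Sigma^1(L)$ and $\Sigma^1(K\cup L)=\mathrm{conv}(\Sigma^1(K)\cup\Sigma^1(L))$ that follow from $\rho\geq 0$ via the max/min identity for support functions, this reduces the valuation identity for $\varphi_\rho$ to a term-wise cancellation inside the sum over $l$.

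The hardest point will be that $\Sigma^1(K)\cup\Sigma^1(L)$ need not itself be convex (it equals $\Sigma^1(K\cup L)$ only after taking the convex hull), so the valuation identity for each functional $\bar\varphi(L_0[r-l],\cdot[l])$ cannot be applied naively to the pair $(\Sigma^1(K),\Sigma^1(L))$. To circumvent this, I would invoke a density argument in the Banach space $\PVal_r(\RR^n)$: a suitable subspace --- for instance the space of valuations admitting an integral representation against a smooth differential form on the normal cycle, which can be described independently of \autoref{thm:AleskerIrredThm} --- is dense, and for such $\varphi$ the construction of $\varphi_\rho$ can be written directly as integration against a modified differential form obtained from the original by incorporating the factor $\rho$, so the valuation property follows from the valuativity of integration over the normal cycle. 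The continuity estimate in \autoref{lem:localizationPolarization} then propagates the valuation identity from this dense subspace to all of $\PVal_r(\RR^n)$, completing the verification.
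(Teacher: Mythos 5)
Your first three verifications (continuity, homogeneity, and the polynomial property) are fine and match the paper's reasoning. Your framework for the valuation identity --- choosing a single $L_0$ so that $h_{L_0}+t\rho h_X$ is convex for $t\in[0,1]$ and all four bodies $X\in\{K,L,K\cup L,K\cap L\}$, then using \autoref{lem:MultLinExtensionAsDerivative} to write $\varphi_\rho(X)$ as an $r$-th derivative of $\varphi(\Sigma^t(X))$ --- is precisely the one the paper adopts (with $L_0$ called $M$ there). The problem is with the step after that: the gap you identify is not a real gap.

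You assert that $\Sigma^1(K)\cup\Sigma^1(L)$ need not be convex. Under the hypotheses this is false, and noticing why is the whole point of the proof. Since $\rho\geq 0$ and $t\geq 0$, the pointwise minimum of the two support functions is
\begin{align*}
	\min\{h_{\Sigma^t(K)},h_{\Sigma^t(L)}\} = h_{L_0}+t\rho\min\{h_K,h_L\} = h_{L_0}+t\rho\, h_{K\cap L},
\end{align*}
where the second equality uses that $K\cup L$ is convex. By your very choice of $L_0$ the right-hand side is the support function $h_{\Sigma^t(K\cap L)}$, hence convex; and a pointwise minimum of two support functions is convex precisely when the union of the corresponding bodies is convex (\autoref{lem:PropSupportFunction}, item (5)). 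Together with the analogous max identity one gets $\Sigma^t(K)\cup\Sigma^t(L)=\Sigma^t(K\cup L)$ and $\Sigma^t(K)\cap\Sigma^t(L)=\Sigma^t(K\cap L)$ for every $t\in[0,1]$. You can therefore apply the valuation identity of $\varphi$ directly to the four bodies $\Sigma^t(X)$ for each fixed $t$ and then differentiate $r$ times at $t=0$; the polarization expansion and the termwise valuativity of $\bar\varphi(L_0[r-l],\,\cdot\,[l])$ become unnecessary. This is exactly the paper's proof, and it is shorter than what you sketched.

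The density detour you propose as a patch is therefore superfluous, and it would not work anyway. First, $\varphi_\rho$ multiplies the \emph{support function} by $\rho$; this is not the same operation as multiplying the defining differential form by $\pi_2^\ast\rho$, and the two do not produce the same valuation in general (compare the effect on the Goodey--Weil distribution in \autoref{cor:LocalizationGWDistroAndVSupport}, which is not what multiplication of the form by $\pi_2^*\rho$ would give). Second, density in $\PVal_r(\RR^n)$ of valuations representable by integration over the normal cycle is essentially \autoref{mthm:smoothValsByDiffform}, which the paper proves \emph{using} the localization procedure that \autoref{prop:localizationValuationProperty} sets up; invoking it here would be circular.
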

\begin{proof}
	\autoref{lem:localizationPolarization} implies that $\varphi_\rho$ is continuous. In order to see that $\varphi_\rho$ is a valuation, let $K,L\in\K(\RR^n)$ be two convex bodies such that $K\cup L$ is convex. Consider the convex body
	\begin{align*}
		M=C_n\|\rho\|_{C^2(\S^{n-1})} (K+L+K\cup L+K\cap L+(\|h_K\|_\infty+\|h_L\|_\infty) B_1(0)).
	\end{align*}
	\autoref{lem:localizedSuppFunctionIsDifference} implies that for $t\in[0,1]$, the functions
	\begin{align*}
		h_M+t\rho h_K, h_M+t\rho h_L,h_M+t\rho h_{K\cup L},h_M+t\rho h_{K\cap L}
	\end{align*}
	are all convex. Let us denote the corresponding bodies by $M^t_K,M^t_L,M^t_{K\cup L}$ and $M^t_{K\cap L}$ (compare \autoref{lem:PropSupportFunction}). Since $\rho$ is nonnegative and $t\ge 0$, we have
	\begin{align*}
		\max\{h_M+t\rho h_K,h_M+t\rho h_L\}=h_M+t\rho \max\{h_K,h_L\}=h_M+t\rho h_{K\cup L},\\
		\min\{h_M+t\rho h_K,h_M+t\rho h_L\}=h_M+t\rho \min\{h_K,h_L\}=h_M+t\rho h_{K\cap L}.
	\end{align*}
	In other words,
	\begin{align*}
		&\max\{h_{M^t_K},h_{M^t_L}\}=h_{M^t_{K\cup L}}, &&\min\{h_{M^t_K},h_{M^t_L}\}=h_{M^t_{K\cap L}},
	\end{align*}
	so in particular, $M^t_K\cup M^t_L=M^t_{K\cup L}$ is convex and $M^t_K\cap M^t_L=M^t_{K\cap L}$ by \autoref{lem:PropSupportFunction}. As $\varphi$ is a valuation, we thus obtain 
	\begin{align*}
		\varphi(M^t_K)+\varphi(M^t_L)=\varphi(M^t_{K\cup L})+\varphi(M^t_{K\cap L})
	\end{align*} 
	for every $t\in [0,1]$. Taking derivatives in $t$ on both sides, \autoref{lem:MultLinExtensionAsDerivative} implies that
	\begin{align*}
		\varphi_\rho(K)+\varphi_\rho(L)=&\frac{1}{r!}\frac{d^r}{dt^r}\Big|_{0^+}\left(\varphi(M^t_K)+\varphi(M^t_L)\right)=\frac{1}{r!}\frac{d^r}{d^r}\Big|_{0^+}\left(\varphi(M^t_{K\cup L})+\varphi(M^t_{K\cap L})\right)\\
		=&\varphi_\rho(K\cup L)+\varphi_\rho(K\cap L).
	\end{align*}
	Thus $\varphi_\rho$ is a valuation as well. Since $\tilde{\varphi}$ is multilinear, it is now easy to check that $\varphi_\rho$ is a polynomial valuation.
\end{proof}
\begin{corollary}
	\label{cor:LocalizationGeneralCase}
	Let $\rho_1,\dots,\rho_r\in C^2(\S^{n-1})$. For every $\varphi\in \PVal_r(\RR^n)$, the map
	\begin{align*}
		\varphi_{\rho_1,\dots,\rho_r}(K):=	\hat{\varphi}_{\rho_1,\dots,\rho_r}(K,\dots,K)
	\end{align*}
	defines an element of $\PVal_r(\RR^n)$. Moreover, $\|\varphi_{\rho_1,\dots,\rho_r}\|\leq C_{n,r}\prod_{j=1}^r\|\rho_j\|_{C^2(\S^{n-1})}\|\varphi\|$.
\end{corollary}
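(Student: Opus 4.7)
The plan is to reduce the statement to Proposition~\ref{prop:localizationValuationProperty} in two steps, exploiting that $\tilde\varphi$ is both multilinear and symmetric, so that $\hat\varphi_{\rho_1,\dots,\rho_r}(K,\dots,K)$ depends multilinearly and symmetrically on $\rho_1,\dots,\rho_r$.

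First I would handle the case in which all the weight functions are nonnegative. For nonnegative $\sigma_1,\dots,\sigma_r \in C^2(\S^{n-1})$, I would apply the standard polarization identity for symmetric multilinear forms to $\tilde\varphi$ at $(\sigma_1 h_K,\dots,\sigma_r h_K)$:
\begin{align*}
r!\,\tilde\varphi(\sigma_1 h_K,\dots,\sigma_r h_K) \;=\; \sum_{I \subseteq \{1,\dots,r\}} (-1)^{r-|I|}\, \tilde\varphi(\tau_I h_K,\dots,\tau_I h_K),
\end{align*}
where $\tau_I := \sum_{i \in I}\sigma_i$. Each $\tau_I$ is again nonnegative and lies in $C^2(\S^{n-1})$, so by Proposition~\ref{prop:localizationValuationProperty} the diagonal term $K \mapsto \tilde\varphi(\tau_I h_K,\dots,\tau_I h_K) = \varphi_{\tau_I}(K)$ belongs to $\PVal_r(\RR^n)$. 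Since $\PVal_r(\RR^n)$ is a vector space, the right-hand side lies there as well, which yields $\varphi_{\sigma_1,\dots,\sigma_r} \in \PVal_r(\RR^n)$.

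To treat general $\rho_j \in C^2(\S^{n-1})$, I would decompose each $\rho_j$ as a $\CC$-linear combination of nonnegative $C^2$-functions: writing $\rho_j = \real\rho_j + \sqrt{-1}\,\imag\rho_j$ and further expressing each real component as $\real\rho_j = (\real\rho_j + M_j) - M_j$ for a constant $M_j > \|\real\rho_j\|_\infty$ (analogously for $\imag\rho_j$) realises such a decomposition. Multilinearity of $\hat\varphi_{\cdot,\dots,\cdot}$ in each slot then expands $\varphi_{\rho_1,\dots,\rho_r}$ into a finite $\CC$-linear combination of terms $\varphi_{\sigma_1,\dots,\sigma_r}$ with every $\sigma_j$ nonnegative, each of which belongs to $\PVal_r(\RR^n)$ by the previous step. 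Hence $\varphi_{\rho_1,\dots,\rho_r} \in \PVal_r(\RR^n)$.

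The norm bound is a direct consequence of Lemma~\ref{lem:localizationPolarization}: specialising its estimate to the diagonal $K_1 = \dots = K_r = K$ with $K \subset B_1(0)$ gives precisely $\|\varphi_{\rho_1,\dots,\rho_r}\| \leq C_{n,r}\|\varphi\|\prod_{j=1}^r \|\rho_j\|_{C^2(\S^{n-1})}$. I do not anticipate any real obstacle; once polarization is applied, all substantive work has been carried out already in Proposition~\ref{prop:localizationValuationProperty}, and the remainder is purely formal bookkeeping from multilinearity and symmetry.
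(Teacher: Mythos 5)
Your proof is correct, but it takes a genuinely different route from the paper. The paper handles the reduction to the nonnegative case in one stroke: it introduces the parameterized family $\rho(t_1,\dots,t_r)=\sum_j\|\rho_j\|_\infty+\sum_j t_j\rho_j$ (nonnegative for $0\le t_j\le 1$), so that $\varphi_{\rho(t_1,\dots,t_r)}$ is a valuation by \autoref{prop:localizationValuationProperty}, observes that $(t_1,\dots,t_r)\mapsto\varphi_{\rho(t_1,\dots,t_r)}(K)$ is a polynomial, and extracts $\varphi_{\rho_1,\dots,\rho_r}$ as $\frac{1}{r!}\partial^r/\partial t_1\cdots\partial t_r$ at $0^+$; the constant shift built into $\rho(t)$ makes no separate ``decompose into nonnegative parts'' step necessary. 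You instead apply the finite-difference polarization identity for symmetric multilinear forms to express $\varphi_{\sigma_1,\dots,\sigma_r}$ (for nonnegative $\sigma_j$) as $\frac{1}{r!}\sum_I(-1)^{r-|I|}\varphi_{\tau_I}$ with $\tau_I=\sum_{i\in I}\sigma_i\ge 0$, and then handle general $\rho_j$ by a separate linear decomposition into nonnegative functions, using the multilinearity of $\hat\varphi$ in the $\rho$-slots. Both arguments hinge on the same key lemma and are of comparable length; yours is purely algebraic (no differentiation), which some may prefer, while the paper's single-step shift is slightly more compact. Minor remark: in your Step 1, the term $I=\emptyset$ gives $\tau_\emptyset=0$ and $\varphi_0=0$ by multilinearity, which is of course a (trivial) element of $\PVal_r(\RR^n)$, so there is no issue — it is just worth saying explicitly.
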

\begin{proof}
	The inequality is a direct consequence of \autoref{lem:localizationPolarization}, which also shows that $\varphi_{\rho_1,\dots,\rho_r}$ is continuous. In order to see that it defines a valuation, note that the function $\rho(t_1,\dots,t_r):=\sum_{j=1}^r\|\rho_j\|_\infty+\sum_{j=1}^rt_j\rho_j$ is nonnegative and belongs to $C^2(\S^{n-1})$ for all $0\leq t_1,\dots,t_r\leq 1$. In particular, $\varphi_{\rho(t_1,\dots,t_r)}$ is a valuation for every $0\leq t_1,\dots,t_r\leq 1$ by \autoref{prop:localizationValuationProperty}. On the other hand, $(t_1,\dots,t_r)\mapsto \varphi_{\rho(t_1,\dots,t_r)}(K)$ is a polynomial in $t_1,\dots,t_r$ for every $K\in\K(\RR^n)$ by construction, and we have
	\begin{align*}
		\varphi_{\rho_1,\dots,\rho_r}(K)=\frac{1}{r!}\frac{\partial^r}{\partial t_1\dots\partial t_r}\Big|_{0^+}\varphi_{\rho(t_1,\dots,t_r)}(K).
	\end{align*} 
	Given $K,L\in \K(\RR^n)$ with $K\cup L$ convex, we thus obtain
	\begin{align*}
		\varphi_{\rho_1,\dots,\rho_r}(K)+\varphi_{\rho_1,\dots,\rho_r}(L)=&\frac{1}{r!}\frac{\partial^r}{\partial t_1\dots\partial t_r}\Big|_{0^+}\left(\varphi_{\rho(t_1,\dots,t_r)}(K)+\varphi_{\rho(t_1,\dots,t_r)}(L)\right)\\
		=&\frac{1}{r!}\frac{\partial^r}{\partial t_1\dots\partial t_r}\Big|_{0^+}\left(\varphi_{\rho(t_1,\dots,t_r)}(K\cup L)+\varphi_{\rho(t_1,\dots,t_r)}(K\cap L)\right)\\
		=&\varphi_{\rho_1,\dots,\rho_r}(K\cup L)+\varphi_{\rho_1,\dots,\rho_r}(K\cap L).
	\end{align*}
	Thus $\varphi_{\rho_1,\dots,\rho_r}$ is a valuation as well. Since $\tilde{\varphi}$ is multilinear, it is now easy to check that $\varphi_{\rho_1,\dots,\rho_r}$ is a polynomial valuation.
\end{proof}
\begin{remark}
	In general, $\hat{\varphi}_{\rho_1,\dots,\rho_r}$ is not the polarization of $\varphi_{\rho_1,\dots,\rho_r}$, however, from the definition of the polarization, we directly obtain
	\begin{align}
		\label{eq:relationPolarizationPhiHat}
		\bar{\varphi}_{\rho_1,\dots,\rho_r}(K_1,\dots,K_r)=\frac{1}{r!}\sum_{\sigma\in S_r}\hat{\varphi}_{\rho_{1}, \dots, \rho_r}(K_{\sigma(1)}, \dots, K_{\sigma(r)})
	\end{align}
	i.e. the polarization agrees with the symmetric part of $\hat{\varphi}_{\rho_1,\dots,\rho_r}$.
\end{remark}

The construction admits the following simple description in terms of the associated Goodey--Weil distributions.

\begin{corollary}
	\label{cor:LocalizationGWDistroAndVSupport}
	Let $\varphi\in \PVal_r(\RR^n)$ and $\rho_1,\dots,\rho_r\in C^\infty(\S^{n-1})$. Then 
	\begin{align}\label{eq:LocGWDistr}
		\GW(\varphi_{\rho_1,\dots,\rho_r})[\phi_1\otimes\dots\otimes \phi_r]=\frac{1}{r!}\sum_{\sigma\in S_r}\GW(\varphi)[\rho_1\phi_{\sigma(1)}\otimes\dots\otimes \rho_r\phi_{\sigma(r)}]
	\end{align}
	for all $\phi_1,\dots,\phi_r\in C^\infty(\S^{n-1})$. In particular,
	\begin{align*}
		\vsupp\varphi_{\rho_1,\dots,\rho_r}\subset \vsupp\varphi\cap \bigcap_{j=1}^r\supp\rho_j.
	\end{align*}
\end{corollary}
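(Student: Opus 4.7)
My plan is to verify the identity \eqref{eq:LocGWDistr} first on test functions of the form $\phi_j = h_{K_j}$ with $K_j$ smooth and of strictly positive Gauss curvature, and then to extend by density to all $\phi_j\in C^\infty(\S^{n-1})$. The support inclusion will then be read off a suitable intrinsic rewriting of the right-hand side as a distribution on $(\S^{n-1})^r$.

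The main preparatory step -- and what I expect to be the most delicate -- is the identification
\begin{align*}
	\tilde\varphi(\phi_1,\dots,\phi_r)=\GW(\varphi)[\phi_1\otimes\dots\otimes\phi_r] \quad \text{for all } \phi_j\in C^\infty(\S^{n-1}).
\end{align*}
I would argue this by first noting that every $\phi\in C^\infty(\S^{n-1})$ can be written as a difference $h_K-h_L$ of support functions of smooth convex bodies with strictly positive Gauss curvature (add a sufficiently large multiple of the support function of a Euclidean ball to the $1$-homogeneous extension of $\phi$). Substituting such representations $\phi_j = h_{K_j} - h_{L_j}$ into the definition \eqref{eq:defMultLinExtension} and exploiting the symmetry of $\bar\varphi$, the sum over $S_r$ collapses to
\begin{align*}
	\tilde\varphi(\phi_1,\dots,\phi_r)=\sum_{I\subset\{1,\dots,r\}}(-1)^{r-|I|}\bar\varphi\bigl((K_j)_{j\in I},(L_j)_{j\notin I}\bigr),
\end{align*}
and by the defining property \eqref{eq:definingPropertyGW} together with multi-linearity, the same expansion represents $\GW(\varphi)[\phi_1\otimes\dots\otimes\phi_r]$.

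With this identification in hand, the identity \eqref{eq:LocGWDistr} for $\phi_j = h_{K_j}$ follows by combining \eqref{eq:relationPolarizationPhiHat} with the defining property of $\GW(\varphi_{\rho_1,\dots,\rho_r})$: for smooth $K_1,\dots,K_r$ with positive Gauss curvature,
\begin{align*}
	\GW(\varphi_{\rho_1,\dots,\rho_r})[h_{K_1}\otimes\dots\otimes h_{K_r}]
	&=\bar\varphi_{\rho_1,\dots,\rho_r}(K_1,\dots,K_r)\\
	&=\frac{1}{r!}\sum_{\sigma\in S_r}\tilde\varphi(\rho_1 h_{K_{\sigma(1)}},\dots,\rho_r h_{K_{\sigma(r)}})\\
	&=\frac{1}{r!}\sum_{\sigma\in S_r}\GW(\varphi)[(\rho_1 h_{K_{\sigma(1)}})\otimes\dots\otimes(\rho_r h_{K_{\sigma(r)}})].
\end{align*}
Since finite linear combinations of such tensor products are dense in $C^\infty((\S^{n-1})^r)$ and both sides of \eqref{eq:LocGWDistr} are continuous multilinear functionals of $(\phi_1,\dots,\phi_r)$, the identity extends to arbitrary $\phi_j\in C^\infty(\S^{n-1})$.

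For the vertical support, I would rewrite the right-hand side of \eqref{eq:LocGWDistr} intrinsically as the distribution $\tfrac{1}{r!}\sum_{\sigma\in S_r}\tau_\sigma^\ast\bigl[(\rho_1\otimes\dots\otimes\rho_r)\cdot\GW(\varphi)\bigr]$ on $(\S^{n-1})^r$, where $\tau_\sigma$ denotes the permutation of coordinates associated to $\sigma$. By the preceding proposition $\supp\GW(\varphi)$ lies in the diagonal, so by \eqref{eq:defvsuppPVal} one has $\supp\GW(\varphi)=\Delta_r(\vsupp\varphi)$; together with $\supp(\rho_1\otimes\dots\otimes\rho_r)=\supp\rho_1\times\dots\times\supp\rho_r$, the product distribution is therefore supported in $\Delta_r\bigl(\vsupp\varphi\cap\bigcap_{j=1}^r\supp\rho_j\bigr)$. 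Each $\tau_\sigma$ fixes the diagonal pointwise, so the same bound holds for every summand, and the desired inclusion of vertical supports follows from \eqref{eq:defvsuppPVal} applied to $\varphi_{\rho_1,\dots,\rho_r}$.
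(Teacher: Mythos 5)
Your proof is correct and follows essentially the same route as the paper: the chain $\GW(\varphi_{\rho_1,\dots,\rho_r})[h_{K_1}\otimes\dots\otimes h_{K_r}]=\bar\varphi_{\rho_1,\dots,\rho_r}(K_1,\dots,K_r)=\frac{1}{r!}\sum_\sigma\tilde\varphi(\rho_1 h_{K_{\sigma(1)}},\dots,\rho_r h_{K_{\sigma(r)}})=\frac{1}{r!}\sum_\sigma\GW(\varphi)[\cdots]$ on smooth, positively curved bodies is exactly the paper's computation, and the support bound is obtained from the diagonal support of $\GW(\varphi)$ exactly as the paper indicates. The only stylistic differences are that you isolate the identification $\tilde\varphi=\GW(\varphi)$ on tensor products of smooth functions as an explicit preparatory lemma (the paper leaves this implicit, using it silently in the last displayed equality), and you close the argument by density of tensor products in $C^\infty((\S^{n-1})^r)$ rather than the slightly more elementary appeal to multilinearity in each $\phi_j$ together with the fact that every smooth function is a difference of support functions of smooth, positively curved bodies. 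Both closing arguments are valid; the paper's multilinearity argument avoids invoking the nuclearity fact behind the density of tensor products, which is why it is preferred there, but this is a minor point.
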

\begin{proof}
	Let $K_1, \dots, K_r \in \K(\RR^n)$ be smooth convex bodies with strictly positive Gauss curvature. Then, by \autoref{thm:GWbyGW} and Eq.~\eqref{eq:relationPolarizationPhiHat},
	\begin{align*}
		\GW(\varphi_{\rho_1,\dots,\rho_r})[h_{K_1}\otimes\dots\otimes h_{K_r}] &= \frac{1}{r!}\sum_{\sigma\in S_r}\hat{\varphi}_{\rho_{1}, \dots, \rho_r}(K_{\sigma(1)}, \dots, K_{\sigma(r)})\\
		 &= \frac{1}{r!}\sum_{\sigma\in S_r}\tilde{\varphi}(\rho_1h_{K_{\sigma(1)}}, \dots, \rho_rh_{K_{\sigma(r)}})\\
		 &= \frac{1}{r!}\sum_{\sigma\in S_r}\GW(\varphi)[\rho_1h_{K_{\sigma(1)}}\otimes \dots\otimes \rho_rh_{K_{\sigma(r)}}].
	\end{align*}
	Since every function in $C ^\infty(\S^{n-1})$ can be expressed as a difference of support functions of smooth convex bodies with strictly positive Gauss curvature, the first claim follows by multilinearity. The second follows from Eq.~\eqref{eq:LocGWDistr} using that the support of $\GW(\varphi)$ is contained in the diagonal.
\end{proof}

Note that we have a natural operation of $\GL(n,\RR)$ on $\PVal_r(\RR^n)$ given by $(g\cdot\varphi)(K):=\varphi(g^{-1}K)$ for $\varphi\in\PVal_r(\RR^n)$, $g\in\GL(n,\RR)$, $K\in\mathcal{K}(\RR^n)$. The next result shows that the construction above preserves the corresponding space of $\GL(n,\RR)$-smooth valuations, which we denote by $\PVal^\infty_r(\RR^n)$.
\begin{proposition}\label{prop:LocPolysAreSm}
	Let $\varphi\in \PVal_r(\RR^n)$ be $\GL(n,\RR)$-smooth and $\rho_1,\dots,\rho_r\in C^\infty(\S^{n-1})$. Then $\varphi_{\rho_1,\dots,\rho_r}\in \PVal_r(\RR^n)$ is $\GL(n,\RR)$-smooth.
\end{proposition}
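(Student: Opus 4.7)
The plan is to reduce smoothness of $g\mapsto g\cdot\varphi_{\rho_1,\dots,\rho_r}$ to the assumed smoothness of $g\mapsto g\cdot\varphi$ via a transformation identity relating the $\GL(n,\RR)$-action on the localized valuation to the action on $\varphi$ and a natural action on the cutoff functions.

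For the identity, I exploit the $1$-homogeneity of support functions: writing $\tilde g(v):=g^{-T}v/\|g^{-T}v\|$, one has $h_{g^{-1}K}(v)=h_K(g^{-T}v)=\|g^{-T}v\|\,h_K(\tilde g(v))$. I introduce the linear operator $A_g$ on $\mathcal D$ that takes $h_K$ to $h_{g^{-1}K}$ (i.e., pullback of the $1$-homogeneous extension under $g^{-T}$), as well as the operator $R_g$ on $C^\infty(\S^{n-1})$ defined by $(R_g\rho)(w):=\rho(g^T w/\|g^T w\|)$. A direct computation then yields $\rho_j\,h_{g^{-1}K}=A_g\bigl((R_g\rho_j)\,h_K\bigr)$. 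On the other hand, the polarization identity $\overline{g\cdot\varphi}(K_1,\dots,K_r)=\bar\varphi(g^{-1}K_1,\dots,g^{-1}K_r)$, combined with the definition \eqref{eq:defMultLinExtension} of the multilinear extension, gives $\widetilde{g\cdot\varphi}(\phi_1,\dots,\phi_r)=\tilde\varphi(A_g\phi_1,\dots,A_g\phi_r)$ for all $\phi_j\in\mathcal D$. Putting these together,
\[
(g\cdot\varphi_{\rho_1,\dots,\rho_r})(K)
=\tilde\varphi\bigl(A_g((R_g\rho_1)h_K),\dots,A_g((R_g\rho_r)h_K)\bigr)
=(g\cdot\varphi)_{R_g\rho_1,\dots,R_g\rho_r}(K).
\]

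Given this identity, smoothness follows formally. By \autoref{cor:LocalizationGeneralCase} the localization map $L:(\psi,\sigma_1,\dots,\sigma_r)\mapsto\psi_{\sigma_1,\dots,\sigma_r}$ is continuous and multilinear from $\PVal_r(\RR^n)\times C^2(\S^{n-1})^r$ to $\PVal_r(\RR^n)$, hence smooth between Banach spaces. The map $g\mapsto R_g\rho_j$ is smooth into $C^2(\S^{n-1})$ since $\rho_j\in C^\infty(\S^{n-1})$ and $g\mapsto g^T(\cdot)/\|g^T(\cdot)\|$ depends smoothly on $g$, while $g\mapsto g\cdot\varphi$ is smooth into $\PVal_r(\RR^n)$ by hypothesis. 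Composing, $g\mapsto L(g\cdot\varphi,R_g\rho_1,\dots,R_g\rho_r)=g\cdot\varphi_{\rho_1,\dots,\rho_r}$ is smooth, which is the claim.

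The main obstacle is the transformation identity, and in particular the verification that $A_g$ intertwines the multilinear extensions $\tilde\varphi$ and $\widetilde{g\cdot\varphi}$. This is not conceptually deep but requires careful bookkeeping through \eqref{eq:defMultLinExtension} and the $1$-homogeneity of support functions; once in hand, the rest is the standard fact that continuous multilinear maps between Banach spaces are smooth.
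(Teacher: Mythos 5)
Your proposal is correct and takes essentially the same route as the paper: both hinge on the equivariance identity $g\cdot\varphi_{\rho_1,\dots,\rho_r}=(g\cdot\varphi)_{g\cdot\rho_1,\dots,g\cdot\rho_r}$ (your $R_g$ is exactly the paper's action on $C^2(\S^{n-1})$) combined with the continuity and multilinearity of the localization map from \autoref{cor:LocalizationGeneralCase}, which forces smooth vectors to go to smooth vectors. The only immaterial difference is that you verify the equivariance directly through the multilinear extension \eqref{eq:defMultLinExtension} and the uniqueness of the polarization, whereas the paper deduces it from the derivative characterization in \autoref{lem:MultLinExtensionAsDerivative}; both arguments are valid and lead to the same conclusion.
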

\begin{proof}
	Define a representation of $\GL(n,\RR)$ on $C^2(\S^{n-1})$ by
	\begin{align*}
		(g\cdot\rho)(v)=\phi\left(\frac{g^Tv}{\|g^Tv\|}\right)
	\end{align*}
	for $g\in \GL(n,\RR)$, $\rho\in C^2(\S^{n-1})$, $v\in \S^{n-1}$. By \autoref{cor:LocalizationGeneralCase}, the map
	\begin{align}
		\label{eq:localizationMap}
		\begin{split}
			C^2(\S^{n-1})^r\times \PVal_r(\RR^n)&\rightarrow \PVal_r(\RR^n)\\		(\rho_1,\dots,\rho_r,\varphi)&\mapsto\varphi_{\rho_1,\dots,\rho_r}
		\end{split}
	\end{align}
	is well defined and continuous. We claim that it is $\GL(n,\RR)$-equivariant. In order to see this, let $\rho_1,\dots,\rho_r\in C^2(\S^{n-1})$, and $K\in\K(\RR^n)$ be given. Choose a convex body $M\in \K(\RR^n)$ such that $h_M+t\rho_j h_K$ is convex for all $t\in[-1,1]$ and denote the associated convex body by $M^t_j$. Since 
	\begin{align*}
		h_{g^{-1}M^t_j}(v)=&h_{M^t_j}(g^{-T}v)=\|g^{-T}v\|h_{M^t_j}\left(\frac{g^{-T}v}{\|g^{-T}v\|}\right)\\
		=&\|g^{-T}v\|h_{M}\left(\frac{g^{-T}v}{\|g^{-T}v\|}\right)+t\rho_j\left(\frac{g^{-T}v}{\|g^{-T}v\|}\right) \|g^{-T}v\|h_K\left(\frac{g^{-T}v}{\|g^{-T}v\|}\right)\\
		=&h_{g^{-1}M}(v)+t(g^{-1} \cdot \rho_j)(v) h_{g^{-1}K}(v),
	\end{align*}
	is convex for all $t\in[-1,1]$, \autoref{lem:MultLinExtensionAsDerivative} implies
	\begin{align*}
		\varphi_{g^{-1}\cdot\rho_1,\dots,g^{-1}\cdot\rho_r}(g^{-1}K)=&\tilde{\varphi}\left((g^{-1}\cdot \rho_1) h_{g^{-1}K},\dots,(g^{-1}\cdot \rho_r) h_{g^{-1}K}\right)\\
		=&\frac{1}{k!}\frac{\partial^r}{\partial t_1\dots\partial t_r}\Big|_0 \varphi\left(\sum_{j=1}^r g^{-1}M^{t_j}_j\right)\\
		=&\frac{1}{k!}\frac{\partial^r}{\partial t_1\dots\partial t_r}\Big|_0 (g\cdot\varphi)\left(\sum_{j=1}^r M^{t_j}_j\right)\\
		=&(g\cdot\varphi)_{\rho_1,\dots,\rho_r}(K).
	\end{align*}
	Replacing $\rho_1,\dots,\rho_r$ by $g\cdot \rho_1,\dots,g\cdot \rho_r$, we obtain the desired result.
	
	Since the map in Eq.~\eqref{eq:localizationMap} is $\GL(n,\RR)$-equivariant, multilinear, and continuous, it maps $\GL(n,\RR)$-smooth vectors to $\GL(n,\RR)$-smooth vectors. Thus we obtain a well defined map
	\begin{align*}
		C^\infty(\S^{n-1})^r\times \PVal^\infty_r(\RR^n)&\rightarrow \PVal^\infty_r(\RR^n)\\		(\rho_1,\dots,\rho_r,\varphi)&\mapsto\varphi_{\rho_1,\dots,\rho_r},
	\end{align*}
	which completes the proof.
\end{proof}

We may in particular apply the previous results to a partition of unity on $\S^{n-1}$, which provides the following decomposition.
\begin{theorem}\label{thm:locToPolyVal}
	Let $r\in\NN_0$, $\varphi \in \PVal_r(\RR^n)$, and $\rho_1, \dots, \rho_N \in C^\infty(\S^{n-1})$ be a partition of unity. Then there exist $\varphi_1, \dots, \varphi_M \in \PVal_r(\RR^n)$ such that
	\begin{align*}
		\varphi = \varphi_1 + \dots + \varphi_M
	\end{align*}
	and such that the vertical supports $\vsupp \varphi_i$, $i=1, \dots, M$, are subordinate to the supports $\supp \rho_i$, $i=1, \dots, N$. If $\varphi$ is $\GL(n,\RR)$-smooth, then $\varphi_1, \dots, \varphi_M$ can be chosen $\GL(n,\RR)$-smooth as well.
\end{theorem}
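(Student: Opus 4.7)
The plan is to exploit the multilinearity of the extension $\tilde{\varphi}$ together with the identity $\rho_1+\cdots+\rho_N=1$ to expand $\varphi$ into $N^r$ localized pieces, each already constructed in the previous subsection.

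First I would fix $K\in\K(\RR^n)$ and compute, using \autoref{lem:MultLinExtensionAsDerivative} (or simply the definition of $\tilde{\varphi}$ at the diagonal), \autoref{lem:localizedSuppFunctionIsDifference} (so that each $\rho_i h_K$ is a legitimate input to $\tilde{\varphi}$), and multilinearity:
\begin{align*}
\varphi(K)
&=\tilde{\varphi}(h_K,\dots,h_K)
=\tilde{\varphi}\Bigl(\sum_{i=1}^N\rho_i h_K,\dots,\sum_{i=1}^N\rho_i h_K\Bigr)\\
&=\sum_{i_1,\dots,i_r=1}^N\tilde{\varphi}(\rho_{i_1}h_K,\dots,\rho_{i_r}h_K)
=\sum_{i_1,\dots,i_r=1}^N\varphi_{\rho_{i_1},\dots,\rho_{i_r}}(K),
\end{align*}
where the last equality is the definition of $\varphi_{\rho_{i_1},\dots,\rho_{i_r}}$ from \autoref{cor:LocalizationGeneralCase}. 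Relabeling the $N^r$ tuples as a single index, I set $M:=N^r$ and $\varphi_1,\dots,\varphi_M$ to be the corresponding valuations $\varphi_{\rho_{i_1},\dots,\rho_{i_r}}$; by \autoref{cor:LocalizationGeneralCase} each of them lies in $\PVal_r(\RR^n)$.

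It remains to check the support and smoothness conditions. By \autoref{cor:LocalizationGWDistroAndVSupport},
\begin{align*}
\vsupp\varphi_{\rho_{i_1},\dots,\rho_{i_r}}\subset\bigcap_{k=1}^r\supp\rho_{i_k}\subset\supp\rho_{i_1},
\end{align*}
so, as the supports $\supp\rho_i$ are subordinate to the cover underlying the partition of unity, the family $(\vsupp\varphi_j)_{j=1}^M$ is subordinate to the same cover. If $\varphi$ is $\GL(n,\RR)$-smooth, then each $\varphi_{\rho_{i_1},\dots,\rho_{i_r}}$ is $\GL(n,\RR)$-smooth by \autoref{prop:LocPolysAreSm}, completing the argument. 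The case $r=0$ is trivial, since $\vsupp\varphi=\emptyset$ by definition and one may take $M=1$, $\varphi_1=\varphi$.

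The point is that once one has \autoref{lem:localizedSuppFunctionIsDifference}, \autoref{cor:LocalizationGeneralCase}, and \autoref{cor:LocalizationGWDistroAndVSupport} at hand, no further work is required: the decomposition is just the algebraic identity above, and the real difficulty is already absorbed into showing that $K\mapsto \tilde{\varphi}(\rho_{i_1}h_K,\dots,\rho_{i_r}h_K)$ is a valuation at all (rather than merely a continuous functional with the right support). Passing from this $r$-homogeneous version to the statement of \autoref{mthm:locVal} requires only the homogeneous decomposition \autoref{thm:HomDecompPVal} applied componentwise.
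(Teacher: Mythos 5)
Your proof is correct and follows essentially the same route as the paper's: expand $\varphi(K)=\tilde{\varphi}(h_K,\dots,h_K)$ using $\sum_i\rho_i=1$ and multilinearity into the $N^r$ terms $\varphi_{\rho_{i_1},\dots,\rho_{i_r}}(K)$, then invoke \autoref{cor:LocalizationGeneralCase}, \autoref{cor:LocalizationGWDistroAndVSupport}, and \autoref{prop:LocPolysAreSm} for membership in $\PVal_r(\RR^n)$, the support restriction, and smoothness respectively, handling $r=0$ separately. The only cosmetic difference is your explicit appeal to \autoref{lem:localizedSuppFunctionIsDifference} to legitimize $\rho_i h_K$ as an argument of $\tilde{\varphi}$, which the paper treats as already established.
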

\begin{proof}
	Note that this is trivial for $r=0$, since $\vsupp\varphi=\emptyset$ in this case and we may thus choose $M=1$, $\varphi_1=\varphi$. Thus let $r\ge 1$. By the definition of $\tilde{\varphi}$, we have
	\begin{align*}
		\varphi(K)=&\tilde{\varphi}\left(h_K,\dots,h_K\right)=\tilde{\varphi}\left(\sum_{j_1=1}^N\rho_{j_1}h_K,\dots,\sum_{j_r=1}^N\rho_{j_r}h_K\right)\\
		=&\sum_{j_1,\dots,j_r=1}^N\tilde{\varphi}\left(\rho_{j_1}h_K,\dots,\rho_{j_r}h_K\right)\\
		=&\sum_{j_1,\dots,j_r=1}^N\varphi_{\rho_{j_1},\dots,\rho_{j_r}}(K).
	\end{align*}
	Since $\varphi_{\rho_{j_1},\dots,\rho_{j_r}}\in \PVal_r(\RR^n)$ by \autoref{cor:LocalizationGeneralCase}, and since the vertical supports of these valuations are subordinate to the supports of $\rho_j$ by \autoref{cor:LocalizationGWDistroAndVSupport}, this completes the proof of the first statement. The second claim is a direct consequence of \autoref{prop:LocPolysAreSm}.
\end{proof}

The proof of \autoref{mthm:locVal} now follows easily.
\begin{proof}[Proof of \autoref{mthm:locVal}]
	Using the homogeneous decomposition in \autoref{thm:HomDecompPVal}, we may assume that $\varphi$ is a homogeneous valuation. If we choose a finite partition of unity $\rho_1,\dots,\rho_N$ subordinate to the cover $(U_\alpha)_{\alpha\in\mathcal{A}}$, then the valuations constructed in \autoref{thm:locToPolyVal} have the desired properties.
\end{proof}

\section{Representation of smooth valuations by integration over the normal cycle}
\label{sec:RepSmoothVal}
The main goal of this section is the proof of \autoref{mthm:smoothValsByDiffform}. We will investigate how the the map $T: \VConv(\RR^n) \to \Val(\RR^{n+1})$ defined in \autoref{thm:defTmapProp} relates smooth valuation (in the sense of \autoref{def:affSmValConv}) to $\GL(n+1,\RR)$-smooth valuations and then establish a variant of \autoref{mthm:smoothValsByDiffform} for smooth polynomial valuations with support restrictions (\autoref{thm:PolySmDiffForm}). In combination with the localization procedure in \autoref{thm:locToPolyVal}, this finally completes the proof of \autoref{mthm:smoothValsByDiffform}.

\subsection{Smooth polynomial valuations}

We consider $\RR^n$ as a subgroup of $\GL(n+1,\RR)$ using the map $x\mapsto g_x$ defined by
\begin{align*}
	g^T_x(v,\lambda):=(v+\lambda x,\lambda), \quad (v, \lambda) \in \RR^n \times \RR \cong \RR^{n+1}.
\end{align*}
It is easy to check that this defines a homomorphism of Lie groups, in particular, $g^{-1}_x=g_{-x}$. Using the action of $\GL(n+1,\RR)$, we obtain a continuous representation of $\RR^n$ on $\Val(\RR^{n+1})$. This action is related to the representation of $\RR^n$ on $\VConv(\RR^n)$ in the following way.
\begin{lemma}\label{lem:Tequiv}
	The map $T:\VConv(\RR^n) \to \Val(\RR^{n+1})$ from \autoref{thm:defTmapProp} is equivariant with respect to the action of $\RR^n$ on both spaces, i.e.
	\begin{align*}
		T(\pi(x)\mu)=g_x\cdot T(\mu)
	\end{align*}
	for all $\mu\in\VConv(\RR^n)$ and $x\in\RR^n$.
\end{lemma}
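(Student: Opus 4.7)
The proof should be a direct unraveling of the two definitions, so the plan is simply to write both sides of the claimed equality in terms of the defining formulas and match them using the transformation rule for support functions.

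First, I would fix $\mu\in\VConv(\RR^n)$, $x\in\RR^n$, and $K\in\K(\RR^{n+1})$, and expand the right-hand side. By definition of the $\GL(n+1,\RR)$-action on $\Val(\RR^{n+1})$ and the definition of $T$,
\begin{align*}
    (g_x\cdot T(\mu))(K)=T(\mu)(g_x^{-1}K)=\mu\bigl(h_{g_x^{-1}K}(\cdot,-1)\bigr).
\end{align*}
Property (2) of \autoref{lem:PropSupportFunction} gives $h_{g_x^{-1}K}(y)=h_K(g_x^{-T}y)$, so the content of the proof reduces to identifying the restriction of $h_K\circ g_x^{-T}$ to $\RR^n\times\{-1\}$.

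Next, I would compute $g_x^{-T}$ explicitly. Since $x\mapsto g_x$ is a homomorphism with $g_x^{-1}=g_{-x}$, one has $g_x^{-T}=g_{-x}^T$, and by the defining formula
\begin{align*}
    g_{-x}^T(v,\lambda)=(v-\lambda x,\lambda).
\end{align*}
Evaluating at $\lambda=-1$ yields $g_x^{-T}(v,-1)=(v+x,-1)$. Therefore
\begin{align*}
    h_{g_x^{-1}K}(v,-1)=h_K(v+x,-1),
\end{align*}
and consequently
\begin{align*}
    (g_x\cdot T(\mu))(K)=\mu\bigl(h_K(\cdot+x,-1)\bigr).
\end{align*}

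Finally, I would unfold the left-hand side using the definition of $\pi(x)$ acting on $\VConv(\RR^n)$:
\begin{align*}
    T(\pi(x)\mu)(K)=[\pi(x)\mu]\bigl(h_K(\cdot,-1)\bigr)=\mu\bigl(h_K(\cdot+x,-1)\bigr),
\end{align*}
which matches the expression obtained above. Since $K\in\K(\RR^{n+1})$ was arbitrary, this proves $T(\pi(x)\mu)=g_x\cdot T(\mu)$.

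There is no real obstacle here — the only thing to be careful about is the bookkeeping with transposes and inverses (the sign flips in $g_x^{-T}(v,-1)$ versus $g_x^T(v,-1)$), which is exactly where the "plus $x$" in the translation of the function appears.
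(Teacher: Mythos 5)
Your proof is correct and follows exactly the paper's argument: unwind both sides via the definitions of $T$, the $\GL(n+1,\RR)$-action, and $\pi(x)$, and use $h_{g_x^{-1}K}(y)=h_K(g_x^{-T}y)$ together with the explicit computation $g_x^{-T}(v,-1)=(v+x,-1)$. Your extra step spelling out $g_x^{-T}=g_{-x}^{T}$ is just a more explicit version of the same bookkeeping.
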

\begin{proof}
	This is a straightforward calculation using the the properties of support functions in \autoref{lem:PropSupportFunction}: For $K\in\mathcal{K}(\RR^{n+1})$,
	\begin{align*}
		g_x \cdot (T(\mu)) (K) &= T(\mu)(g_x^{-1}K) = \mu(h_{g_x^{-1}K}(\cdot, -1)) = \mu(h_{K}(g_x^{-T}(\cdot, -1))) \\
		&= \mu(h_K(\cdot+x, -1)) = (\pi(x)\mu)(h_K(\cdot, -1)) = T(\pi(x)\mu)(K).
	\end{align*}
\end{proof}
This has the following consequence for smooth valuations.
\begin{lemma}\label{lem:TmuGLsmThenMuAffSm}
	Let $\mu \in \VConv(\RR^n)$. If $T(\mu)\in\Val(\RR^{n+1})$ is $\GL(n+1,\RR)$-smooth, then $\mu$ is smooth in the sense of \autoref{def:affSmValConv}.
\end{lemma}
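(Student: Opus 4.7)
The plan is to transfer smoothness across the map $T$ using two ingredients: the equivariance established in \autoref{lem:Tequiv}, together with the fact that $T$ restricts to a topological isomorphism on support-restricted subspaces (\autoref{thm:defTmapProp}).

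First I would observe that the inclusion $\RR^n \hookrightarrow \GL(n+1,\RR)$, $x \mapsto g_x$, is a smooth group homomorphism. Since $T(\mu)$ is $\GL(n+1,\RR)$-smooth by assumption, the composition
\begin{align*}
	\RR^n \to \GL(n+1,\RR) \to \Val(\RR^{n+1}), \qquad x \mapsto g_x \cdot T(\mu),
\end{align*}
is smooth. By \autoref{lem:Tequiv}, this composition equals $x \mapsto T(\pi(x)\mu)$.

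Next, using that $A := \supp\mu$ is compact (\autoref{remark:suppCompact}), I would check that the supports of the translates $\pi(x)\mu$ stay inside a compact set as $x$ varies over a bounded neighborhood $U = B_R(0)$. Indeed, if $f, g \in \Conv(\RR^n,\RR)$ agree on a neighborhood of $A-x$, then $f(\cdot+x)$ and $g(\cdot+x)$ agree on a neighborhood of $A$, so by \autoref{prop:CharSupport} we have $(\pi(x)\mu)(f) = (\pi(x)\mu)(g)$, hence $\supp(\pi(x)\mu) \subset A-x$. Taking $A' := A + \overline{B_R(0)}$, this shows $\pi(x)\mu \in \VConv_{A'}(\RR^n)$ for all $x \in U$, and correspondingly $T(\pi(x)\mu) \in \Val_{P(A')}(\RR^{n+1})$.

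Finally, \autoref{thm:defTmapProp} provides a topological isomorphism $T: \VConv_{A'}(\RR^n) \to \Val_{P(A')}(\RR^{n+1})$. The latter is a closed subspace of $\Val(\RR^{n+1})$ by \autoref{cor:SuppRestrClosedSubspace}, so the smooth map $x \mapsto T(\pi(x)\mu)$ restricted to $U$ is smooth with values in this closed subspace. Applying the continuous linear inverse $T^{-1}$ on $\Val_{P(A')}(\RR^{n+1})$ yields that $x \mapsto \pi(x)\mu$ is smooth from $U$ into $\VConv_{A'}(\RR^n)$, and hence, by post-composing with the continuous inclusion into $\VConv(\RR^n)$, smooth into $\VConv(\RR^n)$. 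Since smoothness is a local property and $U$ was an arbitrary bounded neighborhood of a point in $\RR^n$, this establishes that $\mu$ is smooth in the sense of \autoref{def:affSmValConv}.

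The only point requiring care is the second step, namely verifying uniformity of the support bound for $\pi(x)\mu$ on bounded sets of $x$; once this is in hand, the rest is a straightforward transfer of smoothness across a topological isomorphism of closed subspaces.
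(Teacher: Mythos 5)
Your proof is correct and takes essentially the same route as the paper: the equivariance from \autoref{lem:Tequiv}, a uniform compact bound on the supports of the translates $\pi(x)\mu$ for $x$ in a bounded set, and the topological isomorphism $T:\VConv_B(\RR^n)\to\Val_{P(B)}(\RR^{n+1})$ from \autoref{thm:defTmapProp}, whose continuous linear inverse transfers smoothness back. The only (harmless) slip is the direction of the shift, $\supp(\pi(x)\mu)\subset \supp\mu + x$ rather than $\supp\mu - x$, which changes nothing since both sets lie in $A'=\supp\mu+\overline{B_R(0)}$; the paper instead reduces to a neighborhood of $0$ using continuity of the $\RR^n$-representation, a step your argument sidesteps by taking $R$ arbitrary.
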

\begin{proof}
	Since the representation of $\RR^n$ on $\VConv(\RR^n)$ is continuous (compare \cite{Knoerr2025}*{Lem.~3.4}), it is sufficient to show that the map 
	\begin{align*}
		\RR^n&\rightarrow \VConv(\RR^n)\\
		x&\mapsto \pi(x)\mu
	\end{align*}
	is smooth on a neighborhood $U$ of $0$. Assume that $U$ is bounded and let $A\subset \RR^n$ be a compact set containing $U$. From \autoref{prop:CharSupport}, we obtain that the support of $\pi(x)\mu$, $x \in A$, is contained in $B:=A+\supp\mu$, which is a compact subset since $\supp\mu$ is compact (compare \autoref{remark:suppCompact}). In particular, $T:\VConv_B(\RR^n)\rightarrow\Val_{P(B)}(\RR^{n+1})$ is a topological isomorphism by \autoref{thm:defTmapProp}.\\
	In combination with \autoref{lem:Tequiv}, this also implies $g_x\cdot T(\mu)=T(\pi(x)\mu)\in \Val_{P(B)}(\RR^{n+1})$ for all $x\in U$. Since $T(\mu)$ is a $\GL(n+1,\RR)$-smooth valuation by assumption, the map
	\begin{align*}
		U&\rightarrow \Val_{P(B)}(\RR^{n+1})\\
		x&\mapsto g_x\cdot T(\mu)
	\end{align*}
	is smooth. We may thus write
	\begin{align*}
		\pi(x)\mu=T^{-1}T(\pi(x)\mu))=T^{-1}(g_x\cdot T(\mu)),
	\end{align*}
	where the map $U\rightarrow \Val_{P(B)}(\RR^{n+1})$, $x\mapsto g_x\cdot T(\mu)$, is smooth, and $T^{-1}:\Val_{P(B)}(\RR^{n+1})\rightarrow\VConv_B(\RR^n)$ is continuous and linear. Thus $x\mapsto \pi(x)\mu$ is smooth on $U$.
	 This shows that $\mu$ is a smooth valuation in the sense of \autoref{def:affSmValConv}.
\end{proof}

Next, we will combine the previous result with \autoref{thm:SmVConvGivenByDiffForm} to obtain a version of \autoref{mthm:smoothValsByDiffform} under support restrictions. Since the localization procedure from \autoref{sec:localization} expresses any translation invariant valuation as a sum of \emph{polynomial} valuations with given support restrictions, we establish a more general version of this result for smooth polynomial valuations.

\begin{theorem}\label{thm:PolySmDiffForm}
	Let $r\in\NN_0$ and $\varphi \in \PVal_r(\RR^n)$ with $\vsupp \varphi \subset \S^{n-1}_-$ be given. If $\varphi$ is $\GL(n,\RR)$-smooth, then there exists a differential form $\omega \in \Omega^{n-1}(S\RR^n)$ such that
	\begin{align}\label{eq:thmPolySmDiffForm}
		\varphi(K) = \int_{\nc(K)} \omega, \quad K \in \K(\RR^n).
	\end{align}
\end{theorem}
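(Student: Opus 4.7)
The plan is to reduce to the polynomial generalization of the correspondence between $\VConv(\RR^{n-1})$ and $\Val(\RR^n)$ encoded by \autoref{thm:defTmapProp}, \autoref{lem:TmuGLsmThenMuAffSm}, and \autoref{lem:diffformSmIffValVConv}. For $r=0$, any continuous $0$-homogeneous valuation $\varphi$ satisfies $\varphi(K)=\lim_{t\to 0^+}\varphi(tK)=\varphi(\{0\})$ and is therefore a constant $c$; this is represented by the translation invariant form $c\,\pi_2^\ast\sigma\in\Omega^{n-1}(S\RR^n)$, where $\sigma\in\Omega^{n-1}(\S^{n-1})$ is a normalized volume form, since the Gauss map of any convex body has degree one. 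So from now on I assume $r\geq 1$.

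For $r\geq 1$, the first step is to invoke the polynomial extension of $T$ developed in \cite{Knoerr2024}, which should provide a topological isomorphism between polynomial valuations on $\Conv(\RR^{n-1},\RR)$ of compact support and polynomial valuations in $\PVal_r(\RR^n)$ with $\vsupp\subset\S^{n-1}_-$. Under this identification $\varphi$ corresponds to a unique polynomial valuation $\mu$ on $\Conv(\RR^{n-1},\RR)$ with compact support. Transferring $\GL(n,\RR)$-smoothness follows the template of \autoref{lem:TmuGLsmThenMuAffSm}: the embedding $x\mapsto g_x$ of $\RR^{n-1}$ into $\GL(n,\RR)$ intertwines the translation action on valuations on convex functions with the $\GL(n,\RR)$-action on $\PVal_r(\RR^n)$ (the analog of \autoref{lem:Tequiv}); for $x$ in a bounded neighborhood of $0$ the support of $\pi(x)\mu$ stays inside a fixed compact $B\subset\RR^{n-1}$; and the restriction of $T$ to valuations with support in $B$ is a topological isomorphism onto the subspace of valuations with $\vsupp\subset P(B)$. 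Composing the smooth map $x\mapsto g_x\cdot\varphi$ with the continuous inverse of this restriction yields smoothness of $x\mapsto\pi(x)\mu$, so $\mu$ is smooth in the polynomial analog of \autoref{def:affSmValConv}.

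With $\mu$ smooth I would apply the polynomial form of \autoref{thm:SmVConvGivenByDiffForm} from \cite{Knoerr2025} to obtain a differential form $\tau\in\Omega^{n-1}(T^\ast\RR^{n-1})$, invariant under translations in the cotangent direction and of bounded support in the base, such that $\mu(f)=D(f)[\tau]$ for every $f\in\Conv(\RR^{n-1},\RR)$. Pulling $\tau$ back through the map $Q$ of \autoref{lem:diffformSmIffValVConv} and extending by zero then produces a differential form $\omega\in\Omega^{n-1}(S\RR^n)$ supported in $\RR^n\times\S^{n-1}_-$ and satisfying \eqref{eq:thmPolySmDiffForm}. The main obstacle is ensuring that the two polynomial generalizations go through cleanly: the extension of $T$ (with its compatibility with support restrictions, topology, and the translation action) from \cite{Knoerr2024}, and the regularity characterization from \cite{Knoerr2025}. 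The latter is by far the deepest analytic input; the former reduces to bookkeeping analogous to \autoref{thm:defTmapProp} and \autoref{lem:diffformSmIffValVConv}, tracking the additional polynomial structure.
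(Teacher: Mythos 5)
Your proposal identifies the right ingredients — the $T$-correspondence, the smoothness transfer, and the regularity theorem from \cite{Knoerr2025} — but it rests on a step the paper deliberately avoids: you assume a polynomial generalization of \autoref{thm:SmVConvGivenByDiffForm} (i.e.\ a Paley--Wiener--Schwartz regularity characterization for smooth \emph{polynomial} valuations on convex functions), and while the paper mentions that the map $T$ itself generalizes to the polynomial setting via \cite{Knoerr2024}, the regularity result cited from \cite{Knoerr2025}*{Thm.~D} is stated only for dually epi-translation invariant valuations in $\VConv(\RR^n)$. Your proposal flags this as ``the deepest analytic input'' and then takes it for granted; this is exactly the gap.

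The paper's proof circumvents this by an induction on the polynomial degree $d$ within the filtration $\mathrm{P}_d\Val_r(\RR^n)$. The base case $d=0$ is your translation-invariant argument verbatim. For the inductive step one writes $\varphi(K+x)=\sum_{i=0}^d Y_{i,\varphi}(K)[x]$ with $Y_{d,\varphi}\in\Val_{r-d}(\RR^n)\otimes\Sym^d((\RR^n)^\ast)_\CC$; the top-degree coefficient is genuinely translation invariant (tensored with a finite-dimensional representation), is checked to be $\GL(n,\RR)$-smooth and to inherit the vertical support restriction from $\varphi$, and so falls under the $d=0$ case. Multiplying the resulting forms $\omega_d^l$ by the degree-$d$ monomials $p_l(x)$ produces a form $\omega_d$ representing a polynomial valuation $\varphi_d$ whose top coefficient matches that of $\varphi$ and whose vertical support stays in $\S^{n-1}_-$; then $\varphi-\varphi_d\in\mathrm{P}_{d-1}\Val(\RR^n)$ and the induction closes. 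If you can in fact supply a polynomial version of \autoref{thm:SmVConvGivenByDiffForm}, your more direct route would work and would be shorter; absent that, you need the degree-induction to reduce to the invariant case where the cited regularity theorem actually applies.

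One small additional remark: your ad hoc treatment of $r=0$ is correct but unnecessary, since in the paper's induction the case $r=0$ (for any $d$) is already swept up by the same argument — indeed $\vsupp\varphi=\emptyset$ for $0$-homogeneous valuations, so the $d=0$ base case and the inductive machinery apply uniformly.
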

\begin{proof}
	Throughout the proof, we will use the map $T:\VConv(\RR^{n-1})\rightarrow\Val(\RR^n)$ from \autoref{thm:defTmapProp}.\\
	Recall that $\PVal_r(\RR^n)$ admits a natural filtration given by $\mathrm{P}_d\Val_r(\RR^n)$, $0\leq d\leq r$, compare \autoref{cor:PValFiniteDegree}. We will establish the claim by induction on $d$.\\
	For $d=0$ and a smooth valuation $\varphi\in \Val_r(\RR^n)$ with the given support restriction, we obtain a valuation $\mu\in \VConv(\RR^{n-1})$ such that $T(\mu)=\varphi$ due to the description of the image of $T$ in \autoref{thm:defTmapProp}. By \autoref{lem:TmuGLsmThenMuAffSm}, $\mu$ is a smooth valuation in the sense of \autoref{def:affSmValConv}, so \autoref{thm:SmVConvGivenByDiffForm} shows that $\mu$ is representable by integration with respect to the differential cycle. Thus \autoref{lem:diffformSmIffValVConv} shows that $\varphi=T(\mu)$ is representable by integration with respect to the normal cycle. In fact, \autoref{lem:diffformSmIffValVConv} implies that $\varphi=\int_{\nc(\cdot)}\omega$ for a translation invariant differential form $\omega\in\Omega^{n-1}(S\RR^n)^{\mathrm{tr}}$ with support contained in $\RR^n\times \S^{n-1}_-$. This stronger version for $d=0$ will be used in the induction step. This completes the case $d=0$ for every $r\in\NN_0$.\\
	Now assume that the claim holds for all valuations in $\mathrm{P}_{d-1}\Val_s(\RR^n)$ for all $s\in\NN_0$. Let $\varphi\in \mathrm{P}_d\Val_r(\RR^n)$ be a $\GL(n,\RR)$-smooth valuation with the given support restriction. We may thus uniquely write
	\begin{align}\label{eq:prfPolyDiffValPolynom}
		\varphi(K + x) = \sum_{i=0}^{d} Y_{i,\varphi}(K)[x],
	\end{align}
	where $Y_{i,\varphi}:\mathcal{K}(\RR^n)\rightarrow \Sym^i((\RR^n)^\ast)_\CC$ is a continuous valuation with values in the space of complex-valued $i$-homogeneous polynomials on $\RR^n$, $i=0, \dots, d$. Moreover, comparing the degrees in $x\in\RR^n$, $Y_i$ is a polynomial valuation of degree at most $d-i$, i.e. $Y_{i,\varphi}\in \mathrm{P}_{d-i}\Val_{r-i}(\RR^n)\otimes \Sym^i((\RR^n)^\ast)_\CC$. In particular, $Y_{d,\varphi} \in \Val_{r-d}(\RR^n) \otimes \Sym^d((\RR^n)^\ast)_\CC$.\\
	
	We claim that the vertical support of $Y_{i,\varphi}$ is contained in $\vsupp \varphi \subset \S^{n-1}_-$. In order to see that, note that for $t \in \RR$ and $x \in \RR^n$, 
	\begin{align*}
		\varphi(K + tx) = \sum_{i=0}^{d} t^i Y_{i,\varphi}(K)[x]
	\end{align*}
	so plugging in $t=0,\dots,d$ and inverting the associated Vandermonde matrix, we obtain constants $c_{ij}\in\RR$, $0\leq i,j\leq d$, independent of $\varphi$, $K$, and $x$ such that
	\begin{align}
		\label{eq:expressionYd}
		Y_{i,\varphi}(K)[x]=\sum_{j=0}^dc_{ij}\varphi(K + jx).
	\end{align} 
	If two bodies $K, L \in \K(\RR^n)$ satisfy $h_K=h_L$ on a neighborhood of $\vsupp \varphi$, so do the support functions of $K + t x$ and $L+t x$ for $t\in\RR$, so $\varphi(K + tx) = \varphi(L + tx)$ by \autoref{prop:CharVsupp}. In particular, $Y_{i,\varphi}(K)[x] = Y_{i,\varphi}(L)[x]$, which implies $\vsupp Y_{i,\varphi}\subset \vsupp\varphi$ by \autoref{prop:CharVsupp}.\\
	
	Write $Y_{d,\varphi} = \sum_{l} Y_d^l \otimes p_l$, $Y_d^l \in \Val(\RR^n)$ for some arbitrary basis $p_l$ of $\Sym^d((\RR^n)^\ast)_\CC$. We claim that each $Y_d^l$ is $\GL(n,\RR)$-smooth. \\
	From Eq.~\eqref{eq:expressionYd} we obtain for $g\in \GL(n,\RR)$,
	\begin{align}
		\label{eq:YdEquiv}
		Y_{d,\varphi}(g^{-1}K)[g^{-1}x]=Y_{d,g\cdot \varphi}(K)[x].
	\end{align}
	Consider the map
	\begin{align}
		\label{eq:equivMapTensorProduct}
		\begin{split}
			\mathrm{P}_d\Val_r(\RR^n)&\rightarrow \Val_{r-d}(\RR^n) \otimes \Sym^d((\RR^n)^\ast)_\CC\\
			\nu&\mapsto Y_{d,\nu}.
		\end{split}
	\end{align}
	From Eq.~\eqref{eq:expressionYd}, we see that this map is continuous. If we equip the tensor product $\Val_{r-d}(\RR^n) \otimes \Sym^d((\RR^n)^\ast)_\CC$ with the natural action of $\GL(n,\RR)$ on both factors, then Eq.~\eqref{eq:YdEquiv} implies that the map in Eq.~\eqref{eq:equivMapTensorProduct} is $\GL(n,\RR)$-equivariant. In particular, since $\varphi$ is $\GL(n,\RR)$-smooth, so is $Y_{d,\varphi}$. Since $\Sym((\RR^n)^\ast)_\CC$ is a finite dimensional representation of $\GL(n,\RR)$, \cite{Alesker2004b}*{Lem.~1.5} implies that the $\GL(n,\RR)$-smooth vectors in $\Val(\RR^n)\otimes \Sym^d((\RR^n)^\ast)_\CC$ coincide with $\Val^\infty(\RR^n)\otimes \Sym^d((\RR^n)^\ast)_\CC$. Thus the components $Y_d^l\in\Val_{r-d}(\RR^n)$ are $\GL(n,\RR)$-smooth.\\ 

	We may apply the case $d=0$ to the valuations $Y_d^l\in\Val_{r-d}(\RR^n)$ and obtain differential forms $\omega_{d,l}$ such $Y_d^l(K)=\int_{\nc(K)}\omega_{d,l}$. Moreover, by the previous discussion, we may in addition assume that $\omega_{d,l}$ is translation invariant and that its support is contained in $\RR^n\times \S_-^{n-1}$. Define $\omega_d \in \Omega^{n-1}(S\RR^n)$ by setting
	\begin{align*}
		\omega_d|_{(x,v)} = \sum_{l} p_l(x) \omega_d^l|_{(x,v)}, \quad (x,v) \in S\RR^n.
	\end{align*}
	Then the support of $\omega_d$ is contained in $\RR^n\times \S_-^{n-1}$. Let 
	$\varphi_d=\int_{\nc(\cdot)}\omega_d$. Then $\varphi_d$ is a $\GL(n,\RR)$-smooth valuation and it is easy to see that it is polynomial of degree at most $d$. In fact, for $K \in \K(\RR^n)$
	\begin{align*}
		\varphi_d(K + x) &= \int_{\nc(K+x)} \omega_d = \int_{\nc(K)} \sum_{l} p_l(\cdot + x) \omega_d^l\\
		&=\int_{\nc(K)} \sum_{l} \left( p_l(x) +  q_l(\cdot, x)\right) \omega_d^l,
	\end{align*}
	where $q_l(\cdot, x)$ is a polynomial with degree in $x$ strictly less than $d$. Hence,
	\begin{align*}
		\varphi_d(K + x) &= \sum_{l} p_l(x) \int_{\nc(K)} \omega_d^l + \sum_{l}  \int_{\nc(K)} q_l(\cdot, x) \omega_d^l \\
		&= \sum_{l} p_l(x) Y_d^l(K) + \sum_{l}  \int_{\nc(K)} q_l(\cdot, x) \omega_d^l = Y_{d,\varphi}(K)[x]+ \sum_{l}  \int_{\nc(K)} q_l(\cdot, x) \omega_d^l,
	\end{align*}
	and we conclude that $\varphi - \varphi_d$ is a polynomial valuation of degree strictly less than $d$. Since $\omega_d$ is supported on $\RR^n\times \S^{n-1}_-$, \autoref{prop:CharVsupp} implies that the vertical support of $\varphi_d$ is contained in $\S^{n-1}_-$. In particular, the vertical support of $\varphi - \varphi_d$ is again contained in $\S^{n-1}_-$. We may thus apply the induction assumption to the $\GL(n,\RR)$-smooth valuation $\varphi - \varphi_d\in \mathrm{P}_{d-1}\Val(\RR^n)$ to see that this valuation is representable by integration with respect to the normal cycle. This completes the proof.
\end{proof}

\subsection{Proof of \autoref{mthm:smoothValsByDiffform}}

\autoref{mthm:smoothValsByDiffform} is the special case $d=0$ of the following result for polynomial valuations. The proof of this result combines \autoref{thm:PolySmDiffForm} with the localization procedure in \autoref{thm:locToPolyVal}.
\begin{theorem}\label{thm:smoothPValRepresentable}
	Let $d\in\NN_0$ and $\varphi\in \mathrm{P}_d\Val(\RR^n)$ be a $\GL(n,\RR)$-smooth valuation. Then $\varphi$ is representable by integration with respect to the normal cycle. 
\end{theorem}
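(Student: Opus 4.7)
The plan is to reduce the general statement to the already established \autoref{thm:PolySmDiffForm} by combining the homogeneous decomposition with the localization procedure of \autoref{thm:locToPolyVal} and an $\SO(n)$-transport argument. Concretely, I would split $\varphi$ into homogeneous pieces, then further into smooth polynomial pieces whose vertical supports can each be rotated into $\S^{n-1}_-$, apply \autoref{thm:PolySmDiffForm} in standard position, and finally pull the differential forms back.

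First I would use \autoref{thm:HomDecompPVal} to write $\varphi = \sum_{r=0}^{d+n} \varphi_r$ with $\varphi_r \in \mathrm{P}_d\Val_r(\RR^n)$. The projection onto the $r$-homogeneous component is a continuous linear map on $\mathrm{P}_d\Val(\RR^n)$ that commutes with the $\GL(n,\RR)$-action, because scaling $K\mapsto tK$ commutes with the action. Hence every $\varphi_r$ is $\GL(n,\RR)$-smooth, and since the collection of valuations representable by integration against the normal cycle is a vector space, it suffices to prove the claim for $\varphi \in \PVal_r(\RR^n)^\infty$.

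Next, I would fix a finite open cover $U_1,\dots,U_N$ of $\S^{n-1}$ in which each $U_i$ lies inside some open hemisphere $\{v\in\S^{n-1}:\langle v,w_i\rangle<0\}$ (for instance, the $2n$ coordinate hemispheres work). Applying \autoref{thm:locToPolyVal} to a smooth partition of unity subordinate to this cover yields smooth polynomial valuations $\psi_1,\dots,\psi_M\in\PVal_r(\RR^n)$ with $\varphi = \psi_1+\dots+\psi_M$ and with each $\vsupp\psi_j$ contained in some open hemisphere. For each $j$, choose $g_j\in\SO(n)$ that rotates this hemisphere into $\S^{n-1}_-$. Using the identity $h_{g_j^{-1}K} = h_K\circ g_j$ for orthogonal $g_j$, \autoref{prop:CharVsupp} gives $\vsupp(g_j\cdot \psi_j)=g_j(\vsupp\psi_j)\subset \S^{n-1}_-$, while $g_j\cdot \psi_j$ remains $\GL(n,\RR)$-smooth and polynomial of the same degree. \autoref{thm:PolySmDiffForm} therefore supplies a form $\omega_j\in\Omega^{n-1}(S\RR^n)$ with $(g_j\cdot\psi_j)(K)=\int_{\nc(K)}\omega_j$, and applying $g_j^{-1}$ together with \eqref{eq:actGLOnVal} yields
\begin{equation*}
\psi_j(K)=\int_{\nc(K)} G_{g_j}^\ast\omega_j,
\end{equation*}
where $\sign\det g_j=1$ since $g_j\in\SO(n)$. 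Summing over $j$ (and over $r$) gives the representation of $\varphi$ by integration against $\sum_{r,j}G_{g_j}^\ast\omega_j$.

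The heavy lifting is already done by \autoref{thm:PolySmDiffForm}, which handles smooth polynomial valuations with vertical support in $\S^{n-1}_-$, and by \autoref{thm:locToPolyVal} (together with \autoref{prop:LocPolysAreSm}), which produces the required decomposition without destroying smoothness. The only remaining work is organizational: verifying that homogeneous components inherit $\GL(n,\RR)$-smoothness and that vertical supports transform equivariantly under $\SO(n)$. Both are short consequences of \autoref{prop:CharVsupp} and the compatibility of the $\GL(n,\RR)$-action with dilations, so no substantive new obstacle arises at this stage.
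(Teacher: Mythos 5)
Your proposal is correct and follows essentially the same route as the paper: homogeneous decomposition, then the localization of \autoref{thm:locToPolyVal} with a partition of unity whose supports lie in open hemispheres, then \autoref{thm:PolySmDiffForm} applied to each piece. The only difference is cosmetic — you explicitly rotate each piece into $\S^{n-1}_-$ by some $g_j\in\SO(n)$ and pull the resulting form back via $G_{g_j}^\ast$, whereas the paper invokes \autoref{thm:PolySmDiffForm} directly for supports in an arbitrary open hemisphere, leaving that same rotation equivariance implicit.
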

\begin{proof}
	Due to the homogeneous decomposition in \autoref{thm:HomDecompPVal}, we may assume that $\varphi\in \PVal_r(\RR^n)$. Choose a smooth partition of unity $\rho_1, \dots, \rho_N \in C^\infty(\S^{n-1})$ with the property that the support of $\rho_i$ is contained in an open hemisphere, $\supp\rho_i \subset \{u \in \S^{n-1}: \pair{u}{u_i} < 0\}$ for some $u_i \in \S^{n-1}$, for every $i=1, \dots, N$.
	
	By \autoref{thm:locToPolyVal}, there exist smooth polynomial valuations $\varphi_1, \dots, \varphi_M \in \PVal_r(\RR^n)$ such that $\varphi = \varphi_1 + \dots + \varphi_M$, with vertical supports subordinate to the supports of the $\rho_i$. In particular, $\vsupp \varphi_j \subset \supp \rho_{i_j} \subset \{u \in \S^{n-1}: \pair{u}{u_{i_j}} < 0\}$ for some $1\leq i_j\leq N$. We may thus apply \autoref{thm:PolySmDiffForm} to obtain for $1\leq j\leq N$ a differential form $\omega_j \in \Omega^{n-1}(S\RR^n)$, such that
	\begin{align*}
		\varphi_j(K) = \int_{\nc(K)} \omega_j, \quad K \in \K(\RR^n).
	\end{align*}
	We conclude that
	\begin{align*}
		\varphi(K) = \varphi_1(K) + \dots + \varphi_M(K) = \int_{\nc(K)} \left( \omega_1 + \dots + \omega_M\right), \quad K \in \K(\RR^n),
	\end{align*}
	so $\varphi$ is representable by integration with respect to the normal cycle. This concludes the proof.
\end{proof}
\begin{remark}
	Note that the differential form constructed in the previous proof does not inherit the invariance properties of the valuation $\varphi$. In particular, if $\varphi$ is translation invariant, then the differential form does not need to be translation invariant. In order to pass from this representation to invariant differential forms, we require some results by Bernig and Bröcker from \cite{Bernig2007}, which we discuss in \autoref{sec:prelimImD}. Due to these results, the calculations in the next section only involve translation invariant differential forms.
\end{remark}

\part{Irreducibility, proofs of \autoref{mthm:imDIrred} and \autoref{mcor:ValslIrred}}\label{part2}
\section{The action of the Lie algebra $\sln(n)_\CC$ on differential forms}\label{sec:actLieAlgdf}
In this section we collect all calculations needed in \autoref{sec:prfIrredThm} in the proof of \autoref{mthm:imDIrred}. Since we will reduce the proof of \autoref{mthm:imDIrred} to a result for a corresponding space of differential forms, these calculations mostly involve applying the Lie derivative along the fundamental vector fields of suitable elements of $\sln(n)_\CC$ to highest weight vectors in the relevant spaces of differential forms and showing that the pairing from \autoref{sec:pairing} does not vanish on certain combinations of these expressions. Since these differential forms were constructed in \cite{Kotrbaty2022} using double forms, all calculations will be performed in the same framework. Let us remark that despite the simplifications enabled by the use of double forms, most of the calculations are very lengthy and the relations established in this section are not very insightful in isolation. At a first reading, the authors recommend to skip this section and continue with \autoref{sec:prfIrredThm}.

\subsection{Double forms}\label{sec:doubleforms}
We will use the conventions and notation from \cite{Kotrbaty2022} and refer to the same article for a more thorough discussion of double forms.

In our setting, we will consider double forms on $M=\RR^n \times \RR^n$ or $M=S\RR^n$. A double form on $M$ is a section of the bundle $(\CC \otimes \Lambda^\ast T^\ast M) \otimes (\CC \otimes \Lambda^\ast (\RR^n \times \RR^n)^\ast)$ over $M$. It is of bi-degree $(j,k) \in \NN_0 \times \NN_0$ if it is a section of $(\CC\otimes \Lambda^j T^\ast M) \otimes (\CC\otimes \Lambda^k(\RR^n \times \RR^n)^\ast)$. We denote by $\overline{\omega}$ complex conjugation on the first factor of a double form $\omega$, that is,
\begin{align*}
	\overline{\omega} = \overline{\eta} \otimes \tau
\end{align*}
whenever $\omega= \eta \otimes \tau$ for some $\eta \in \Omega^\ast(M)$ and $\tau \in \Omega^\ast(\RR^n \times \RR^n)$. Similarly, the exterior differential naturally extends to double forms such that $d(\eta\otimes\tau)=d\eta\otimes \tau$. There is a natural wedge product on double forms that respects the grading and is determined by
\begin{align*}
	(\eta \otimes \tau) \wedge (\eta' \otimes \tau') = (\eta \wedge \eta') \otimes (\tau \wedge \tau').
\end{align*}
The $\wedge$-sign will be omitted later on to obtain more concise formulas. We further set $\omega^{[k]} := \frac{1}{k!}\omega^k$, where $\omega$ is a double form and $\omega^k$ the $k$-fold wedge product of $\omega$ with itself. Note that for double forms $\alpha, \beta$ of bidegree $(1,1)$ the following binomial formula holds:
\begin{align}\label{eq:binomDFCalc}
	(\alpha + \beta)^{[k]} = \sum_{j=0}^k \alpha^{[j]}\beta^{[k-j]}.
\end{align}

\medskip

Next, we fix some notation needed to define the relevant differential forms. Recall that $n \geq 2$ and denote by $x_1, \dots, x_n, \xi_1, \dots, \xi_n$ the standard coordinates on $\RR^n \times \RR^n$. Put $l = \lfloor\frac{n}{2}\rfloor$. Then for every $j \in \{1, \dots, l\}$, we define
\begin{align*}
	z_j &= \frac{1}{\sqrt{2}}(x_{2j-1} + \I x_{2j}), \quad & z_{\overline{j}} &= \frac{1}{\sqrt{2}}(x_{2j-1} - \I x_{2j}),\\
	\zeta_j &= \frac{1}{\sqrt{2}}(\xi_{2j-1} + \I \xi_{2j}), &	\zeta_{\overline{j}} &= \frac{1}{\sqrt{2}}(\xi_{2j-1} - \I \xi_{2j}).
\end{align*}
If $n=2l+1$ is odd, we also set $z_{l+1} = x_{2l+1}$ and $\zeta_{l+1} = \xi_{2l+1}$.

We define the corresponding set of indices by
\begin{align*}
	\Iind = \begin{cases}
		\{1, \bar{1}, \dots, l, \bar{l}\}, & \text{ for } n = 2l,\\
		\{1, \bar{1}, \dots, l, \bar{l}, l+1\}, & \text{ for } n =2l+1.
	\end{cases}
\end{align*}
and order $\Iind$ by $1 \prec \bar 1 \prec \dots \prec l \prec \bar{l} \prec l+1$.

Using the forms above, we define the following double forms on $\RR^n \times \RR^n$ for every $I = \{i_1 \prec \dots \prec i_j\} \subset \Iind$:
\begin{align*}
	\nu_I &= \sum_{i \in I} \zeta_{\overline{i}}\zeta_{i},&&\\
	\alpha_I &= \sum_{i \in I} \zeta_{\overline{i}} dz_i, \quad &\gamma_I &= \sum_{i \in I} \zeta_{\overline{i}} d\zeta_i,\\
	z_I &= \sum_{i \in I} z_i \otimes dz_i, & \zeta_I &= \sum_{i \in I} \zeta_{i} \otimes dz_i,\\	
	\Theta_{1,I} &= dz_{i_1}\dots dz_{i_j}, & \Theta_{2,I} &=d\zeta_{i_1} \dots d\zeta_{i_j}, \quad \Theta_I = \Theta_{1,I}\Theta_{2,I}.
\end{align*}
Here, we write $\overline{\overline{i}} = i$ and $\overline{l+1} = l+1$.
Note that $\nu_I$ is a function, $\alpha_I$ and $\gamma_I$ can be interpreted either as a $1$-form or as a $(1,0)$-double form, while $z_I$ and $\zeta_I$ are $(0,1)$-double forms.

If $I = \Iind$, then we write $\alpha = \alpha_{\Iind}$, $\gamma = \gamma_{\Iind}$. Note that $\alpha|_{S\RR^n}$ coincides with the contact form on $S\RR^n$ as defined in \eqref{eq:defAlphSRn}. The form $\gamma$ vanishes on all submanifolds $\RR^n \times \lambda \S^{n-1}$, $\lambda > 0$. Therefore, if $\omega \wedge \gamma = 0$ for some $\omega \in \Omega(\RR^n \times \RR^n)$, then necessarily $\omega|_{S\RR^n} = 0$. Let us further point out here that $\nu(x,\xi) = \| \xi \|_2^2$, that is, $\nu|_{\RR^n \times \S^{n-1}} \equiv 1$.

For $k \in \NN$, $k \leq l$, we write $K = \{1, \dots, k\} \subset \Iind$, $J = \Iind \setminus K$ and $L = J \setminus \overline{K}$, where $\overline{I} = \{\overline{i}: i \in I\}$ for any $I \subset \Iind$. If $k \leq l-1$, then we write $K^+ = K \cup \{k+1\}$ and $L^- = L \setminus\{k+1, \overline{k+1}\}$. For $I \subset \Iind$ and $i_1, \dots, i_j \in \Iind$, we write $I_{i_1, \dots, i_j}=I \setminus\{i_1, \dots, i_j\}$.\\

The following forms $\omega_{r,k}$ and $\omega_{r,k,m}$ were introduced in \cite{Kotrbaty2022}:
\begin{align} \label{eq:defOmegaRkm}
	\omega_{r,k} \otimes \Theta_1 &= \zeta_J d\zeta_J^{[n-r-1]} dz_J^{[r-k]}\overline{dz_K}^{[k]},\\
	\label{eq:defOmegaRkm_m}
	  \omega_{r,k,m} &= \zeta_{\overline{1}}^{m-2} \omega_{r,k} \in \Omega^{r,n-r-1}(S\RR^{n})^{\mathrm{\mathrm{tr}}},
\end{align}
where $r,k,m \in \NN$ with $1\leq r \leq n-1$, $1\leq k \leq \min\{r, n-r\}$ and $m \geq 2$. If $n=2l$ is even, then we additionally define
\begin{align}\label{eq:defOmegalmlm}
	\omega_{l,-l} \otimes \Theta_1 = \zeta_{\overline{M}} d\zeta_{\overline{M}}^{[l-1]}\overline{dz_M}^{[l]}, \qquad \omega_{l,-l, m} = \zeta_{\overline{1}}^{m-2}\omega_{l,-l}\in \Omega^{l,l-1}(S\RR^{n})^{\mathrm{\mathrm{tr}}},
\end{align}
where $M = \{1, \dots, l-1, \overline{l}\}$. These forms are the highest weight vectors of weight $\lambda_{k,m}$ of the $\SO(n)$-representation on differential forms defined by Eq.~\eqref{eq:actGLOnVal} (see \cite{Kotrbaty2022}*{Thm.~4.2}), where 
\begin{align*}
	\lambda_{k,m} = \begin{cases}
		(m, \underbrace{2, \dots, 2}_{k-1}, 0, \dots, 0) \in \ZZ^l & \text{ for } k=1, \dots, l,\\
		(m, 2 \dots, 2, -2) \in \ZZ^l & \text{ for } k=-l.
	\end{cases}
\end{align*}
In particular, these differential forms correspond to highest weight vectors in $\mathcal{V}^{\infty,\mathrm{tr}}$, compare \autoref{sec:prelimImD}. In \autoref{sec:weight0Form} we will also consider the relevant forms for the weight $\lambda_{1,0}:=0$, which corresponds to $\SO(n)$-invariant differential forms.\\
\begin{remark}
	\label{remark:holomorphicExtensionOmega_rkm}
	We may use Eq.~\eqref{eq:defOmegaRkm_m} to define $\omega_{r,k,m}$ and $\omega_{-l,l,m}$ for arbitrary $m\in\mathbb{C}$ by setting $\zeta_{\bar1}^{m-2}:=\exp((m-2)\mathrm{Ln}(\zeta_{\bar 1}))$ for the principal value of the logarithm. Then $\omega_{r,k,m}$ and $\omega_{-l,l,m}$ are smooth differential forms on the open subset of $S\RR^n$ given by $\zeta_1\notin(-\infty,0]$. We will use this extension to simplify some calculations in \autoref{sec:weight0Form}.
\end{remark}

We further need the differential forms
\begin{align}
	\sigma_{r,k}\otimes \Theta_1 &= \overline{d\zeta_K}^{[k]} \zeta_J d\zeta_J^{[n-r-k]}dz_J^{[r-1]},\\
	\label{eq:defTau}
	\tau_{r,k} \otimes \Theta_1 &= \overline{\zeta_K} \overline{d\zeta_K}^{[k-1]}d\zeta_J^{[n-r-k+1]}dz_J^{[r-1]},\\
	\delta_{r,k} \otimes \Theta_1 &= d\zeta_J^{[n-r]}dz_J^{[r-k]} \overline{dz_K}^{[k]},\\
	\theta_{r,k} \otimes \Theta_1 &= \overline{\zeta_K}\, \overline{d\zeta_K}^{[k-1]} \zeta_J d\zeta_J^{[n-r-k]}dz_J^{[r-1]}. \label{eq:defThetark}
\end{align}

For later reference we note the following relations, mostly contained in \cite{Kotrbaty2022}.
\begin{proposition}\label{prop:OmSigTauSepL}
	Let $r,k,m \in \NN$ with $r \leq n-1$, $k \leq \min\{r, n-r\}$ and $m \geq 2$. Then the following relations hold on $\RR^n \times \RR^n$:
	\begin{align}
		\omega_{r,k} \otimes \Theta_1 =& \zeta_{\overline{K}}d\zeta_{\overline{K}}^{[k-1]}d\zeta_L^{[n-r-k]}dz_L^{[r-k]}\overline{dz_K}^{[k]}
		+\zeta_L d\zeta_{\overline{K}}^{[k]}d\zeta_L^{[n-r-k-1]}dz_L^{[r-k]}\overline{dz_K}^{[k]} \label{eq:OmSepL} \\
		\sigma_{r,k} \otimes \Theta_1 =& \overline{d\zeta_K}^{[k]}\zeta_{\overline{K}} d\zeta_L^{[n-r-k]} dz_L^{[r-k]}dz_{\overline{K}}^{[k-1]} + \overline{d\zeta_K}^{[k]}\zeta_L d\zeta_L^{[n-r-k]}dz_L^{[r-k-1]}dz_{\overline{K}}^{[k]}\label{eq:sigmarkSep}\\
		\begin{split}\tau_{r,k} \otimes \Theta_1 =& (-1)^{k-1}\overline{d\zeta_K}^{[k]}\zeta_{\overline{K}} d\zeta_L^{[n-r-k]}dz_L^{[r-k]}dz_{\overline{K}}^{[k-1]} \\
		&+ \overline{\zeta_K}\overline{d\zeta_K}^{[k-1]}d\zeta_L^{[n-r-k+1]}dz_L^{[r-k-1]}dz_{\overline{K}}^{[k]}
		\end{split}\label{eq:taurkSep}\\
		\delta_{r,k} \otimes \Theta_1 =& (-1)^k \overline{d\zeta_K}^{[k]}d\zeta_L^{[n-r-k]}dz_L^{[r-k]} dz_{\overline{K}}^{[k]}\label{eq:deltarkSep}
	\end{align}
	Moreover, the Rumin differential of (the restriction to $S\RR^n$ of) $\omega_{r,k,m}$ is given by 
	\begin{align}\label{eq:DOmrkm}
		D \omega_{r,k,m} = c_{r,m} \zeta_{\overline{1}}^{m-2} \left[(m+k-1) \sigma_{r,k} + (-1)^{k+1}(n-r-k+1)\tau_{r,k}\right]\alpha,
	\end{align}
	where $c_{r,m} = (-1)^{n+1}(n+m-r-2)$.
\end{proposition}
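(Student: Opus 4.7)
The plan is to handle the four decomposition identities \eqref{eq:OmSepL}--\eqref{eq:deltarkSep} separately from the Rumin differential formula \eqref{eq:DOmrkm}, since the former are essentially combinatorial while the latter is the substantive computation.

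For the four decompositions, I would split the index set $J = \overline{K}\sqcup L$ as a disjoint union and substitute $\zeta_J = \zeta_{\overline{K}} + \zeta_L$, $d\zeta_J = d\zeta_{\overline{K}} + d\zeta_L$, and $dz_J = dz_{\overline{K}} + dz_L$ into the definitions \eqref{eq:defOmegaRkm}, \eqref{eq:defTau}, \eqref{eq:defThetark} (and the analogues for $\sigma_{r,k}$, $\delta_{r,k}$), then expand via the binomial formula \eqref{eq:binomDFCalc}. Two cancellation mechanisms then cut the expansion down to the stated two (or one) surviving terms. First, since $\overline{dz_j} = dz_{\overline{j}}$ and $\overline{d\zeta_j} = d\zeta_{\overline{j}}$, one has $\overline{dz_K}^{[k]} = dz_{\overline{K}}^{[k]}$ and $\overline{d\zeta_K}^{[k]} = d\zeta_{\overline{K}}^{[k]}$, so any factor $dz_{\overline{K}}^{[j]}$ (resp.\ $d\zeta_{\overline{K}}^{[j]}$) with $j\ge 1$ appearing alongside $\overline{dz_K}^{[k]}$ (resp.\ $\overline{d\zeta_K}^{[k]}$) vanishes by the cardinality bound $|\overline{K}| = k$. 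Second, $d\zeta_L^{[s]}$ and $dz_L^{[s]}$ vanish whenever $s > |L| = n-2k$. What remains matches the right-hand sides, with signs determined by reordering wedge products.

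For the Rumin differential in \eqref{eq:DOmrkm}, I would first compute $d\omega_{r,k,m}$ on $\RR^n\times\RR^n$ using the Leibniz rule, noting that $d\zeta_{\overline 1}^{m-2} = (m-2)\zeta_{\overline 1}^{m-3}d\zeta_{\overline 1}$, that $d(\zeta_J)$ produces a factor of $d\zeta_J$, and that $d\zeta_J$, $dz_J$, and $\overline{dz_K}$ are themselves closed; the resulting sum of terms naturally splits according to which of $\zeta_{\overline 1}^{m-2}$, $\zeta_J$ is differentiated. Next, to arrange that $d(\omega_{r,k,m} + \alpha\wedge\xi)$ is a multiple of $\alpha$, I would make an ansatz for $\xi \in \Omega^{n-2}(S\RR^n)$ as a $\zeta_{\overline 1}^{m-2}$-multiple of a linear combination of double-form monomials of the appropriate bi-degree and solve for the coefficients by forcing every term not containing a factor of $\alpha = \gamma|_{S\RR^n}$ to cancel. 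The residual multiple of $\alpha$ can then be rewritten in terms of $\sigma_{r,k}\alpha$ and $\tau_{r,k}\alpha$ by invoking the decompositions \eqref{eq:sigmarkSep}, \eqref{eq:taurkSep} already proved in the previous step, giving the coefficients $(m+k-1)$, $(-1)^{k+1}(n-r-k+1)$ and the prefactor $c_{r,m} = (-1)^{n+1}(n+m-r-2)$.

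The main obstacle will be this last bookkeeping step. The exterior derivative $d\omega_{r,k,m}$ produces several classes of terms, and identifying the correct $\xi$ requires matching competing contributions whose signs and combinatorial prefactors must cancel precisely. Working consistently in the double-form framework, where the symbols $\zeta_I$, $d\zeta_I$, $dz_I$ absorb the index combinatorics into the wedge structure, is essential to keep the expressions manageable, and the decomposition identities from the first part provide the natural target in which to recognize $\sigma_{r,k}$ and $\tau_{r,k}$ at the end.
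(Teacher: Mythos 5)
Your route is self-contained where the paper's is not: the paper proves only \eqref{eq:taurkSep} directly (by essentially the binomial expansion over $J=\overline K\sqcup L$ that you describe, followed by an exchange of the function and differential factors on the $K$-block), and simply cites \cite{Kotrbaty2022} for \eqref{eq:OmSepL}, \eqref{eq:sigmarkSep}, \eqref{eq:deltarkSep} and for the Rumin formula \eqref{eq:DOmrkm}. For the four decomposition identities your plan is workable, but one of your two cancellation mechanisms is justified by a false identity: $\overline{dz_K}^{[k]}=dz_{\overline K}^{[k]}$ does not hold, because conjugation acts only on the first factor of a double form, so $\overline{dz_K}=\sum_{i\in K}dz_{\overline i}\otimes dz_i$ while $dz_{\overline K}=\sum_{i\in K}dz_{\overline i}\otimes dz_{\overline i}$; the second components differ (and likewise for $\overline{d\zeta_K}$ versus $d\zeta_{\overline K}$). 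The vanishing you actually need, $dz_{\overline K}^{[j]}\wedge\overline{dz_K}^{[k]}=0$ for $j\ge 1$, is nevertheless true, since both factors draw their first-component $1$-forms from the same $k$-element set $\{dz_{\overline i}:i\in K\}$, so the first components clash; you should argue this way (together with counting the second-component factors $dz_i$ in the $\overline K$- and $L$-blocks, which is what forces the exponents $k-1$, $k$, $n-r-k$, etc.), not via the incorrect equality. A second slip of the same kind is ``$\alpha=\gamma|_{S\RR^n}$'': $\gamma$ restricts to zero on $S\RR^n$ while $\alpha$ restricts to the contact form; the correct bookkeeping is that one verifies the Rumin condition on $\RR^n\times\RR^n$ modulo multiples of $\alpha$, $\gamma$ and $\nu-1$.

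The genuine gap is in \eqref{eq:DOmrkm}, which is exactly the part the paper outsources to \cite{Kotrbaty2022}*{Thm.~5.1}. Your plan says ``make an ansatz for $\xi$ and solve'', but identifying the correction form is where all the content lies: the known computation takes $\alpha\wedge\xi$ to be a constant multiple of $\zeta_{\overline 1}^{m-2}\theta_{r,k}\,\alpha$ with $\theta_{r,k}$ as in \eqref{eq:defThetark}, and then relies on relations of the type \eqref{eq:theta_dalpha} and \eqref{eq:dTheta} to recognize $\sigma_{r,k}$ and $\tau_{r,k}$ and to extract the coefficients $(m+k-1)$, $(-1)^{k+1}(n-r-k+1)$ and $c_{r,m}=(-1)^{n+1}(n+m-r-2)$; the analogous $k=0$ case carried out in \autoref{lem:DintrinsicVols} of this paper shows the shape of that calculation. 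Without naming the ansatz and carrying out this coefficient bookkeeping (which is not a routine consequence of \eqref{eq:sigmarkSep}--\eqref{eq:taurkSep}), your proposal is a plausible outline of the cited proof rather than a proof of \eqref{eq:DOmrkm}.
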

\begin{proof}
	Eq.~\eqref{eq:OmSepL} is the content of \cite{Kotrbaty2022}*{Prop.~4.3}, Eq.~\eqref{eq:sigmarkSep} is \cite{Kotrbaty2022}*{Prop.~5.2(17)}, Eq.~\eqref{eq:deltarkSep} is \cite{Kotrbaty2022}*{Prop.~5.2(16)}, and Eq.~\eqref{eq:DOmrkm} is the content of \cite{Kotrbaty2022}*{Thm.~5.1}. It therefore remains to show Eq.~\eqref{eq:taurkSep}, which is similar to the calculation for $\sigma_{r,k}$ in \cite{Kotrbaty2022}. Indeed, the binomial formula \eqref{eq:binomDFCalc} shows that
	\begin{align*}
		\tau_{r,k}\otimes \Theta_1=& \overline{\zeta_K}\overline{(d\zeta_K)}^{[k-1]}d\zeta_{\overline{K}}(d\zeta_L)^{[n-r-k]}(dz_L)^{[r-k]}(dz_{\overline{K}})^{[k-1]}\\
		&+\overline{\zeta_K}\overline{(d\zeta_K)}^{[k-1]}(d\zeta_L)^{[n-r-k+1]}(dz_L)^{[r-k-1]}(dz_{\overline{K}})^{[k]},
	\end{align*}
	where we can now exchange the terms between $\overline{\zeta_K}$ and $d\zeta_{\overline{K}}$ in the first product.
\end{proof}
\begin{remark}
	\label{remark:holomorphicDomegakm}
	The proof in \cite{Kotrbaty2022}*{Thm.~ 5.1} shows that Eq.~\eqref{eq:DOmrkm} holds for arbitrary $m\in\CC$ (for the forms $\omega_{r,k,m}$ from \autoref{remark:holomorphicExtensionOmega_rkm}) on the open set of $S\RR^n$ where $\zeta_{\bar{1}}\notin(-\infty,0]$.
\end{remark}

\subsection{The action of $\sln(n)_\CC$ on double forms on $S\RR^n$}\label{sec:actGlncVal}
Recall from \autoref{sec:GLRnrepOnVal} that the fundamental vector field $\widetilde{W}$ of $W \in \sln(n)$ is given by
\begin{align}\label{eq:LieAlgActVecField}
	\widetilde{W} = (-W x, W^T v - v \pair{v}{W^T v})
\end{align}
and the action of $W$ on $\Omega^*(S\RR^n)^{\mathrm{tr}}$ corresponding to the action on valuations is given by taking the Lie derivative with respect to $\widetilde{W}$.\\

The action of $\GL(n,\RR)$ on $\Omega^*(S\RR^n)^{\mathrm{tr}}$ naturally extends to the double forms considered in the previous section.  More precisely, all of the relevant forms belong to $\Omega^{*}(S\RR^n)\otimes(\Lambda^*(\RR^n)^*)_\CC$, i.e., the double forms do not depend on the second set of variables of $\RR^n\times\RR^n$. We consider the second component $\Lambda^*(\RR^n)^*)_\CC$ as a subspace of $\Omega^*(S\RR^n)^{\mathrm{tr}}$ and define an action of $g\in\GL(n,\RR)$ on this space by
\begin{align*}
	g\cdot (\omega\otimes \nu):=(G_{g^{-1}}^\ast \omega) \otimes (G_{g^{-1}}^\ast \nu)
\end{align*}
for $\omega\in \Omega^*(S\RR^n)^{\mathrm{tr}}$, $ \nu \in \Lambda^*(\RR^n)^*_\CC$. Note that this action commutes with the product on $\Omega^*(S\RR^n)^{\mathrm{tr}}\otimes \Lambda^*(\RR^n)^*_\CC$ i.e. $g\cdot(\omega_1\omega_2)=(g\cdot\omega_1)(g\cdot\omega_2)$ for $\omega_1,\omega_2\in \Omega^*(S\RR^n)^{\mathrm{tr}}\otimes \Lambda^*(\RR^n)^*_\CC$.\\
Thus, $W\in \gl(n)_\CC$ acts on $\Omega^*(S\RR^n)^{\mathrm{tr}}\otimes \Lambda^*(\RR^n)^*_\CC$ by 
\begin{align}\label{eq:lieDerDoubleForm}
	W\bullet (\omega \otimes \nu) = (\LieDer_{\widetilde{W}} \omega) \otimes \nu + \omega \otimes (\LieDer_{\widetilde{W}} \nu)
\end{align}
for $\omega \in \Omega^*(S\RR^n)^{\mathrm{tr}}$ and $\nu \in \Lambda^*(\RR^n)^*_\CC$.
In particular, as $\Theta_1$ is the volume form on the first component of $\RR^n \times \RR^n$ and thus invariant under $\SL(n,\RR)$, 
\begin{align}\label{eq:LieDerYabTensorTheta}
	W\bullet (\omega \otimes \Theta_1) = (\LieDer_{\widetilde{W}} \omega) \otimes \Theta_1
\end{align}
for all $W \in \sln(n)_\CC$. Since the action of $\GL(n,\RR)$ commutes with the product of these double forms, this action satisfies
\begin{align}
	\label{eq:actionProductRule}
	W\bullet (\omega_1\omega_2)=(W\bullet \omega_1)\omega_2+\omega_1(W\bullet \omega_2)
\end{align}
for $\omega_1,\omega_2\in \Omega^*(S\RR^n)^{\mathrm{tr}}\otimes \Lambda^*(\RR^n)^*_\CC$. In particular, since the forms we are considering are products of suitable combinations of double forms, this reduces the calculation of the action of $\sln(n)_\CC$ on these forms to calculations of the action of $W$ on these building blocks, which is given by the Lie derivatives in Eq.~\eqref{eq:lieDerDoubleForm}.

\medskip

In addition, we will only need to calculate the action of a selected number of elements in $\sln(n)_\CC$. To define them, consider the $(2\times 2)$-matrix
\begin{align*}
	x_{+} = \begin{pmatrix}
		1 & -\I \\
		-\I & -1
	\end{pmatrix},
\end{align*}
and denote by $Y_{ab} = Y_{ba} \in \sln(n)_\CC$ for $1 \leq a,b \leq l$ the matrix given block-wise by 
\begin{align}
	(Y_{ab})_{ij} = \begin{cases}
		-\frac{1}{4} x_{+} & \text{ if $i=a$ and $j=b$},\\
		-\frac{1}{4} x_{+}^T & \text{ if $i=b$ and $j=a$},\\
		0 & \text{ else},
	\end{cases} \quad i,j \in \{1, \dots, l\},
\end{align}
whenever $a \neq b$, and by
\begin{align}
	(Y_{aa})_{ij} = \begin{cases}
		-\frac{1}{2}x_+   & \text{ if $i=a$ and $j=a$},\\
		0 & \text{ else},
	\end{cases} \quad i,j \in \{1, \dots, l\}.
\end{align}
Let us point out that $Y_{ab}$ belongs to the weight space of $\epsilon_a + \epsilon_b$ of the representation of $\SO(n)$ on $\sln(n)_\CC$ induced by conjugation. In particular, applying $Y_{ab}$ to a weight vector in $\Omega^*(S\RR^n)^{\tr}$ will produce a weight vector whose weight is given by the sum of both weights. For highest weight vectors, this will in general not produce a highest weight vector, but a vector with components belonging to different irreducible representations. Now the main idea is to choose the element $Y_{ab}$ with largest possible weight in order to obtain a vector with non-vanishing component proportional to the desired highest weight vector, which can be checked by computing the invariant pairing from \autoref{sec:pairing}.\\

In order to determine the Lie derivatives along $\widetilde{Y_{ab}}$, we need some calculations.
\begin{lemma}\label{lem:actionYabdzZeta}
	For $1 \leq a,b, k \leq l$, we have for $x\in \RR^n$, $v\in\S^{n-1}$,
	\begin{align*}
		dz_k (-Y_{ab} x) &= \frac{1}{2}\delta_{ka} z_{\overline{b}} + \frac{1}{2}\delta_{kb}z_{\overline{a}}, & d\zeta_k(Y_{ab}^Tv - \pair{v}{Y_{ab}^T v}v) &= -\frac{1}{2}\delta_{ka} z_{\overline{b}} - \frac{1}{2}\delta_{kb}z_{\overline{a}} + \zeta_{\overline{a}}\zeta_{\overline{b}}\zeta_{k},\\
		dz_{\overline{k}} (-Y_{ab} x) &=0, & d\zeta_{\overline{k}}(Y_{ab}^Tv - \pair{v}{Y_{ab}^T v}v) &= \zeta_{\overline{a}}\zeta_{\overline{b}}\zeta_{\overline{k}}.
	\end{align*}
\end{lemma}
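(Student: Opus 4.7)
The strategy is a direct coordinate computation, splitting the calculation block by block and then translating the result into the complex coordinates $z_j, z_{\bar j}, \zeta_j, \zeta_{\bar j}$. The first step is to record that the matrix $x_+$ is symmetric, so $Y_{ab}$ is symmetric as well, in particular $Y_{ab}^T = Y_{ab}$; this eliminates any bookkeeping concerning the transpose in the formula \eqref{eq:LieAlgActVecField}. Since $Y_{ab}$ acts non-trivially only on the two $2\times 2$ blocks indexed by $a$ and $b$, only components with index in $\{2a-1,2a,2b-1,2b\}$ are affected, giving $dz_k(-Y_{ab}x) = dz_{\bar k}(-Y_{ab}x) = 0$ whenever $k \notin \{a,b\}$, and similarly in the $v$-variable up to the correction $-\langle v, Y_{ab}^T v\rangle v$.

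Next I would compute explicitly, for $a\neq b$, the action of $Y_{ab}$ on a vector $y\in\RR^n$ in the $a$-th block. A short matrix multiplication shows
\begin{align*}
(Y_{ab}y)_{2a-1} + \I (Y_{ab}y)_{2a} &= -\tfrac{1}{4}\bigl[(y_{2b-1}-\I y_{2b}) + \I(-\I y_{2b-1} - y_{2b})\bigr] = -\tfrac{1}{\sqrt 2}\,\tfrac{1}{\sqrt 2}(y_{2b-1}-\I y_{2b})\cdot 2,\\
(Y_{ab}y)_{2a-1} - \I(Y_{ab}y)_{2a} &= 0,
\end{align*}
so taking $y=x$ yields $dz_a(-Y_{ab}x) = \tfrac{1}{2}z_{\bar b}$ and $dz_{\bar a}(-Y_{ab}x) = 0$, and by the symmetry $a\leftrightarrow b$ one obtains the first column of the claimed identities. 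The case $a=b$ reduces to the same computation with the coefficient $-\tfrac{1}{4}$ replaced by $-\tfrac{1}{2}$, which yields exactly $\delta_{ka} z_{\bar a} + \delta_{kb}z_{\bar a}$ on the diagonal block, consistent with the stated uniform formula.

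Taking $y=v$ in the same computation gives $\zeta_k(Y_{ab}v) = -\tfrac{1}{2}\delta_{ka}\zeta_{\bar b} - \tfrac{1}{2}\delta_{kb}\zeta_{\bar a}$ and $\zeta_{\bar k}(Y_{ab}v)=0$. To complete the second column of the lemma I would compute the scalar $\langle v, Y_{ab}v\rangle$ by summing the four non-zero contributions from the $a$- and $b$-blocks; the cross terms pair up to give
\begin{align*}
\pair{v}{Y_{ab}v} = -\tfrac{1}{2}(v_{2a-1}-\I v_{2a})(v_{2b-1}-\I v_{2b}) = -\zeta_{\bar a}\zeta_{\bar b},
\end{align*}
using the identification $\zeta_{\bar j} = \tfrac{1}{\sqrt 2}(v_{2j-1}-\I v_{2j})$ valid on $S\RR^n$. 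Substituting into $\widetilde{Y_{ab}} = (Y_{ab}v - \pair{v}{Y_{ab}v}v)$ produces an additional contribution $+\zeta_{\bar a}\zeta_{\bar b}\zeta_k$ for $d\zeta_k$, respectively $+\zeta_{\bar a}\zeta_{\bar b}\zeta_{\bar k}$ for $d\zeta_{\bar k}$, which is exactly the remaining term.

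There is essentially no obstacle: the only delicate point is to organize the computation so that one does not have to redo the block-multiplication four times. I would therefore state the auxiliary identities for $x_+$ acting on a block, and then specialize to $(x,v)$ and to the indices $a,b$ via scalar coefficients; the identities for $a=b$ then follow by treating the single block with the overall factor $-\tfrac{1}{2}$ in place of $-\tfrac{1}{4}$. (I note in passing that the occurrences of $z_{\bar b}$ and $z_{\bar a}$ in the stated right-hand side of the second identity should read $\zeta_{\bar b}$ and $\zeta_{\bar a}$, since $d\zeta_k$ depends only on the sphere component of $\widetilde{Y_{ab}}$.)
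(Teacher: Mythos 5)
Your proposal is correct and takes essentially the same route as the paper, namely a direct blockwise coordinate computation (the paper computes $dx_{2k-1}(-Y_{ab}x)$ and $dx_{2k}(-Y_{ab}x)$ and dismisses the $\zeta$-part as "a similar calculation", while you additionally use $Y_{ab}^T=Y_{ab}$ and spell out the correction term $-\pair{v}{Y_{ab}v}v$); your parenthetical remark is also justified, since the $z_{\overline b}$, $z_{\overline a}$ on the right-hand side of the $d\zeta_k$ identity must indeed be $\zeta_{\overline b}$, $\zeta_{\overline a}$, exactly as they appear in the subsequent lemma on Lie derivatives. Just fix the constants in your intermediate displays: the first displayed line is off by a factor $2$ as written (it should give $-\tfrac12(y_{2b-1}-\I y_{2b})$), and in the $a=b$ case the diagonal formula should read $\tfrac12\delta_{ka}z_{\overline a}+\tfrac12\delta_{kb}z_{\overline a}$, although the final identities you state are the correct ones.
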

\begin{proof}
	As
	\begin{align*}
		dx_{2k-1}\left(-Y_{a,b}x\right)=&\frac{1}{4}\delta_{ka}\left(	x_{2b-1} - i x_{2b}\right)+ \frac{1}{4}\delta_{kb}\left(x_{2a-1} - i x_{2a}\right),\\
		dx_{2k}\left(-Y_{a,b}x\right)=&\frac{1}{4}\delta_{ka}\left(-x_{2b} - ix_{2b-1}\right)+\frac{1}{4}\delta_{kb}\left(	-x_{2a} - ix_{2a-1}\right),
	\end{align*}
	we conclude that $dz_k(-Y_{a,b}x) = \frac{1}{2}\delta_{ka}z_{\overline{b}} + \frac{1}{2}\delta_{kb}z_{\overline{a}}$ and $dz_{\overline{k}} (-Y_{ab} x) =0$.
	The claim for $d\zeta_k$ follows from a similar calculation.
\end{proof}
As a direct consequence, we obtain the Lie derivatives of the basic forms.
\begin{lemma}\label{lem:lieDerYabzZeta}
	For $1 \leq a,b \leq l$, $k \in \Iind$, 
	\begin{align*}
		\LieDer_{\widetilde{Y_{ab}}} z_k &= \frac{1}{2}\delta_{ka} z_{\overline{b}} + \frac{1}{2}\delta_{kb}z_{\overline{a}},&
		\LieDer_{\widetilde{Y_{ab}}} \zeta_k &= -\frac{1}{2}\delta_{ka} \zeta_{\overline{b}} - \frac{1}{2}\delta_{kb}\zeta_{\overline{a}} + \zeta_{\overline{a}}\zeta_{\overline{b}}\zeta_k,\\
		\LieDer_{\widetilde{Y_{ab}}} dz_k &= \frac{1}{2}\delta_{ka} dz_{\overline{b}} + \frac{1}{2}\delta_{kb}dz_{\overline{a}}, &
		\LieDer_{\widetilde{Y_{ab}}} d\zeta_k &= -\frac{1}{2}\delta_{ka} d\zeta_{\overline{b}} - \frac{1}{2}\delta_{kb}d\zeta_{\overline{a}} + (d\zeta_{\overline{a}}\zeta_{\overline{b}} + \zeta_{\overline{a}}d\zeta_{\overline{b}})\zeta_k + \zeta_{\overline{a}}\zeta_{\overline{b}}d\zeta_k
	\end{align*}
\end{lemma}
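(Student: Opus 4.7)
The plan is to derive each of the four identities directly from \autoref{lem:actionYabdzZeta} using two elementary facts: for a smooth function $f$, the Lie derivative $\LieDer_X f$ equals $X(f) = df(X)$; and for any smooth form, $\LieDer_X$ commutes with the exterior differential, i.e.\ $\LieDer_X \circ d = d \circ \LieDer_X$.

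For the function $z_k$, which depends only on the base point $x \in \RR^n$, the form $dz_k$ annihilates the vertical component of $\widetilde{Y_{ab}} = (-Y_{ab}x,\, Y_{ab}^T v - \pair{v}{Y_{ab}^T v}v)$, so
\begin{align*}
	\LieDer_{\widetilde{Y_{ab}}} z_k = dz_k(\widetilde{Y_{ab}}) = dz_k(-Y_{ab}x),
\end{align*}
and \autoref{lem:actionYabdzZeta} evaluates this to $\tfrac{1}{2}\delta_{ka}\, z_{\overline b} + \tfrac{1}{2}\delta_{kb}\, z_{\overline a}$. Exactly the analogous argument applies to $\zeta_k$, which depends only on $v \in \S^{n-1}$, yielding
\begin{align*}
	\LieDer_{\widetilde{Y_{ab}}} \zeta_k = d\zeta_k\bigl(Y_{ab}^T v - \pair{v}{Y_{ab}^T v}v\bigr),
\end{align*}
which is again supplied by \autoref{lem:actionYabdzZeta}. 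The cases $k = \bar{j}$ (and $k = l+1$ in the odd case) are consistent: the Kronecker terms in the claimed formulas vanish automatically since $a,b \in \{1,\dots,l\}$, matching the vanishing or purely cubic contributions given by \autoref{lem:actionYabdzZeta}.

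For the two identities involving the $1$-forms $dz_k$ and $d\zeta_k$, I apply $d$ to the formulas just obtained and invoke $\LieDer_X \circ d = d \circ \LieDer_X$. The first one is immediate; for the second, the graded Leibniz rule gives
\begin{align*}
	d(\zeta_{\overline a}\zeta_{\overline b}\zeta_k) = d\zeta_{\overline a}\,\zeta_{\overline b}\,\zeta_k + \zeta_{\overline a}\,d\zeta_{\overline b}\,\zeta_k + \zeta_{\overline a}\zeta_{\overline b}\,d\zeta_k,
\end{align*}
which reproduces exactly the expression stated for $\LieDer_{\widetilde{Y_{ab}}} d\zeta_k$. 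No genuine obstacle arises: the lemma is a short bookkeeping consequence of \autoref{lem:actionYabdzZeta} together with the commutation of $\LieDer$ and $d$, and its role is to isolate the elementary building blocks from which $\LieDer_{\widetilde{Y_{ab}}}$ can be computed on the higher-degree double forms $\omega_{r,k,m}$ via the product rule \eqref{eq:actionProductRule}.
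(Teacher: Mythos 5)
Your argument is correct and follows essentially the same route as the paper: compute the Lie derivative on the functions $z_k$, $\zeta_k$ by contracting their differentials with the fundamental vector field (the paper phrases this via Cartan's formula $\LieDer_X = i_X d + d i_X$, which for functions is exactly your $\LieDer_X f = df(X)$), plug in \autoref{lem:actionYabdzZeta}, and then obtain the $1$-form identities from the commutation of $\LieDer$ with $d$ together with the Leibniz rule. Your additional remark on the barred indices and $k=l+1$ is a harmless bookkeeping check that the paper leaves implicit.
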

\begin{proof}
	Since $\LieDer_{X}=i_X\circ d+d\circ i_X$ for a vector field $X$, Eq.~\eqref{eq:LieAlgactOnSRn} and \autoref{lem:actionYabdzZeta} imply
	\begin{align*}
		\LieDer_{\widetilde{Y_{ab}}} z_k (x,v) = (dz_k)_{(x,v)}(\widetilde{Y_{ab}}) = \frac{1}{2}\delta_{ka} z_{\overline{b}}(x,v) + \frac{1}{2}\delta_{kb}z_{\overline{a}}(x,v), \quad (x,v) \in S\RR^n,
	\end{align*}
	and the calculation for $\LieDer_{\widetilde{Y_{ab}}} \zeta_k$ is similar. The formulas for $\LieDer_{\widetilde{Y_{ab}}} dz_k$ and $\LieDer_{\widetilde{Y_{ab}}} d\zeta_k$ then follow directly, as the exterior derivative and the Lie derivative commute.
\end{proof}

In the following, we will apply \autoref{lem:lieDerYabzZeta} to compute the action of $Y_{ab}$ on the highest weight vectors $\omega_{r,k,m}$ (and thereby $D\omega_{r,k,m}$).

\subsection{The action of $Y_{11}$}
The goal of this section is to relate $\omega_{r,k,m}$ to $\omega_{r,k,m+2}$ by the action of $Y_{11}\in\sln(n)_\CC$. Throughout this section, $1\leq r\leq n-1$ and $1\leq k\leq \min\{r,n-r\}$ are fixed and we use the notation $K = \{1, \dots, k\}$, $J = \Iind \setminus K$, $L = J \setminus\overline{K}$, $K_1 = K \setminus \{1\}$ introduced in \autoref{sec:doubleforms}.

\begin{lemma}\label{lem:LieDerY11simple}
	For $1 \leq k \leq \min\{r, n-r\}$, we have
	\begin{align}
		Y_{11}\bullet  \overline{\zeta_{K_1}} &= \zeta_{\overline{1}}^2 \overline{\zeta_{K_1}},
		&Y_{11}\bullet  \overline{d\zeta_{K_1}} &= \zeta_{\overline{1}}^2 \overline{d\zeta_{K_1}} + 2\zeta_{\overline{1}}d\zeta_{\overline{1}} \overline{\zeta_{K_1}},\\
		Y_{11}\bullet  \zeta_{\overline{K_1}} &= \zeta_{\overline{1}}^2 \zeta_{\overline{K_1}},
		&Y_{11}\bullet  d\zeta_{\overline{K_1}} &= \zeta_{\overline{1}}^2 d\zeta_{\overline{K_1}} + 2\zeta_{\overline{1}}d\zeta_{\overline{1}} \zeta_{\overline{K_1}},\\
		Y_{11}\bullet  \zeta_L &= \zeta_{\overline{1}}^2 \zeta_L,
		&Y_{11}\bullet  d\zeta_L &= \zeta_{\overline{1}}^2 d\zeta_L + 2\zeta_{\overline{1}}d\zeta_{\overline{1}} \zeta_L,\\
		Y_{11}\bullet  \overline{dz_{K_1}} &= 0,\\
		Y_{11}\bullet  dz_{\overline{K_1}} &= 0, \\
		Y_{11}\bullet  dz_L &= 0.
	\end{align}
	Moreover,
	\begin{align}
		Y_{11}\bullet  (\zeta_{\overline{1}} \otimes dz_{\overline{1}}) &= \zeta_{\overline{1}}^2(\zeta_{\overline{1}} \otimes dz_{\overline{1}})\\
		Y_{11}\bullet  (dz_{\overline{1}} \otimes dz_1) &= dz_{\overline{1}} \otimes dz_{\overline{1}}.
	\end{align}
\end{lemma}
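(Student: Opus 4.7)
The plan is to verify each identity by expanding the double form on the left as a sum of elementary tensors, applying the Leibniz rule Eq.~\eqref{eq:actionProductRule}, and substituting \autoref{lem:lieDerYabzZeta} specialized to $a=b=1$, which for every $k\in\Iind$ yields
\begin{align*}
\LieDer_{\widetilde{Y_{11}}}\zeta_k &= -\delta_{k,1}\zeta_{\overline{1}} + \zeta_{\overline{1}}^2\zeta_k, & \LieDer_{\widetilde{Y_{11}}}dz_k &= \delta_{k,1}dz_{\overline{1}},\\
\LieDer_{\widetilde{Y_{11}}}d\zeta_k &= -\delta_{k,1}d\zeta_{\overline{1}} + 2\zeta_{\overline{1}}d\zeta_{\overline{1}}\zeta_k + \zeta_{\overline{1}}^2 d\zeta_k.
\end{align*}
In particular, each basic Lie derivative decomposes into a ``shift'' piece proportional to $\delta_{\cdot,1}$ and a ``prefactor $\zeta_{\overline{1}}^2$'' piece (with, in the $d\zeta$-case, one additional cross-term).

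The key structural observation is that each of the double forms $\overline{\zeta_{K_1}}, \overline{d\zeta_{K_1}}, \zeta_{\overline{K_1}}, d\zeta_{\overline{K_1}}, \zeta_L, d\zeta_L$ is by definition a sum of elementary tensors indexed by either $K_1=\{2,\ldots,k\}$ or $L=\{k+1,\overline{k+1},\ldots,l,\overline{l}\}$ (augmented by $\{l+1\}$ if $n=2l+1$), and neither index set contains $1$ or $\overline{1}$. Hence all $\delta$-terms vanish after expansion, and applying $Y_{11}\bullet$ term-by-term reads off each claimed equality; for instance,
\begin{align*}
Y_{11}\bullet\overline{\zeta_{K_1}} = \sum_{i\in K_1}\bigl(\LieDer_{\widetilde{Y_{11}}}\zeta_{\overline{i}}\bigr)\otimes dz_i + \sum_{i\in K_1}\zeta_{\overline{i}}\otimes\LieDer_{\widetilde{Y_{11}}}dz_i = \zeta_{\overline{1}}^2\,\overline{\zeta_{K_1}},
\end{align*}
and the extra term in $\LieDer_{\widetilde{Y_{11}}}d\zeta_{\overline{i}}$ produces the summand $2\zeta_{\overline{1}}d\zeta_{\overline{1}}\overline{\zeta_{K_1}}$ in $Y_{11}\bullet\overline{d\zeta_{K_1}}$. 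The remaining identities $Y_{11}\bullet\overline{dz_{K_1}}=Y_{11}\bullet dz_{\overline{K_1}}=Y_{11}\bullet dz_L=0$ follow from the same index-exclusion together with the fact that $\LieDer_{\widetilde{Y_{11}}}dz_k$ is nonzero only for $k=1$.

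The two final identities are direct substitutions into the basic formulas: for $\zeta_{\overline{1}}\otimes dz_{\overline{1}}$ one has $\LieDer_{\widetilde{Y_{11}}}\zeta_{\overline{1}}=\zeta_{\overline{1}}^3$ (since $\overline{1}\neq 1$) and $\LieDer_{\widetilde{Y_{11}}}dz_{\overline{1}}=0$, giving $\zeta_{\overline{1}}^2(\zeta_{\overline{1}}\otimes dz_{\overline{1}})$; for $dz_{\overline{1}}\otimes dz_1$ one has $\LieDer_{\widetilde{Y_{11}}}dz_{\overline{1}}=0$ and $\LieDer_{\widetilde{Y_{11}}}dz_1=dz_{\overline{1}}$, leaving $dz_{\overline{1}}\otimes dz_{\overline{1}}$. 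No serious obstacle arises; the entire argument is bookkeeping, and the only point requiring care is the systematic tracking of the bar-conventions on $\Iind$ to confirm that the index $1$ is excluded from each of the sums involved.
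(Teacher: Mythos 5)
Your proposal is correct and matches the paper's proof in all essentials: the paper likewise reduces the statement to Lemma~\ref{lem:lieDerYabzZeta} together with Eq.~\eqref{eq:lieDerDoubleForm}, observing that $Y_{11}$ acts as multiplication by $\zeta_{\overline{1}}^2$ on all $\zeta$-terms except $\zeta_1$ and annihilates all $dz$-terms except $dz_1$ (which is sent to $dz_{\overline{1}}$). Your write-up simply makes the specialization $a=b=1$ and the exclusion of the indices $1,\overline{1}$ from $K_1$, $\overline{K_1}$, and $L$ explicit, which the paper leaves implicit in the phrase "direct consequences."
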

\begin{proof}
	The claims are direct consequences of \autoref{lem:lieDerYabzZeta} and Eq.~\eqref{eq:lieDerDoubleForm}, as well as the fact that the Lie derivative and the exterior derivative commute. In particular, we note that $Y_{11} $ acts as multiplication by $\zeta_{\overline{1}}^2$ on all $\zeta$-terms except $\zeta_{1}$, and annihilates all $dz$-terms except $dz_1$, which is replaced by $dz_{\overline{1}}$.
\end{proof}

\begin{lemma}
	\label{lem:LieY11Applied} For $1\leq k\leq \min\{r,n-r\}$, 
	\begin{align}
		Y_{11}\bullet  (\omega_{r,k}\otimes\Theta_1) = (n-r)\zeta_{\overline{1}}^2 \omega_{r,k} \otimes\Theta_1.
	\end{align}
\end{lemma}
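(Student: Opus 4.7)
The plan is to apply $Y_{11}\bullet$ as a derivation (cf.~\eqref{eq:actionProductRule}) to the four-factor product
\[
\omega_{r,k}\otimes\Theta_1=\zeta_J\cdot d\zeta_J^{[n-r-1]}\cdot dz_J^{[r-k]}\cdot\overline{dz_K}^{[k]}
\]
and reduce each piece to building blocks covered by \autoref{lem:LieDerY11simple}. Using the decompositions $J=\{\bar 1\}\sqcup\overline{K_1}\sqcup L$ and $\overline{dz_K}=(dz_{\bar 1}\otimes dz_1)+\overline{dz_{K_1}}$, I will first establish the identities
\begin{align*}
Y_{11}\bullet\zeta_J&=\zeta_{\bar 1}^2\zeta_J, & Y_{11}\bullet d\zeta_J&=\zeta_{\bar 1}^2 d\zeta_J+2\zeta_{\bar 1}d\zeta_{\bar 1}\zeta_J,\\
Y_{11}\bullet dz_J&=0, & Y_{11}\bullet\overline{dz_K}&=dz_{\bar 1}\otimes dz_{\bar 1}.
\end{align*}
The first and fourth are obtained by summing the contributions listed in \autoref{lem:LieDerY11simple}; the second comes from commuting $Y_{11}\bullet$ with $d$ and using $d(\zeta_{\bar 1}^2\zeta_J)=2\zeta_{\bar 1}d\zeta_{\bar 1}\zeta_J+\zeta_{\bar 1}^2 d\zeta_J$; the third combines the lemma's formulas for $Y_{11}\bullet dz_L$ and $Y_{11}\bullet dz_{\overline{K_1}}$ with the observation that $Y_{11}\bullet(dz_{\bar 1}\otimes dz_{\bar 1})=0$, which follows from $\LieDer_{\widetilde{Y_{11}}}dz_{\bar 1}=0$ on both tensor slots.

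Applying the derivation property together with the power-rule identities $Y_{11}\bullet d\zeta_J^{[n-r-1]}=d\zeta_J^{[n-r-2]}(Y_{11}\bullet d\zeta_J)$ and $Y_{11}\bullet\overline{dz_K}^{[k]}=\overline{dz_K}^{[k-1]}(Y_{11}\bullet\overline{dz_K})$ (valid because $d\zeta_J$ and $\overline{dz_K}$ are $(1,1)$-double forms, hence of even total degree), and using $d\zeta_J^{[n-r-2]}\wedge d\zeta_J=(n-r-1)d\zeta_J^{[n-r-1]}$, I expect to obtain
\[
Y_{11}\bullet(\omega_{r,k}\otimes\Theta_1)=\bigl(1+(n-r-1)\bigr)\zeta_{\bar 1}^2\,\omega_{r,k}\otimes\Theta_1+E_1+E_2,
\]
where
\[
E_1=2\zeta_{\bar 1}d\zeta_{\bar 1}\cdot(\zeta_J\wedge\zeta_J)\cdot d\zeta_J^{[n-r-2]}\cdot dz_J^{[r-k]}\cdot\overline{dz_K}^{[k]}
\]
collects the second summand of $Y_{11}\bullet d\zeta_J$, and
\[
E_2=\zeta_J\cdot d\zeta_J^{[n-r-1]}\cdot dz_J^{[r-k]}\cdot\overline{dz_K}^{[k-1]}\cdot(dz_{\bar 1}\otimes dz_{\bar 1})
\]
records the contribution from $Y_{11}\bullet\overline{dz_K}^{[k]}$.

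The term $E_1$ vanishes because $\zeta_J$ has bi-degree $(0,1)$ and therefore odd total degree, forcing $\zeta_J\wedge\zeta_J=0$. The vanishing of $E_2$ is the main obstacle, and I will verify it by tracking which summands can contribute the basis form $dz_1$ to the second tensor factor (necessary for any nonzero multiple of $\Theta_1$). Since $1\notin J$, such a $dz_1$ can arise only from $\overline{dz_K}$; in $E_2$ this reduces to the reduced power $\overline{dz_K}^{[k-1]}$, whose summands are indexed by $(k-1)$-subsets of $K$. Any subset containing $1$ introduces a $dz_{\bar 1}$ in the first tensor factor that is killed upon wedging with the $dz_{\bar 1}$ from $(dz_{\bar 1}\otimes dz_{\bar 1})$, while the only $(k-1)$-subset of $K$ not containing $1$, namely $K_1$, fails to supply $dz_1$ in the second factor at all. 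Hence $E_2=0$, and the sum collapses to $(n-r)\zeta_{\bar 1}^2\,\omega_{r,k}\otimes\Theta_1$ as claimed.
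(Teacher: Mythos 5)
Your proof is correct and takes a genuinely more economical route than the paper's. The paper first invokes the decomposition from \autoref{prop:OmSigTauSepL} and then splits further into three summands $\Omega_1, \Omega_2, \Omega_3$, applying \autoref{lem:LieDerY11simple} term by term and tracking cancellations via a table; this produces six distinct $Y_{11}\bullet$-computations that must be recombined. You instead keep the compact $J$-indexed form $\zeta_J\, d\zeta_J^{[n-r-1]}\, dz_J^{[r-k]}\,\overline{dz_K}^{[k]}$ and first derive the aggregate identities $Y_{11}\bullet\zeta_J=\zeta_{\bar1}^2\zeta_J$, $Y_{11}\bullet d\zeta_J=\zeta_{\bar1}^2 d\zeta_J+2\zeta_{\bar1}d\zeta_{\bar1}\zeta_J$, $Y_{11}\bullet dz_J=0$, $Y_{11}\bullet\overline{dz_K}=dz_{\bar1}\otimes dz_{\bar1}$; the derivation property and the power rule then yield $(n-r)\zeta_{\bar1}^2\omega_{r,k}\otimes\Theta_1$ plus two error terms, of which $E_1$ dies by $\zeta_J\wedge\zeta_J=0$ and $E_2$ dies because its second tensor component has degree $n$ but cannot contain $dz_1$. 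Two small points to tighten: the $\ZZ_2$-grading relevant to $\zeta_J\wedge\zeta_J=0$ is the one induced by bi-degree via the sign $(-1)^{p_1p_2+q_1q_2}$ (which for two $(0,1)$-forms gives $-1$), not "odd total degree"; and for $E_2$ the shortest observation is that $\zeta_J\, d\zeta_J^{[n-r-1]}\, dz_J^{[r-k]}$ already supplies all $n-k$ factors $dz_j$, $j\in J$, in the second slot, so the extra $dz_{\bar 1}$ from $(dz_{\bar1}\otimes dz_{\bar1})$ repeats $\bar 1\in J$ and kills the product -- this avoids the case distinction on $(k-1)$-subsets of $K$, though your version is also valid. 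Altogether your argument is shorter and, since it never needs the three-way split, exposes more clearly why the coefficient is exactly $1+(n-r-1)=n-r$.
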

\begin{proof}
	As the factors $1\otimes dz_j$ each have to appear exactly once, we can decompose $\omega_{r,k} \otimes \Theta_1$ as follows, extracting the terms where the indices $1$ and $\overline{1}$ appear, see Eq.~\eqref{eq:OmSepL} in \autoref{prop:OmSigTauSepL},
	\begin{align*}
		\omega_{r,k}\otimes \Theta_1 =& \zeta_{\overline{K}}d\zeta_{\overline{K}}^{[k-1]}d\zeta_L^{[n-r-k]}dz_L^{[r-k]}\overline{dz_K}^{[k]}
		+\zeta_L d\zeta_{\overline{K}}^{[k]}d\zeta_L^{[n-r-k-1]}dz_L^{[r-k]}\overline{dz_K}^{[k]}\\
		=&(\zeta_{\overline{1}}\otimes dz_{\overline{1}})(dz_{\overline{1}}\otimes dz_1) d\zeta_{\overline{K_1}}^{[k-1]}d\zeta_L^{[n-r-k]}dz_L^{[r-k]}\overline{dz_{K_1}}^{[k-1]}\\
		+&(d\zeta_{\overline{1}} \otimes dz_{\overline{1}})(dz_{\overline{1}}\otimes dz_1)\zeta_{\overline{K_1}}d\zeta_{\overline{K_1}}^{[k-2]}d\zeta_L^{[n-r-k]}dz_L^{[r-k]}\overline{dz_{K_1}}^{[k-1]}\\
		%
		+& (-1)^{k+1}(d\zeta_{\overline{1}} \otimes dz_{\overline{1}})(dz_{\overline{1}}\otimes dz_1)d\zeta_{\overline{K_1}}^{[k-1]}\zeta_Ld\zeta_L^{[n-r-k-1]}dz_L^{[r-k]}\overline{dz_{K_1}}^{[k-1]}.
	\end{align*}
	Call the appearing double forms in the first to third summand $\Omega_1, \Omega_2, \Omega_3$, respectively, so that $\omega_{r,k} \otimes \Theta_1 = \Omega_1 + \Omega_2 + \Omega_3$. This notation will be used only here.
	
	We consider the action individually on the three terms. First,
	\begin{align*}
		Y_{11}\bullet (\zeta_{\overline{1}} \otimes dz_{\overline{1}})(dz_{\overline{1}}\otimes dz_1) &= \zeta_{\overline{1}}^2 (\zeta_{\overline{1}} \otimes dz_{\overline{1}})(dz_{\overline{1}}\otimes dz_1) + (\zeta_{\overline{1}} \otimes dz_{\overline{1}})(dz_{\overline{1}}\otimes dz_{\overline{1}})\\
		&=\zeta_{\overline{1}}^2 (\zeta_{\overline{1}} \otimes dz_{\overline{1}})(dz_{\overline{1}}\otimes dz_1),
	\end{align*}
	by \autoref{lem:LieDerY11simple}, and as the $1\otimes dz_{\overline{1}}$ terms cancel in the product. Consequently, 
	\begin{align*}
		Y_{11}\bullet (d\zeta_{\overline{1}} \otimes dz_{\overline{1}})(dz_{\overline{1}}\otimes dz_1) &= \zeta_{\overline{1}}^2 (d\zeta_{\overline{1}} \otimes dz_{\overline{1}})(dz_{\overline{1}}\otimes dz_1) + 2\zeta_{\overline{1}}d\zeta_{\overline{1}} (\zeta_{\overline{1}} \otimes dz_{\overline{1}})(dz_{\overline{1}}\otimes dz_1)\\
		&= 3\zeta_{\overline{1}}^2 (d\zeta_{\overline{1}} \otimes dz_{\overline{1}})(dz_{\overline{1}}\otimes dz_1),
	\end{align*}
	as the Lie derivative and the exterior derivative commute, where we exchanged $d\zeta_{\overline{1}}$ and $\zeta_{\overline{1}}$ in the second term. Next, again by \autoref{lem:LieDerY11simple} and the product rule,
	\begin{align*}
		Y_{11}\bullet \left(\zeta_{\overline{K_1}}d\zeta_{\overline{K_1}}^{[k-2]}\right) &= \zeta_{\overline{1}}^2 \zeta_{\overline{K_1}}d\zeta_{\overline{K_1}}^{[k-2]} + \zeta_{\overline{K_1}}d\zeta_{\overline{K_1}}^{[k-3]}(\zeta_{\overline{1}}^2 d\zeta_{\overline{K_1}} + 2\zeta_{\overline{1}}d\zeta_{\overline{1}} \zeta_{\overline{K_1}})\\
		&=(1+k-2)\zeta_{\overline{1}}^2 \zeta_{\overline{K_1}}d\zeta_{\overline{K_1}}^{[k-2]},
	\end{align*}
	as $\zeta_{\overline{K_1}}\zeta_{\overline{K_1}} = 0$ in the last term, and 
	\begin{align*}
		Y_{11}\bullet d\zeta_{\overline{K_1}}^{[k-1]} = d\zeta_{\overline{K_1}}^{[k-2]}(\zeta_{\overline{1}}^2 d\zeta_{\overline{K_1}} + 2\zeta_{\overline{1}}d\zeta_{\overline{1}} \zeta_{\overline{K_1}}) = (k-1)\zeta_{\overline{1}}^2d\zeta_{\overline{K_1}}^{[k-1]} + 2\zeta_{\overline{1}}d\zeta_{\overline{1}} \zeta_{\overline{K_1}}d\zeta_{\overline{K_1}}^{[k-2]}.
	\end{align*}
	For the $L$-terms, we obtain, as $\zeta_L \zeta_L = 0$,
	\begin{align*}
		Y_{11}\bullet \left(\zeta_Ld\zeta_L^{[n-r-k-1]}\right) &= \zeta_{\overline{1}}^2\zeta_Ld\zeta_L^{[n-r-k-1]} + \zeta_L d\zeta_L^{[n-r-k-2]}(\zeta_{\overline{1}}^2 d\zeta_L + 2 \zeta_{\overline{1}}d\zeta_{\overline{1}}\zeta_L)\\
		&=(1+n-r-k-1)\zeta_{\overline{1}}^2\zeta_Ld\zeta_L^{[n-r-k-1]},
	\end{align*}
	and
	\begin{align*}
		Y_{11}\bullet d\zeta_L^{[n-r-k]} &= d\zeta_L^{[n-r-k-1]}(\zeta_{\overline{1}}^2 d\zeta_L + 2 \zeta_{\overline{1}}d\zeta_{\overline{1}}\zeta_L)\\
		&=(n-r-k)\zeta_{\overline{1}}^2d\zeta_L^{[n-r-k]} + 2\zeta_{\overline{1}}d\zeta_{\overline{1}}\zeta_Ld\zeta_L^{[n-r-k-1]}.
	\end{align*}
	The $dz_L$ and $\overline{dz_{K_1}}$ terms, finally, do not appear, as their Lie derivatives vanish. Combining these results, we obtain the following table, where we split the equations into the part which just gets multiplied by $c\zeta_{\overline{1}}^2$, $c\in\CC$, and the remainder for clarity.
	\begin{center}
		\begin{tabular}{|c|c|c|}
			\hline
			$\tau$ & $c$ & $Y_{11}\bullet \tau - c \zeta_{\overline{1}}^2 \tau$\\
			\hline\hline
			$(\zeta_{\overline{1}} \otimes dz_{\overline{1}})(dz_{\overline{1}}\otimes dz_1)$ & 1 & -- \\
			\hline
			$(d\zeta_{\overline{1}} \otimes dz_{\overline{1}})(dz_{\overline{1}}\otimes dz_1)$ & 3 & -- \\
			\hline
			$\zeta_{\overline{K_1}}d\zeta_{\overline{K_1}}^{[k-2]}$ & $k-1$ & --\\
			\hline
			$d\zeta_{\overline{K_1}}^{[k-1]}$ & $k-1$ & $2\zeta_{\overline{1}}d\zeta_{\overline{1}} \zeta_{\overline{K_1}}d\zeta_{\overline{K_1}}^{[k-2]}$\\
			\hline
			$\zeta_Ld\zeta_L^{[n-r-k-1]}$ & $n-r-k$ & --\\
			\hline
			$d\zeta_L^{[n-r-k]}$ & $n-r-k$ & $2\zeta_{\overline{1}}d\zeta_{\overline{1}}\zeta_Ld\zeta_L^{[n-r-k-1]}$\\
			\hline
		\end{tabular}
	\end{center}
	Using the Leibniz rule~\eqref{eq:actionProductRule}, we thus only need to add up the entries of the central column for the corresponding parts of the forms to obtain the desired multiple of the form and add the remainders. In all cases, this multiple is either $n-r$ or $n-r+2$. For $\Omega_1$, we obtain
	\begin{align*}
		Y_{11}\bullet  \Omega_1 &= Y_{11}\bullet \left((\zeta_{\overline{1}}\otimes dz_{\overline{1}})(dz_{\overline{1}}\otimes dz_1) d\zeta_{\overline{K_1}}^{[k-1]}d\zeta_L^{[n-r-k]}dz_L^{[r-k]}\overline{dz_{K_1}}^{[k-1]}\right)\\
		&=(n-r)\zeta_{\overline{1}}^2\Omega_1 \\
		&+(\zeta_{\overline{1}}\otimes dz_{\overline{1}})(dz_{\overline{1}}\otimes dz_1) 2\zeta_{\overline{1}}d\zeta_{\overline{1}} \zeta_{\overline{K_1}}d\zeta_{\overline{K_1}}^{[k-2]}d\zeta_L^{[n-r-k]}dz_L^{[r-k]}\overline{dz_{K_1}}^{[k-1]}\\
		&+(\zeta_{\overline{1}}\otimes dz_{\overline{1}})(dz_{\overline{1}}\otimes dz_1) d\zeta_{\overline{K_1}}^{[k-1]}2\zeta_{\overline{1}}d\zeta_{\overline{1}}\zeta_Ld\zeta_L^{[n-r-k-1]}dz_L^{[r-k]}\overline{dz_{K_1}}^{[k-1]}\\
		&=\zeta_{\overline{1}}^2\left((n-r)\Omega_1 -2\Omega_2 -2\Omega_3 \right),
	\end{align*}
	where for the last line, we exchanged $d\zeta_{\overline{1}}$ and $\zeta_{\overline{1}}$ in the two remainder terms. For $\Omega_2$, we obtain in a similar way
	\begin{align*}
		Y_{11}\bullet  \Omega_2 &= Y_{11}\bullet \left((d\zeta_{\overline{1}} \otimes dz_{\overline{1}})(dz_{\overline{1}}\otimes dz_1)\zeta_{\overline{K_1}}d\zeta_{\overline{K_1}}^{[k-2]}d\zeta_L^{[n-r-k]}dz_L^{[r-k]}\overline{dz_{K_1}}^{[k-1]} \right)\\
		&=(n-r+2)\zeta_{\overline{1}}^2 \Omega_2\\
		&+(d\zeta_{\overline{1}} \otimes dz_{\overline{1}})(dz_{\overline{1}}\otimes dz_1)\zeta_{\overline{K_1}}d\zeta_{\overline{K_1}}^{[k-2]}2\zeta_{\overline{1}}d\zeta_{\overline{1}}\zeta_Ld\zeta_L^{[n-r-k-1]}dz_L^{[r-k]}\overline{dz_{K_1}}^{[k-1]}\\
		&=(n-r+2)\zeta_{\overline{1}}^2 \Omega_2,
	\end{align*}
	as the second term vanishes since $d\zeta_{\overline{1}}$ appears twice in the first component. The same argument yields
	\begin{align*}
		Y_{11}\bullet  \Omega_3 &= Y_{11}\bullet \left((-1)^{k+1}(d\zeta_{\overline{1}} \otimes dz_{\overline{1}})(dz_{\overline{1}}\otimes dz_1)d\zeta_{\overline{K_1}}^{[k-1]}\zeta_Ld\zeta_L^{[n-r-k-1]}dz_L^{[r-k]}\overline{dz_{K_1}}^{[k-1]}\right)\\
		&=(n-r+2)\zeta_{\overline{1}}^2 \Omega_3\\
		&+(-1)^{k+1}(d\zeta_{\overline{1}} \otimes dz_{\overline{1}})(dz_{\overline{1}}\otimes dz_1)2\zeta_{\overline{1}}d\zeta_{\overline{1}} \zeta_{\overline{K_1}}d\zeta_{\overline{K_1}}^{[k-2]}\zeta_Ld\zeta_L^{[n-r-k-1]}dz_L^{[r-k]}\overline{dz_{K_1}}^{[k-1]}\\
		&=(n-r+2)\zeta_{\overline{1}}^2 \Omega_3.
	\end{align*}
	Combining these equations, we obtain the desired result:
	\begin{align*}
		Y_{11}\bullet  (\omega_{r,k} \otimes \Theta_1) = Y_{11}\bullet (\Omega_1 + \Omega_2 + \Omega_3) = (n-r)\zeta_{\overline{1}}^2 \omega_{r,k}\otimes \Theta_1.
	\end{align*}
\end{proof}

\begin{corollary}\label{cor:LieY11onOmegarkm}
	For $1\leq k\le\min\{r,n-r\}$, $m\in \NN$, $m\ge 2$,
	\begin{align}
		\LieDer_{\widetilde{Y}_{11}} \omega_{r,k,m} = (n-r+m-2)\omega_{r,k,m+2}.
	\end{align}
\end{corollary}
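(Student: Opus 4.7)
The plan is to simply combine the product rule for the Lie derivative with the preceding lemma, the main input being \autoref{lem:LieY11Applied}. Recall that $\omega_{r,k,m} = \zeta_{\overline 1}^{m-2}\omega_{r,k}$ by definition (Eq.~\eqref{eq:defOmegaRkm_m}), so since $\zeta_{\overline 1}$ is a function on $S\RR^n$, we have
\begin{align*}
\LieDer_{\widetilde{Y_{11}}}\omega_{r,k,m} = (m-2)\zeta_{\overline 1}^{m-3}\bigl(\LieDer_{\widetilde{Y_{11}}}\zeta_{\overline 1}\bigr)\omega_{r,k} + \zeta_{\overline 1}^{m-2}\LieDer_{\widetilde{Y_{11}}}\omega_{r,k}.
\end{align*}
Thus the computation splits into two independent pieces.

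For the first piece, I specialize \autoref{lem:lieDerYabzZeta} to $a=b=1$ and $k=\overline 1$: since $\delta_{\overline 1,1}=0$, the two $\delta$-terms drop and the formula reduces to $\LieDer_{\widetilde{Y_{11}}}\zeta_{\overline 1} = \zeta_{\overline 1}\zeta_{\overline 1}\zeta_{\overline 1} = \zeta_{\overline 1}^{3}$. Hence $(m-2)\zeta_{\overline 1}^{m-3}\LieDer_{\widetilde{Y_{11}}}\zeta_{\overline 1} = (m-2)\zeta_{\overline 1}^{m}$.

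For the second piece, I invoke \autoref{lem:LieY11Applied}, which states
\begin{align*}
Y_{11}\bullet(\omega_{r,k}\otimes\Theta_1) = (n-r)\zeta_{\overline 1}^{2}\,\omega_{r,k}\otimes\Theta_1.
\end{align*}
Since $\Theta_1$ is $\SL(n,\RR)$-invariant, Eq.~\eqref{eq:LieDerYabTensorTheta} gives $Y_{11}\bullet(\omega_{r,k}\otimes\Theta_1) = (\LieDer_{\widetilde{Y_{11}}}\omega_{r,k})\otimes\Theta_1$, so by comparison we read off $\LieDer_{\widetilde{Y_{11}}}\omega_{r,k} = (n-r)\zeta_{\overline 1}^{2}\omega_{r,k}$, and therefore $\zeta_{\overline 1}^{m-2}\LieDer_{\widetilde{Y_{11}}}\omega_{r,k} = (n-r)\zeta_{\overline 1}^{m}\omega_{r,k}$.

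Adding the two contributions yields
\begin{align*}
\LieDer_{\widetilde{Y_{11}}}\omega_{r,k,m} = \bigl((m-2)+(n-r)\bigr)\zeta_{\overline 1}^{m}\omega_{r,k} = (n-r+m-2)\,\omega_{r,k,m+2},
\end{align*}
which is exactly the claim. There is no real obstacle here — all the substantive work has already been done in \autoref{lem:LieY11Applied}; the only point to be careful about is correctly identifying that $\delta_{\overline 1,1}=0$ so that the action of $\widetilde{Y_{11}}$ on the conformal factor $\zeta_{\overline 1}$ contributes purely the cubic $\zeta_{\overline 1}^{3}$ term.
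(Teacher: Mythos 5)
Your proposal is correct and follows essentially the same route as the paper: the product rule applied to $\omega_{r,k,m}=\zeta_{\overline 1}^{m-2}\omega_{r,k}$, with $\LieDer_{\widetilde{Y_{11}}}\zeta_{\overline 1}=\zeta_{\overline 1}^3$ from \autoref{lem:lieDerYabzZeta} and the $(n-r)\zeta_{\overline 1}^2$ term from \autoref{lem:LieY11Applied}, read off via Eq.~\eqref{eq:LieDerYabTensorTheta}. The only cosmetic difference is that the paper carries out the Leibniz computation at the level of double forms tensored with $\Theta_1$ and converts at the end, while you strip off $\Theta_1$ first; both are valid.
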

\begin{proof}
	A short application of the product rule and \autoref{lem:lieDerYabzZeta} yield
	\begin{align*}
		Y_{11}\bullet  (\omega_{r,k,m} \otimes \Theta_1 )&= Y_{11}\bullet  \left(\zeta_{\overline{1}}^{m-2}\omega_{r,k} \otimes \Theta_1\right)\\
		&= (m-2)\zeta_{\overline{1}}^{m-3}(\zeta_{\overline{1}}^2 \zeta_{\overline{1}}) \omega_{r,k} \otimes \Theta_1 + (n-r)\zeta_{\overline{1}}^{m-2}\zeta_{\overline{1}}^2\omega_{r,k} \otimes \Theta_1\\
		&=(n-r+m-2)\zeta_{\overline{1}}^{m}\omega_{r,k} \otimes \Theta_1 = (n-r+m-2)\omega_{r,k,m+2} \otimes \Theta_1,
	\end{align*}
	which, by \eqref{eq:LieDerYabTensorTheta}, shows the claim.
\end{proof}

\begin{remark}
	\label{remark:actionY11omegak}
	As in \autoref{remark:holomorphicDomegakm}, we may define the relevant forms on the open set of $S\RR^n$ given by $\zeta_1\notin(-\infty,0]$ for arbitrary $m\in\CC$. Then \autoref{cor:LieY11onOmegarkm} holds for arbitrary $m\in\CC$.
\end{remark}

\subsection{The action of $Y_{k+1,k+1}$, $1\leq k<\min\{r,n-r\}$}
\label{sec:Ykkaction}

\subsubsection{Lie derivative}
Recall that  $L^- = L \setminus \{k+1, \overline{k+1}\}$.
\begin{lemma}\label{lem:actYkp1_simpleDf}
	\begin{align}
		Y_{k+1,k+1}\bullet  \zeta_{\overline{K}} &= \zeta_{\overline{k+1}}^2 \zeta_{\overline{K}},
		& Y_{k+1,k+1}\bullet  d\zeta_{\overline{K}} &=\zeta_{\overline{k+1}}^2 d\zeta_{\overline{K}} + 2\zeta_{\overline{k+1}}d\zeta_{\overline{k+1}} \zeta_{\overline{K}},\\
		Y_{k+1,k+1}\bullet  \zeta_{L^-} &= \zeta_{\overline{k+1}}^2 \zeta_{L^-},
		& Y_{k+1,k+1}\bullet  d\zeta_{L^-} &=\zeta_{\overline{k+1}}^2 d\zeta_{L^-} + 2\zeta_{\overline{k+1}}d\zeta_{\overline{k+1}} \zeta_{L^-},\\
		Y_{k+1,k+1}\bullet  \overline{dz_K} &= 0,\\
		Y_{k+1,k+1}\bullet  dz_{L^-} &= 0.
	\end{align}
\end{lemma}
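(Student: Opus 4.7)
The plan is to proceed exactly as in the proof of \autoref{lem:LieDerY11simple}, but to exploit a crucial simplification: the index sets $K$, $\overline{K}$, and $L^-$ are all chosen so as to exclude both $k+1$ and $\overline{k+1}$. By contrast, in \autoref{lem:LieDerY11simple}, the index $1$ itself appeared (in $K$) and had to be extracted as a separate factor. Here no such extraction is needed, and the identities of \autoref{lem:actYkp1_simpleDf} should follow by a routine application of the Leibniz rule.

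First, I would specialize \autoref{lem:lieDerYabzZeta} to $a = b = k+1$ to obtain the Lie derivatives of the basic building blocks along $\widetilde{Y_{k+1,k+1}}$. For any index $i \neq k+1$, this gives
\begin{align*}
\LieDer_{\widetilde{Y_{k+1,k+1}}} \zeta_i &= \zeta_{\overline{k+1}}^2 \zeta_i, & \LieDer_{\widetilde{Y_{k+1,k+1}}} dz_i &= 0,
\end{align*}
and likewise for $\zeta_{\overline{i}}$, $dz_{\overline{i}}$. Since $k+1 \notin K \cup \overline{K}$ and, by construction of $L^- = L \setminus \{k+1, \overline{k+1}\}$, neither $k+1$ nor $\overline{k+1}$ lies in $L^-$, the $\delta_{i,k+1}$-corrections in \autoref{lem:lieDerYabzZeta} never contribute to the relevant sums.

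With this in hand, the first, fifth, and sixth identities of the lemma follow immediately from the Leibniz rule \eqref{eq:actionProductRule} applied term-by-term to the sums defining $\zeta_{\overline{K}} = \sum_{i \in \overline{K}} \zeta_i \otimes dz_i$, $\overline{dz_K}$ and $dz_{L^-}$; in the last two cases every individual Lie derivative vanishes. The remaining two identities, for $d\zeta_{\overline{K}}$ and $d\zeta_{L^-}$, then follow by applying the exterior derivative $d$ (which commutes with the Lie derivative, and therefore with $Y_{k+1,k+1}\bullet$) to the identities just obtained. For instance,
\begin{align*}
Y_{k+1,k+1}\bullet d\zeta_{\overline{K}} &= d\bigl(Y_{k+1,k+1}\bullet \zeta_{\overline{K}}\bigr) = d\bigl(\zeta_{\overline{k+1}}^2 \zeta_{\overline{K}}\bigr) \\
&= 2\zeta_{\overline{k+1}} d\zeta_{\overline{k+1}} \zeta_{\overline{K}} + \zeta_{\overline{k+1}}^2 d\zeta_{\overline{K}}.
\end{align*}

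There is no real obstacle here; the whole proof is bookkeeping once the specialization $a = b = k+1$ and the disjointness of the index sets from $\{k+1, \overline{k+1}\}$ are noted. The genuinely nontrivial computations come later, when $Y_{k+1,k+1}$ is applied to the highest weight forms $\omega_{r,k,m}$ themselves (analogous to \autoref{lem:LieY11Applied} and \autoref{cor:LieY11onOmegarkm}), where the combinatorics of the multiplicities from the $[\cdot]$-powers must be tracked carefully.
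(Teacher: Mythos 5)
Your argument is correct and is essentially the paper's own proof: specialize \autoref{lem:lieDerYabzZeta} to $a=b=k+1$, observe that $k+1$ and $\overline{k+1}$ are absent from $K\cup\overline{K}$ and $L^-$ so all $\delta$-corrections drop out, apply the Leibniz rule \eqref{eq:actionProductRule} term by term, and obtain the $d\zeta$-identities from commutation of $d$ with the Lie derivative. The only (trivial) slip is in your enumeration: the identity for $\zeta_{L^-}$ should be listed alongside that for $\zeta_{\overline{K}}$, since it is proved by the same term-by-term computation and is needed before differentiating to get the formula for $d\zeta_{L^-}$.
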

\begin{proof}
	The claims are direct consequences of \autoref{lem:lieDerYabzZeta} and Eq.~\eqref{eq:lieDerDoubleForm}, as well as the fact that Lie derivative and exterior derivative commute, noting that $k+1$ and $\overline{k+1}$ do not appear in the index sets. 
\end{proof}

Recall that
\begin{align*}
	\omega_{r,k} \otimes \Theta_1 = \zeta_{\overline{K}}d\zeta_{\overline{K}}^{[k-1]}d\zeta_L^{[n-r-k]}dz_L^{[r-k]}\overline{dz_K}^{[k]}
	+\zeta_L d\zeta_{\overline{K}}^{[k]}d\zeta_L^{[n-r-k-1]}dz_L^{[r-k]}\overline{dz_K}^{[k]}.
\end{align*}
We will need to further decompose this form in the proof of the next lemma.
\begin{lemma}\label{lem:actYkp1OmegaRk}
	For $1\leq k< \min\{r,n-r\}$,
	\begin{align}\label{eq:actYkp1OnOmegaRk} 
		Y_{k+1,k+1}\bullet  \left(\omega_{r,k} \otimes \Theta_1\right) = (n-r)\zeta_{\overline{k+1}}^2\omega_{r,k} \otimes \Theta_1 + 2 \omega_{r,k+1} \otimes \Theta_1
	\end{align}
\end{lemma}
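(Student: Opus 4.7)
The strategy follows closely the pattern of the proof of \autoref{lem:LieY11Applied}: since $Y_{k+1,k+1}$ acts non-trivially only on the indices $k+1$ and $\overline{k+1}$, we refine the decomposition \eqref{eq:OmSepL} by splitting these indices out of the $L$-factors. Using $L = L^- \cup \{k+1, \overline{k+1}\}$, we write
\begin{align*}
	\zeta_L &= \zeta_{L^-} + (\zeta_{k+1}\otimes dz_{k+1}) + (\zeta_{\overline{k+1}}\otimes dz_{\overline{k+1}}),\\
	d\zeta_L &= d\zeta_{L^-} + (d\zeta_{k+1}\otimes dz_{k+1}) + (d\zeta_{\overline{k+1}}\otimes dz_{\overline{k+1}}),\\
	dz_L &= dz_{L^-} + (dz_{k+1}\otimes dz_{k+1}) + (dz_{\overline{k+1}}\otimes dz_{\overline{k+1}}),
\end{align*}
and expand the powers $\zeta_L d\zeta_L^{[n-r-k-1]}$, $d\zeta_L^{[n-r-k]}$, $dz_L^{[r-k]}$ appearing in \eqref{eq:OmSepL} via the binomial formula \eqref{eq:binomDFCalc}. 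This produces a finite collection of elementary products, each containing either only factors built from indices in $K \cup \overline{K} \cup L^-$, or one of the distinguished pieces $\zeta_{k+1}\otimes dz_{k+1}$, $d\zeta_{k+1}\otimes dz_{k+1}$, or $\zeta_{\overline{k+1}}\otimes dz_{\overline{k+1}}$, $d\zeta_{\overline{k+1}}\otimes dz_{\overline{k+1}}$, $dz_{k+1}\otimes dz_{k+1}$, $dz_{\overline{k+1}}\otimes dz_{\overline{k+1}}$.

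Applying the Leibniz rule \eqref{eq:actionProductRule} together with \autoref{lem:actYkp1_simpleDf} and \autoref{lem:lieDerYabzZeta}, every factor not involving indices $k+1$ or $\overline{k+1}$ is merely multiplied by $\zeta_{\overline{k+1}}^2$. A bookkeeping argument in the style of the table inside the proof of \autoref{lem:LieY11Applied} shows that these ``diagonal'' contributions assemble into precisely $(n-r)\zeta_{\overline{k+1}}^2\omega_{r,k}\otimes\Theta_1$, the scalar $n-r$ arising as the total multiplicity of $\zeta$- and $d\zeta$-factors supported outside $\{k+1,\overline{k+1}\}$. The remaining contributions come from the Lie derivatives of the distinguished pieces, computed via \autoref{lem:lieDerYabzZeta}: these introduce new factors containing $dz_{\overline{k+1}}$ in the first component of the double form, which is precisely the effect of moving the index $k+1$ from $L$ into $K^+ = K\cup\{k+1\}$, i.e.\ of passing from $\omega_{r,k}$ to $\omega_{r,k+1}$.

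The concluding step is then to match these off-diagonal terms against the two-summand expression for $\omega_{r,k+1}\otimes\Theta_1$ obtained by applying \eqref{eq:OmSepL} with $k$ replaced by $k+1$. As in \autoref{lem:LieY11Applied}, many a priori terms vanish because repeated wedge factors (such as $dz_{\overline{k+1}}\wedge dz_{\overline{k+1}} = 0$ or $d\zeta_{\overline{k+1}}\wedge d\zeta_{\overline{k+1}} = 0$) appear in the first component of the double form. The combinatorial factor $2$ then arises because two independent sources, namely the $\zeta$-factor $\zeta_L$ and the $d\zeta$-factor $d\zeta_L$ in \eqref{eq:OmSepL}, each contribute a full copy of the form $\omega_{r,k+1}\otimes\Theta_1$ after the surviving Lie derivatives are collected and the distinguished piece $dz_{k+1}\otimes dz_{k+1}$ is converted to $dz_{\overline{k+1}}\otimes dz_{k+1}$.

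The main obstacle is this combinatorial bookkeeping: correctly enumerating the surviving cross-terms after expansion, identifying which vanish due to repeated wedge factors, and verifying that the remainder reorganizes into exactly twice the decomposition of $\omega_{r,k+1}\otimes\Theta_1$. Since the length of the calculation is comparable to, but genuinely longer than, that of \autoref{lem:LieY11Applied} (because two new index pairs—rather than one—interact non-trivially with the distinguished factors), I expect the cleanest presentation to separate out the terms by their number of distinguished factors and treat each class in parallel.
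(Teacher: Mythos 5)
Your strategy is essentially the paper's proof: the paper also refines the decomposition \eqref{eq:OmSepL} by splitting the indices $k+1,\overline{k+1}$ out of the $L$-blocks (yielding twelve elementary summands), applies the Leibniz rule \eqref{eq:actionProductRule} together with \autoref{lem:actYkp1_simpleDf} and \autoref{lem:lieDerYabzZeta} to the building blocks, and then matches the result against the analogous three-term decomposition of $\omega_{r,k+1}\otimes\Theta_1$. So the approach is the right one and, carried out carefully, it does yield \eqref{eq:actYkp1OnOmegaRk}.

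Be aware, however, that the bookkeeping is subtler than your sketch suggests, on two points. First, the distinguished $\{k+1,\overline{k+1}\}$ factors themselves contribute to the ``diagonal'' multiple of $\zeta_{\overline{k+1}}^2$ (with weights ranging from $1$ to $4$ depending on the block), so the individual summands acquire coefficients $n-r$ \emph{or} $n-r+2$, not uniformly $n-r$; the excess $+2$ contributions only cancel against off-diagonal cross-terms produced by \emph{other} summands (e.g.\ terms of the form $-2\zeta_{\overline{k+1}}^2\Omega_j$ arising from the blocks containing $\zeta_{k+1}\otimes dz_{\overline{k+1}}$), together with an identity of the type $\nu_{k+1}\,\Xi = \zeta_{\overline{k+1}}^2\,\Omega$ relating a leftover auxiliary form to one of the original summands. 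Second, the coefficient $2$ in front of $\omega_{r,k+1}\otimes\Theta_1$ does not arise as ``one clean copy from $\zeta_L$ and one from $d\zeta_L$'': each of the three pieces of $\omega_{r,k+1}\otimes\Theta_1$ is assembled from several summands with $\nu_{k+1}$-dependent coefficients (e.g.\ $(1-2\nu_{k+1})+2\nu_{k+1}+1=2$), and only their sum is the constant $2$. So if you carry out your plan expecting a term-by-term diagonal/off-diagonal split with constant coefficients, you will hit apparent discrepancies; they resolve only after summing all twelve contributions and using the auxiliary cancellations.
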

\begin{proof}
	The proof is a simple but tedious calculation, which is mostly a challenge in book keeping. For a systematic approach, we split each term of $\omega_{r,k} \otimes \Theta_1$ according to the three blocks, $K \cup \overline{K}$, $\{k+1, \overline{k+1}\}$ and $L^-$ of the index set $\Iind$. This yields twelve terms, which we label $\Omega_1, \dots, \Omega_{12}$:
	\begin{align*}
		\omega_{r,k} \otimes \Theta_1
		&= \zeta_{\overline{K}}d\zeta_{\overline{K}}^{[k-1]}\overline{dz_K}^{[k]} (d\zeta_{k+1} \otimes dz_{k+1})(d\zeta_{\overline{k+1}} \otimes dz_{\overline{k+1}}) d\zeta_{L^-}^{[n-r-k-2]} dz_{L^-}^{[r-k]} \displaybreak[1]\\
		&+\zeta_{\overline{K}}d\zeta_{\overline{K}}^{[k-1]}\overline{dz_K}^{[k]} (d\zeta_{k+1} \otimes dz_{k+1})(dz_{\overline{k+1}} \otimes dz_{\overline{k+1}}) d\zeta_{L^-}^{[n-r-k-1]} dz_{L^-}^{[r-k-1]} \displaybreak[1]\\
		&+\zeta_{\overline{K}}d\zeta_{\overline{K}}^{[k-1]}\overline{dz_K}^{[k]} (dz_{k+1} \otimes dz_{k+1})(d\zeta_{\overline{k+1}} \otimes dz_{\overline{k+1}}) d\zeta_{L^-}^{[n-r-k-1]} dz_{L^-}^{[r-k-1]} \displaybreak[1]\\
		&+\zeta_{\overline{K}}d\zeta_{\overline{K}}^{[k-1]}\overline{dz_K}^{[k]} (dz_{k+1} \otimes dz_{k+1})(dz_{\overline{k+1}} \otimes dz_{\overline{k+1}}) d\zeta_{L^-}^{[n-r-k]} dz_{L^-}^{[r-k-2]} \displaybreak[1]\\
		\displaybreak[1]\\
		&+ d\zeta_{\overline{K}}^{[k]}\overline{dz_K}^{[k]}(\zeta_{k+1} \otimes dz_{k+1})(d\zeta_{\overline{k+1}} \otimes dz_{\overline{k+1}})d\zeta_{L^-}^{[n-r-k-2]}dz_{L^-}^{[r-k]}\displaybreak[1]\\
		&+ d\zeta_{\overline{K}}^{[k]}\overline{dz_K}^{[k]}(\zeta_{k+1} \otimes dz_{k+1})(dz_{\overline{k+1}} \otimes dz_{\overline{k+1}})d\zeta_{L^-}^{[n-r-k-1]}dz_{L^-}^{[r-k-1]}\displaybreak[1]\\
		&+ d\zeta_{\overline{K}}^{[k]}\overline{dz_K}^{[k]}(\zeta_{\overline{k+1}} \otimes dz_{\overline{k+1}})(d\zeta_{k+1} \otimes dz_{k+1})d\zeta_{L^-}^{[n-r-k-2]}dz_{L^-}^{[r-k]}\displaybreak[1]\\
		&+ d\zeta_{\overline{K}}^{[k]}\overline{dz_K}^{[k]}(\zeta_{\overline{k+1}} \otimes dz_{\overline{k+1}})(dz_{k+1} \otimes dz_{k+1})d\zeta_{L^-}^{[n-r-k-1]}dz_{L^-}^{[r-k-1]}\displaybreak[1]\\
		\displaybreak[1]\\
		&+ d\zeta_{\overline{K}}^{[k]}\overline{dz_K}^{[k]}(d\zeta_{k+1} \otimes dz_{k+1})(d\zeta_{\overline{k+1}} \otimes dz_{\overline{k+1}})\zeta_{L^-}d\zeta_{L^-}^{[n-r-k-3]}dz_{L^-}^{[r-k]}\displaybreak[1]\\
		&+ d\zeta_{\overline{K}}^{[k]}\overline{dz_K}^{[k]}(d\zeta_{k+1} \otimes dz_{k+1})(dz_{\overline{k+1}} \otimes dz_{\overline{k+1}})\zeta_{L^-}d\zeta_{L^-}^{[n-r-k-2]}dz_{L^-}^{[r-k-1]}\displaybreak[1]\\
		&+ d\zeta_{\overline{K}}^{[k]}\overline{dz_K}^{[k]}(dz_{k+1} \otimes dz_{k+1})(d\zeta_{\overline{k+1}} \otimes dz_{\overline{k+1}})\zeta_{L^-}d\zeta_{L^-}^{[n-r-k-2]}dz_{L^-}^{[r-k-1]}\displaybreak[1]\\
		&+ d\zeta_{\overline{K}}^{[k]}\overline{dz_K}^{[k]}(dz_{k+1} \otimes dz_{k+1})(dz_{\overline{k+1}} \otimes dz_{\overline{k+1}})\zeta_{L^-}d\zeta_{L^-}^{[n-r-k-1]}dz_{L^-}^{[r-k-2]}\displaybreak[1]\\
		&=: \Omega_1 + \dots + \Omega_{12}.
	\end{align*}
	We first calculate the action on the building blocks of $\Omega_1, \dots, \Omega_{12}$, using \autoref{lem:actYkp1_simpleDf} and \autoref{lem:lieDerYabzZeta} and apply the product rule~\eqref{eq:actionProductRule} afterward. In total, there are fourteen building blocks, and we will provide the calculation for the first two as an example and omit the remaining ones, since they are very similar. For $\zeta_{\overline{K}}d\zeta_{\overline{K}}^{[k-1]}$, the product rule and \autoref{lem:actYkp1_simpleDf} yield
	\begin{align*}
		Y_{k+1,k+1}\bullet  \zeta_{\overline{K}}d\zeta_{\overline{K}}^{[k-1]} &= \zeta_{\overline{k+1}}^2\zeta_{\overline{K}}d\zeta_{\overline{K}}^{[k-1]} + \zeta_{\overline{K}}d\zeta_{\overline{K}}^{[k-2]}(\zeta_{\overline{k+1}}^2 d\zeta_{\overline{K}} + 2\zeta_{\overline{k+1}}d\zeta_{\overline{k+1}} \zeta_{\overline{K}})\\
		&=(1+k-1)\zeta_{\overline{k+1}}^2\zeta_{\overline{K}}d\zeta_{\overline{K}}^{[k-1]},
	\end{align*}
	where we used in the last step that $\zeta_{\overline{K}}\zeta_{\overline{K}} = 0$. For $d\zeta_{\overline{K}}^{[k]}$, we obtain in a similar way
	\begin{align*}
		Y_{k+1,k+1}\bullet  d\zeta_{\overline{K}}^{[k]} &= d\zeta_{\overline{K}}^{[k-1]}(\zeta_{\overline{k+1}}^2 d\zeta_{\overline{K}} + 2\zeta_{\overline{k+1}}d\zeta_{\overline{k+1}} \zeta_{\overline{K}})\\
		&=k \zeta_{\overline{k+1}}^2 d\zeta_{\overline{K}}^{[k]} + 2\zeta_{\overline{k+1}}d\zeta_{\overline{k+1}} \zeta_{\overline{K}}d\zeta_{\overline{K}}^{[k-1]}.
	\end{align*}
	In both cases, we obtain the original form multiplied with $\zeta_{\overline{k+1}}^2$ and a constant plus extra terms. Summing up these multiples will eventually give the first term in \eqref{eq:actYkp1OnOmegaRk}. \autoref{tab:lieDerYkp1BuildBlocks} shows the action of $Y_{k+1,k+1}$ on the building blocks, decomposed into a product of the form with $c\zeta_{\overline{k+1}}^2$, $c\in\CC$, and the remainder. The terms
	\begin{align*}
		Y_{k+1,k+1}\bullet  \overline{dz_K}^{[k]} = 0 \quad \text{ and } \quad Y_{k+1,k+1}\bullet  dz_{L^-}^{[j]} = 0,
	\end{align*}
	are omitted from the table.
	
\begin{extracalc}
	\begin{align*}
		Y_{k+1,k+1}\bullet  \zeta_{L^-}d\zeta_{L^-}^{[j-1]} &= \zeta_{\overline{k+1}}^2\zeta_{L^-}d\zeta_{L^-}^{[j-1]} + \zeta_{L^-}d\zeta_{L^-}^{[j-2]}(\zeta_{\overline{k+1}}^2d\zeta_{L^-} + 2\zeta_{\overline{k+1}}d\zeta_{\overline{k+1}}\zeta_{L^-})\\
		&=(1+j-1)\zeta_{\overline{k+1}}^2\zeta_{L^-}d\zeta_{L^-}^{[j-1]}
	\end{align*}
	\begin{align*}
		Y_{k+1,k+1}\bullet  d\zeta_{L^-}^{[j]} &= d\zeta_{L^-}^{[j-1]}(\zeta_{\overline{k+1}}^2d\zeta_{L^-} + 2\zeta_{\overline{k+1}}d\zeta_{\overline{k+1}}\zeta_{L^-})\\
		&=j\zeta_{\overline{k+1}}^2d\zeta_{L^-}^{[j]} + 2\zeta_{\overline{k+1}}d\zeta_{\overline{k+1}}\zeta_{L^-}d\zeta_{L^-}^{[j-1]}
	\end{align*}
	
	\begin{align*}
		Y_{k+1,k+1}\bullet  \overline{dz_K}^{[k]} &= 0\\
		Y_{k+1,k+1}\bullet  dz_{L^-}^{[j]} &= 0
	\end{align*}
	
	\begin{align*}
		&Y_{k+1,k+1}\bullet  (d\zeta_{k+1} \otimes dz_{k+1})(d\zeta_{\overline{k+1}} \otimes dz_{\overline{k+1}})\\
		 &=(-d\zeta_{\overline{k+1}}+2\zeta_{\overline{k+1}}d\zeta_{\overline{k+1}}\zeta_{k+1} + \zeta_{\overline{k+1}}^2d\zeta_{k+1})\otimes dz_{k+1}(d\zeta_{\overline{k+1}} \otimes dz_{\overline{k+1}})\\
		 &+(d\zeta_{k+1} \otimes dz_{k+1})(\zeta_{\overline{k+1}}^2d\zeta_{\overline{k+1}} + 2 \zeta_{\overline{k+1}}d\zeta_{\overline{k+1}}\zeta_{\overline{k+1}}) \otimes dz_{\overline{k+1}}\\
		 &=(1+3)\zeta_{\overline{k+1}}^2(d\zeta_{k+1}\otimes dz_{k+1})(d\zeta_{\overline{k+1}} \otimes dz_{\overline{k+1}})
	\end{align*}
	\begin{align*}
		&Y_{k+1,k+1}\bullet  (d\zeta_{k+1} \otimes dz_{k+1})(dz_{\overline{k+1}} \otimes dz_{\overline{k+1}})\\
		&=(-d\zeta_{\overline{k+1}}+2\zeta_{\overline{k+1}}d\zeta_{\overline{k+1}}\zeta_{k+1} + \zeta_{\overline{k+1}}^2d\zeta_{k+1})\otimes dz_{k+1}(dz_{\overline{k+1}} \otimes dz_{\overline{k+1}})\\
		&= \zeta_{\overline{k+1}}^2(d\zeta_{k+1} \otimes dz_{k+1})(dz_{\overline{k+1}} \otimes dz_{\overline{k+1}}) + (2\nu_{k+1}-1)(d\zeta_{\overline{k+1}}\otimes dz_{k+1})(dz_{\overline{k+1}} \otimes dz_{\overline{k+1}})
	\end{align*}
	\begin{align*}
		&Y_{k+1,k+1}\bullet  (dz_{k+1} \otimes dz_{k+1})(d\zeta_{\overline{k+1}} \otimes dz_{\overline{k+1}})\\
		&=(dz_{\overline{k+1}} \otimes dz_{k+1})(d\zeta_{\overline{k+1}} \otimes dz_{\overline{k+1}})\\
		&+(dz_{k+1} \otimes dz_{k+1})(\zeta_{\overline{k+1}}^2d\zeta_{\overline{k+1}} + 2 \zeta_{\overline{k+1}}d\zeta_{\overline{k+1}}\zeta_{\overline{k+1}}) \otimes dz_{\overline{k+1}}\\
		&=3\zeta_{\overline{k+1}}^2(dz_{k+1} \otimes dz_{k+1})(d\zeta_{\overline{k+1}} \otimes dz_{\overline{k+1}}) + (dz_{\overline{k+1}} \otimes dz_{k+1})(d\zeta_{\overline{k+1}} \otimes dz_{\overline{k+1}})
	\end{align*}
	\begin{align*}
		&Y_{k+1,k+1}\bullet  (dz_{k+1} \otimes dz_{k+1})(dz_{\overline{k+1}} \otimes dz_{\overline{k+1}})\\
		&=(dz_{\overline{k+1}} \otimes dz_{k+1})(dz_{\overline{k+1}} \otimes dz_{\overline{k+1}}) = 0
	\end{align*}
	
	\begin{align*}
		&Y_{k+1,k+1}\bullet (\zeta_{k+1} \otimes dz_{k+1})(d\zeta_{\overline{k+1}} \otimes dz_{\overline{k+1}})\\
		&=(\zeta_{\overline{k+1}}^2\zeta_{k+1}-\zeta_{\overline{k+1}}) \otimes dz_{k+1}(d\zeta_{\overline{k+1}} \otimes dz_{\overline{k+1}})\\
		&+(\zeta_{k+1} \otimes dz_{k+1})(3\zeta_{\overline{k+1}}^2d\zeta_{\overline{k+1}} \otimes dz_{\overline{k+1}})\\
		&=4\zeta_{\overline{k+1}}^2(\zeta_{k+1} \otimes dz_{k+1})(d\zeta_{\overline{k+1}} \otimes dz_{\overline{k+1}}) - (\zeta_{\overline{k+1}} \otimes dz_{k+1})(d\zeta_{\overline{k+1}} \otimes dz_{\overline{k+1}})
	\end{align*}
	\begin{align*}
		&Y_{k+1,k+1}\bullet (\zeta_{k+1} \otimes dz_{k+1})(dz_{\overline{k+1}} \otimes dz_{\overline{k+1}})=\\
		&=(\zeta_{\overline{k+1}}^2\zeta_{k+1} - \zeta_{\overline{k+1}}) \otimes dz_{k+1}(dz_{\overline{k+1}} \otimes dz_{\overline{k+1}})
	\end{align*}
	
	\begin{align*}
		&Y_{k+1,k+1}\bullet (\zeta_{\overline{k+1}} \otimes dz_{\overline{k+1}})(d\zeta_{k+1} \otimes dz_{k+1})\\
		&=\zeta_{\overline{k+1}}^2(\zeta_{\overline{k+1}} \otimes dz_{\overline{k+1}})(d\zeta_{k+1} \otimes dz_{k+1})\\
		&+(\zeta_{\overline{k+1}} \otimes dz_{\overline{k+1}})(\zeta_{\overline{k+1}}^2d\zeta_{k+1} -d\zeta_{\overline{k+1}} + 2\nu_{k+1}d\zeta_{\overline{k+1}}) \otimes dz_{k+1}\\
		&=2\zeta_{\overline{k+1}}^2(\zeta_{\overline{k+1}} \otimes dz_{\overline{k+1}})(d\zeta_{k+1} \otimes dz_{k+1})\\
		&+(2\nu_{k+1}-1)(\zeta_{\overline{k+1}} \otimes dz_{\overline{k+1}})(d\zeta_{\overline{k+1}} \otimes dz_{k+1})
	\end{align*}
	
	\begin{align*}
		&Y_{k+1,k+1}\bullet (\zeta_{\overline{k+1}} \otimes dz_{\overline{k+1}})(dz_{k+1} \otimes dz_{k+1})\\
		&=\zeta_{\overline{k+1}}^2(\zeta_{\overline{k+1}} \otimes dz_{\overline{k+1}})(dz_{k+1} \otimes dz_{k+1})+(\zeta_{\overline{k+1}} \otimes dz_{\overline{k+1}})(dz_{\overline{k+1}} \otimes dz_{k+1})
	\end{align*}
\end{extracalc}

\begin{table}[h]
	\begin{center}
	\begin{tabular}{|c|c|c|}
		\hline
		$\tau$ & $c$ & $Y_{k+1,k+1}\bullet \tau -c\zeta_{\overline{k+1}}^2 \tau$\\
		\hline\hline
		$\zeta_{\overline{K}}d\zeta_{\overline{K}}^{[k-1]}$  & $k$ & -- \\
		\hline
		$d\zeta_{\overline{K}}^{[k]}$ & $k$ & $2\zeta_{\overline{k+1}}d\zeta_{\overline{k+1}} \zeta_{\overline{K}}d\zeta_{\overline{K}}^{[k-1]}$ \\
		\hline\hline
		$\zeta_{L^-}d\zeta_{L^-}^{[j-1]}$ & $j$ & -- \\
		\hline
		$d\zeta_{L^-}^{[j]}$ & $j$ & $2\zeta_{\overline{k+1}}d\zeta_{\overline{k+1}}\zeta_{L^-}d\zeta_{L^-}^{[j-1]}$ \\
		\hline\hline
		$(d\zeta_{k+1} \otimes dz_{k+1})(d\zeta_{\overline{k+1}} \otimes dz_{\overline{k+1}})$ & $4$ & -- \\
		\hline
		$(d\zeta_{k+1} \otimes dz_{k+1})(dz_{\overline{k+1}} \otimes dz_{\overline{k+1}})$ & $1$ & $(1-2\nu_{k+1})(dz_{\overline{k+1}}\otimes dz_{k+1})(d\zeta_{\overline{k+1}} \otimes dz_{\overline{k+1}})$ \\
		\hline
		$(dz_{k+1} \otimes dz_{k+1})(d\zeta_{\overline{k+1}} \otimes dz_{\overline{k+1}})$ & $3$ & $(dz_{\overline{k+1}} \otimes dz_{k+1})(d\zeta_{\overline{k+1}} \otimes dz_{\overline{k+1}})$ \\
		\hline
		$(dz_{k+1} \otimes dz_{k+1})(dz_{\overline{k+1}} \otimes dz_{\overline{k+1}})$ & -- & -- \\
		\hline\hline
		$(\zeta_{k+1} \otimes dz_{k+1})(d\zeta_{\overline{k+1}} \otimes dz_{\overline{k+1}})$ & $4$ & $- (\zeta_{\overline{k+1}} \otimes dz_{k+1})(d\zeta_{\overline{k+1}} \otimes dz_{\overline{k+1}})$ \\
		\hline
		$(\zeta_{k+1} \otimes dz_{k+1})(dz_{\overline{k+1}} \otimes dz_{\overline{k+1}})$ & $1$ & $-(\zeta_{\overline{k+1}} \otimes dz_{k+1})(dz_{\overline{k+1}} \otimes dz_{\overline{k+1}})$ \\
		\hline
		$(\zeta_{\overline{k+1}} \otimes dz_{\overline{k+1}})(d\zeta_{k+1} \otimes dz_{k+1})$ & $2$ & $(1-2\nu_{k+1})(\zeta_{\overline{k+1}} \otimes dz_{k+1})(d\zeta_{\overline{k+1}} \otimes dz_{\overline{k+1}})$ \\
		\hline
		$(\zeta_{\overline{k+1}} \otimes dz_{\overline{k+1}})(dz_{k+1} \otimes dz_{k+1})$ &$1$& $-(\zeta_{\overline{k+1}} \otimes dz_{k+1})(dz_{\overline{k+1}} \otimes dz_{\overline{k+1}})$ \\
		\hline 
	\end{tabular}
\end{center}
\caption{Lie derivatives of the building blocks for \autoref{lem:actYkp1OmegaRk}}
\label{tab:lieDerYkp1BuildBlocks}
\end{table}

Next, we also decompose $\omega_{r,k+1} \otimes \Theta_1$ according to the three blocks $K\cup\overline{K}$, $\{k+1, \overline{k+1}\}$ and $L^-$, resulting in three terms $\Omega_{k+1,1}, \Omega_{k+1,2}, \Omega_{k+1,3}$, given by
\begin{align}\label{eq:prfactYkp1OmegaRk_defOmegaRkp1}
	\notag
	\omega_{r,k+1} \otimes \Theta_1  &= 
	d\zeta_{\overline{K}}^{[k]}\overline{dz_{K}}^{[k]}(\zeta_{\overline{k+1}}\otimes dz_{\overline{k+1}}) (dz_{\overline{k+1}}\otimes dz_{k+1}) d\zeta_{L^-}^{[n-r-k-1]} dz_{L^-}^{[r-k-1]} \\
	\notag%
	&+ \zeta_{\overline{K}}d\zeta_{\overline{K}}^{[k-1]}\overline{dz_{K}}^{[k]} (d\zeta_{\overline{k+1}}\otimes dz_{\overline{k+1}})(dz_{\overline{k+1}}\otimes dz_{k+1}) d\zeta_{L^-}^{[n-r-k-1]} dz_{L^-}^{[r-k-1]} \\
	\notag%
	&+ d\zeta_{\overline{K}}^{[k]}\overline{dz_{K}}^{[k]}(d\zeta_{\overline{k+1}}\otimes dz_{\overline{k+1}})(dz_{\overline{k+1}}\otimes dz_{k+1})\zeta_{L^-}d\zeta_{L^-}^{[n-r-k-2]}dz_{L^-}^{[r-k-1]} \\
	&=: \Omega_{k+1, 1} + \Omega_{k+1, 2} + \Omega_{k+1, 3}.
\end{align}
Using \autoref{tab:lieDerYkp1BuildBlocks} and comparing the terms with Eq.~\eqref{eq:prfactYkp1OmegaRk_defOmegaRkp1}, we obtain the following relations:
\begin{align*}
	Y_{k+1,k+1}\bullet \Omega_1 &= \zeta_{\overline{k+1}}^2(n-r+2)\Omega_1,&&\displaybreak[1]\\
	Y_{k+1,k+1}\bullet \Omega_2 &= \zeta_{\overline{k+1}}^2(n-r)\Omega_2 &+& (1-2\nu_{k+1})\Omega_{k+1, 2} +2\zeta_{\overline{k+1}}d\zeta_{k+1} \Xi_{2,10},\displaybreak[1]\\
	Y_{k+1,k+1}\bullet \Omega_3 &= \zeta_{\overline{k+1}}^2(n-r+2)\Omega_3 &+&  \Omega_{k+1, 2},\displaybreak[1]\\
	Y_{k+1,k+1}\bullet \Omega_4 &= \zeta_{\overline{k+1}}^2(n-r)\Omega_4 &-& 2 \zeta_{\overline{k+1}}d\zeta_{\overline{k+1}} \Xi_{4,12},\displaybreak[1]\\
	Y_{k+1,k+1}\bullet \Omega_5 &= \zeta_{\overline{k+1}}^2(n-r+2)\Omega_5 &-&  \Xi_{5,7},\displaybreak[1]\\
	Y_{k+1,k+1}\bullet \Omega_6 &= \zeta_{\overline{k+1}}^2(n-r)\Omega_6 &+& 2\nu_{k+1}\Omega_{k+1, 2} + \Omega_{k+1, 1} + 2\nu_{k+1}\Omega_{k+1, 3},\displaybreak[1]\\
	Y_{k+1,k+1}\bullet \Omega_7 &= \zeta_{\overline{k+1}}^2(n-r)\Omega_7 &-& 2\zeta_{\overline{k+1}}^2\Omega_1 + (1-2\nu_{k+1})\Xi_{5,7} - 2\zeta_{\overline{k+1}}^2\Omega_9,\displaybreak[1]\\
	Y_{k+1,k+1}\bullet \Omega_8 &= \zeta_{\overline{k+1}}^2(n-r)\Omega_8 &-& 2 \zeta_{\overline{k+1}}^2\Omega_3 + \Omega_{k+1, 1} - 2 \zeta_{\overline{k+1}}^2\Omega_{11},\displaybreak[1]\\
	Y_{k+1,k+1}\bullet \Omega_9 &= \zeta_{\overline{k+1}}^2(n-r+2)\Omega_9,&&\displaybreak[1]\\
	Y_{k+1,k+1}\bullet \Omega_{10} &= \zeta_{\overline{k+1}}^2(n-r)\Omega_{10} &-& 2\zeta_{\overline{k+1}}d\zeta_{k+1}\Xi_{2,10} + (1-2\nu_{k+1})\Omega_{k+1, 3},\displaybreak[1]\\
	Y_{k+1,k+1}\bullet \Omega_{11} &= \zeta_{\overline{k+1}}^2(n-r+2)\Omega_{11} &+& \Omega_{k+1, 3},\displaybreak[1]\\
	Y_{k+1,k+1}\bullet \Omega_{12} &= \zeta_{\overline{k+1}}^2(n-r)\Omega_{12} &+& 2\zeta_{\overline{k+1}}d\zeta_{\overline{k+1}} \Xi_{4,12},
\end{align*}
where the auxiliary forms $\Xi_{2,10}, \Xi_{4,12}$ and $\Xi_{5,7}$ are defined as
\begin{align*}
	\Xi_{2,10} &= \zeta_{\overline{K}} d\zeta_{\overline{K}}^{[k-1]}dz_{\overline{K}}^{[k]} (d\zeta_{\overline{k+1}} \otimes dz_{k+1})(dz_{\overline{k+1}} \otimes dz_{\overline{k+1}}) \zeta_{L^-}d\zeta_{L^-}^{[n-r-k-2]}dz_{L^-}^{[r-k-1]},\\
	\Xi_{4,12} &=\zeta_{\overline{K}} d\zeta_{\overline{K}}^{[k-1]}dz_{\overline{K}}^{[k]} (dz_{k+1} \otimes dz_{k+1})(dz_{\overline{k+1}} \otimes dz_{\overline{k+1}}) \zeta_{L^-}d\zeta_{L^-}^{[n-r-k-1]}dz_{L^-}^{[r-k-2]},\\
	\Xi_{5,7} &=d\zeta_{\overline{K}}^{[k]}dz_{\overline{K}}^{[k]} (\zeta_{\overline{k+1}} \otimes dz_{k+1})(d\zeta_{\overline{k+1}} \otimes dz_{\overline{k+1}}) d\zeta_{L^-}^{[n-r-k-2]}dz_{L^-}^{[r-k]}.
\end{align*}
As the calculation is always very similar, we will restrict ourselves to present only the calculation for $Y_{k+1,k+1}\bullet  \Omega_2$. In this case, \autoref{tab:lieDerYkp1BuildBlocks} shows that
\begin{align*}
	&Y_{k+1,k+1}\bullet  \Omega_2  =(k+1+n-r-k-1)\zeta_{\overline{k+1}}^2\Omega_2 \\
	&+\zeta_{\overline{K}}d\zeta_{\overline{K}}^{[k-1]}\overline{dz_K}^{[k]} (1-2\nu_{k+1})(dz_{\overline{k+1}}\otimes dz_{k+1})(d\zeta_{\overline{k+1}} \otimes dz_{\overline{k+1}}) d\zeta_{L^-}^{[n-r-k-1]} dz_{L^-}^{[r-k-1]}\\
	&+\zeta_{\overline{K}}d\zeta_{\overline{K}}^{[k-1]}\overline{dz_K}^{[k]} (d\zeta_{k+1} \otimes dz_{k+1})(dz_{\overline{k+1}} \otimes dz_{\overline{k+1}}) 2\zeta_{\overline{k+1}}d\zeta_{\overline{k+1}}\zeta_{L^-}d\zeta_{L^-}^{[n-r-k-2]} dz_{L^-}^{[r-k-1]}\\
	&=(n-r)\zeta_{\overline{k+1}}^2\Omega_2 + (1-2\nu_{k+1})\Omega_{k+1, 2} + 2\zeta_{\overline{k+1}}d\zeta_{k+1} \Xi_{2,10}.
\end{align*}

\begin{extracalc}
\begin{align*}
	&Y_{k+1,k+1}\bullet  \Omega_1 = \zeta_{\overline{k+1}}^2(k+4+n-r-k-2)\Omega_1
\end{align*}
as the remainder term for $d\zeta_{L^-}$ cancels with the $d\zeta_{\overline{k+1}}$.
\begin{align*}
	&Y_{k+1,k+1}\bullet  \Omega_3 = \zeta_{\overline{k+1}}^2(k+3+n-r-k-1)\Omega_3 \\
	&+\zeta_{\overline{K}}d\zeta_{\overline{K}}^{[k-1]}\overline{dz_K}^{[k]} (dz_{\overline{k+1}} \otimes dz_{k+1})(d\zeta_{\overline{k+1}} \otimes dz_{\overline{k+1}}) d\zeta_{L^-}^{[n-r-k-1]} dz_{L^-}^{[r-k-1]}\\
	&=\zeta_{\overline{k+1}}^2(n-r+2)\Omega_3 + \Omega_{k+1, 2}
\end{align*}
\begin{align*}
	&Y_{k+1,k+1}\bullet  \Omega_4 = \zeta_{\overline{k+1}}^2(k+n-r-k)\Omega_4 \\
	&+\zeta_{\overline{K}}d\zeta_{\overline{K}}^{[k-1]}\overline{dz_K}^{[k]} (dz_{k+1} \otimes dz_{k+1})(dz_{\overline{k+1}} \otimes dz_{\overline{k+1}}) 2\zeta_{\overline{k+1}}d\zeta_{\overline{k+1}}\zeta_{L^-}d\zeta_{L^-}^{[n-r-k-1]} dz_{L^-}^{[r-k-2]}\\
	&=\zeta_{\overline{k+1}}^2(n-r)\Omega_4 - 2\zeta_{\overline{k+1}}d\zeta_{\overline{k+1}}\Xi_{4,12}
\end{align*}
\begin{align*}
	&Y_{k+1,k+1}\bullet  \Omega_5 = 
	\zeta_{\overline{k+1}}^2(k+4+n-r-k-2)\Omega_5 \\
	&-d\zeta_{\overline{K}}^{[k]}\overline{dz_K}^{[k]} (\zeta_{\overline{k+1}} \otimes dz_{k+1})(d\zeta_{\overline{k+1}} \otimes dz_{\overline{k+1}})d\zeta_{L^-}^{[n-r-k-2]}dz_{L^-}^{[r-k]}\\
	&=\zeta_{\overline{k+1}}^2(n-r+2)\Omega_5 - \Xi_{5,7}
\end{align*}
\begin{align*}
	&Y_{k+1,k+1}\bullet  \Omega_6 = \zeta_{\overline{k+1}}^2(k+1+n-r-k-1)\Omega_6 \\
	&+2\zeta_{\overline{k+1}}d\zeta_{\overline{k+1}} \zeta_{\overline{K}}d\zeta_{\overline{K}}^{[k-1]}\overline{dz_K}^{[k]}(\zeta_{k+1} \otimes dz_{k+1})(dz_{\overline{k+1}} \otimes dz_{\overline{k+1}})d\zeta_{L^-}^{[n-r-k-1]}dz_{L^-}^{[r-k-1]}\\
	&-d\zeta_{\overline{K}}^{[k]}\overline{dz_K}^{[k]}(\zeta_{\overline{k+1}} \otimes dz_{k+1})(dz_{\overline{k+1}} \otimes dz_{\overline{k+1}})d\zeta_{L^-}^{[n-r-k-1]}dz_{L^-}^{[r-k-1]}\\
	&+d\zeta_{\overline{K}}^{[k]}\overline{dz_K}^{[k]}(\zeta_{k+1} \otimes dz_{k+1})(dz_{\overline{k+1}} \otimes dz_{\overline{k+1}})2\zeta_{\overline{k+1}}d\zeta_{\overline{k+1}}\zeta_{L^-}d\zeta_{L^-}^{[n-r-k-2]}dz_{L^-}^{[r-k-1]}\\
	&= \zeta_{\overline{k+1}}^2(n-r)\Omega_6 + 2\nu_{k+1} \Omega_{k+1, 2} + \Omega_{k+1, 1} + 2\nu_{k+1}\Omega_{k+1, 3}
\end{align*}
\begin{align*}
	&Y_{k+1,k+1}\bullet  \Omega_7 =  \zeta_{\overline{k+1}}^2(k+2+n-r-k-2)\Omega_7 \\
	&+2\zeta_{\overline{k+1}}d\zeta_{\overline{k+1}} \zeta_{\overline{K}}d\zeta_{\overline{K}}^{[k-1]}\overline{dz_K}^{[k]}(\zeta_{\overline{k+1}} \otimes dz_{\overline{k+1}})(d\zeta_{k+1} \otimes dz_{k+1})d\zeta_{L^-}^{[n-r-k-2]}dz_{L^-}^{[r-k]}\\
	&+d\zeta_{\overline{K}}^{[k]}\overline{dz_K}^{[k]}(1-2\nu_{k+1})(\zeta_{\overline{k+1}} \otimes dz_{k+1})(d\zeta_{\overline{k+1}} \otimes dz_{\overline{k+1}})d\zeta_{L^-}^{[n-r-k-2]}dz_{L^-}^{[r-k]}\\
	&+d\zeta_{\overline{K}}^{[k]}\overline{dz_K}^{[k]}(\zeta_{\overline{k+1}} \otimes dz_{\overline{k+1}})(d\zeta_{k+1} \otimes dz_{k+1})2\zeta_{\overline{k+1}}d\zeta_{\overline{k+1}}\zeta_{L^-}d\zeta_{L^-}^{[n-r-k-3]}dz_{L^-}^{[r-k]}\\
	&= \zeta_{\overline{k+1}}^2(n-r)\Omega_7 - 2\zeta_{\overline{k+1}}^2 \Omega_1 + (1-2\nu_{k+1})\Xi_{5,7}-2\zeta_{\overline{k+1}}^2 \Omega_9
\end{align*}
\begin{align*}
	&Y_{k+1,k+1}\bullet  \Omega_8 =  \zeta_{\overline{k+1}}^2(k+1+n-r-k-1)\Omega_8 \\
	&+2\zeta_{\overline{k+1}}d\zeta_{\overline{k+1}} \zeta_{\overline{K}}d\zeta_{\overline{K}}^{[k-1]}\overline{dz_K}^{[k]}(\zeta_{\overline{k+1}} \otimes dz_{\overline{k+1}})(dz_{k+1} \otimes dz_{k+1})d\zeta_{L^-}^{[n-r-k-1]}dz_{L^-}^{[r-k-1]}\\
	&-d\zeta_{\overline{K}}^{[k]}\overline{dz_K}^{[k]}(\zeta_{\overline{k+1}} \otimes dz_{k+1})(dz_{\overline{k+1}} \otimes dz_{\overline{k+1}})d\zeta_{L^-}^{[n-r-k-1]}dz_{L^-}^{[r-k-1]}\\
	&+d\zeta_{\overline{K}}^{[k]}\overline{dz_K}^{[k]}(\zeta_{\overline{k+1}} \otimes dz_{\overline{k+1}})(dz_{k+1} \otimes dz_{k+1})2\zeta_{\overline{k+1}}d\zeta_{\overline{k+1}}\zeta_{L^-}d\zeta_{L^-}^{[n-r-k-2]}dz_{L^-}^{[r-k-1]}\\
	&=\zeta_{\overline{k+1}}^2(n-r)\Omega_8 -2\zeta_{\overline{k+1}}^2 \Omega_3 + \Omega_{k+1, 1} - 2\zeta_{\overline{k+1}}^2 \Omega_{11}
\end{align*}
\begin{align*}
	&Y_{k+1,k+1}\bullet  \Omega_9 =  \zeta_{\overline{k+1}}^2(k+4+n-r-k-2)\Omega_9
\end{align*}
\begin{align*}
	&Y_{k+1,k+1}\bullet  \Omega_{10} =  \zeta_{\overline{k+1}}^2(k+1+n-r-k-1)\Omega_{10} \\
	&+2\zeta_{\overline{k+1}}d\zeta_{\overline{k+1}} \zeta_{\overline{K}}d\zeta_{\overline{K}}^{[k-1]}\overline{dz_K}^{[k]}(d\zeta_{k+1} \otimes dz_{k+1})(dz_{\overline{k+1}} \otimes dz_{\overline{k+1}})\zeta_{L^-}d\zeta_{L^-}^{[n-r-k-2]}dz_{L^-}^{[r-k-1]}\\
	&+d\zeta_{\overline{K}}^{[k]}\overline{dz_K}^{[k]}(1-2\nu_{k+1})(dz_{\overline{k+1}}\otimes dz_{k+1})(d\zeta_{\overline{k+1}} \otimes dz_{\overline{k+1}})\zeta_{L^-}d\zeta_{L^-}^{[n-r-k-2]}dz_{L^-}^{[r-k-1]}\\
	&=\zeta_{\overline{k+1}}^2(n-r)\Omega_{10}-2\zeta_{\overline{k+1}}d\zeta_{k+1} \Xi_{2,10}+(1-2\nu_{k+1})\Omega_{k+1, 3}
\end{align*}
\begin{align*}
	&Y_{k+1,k+1}\bullet  \Omega_{11} =  \zeta_{\overline{k+1}}^2(k+3+n-r-k-1)\Omega_{11} \\
	&+d\zeta_{\overline{K}}^{[k]}\overline{dz_K}^{[k]}(dz_{\overline{k+1}} \otimes dz_{k+1})(d\zeta_{\overline{k+1}} \otimes dz_{\overline{k+1}})\zeta_{L^-}d\zeta_{L^-}^{[n-r-k-2]}dz_{L^-}^{[r-k-1]}\\
	&=\zeta_{\overline{k+1}}^2(n-r+2)\Omega_{11} + \Omega_{k+1, 3}
\end{align*}
\begin{align*}
	&Y_{k+1,k+1}\bullet  \Omega_{12} =  \zeta_{\overline{k+1}}^2(k+n-r-k)\Omega_{12} \\
	&+2\zeta_{\overline{k+1}}d\zeta_{\overline{k+1}} \zeta_{\overline{K}}d\zeta_{\overline{K}}^{[k-1]}\overline{dz_K}^{[k]}(dz_{k+1} \otimes dz_{k+1})(dz_{\overline{k+1}} \otimes dz_{\overline{k+1}})\zeta_{L^-}d\zeta_{L^-}^{[n-r-k-1]}dz_{L^-}^{[r-k-2]}\\
	&=\zeta_{\overline{k+1}}^2(n-r)\Omega_{12} + 2\zeta_{\overline{k+1}}d\zeta_{\overline{k+1}}\Xi_{4,12}
\end{align*}
\end{extracalc}

Summing up the contributions of each $\Omega_j$, $j=1, \dots, 12$, and canceling terms, the claim finally follows:
\begin{align*}
	Y_{k+1,k+1}\bullet  (\omega_{r,k}\otimes \Theta_1) 
	%
	&=(n-r)\zeta_{\overline{k+1}}^2\omega_{r,k}\otimes \Theta_1 + 2 (\Omega_{k+1, 1}+ \Omega_{k+1, 2} + \Omega_{k+1, 3}) \\
	&+2\zeta_{\overline{k+1}}^2\Omega_5  -2\nu_{k+1}\Xi_{5,7} \\
	&=(n-r)\zeta_{\overline{k+1}}^2\omega_{r,k}\otimes \Theta_1 + 2 \omega_{r,k+1} \otimes \Theta_1,
\end{align*}
where we used that $\nu_{k+1} \Xi_{5,7} = \zeta_{\overline{k+1}}^2\Omega_5$ in the last step.
\end{proof}

\begin{corollary}\label{cor:YkkOnOmegarkm}
	For $1\leq k<\min\{r,n-r\}$, $m\in\NN$, $m\ge 2$, 
	\begin{align} 
		\LieDer_{\widetilde{Y}_{k+1,k+1}} \omega_{r,k,m} = (n-r+m-2)\zeta_{\overline{k+1}}^{2}\omega_{r,k,m}  + 2 \omega_{r,k+1,m}
	\end{align}
\end{corollary}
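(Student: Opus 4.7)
The plan is to mimic exactly the argument used for \autoref{cor:LieY11onOmegarkm}, using the main technical input \autoref{lem:actYkp1OmegaRk} in place of \autoref{lem:LieY11Applied}. By Eq.~\eqref{eq:LieDerYabTensorTheta}, it suffices to compute $Y_{k+1,k+1}\bullet(\omega_{r,k,m}\otimes\Theta_1)$ and peel off the $\Theta_1$ factor at the end. Writing $\omega_{r,k,m}\otimes\Theta_1 = \zeta_{\overline{1}}^{m-2}\cdot(\omega_{r,k}\otimes\Theta_1)$ and using the Leibniz rule~\eqref{eq:actionProductRule}, the computation splits into applying $Y_{k+1,k+1}\bullet$ to each of the two factors.

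For the factor $\zeta_{\overline{1}}^{m-2}$, I use \autoref{lem:lieDerYabzZeta} with $a=b=k+1$ and index $\overline{1}$. The assumption $k\ge 1$ forces $k+1\ge 2$, so $\overline{1}\ne k+1$ and the Kronecker delta terms drop out, leaving
\[
\LieDer_{\widetilde{Y}_{k+1,k+1}}\zeta_{\overline{1}} = \zeta_{\overline{k+1}}^{2}\,\zeta_{\overline{1}}.
\]
Iterating via the product rule yields $\LieDer_{\widetilde{Y}_{k+1,k+1}}\zeta_{\overline{1}}^{m-2} = (m-2)\,\zeta_{\overline{k+1}}^{2}\,\zeta_{\overline{1}}^{m-2}$. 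For the factor $\omega_{r,k}\otimes\Theta_1$, \autoref{lem:actYkp1OmegaRk} gives
\[
Y_{k+1,k+1}\bullet\bigl(\omega_{r,k}\otimes\Theta_1\bigr) = (n-r)\,\zeta_{\overline{k+1}}^{2}\,\omega_{r,k}\otimes\Theta_1 + 2\,\omega_{r,k+1}\otimes\Theta_1.
\]

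Combining these two computations via \eqref{eq:actionProductRule} and then collecting the terms proportional to $\zeta_{\overline{1}}^{m-2}$ produces
\[
Y_{k+1,k+1}\bullet(\omega_{r,k,m}\otimes\Theta_1) = (n-r+m-2)\,\zeta_{\overline{k+1}}^{2}\,\zeta_{\overline{1}}^{m-2}\omega_{r,k}\otimes\Theta_1 + 2\,\zeta_{\overline{1}}^{m-2}\omega_{r,k+1}\otimes\Theta_1.
\]
Recognizing $\zeta_{\overline{1}}^{m-2}\omega_{r,k} = \omega_{r,k,m}$ and $\zeta_{\overline{1}}^{m-2}\omega_{r,k+1} = \omega_{r,k+1,m}$ by Eq.~\eqref{eq:defOmegaRkm_m}, and applying \eqref{eq:LieDerYabTensorTheta} to strip the $\Theta_1$, yields the stated identity.

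No step is expected to be a real obstacle, as the heavy lifting has already been done in \autoref{lem:actYkp1OmegaRk}; the only substantive point to verify is that $\overline{1}$ is disjoint from the index pair $\{k+1,\overline{k+1}\}$ under the hypothesis $1\le k < \min\{r,n-r\}$, which is immediate.
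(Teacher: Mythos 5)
Your proof is correct and follows the paper's argument essentially verbatim: split off $\zeta_{\overline{1}}^{m-2}$, apply the Leibniz rule with \autoref{lem:lieDerYabzZeta} and \autoref{lem:actYkp1OmegaRk}, and strip $\Theta_1$ via \eqref{eq:LieDerYabTensorTheta}. The observation that $\overline{1}\notin\{k+1,\overline{k+1}\}$ for $k\ge 1$ is the right thing to check and is indeed immediate.
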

\begin{proof}
	A short application of the product rule, \autoref{lem:lieDerYabzZeta} and \autoref{lem:actYkp1OmegaRk} yields
	\begin{align*}
		Y_{k+1,k+1}\bullet  \left(\omega_{r,k,m} \otimes \Theta_1\right) &=Y_{k+1,k+1}\bullet  \left( \zeta_{\overline{1}}^{m-2}\omega_{r,k} \otimes \Theta_1\right)\\
		&=(m-2)\zeta_{\overline{1}}^{m-3} (\zeta_{\overline{k+1}}^2\zeta_{\overline{1}}) \omega_{r,k} \otimes \Theta_1 \\
		&+ \zeta_{\overline{1}}^{m-2}((n-r)\zeta_{\overline{k+1}}^2\omega_{r,k} \otimes \Theta_1 + 2 \omega_{r,k+1} \otimes \Theta_1)\\
		&=(n-r+m-2)\zeta_{\overline{k+1}}^{2}\omega_{r,k,m} \otimes \Theta_1 + 2 \omega_{r,k+1,m} \otimes \Theta_1,
	\end{align*}
	which, by \eqref{eq:LieDerYabTensorTheta} shows the claim.
\end{proof}

\subsubsection{Pairing}
In this section, we determine the pairing from \autoref{sec:pairing} between $D\LieDer_{\widetilde{Y}_{k+1,k+1}}\omega_{r,k,m}$ and $D\omega_{n-r,k+1,m}$, which due to \autoref{cor:YkkOnOmegarkm} and \autoref{lem:pairKWForms} below reduces to a calculation of the product of $\omega_{r,k,m}$ and $D\omega_{r,k+1,m}$. As in \cite{Kotrbaty2022}, we will reduce this calculation to the corresponding product of double forms. However, if we take the product of the two double forms, we would obtain the factor $\Theta_1 \wedge \Theta_1$ in the second component, which vanishes. This can be avoided by replacing one occurrence of $\Theta_1$ by $\Theta_2$. This acts exactly as a renaming of the variables in the second component, so, formally, all calculations stay the same. To reflect the change in the second component notationally, we introduce for $I \subset \Iind$
\begin{align*}
	w_I &= \sum_{i \in I} z_i \otimes d\zeta_i, & \eta_I &= \sum_{i \in I} \zeta_{i} \otimes d\zeta_i.
\end{align*}
Then, clearly, by Eq.~\eqref{eq:sigmarkSep} and Eq.~\eqref{eq:taurkSep},
\begin{align}\label{eq:sigmarkSepDual}
	\sigma_{r,k} \otimes \Theta_2 = \overline{d\eta_K}^{[k]}\eta_{\overline{K}} d\eta_L^{[n-r-k]} dw_L^{[r-k]}dw_{\overline{K}}^{[k-1]} + \overline{d\eta_K}^{[k]}\eta_L d\eta_L^{[n-r-k]}dw_L^{[r-k-1]}dw_{\overline{K}}^{[k]},
\end{align}
and
\begin{align}\label{eq:taurkSepDual}
	\tau_{r,k} \otimes \Theta_2 =& (-1)^{k-1}\overline{d\eta_K}^{[k]}\eta_{\overline{K}} d\eta_L^{[n-r-k]}dw_L^{[r-k]}dw_{\overline{K}}^{[k-1]} \\
	&+ \overline{\eta_K}\overline{d\eta_K}^{[k-1]}d\eta_L^{[n-r-k+1]}dw_L^{[r-k-1]}dw_{\overline{K}}^{[k]}. \nonumber
\end{align}

We further need the following reductions from \cite{Kotrbaty2022}.
\begin{proposition}[\cite{Kotrbaty2022}*{Prop.s~5.2, 6.2 and 6.4}]\label{prop:fillFullZetaDZeta}
	Suppose that $M \subset \Iind$, $|M| = i$. Then
	\begin{align}
		\zeta_M d\zeta_M^{[i-1]} \gamma_M = (-1)^{i-1} \nu_M d\zeta_M^{[i]},\label{eq:redZetaGamma}\\
		\zeta_M dz_M^{[i-1]} \alpha_M = (-1)^{i-1} \nu_M dz_M^{[i]}\label{eq:redZetaZAlpha},
	\end{align}
	and likewise for $\eta_M$, $d\eta_M$, and $dw_M$.
	Moreover, we have for $1 \leq \alpha \leq n-1$ and $1 \leq k \leq \min\{\alpha, n-\alpha\}$
	\begin{align}\label{eq:redFullKTerms}
		\overline{d\zeta_{\overline{K}}}^{[k]} dz_K^{[k]}\overline{d\eta_{K}}^{[k]}dw_{\overline{K}}^{[k]} = \Theta_K \otimes \Theta_K,
	\end{align}
	\begin{align}\label{eq:redFullLTerms}
		\overline{d\zeta_L}^{[n-\alpha-k]}\overline{dz_L}^{[\alpha-k]}d\eta_L^{[\alpha-k]}dw_L^{[n-\alpha-k]} = (-1)^{n+l+\alpha}\binom{n-2k}{\alpha-k} \Theta_L \otimes \Theta_L,
	\end{align}
	and for $1 \leq \alpha \leq n-1$ and $1 \leq k < \min\{\alpha, n-\alpha\}$
	\begin{align}\label{eq:redAddLAlpGamTerms1}
		d\zeta_L^{[n-\alpha-k]}dz_L^{[\alpha-k]}\alpha_L +d\zeta_L^{[n-\alpha-k-1]}dz_L^{[\alpha-k+1]}\gamma_L=0,
	\end{align}
	\begin{align}\label{eq:redAddLAlpGamTerms2}
		\zeta_L d\zeta_L^{[n-\alpha-k]}dz_L^{[\alpha-k-1]}\alpha_L + \zeta_L d\zeta_L^{[n-\alpha-k-1]}dz_L^{[\alpha-k]}\gamma_L = (-1)^{n-1}d\zeta_L^{[n-\alpha-k]}dz_L^{[\alpha-k]}\nu_L.
	\end{align}
\end{proposition}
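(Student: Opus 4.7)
The plan is to prove each of the six identities by direct expansion in the algebra of double forms, exploiting that the building blocks $A_j := d\zeta_j \otimes dz_j$ indexed by $j$ in our index set pairwise commute (each component is a $1$-form, so the two anticommutation signs cancel) and satisfy $A_j \wedge A_j = 0$. Consequently,
\begin{align*}
	d\zeta_M^{[m]} = \sum_{I \subset M,\,|I|=m}\; \prod_{j \in I} A_j,
\end{align*}
and analogous expansions hold for $dz_L^{[\cdot]}$, $d\eta_L^{[\cdot]}$, $dw_L^{[\cdot]}$, each viewed as a sum of commuting square-zero quantities on its own index set.

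For (\ref{eq:redZetaGamma}) and (\ref{eq:redZetaZAlpha}), write $M = \{m_1, \dots, m_i\}$. Since $|M|=i$, one has $d\zeta_M^{[i-1]} = \sum_{s=1}^i \prod_{j \neq s} A_{m_j}$. Wedging with $\zeta_M = \sum_l \zeta_{m_l}(1 \otimes dz_{m_l})$ from the left forces $l=s$ (otherwise the second component repeats $dz_{m_l}$) and contributes a sign $(-1)^{s-1}$ from reordering. Wedging with $\gamma_M = \sum_k \zeta_{\bar{m}_k} d\zeta_{m_k}$ from the right forces $k=s$ and contributes $(-1)^{i-s}$. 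The combined sign $(-1)^{i-1}$ is independent of $s$, so summing over $s$ produces $(-1)^{i-1} \nu_M d\zeta_M^{[i]}$. Identity (\ref{eq:redZetaZAlpha}) follows from the same calculation with $d\zeta \leftrightarrow dz$.

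Identities (\ref{eq:redFullKTerms}) and (\ref{eq:redFullLTerms}) are saturation identities: in each case the total degree on the index set equals the maximum. In (\ref{eq:redFullKTerms}), each of the four factors is the top wedge power on $k$ elements, hence proportional to the unique top $k$-form; matching components yields $\Theta_K \otimes \Theta_K$ after sign verification. In (\ref{eq:redFullLTerms}), the total degree on $L$ equals $2|L|$ but is split asymmetrically between $d\zeta_L$ and $dz_L$ (and duals). Expanding each factor as a sum over subsets of $L$, one sees that the choice of an $(\alpha-k)$-subset in one factor determines complementary choices in the others; the number of valid configurations equals $\binom{n-2k}{\alpha-k}$, and the sign $(-1)^{n+l+\alpha}$ arises from reordering to canonical form. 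Finally, identities (\ref{eq:redAddLAlpGamTerms1}) and (\ref{eq:redAddLAlpGamTerms2}) rest on a Koszul-type cancellation: for each $j_0 \in L$ the factor $\zeta_{\bar{j_0}}$ from $\alpha_L$ (resp.\ $\gamma_L$) is extracted, and the residual terms $dz_{j_0}$ and $d\zeta_{j_0}$ cancel term-by-term after matching subsets of $L \setminus \{j_0\}$ in the remaining wedge powers. For (\ref{eq:redAddLAlpGamTerms2}), the additional factor $\zeta_L$ produces the $\nu_L$ on the right via the same diagonal mechanism as in (\ref{eq:redZetaGamma}).

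The main obstacle throughout is sign and coefficient bookkeeping, especially in (\ref{eq:redFullLTerms}) and (\ref{eq:redAddLAlpGamTerms1})--(\ref{eq:redAddLAlpGamTerms2}) where many anticommuting factors must be reordered. There is no deeper conceptual difficulty, which is why the proposition is simply cited from \cite{Kotrbaty2022} rather than reproved here; a more invariant approach via $\GL(n)_\CC$-representation theory on $\Lambda^\bullet(\RR^n)^\ast \otimes \Lambda^\bullet(\RR^n)^\ast$ would isolate the combinatorial content but obscure the explicit constants needed for the subsequent Lie-algebra calculations.
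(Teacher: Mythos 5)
Your treatment of \eqref{eq:redZetaGamma} and \eqref{eq:redZetaZAlpha} is correct and essentially identical to the paper's own argument: the paper likewise observes that against each summand $\zeta_{\overline{j}}\,d\zeta_j$ of $\gamma_M$ only the matching terms of $\zeta_M$ and of $d\zeta_M^{[i-1]}$ survive, and it obtains the sign $(-1)^{i-1}$ by commuting the single $(1,0)$-factor $d\zeta_j$ across an $(i-1,i-1)$-block; this is the same bookkeeping you carry out with an explicit ordering. For the other four identities the paper gives no proof at all: \eqref{eq:redFullKTerms} and \eqref{eq:redFullLTerms} are quoted from \cite{Kotrbaty2022}*{Prop.~6.4} and \eqref{eq:redAddLAlpGamTerms1}--\eqref{eq:redAddLAlpGamTerms2} from \cite{Kotrbaty2022}*{Prop.~5.2}, so there is nothing of the paper's to compare against there; your sketches follow the same direct-expansion strategy as that source.

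Be aware, though, that for those four identities your sketch stops exactly where the content lies. In \eqref{eq:redFullLTerms} the free datum is one subset of $L$ (it determines the other three by complementation separately in the two components, which is why the count is $\binom{n-2k}{\alpha-k}$), but one must still verify that every admissible configuration contributes with the \emph{same} sign $(-1)^{n+l+\alpha}$; if the sign depended on the configuration there would be internal cancellation and the coefficient would be wrong. In \eqref{eq:redFullKTerms} the first and the second component each acquire their own reordering sign relative to the canonical product, and one must check that these two signs agree. In \eqref{eq:redAddLAlpGamTerms1}--\eqref{eq:redAddLAlpGamTerms2} your matching is the right one (note that since the two chosen subsets are disjoint with sizes adding up to $|L|$, the index supplied by $\alpha_L$ automatically lies in the $d\zeta$-subset, so no term is left unmatched), but the pairwise cancellation with opposite signs, and in \eqref{eq:redAddLAlpGamTerms2} the surviving diagonal producing $(-1)^{n-1}\nu_L$, still require the explicit sign computation. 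Since precisely these signs and multiplicities feed the later non-vanishing pairing arguments, deferring them to ``sign verification'' leaves the actual substance of \cite{Kotrbaty2022}'s propositions unproved --- which is consistent with, and explains, the paper's choice to cite them rather than rederive them.
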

\begin{proof}
	Eq.~\eqref{eq:redFullKTerms} and Eq.~\eqref{eq:redFullLTerms} are the content of \cite{Kotrbaty2022}*{Prop.~6.4}; Eq.~\eqref{eq:redAddLAlpGamTerms1} and Eq.~\eqref{eq:redAddLAlpGamTerms2} are the content of \cite{Kotrbaty2022}*{Prop.~5.2}. 
	
	For the remaining relations, we will only show Eq.~\eqref{eq:redZetaGamma}, as Eq.~\eqref{eq:redZetaZAlpha} follows with a similar calculation. For $j \in M$,
	\begin{align*}
		\zeta_M d\zeta_M^{[i-1]} \zeta_{\overline{j}} d\zeta_j &= (\zeta_j \otimes dz_j)d\zeta_M^{[i-1]} \zeta_{\overline{j}} d\zeta_j\\
		&=(-1)^{i-1}d\zeta_M^{[i-1]} (d\zeta_j \otimes dz_j) \nu_j,
	\end{align*}
	and the claim follows by summing over $j \in M$.

\end{proof}

We are now in position to calculate the following pairing of differential forms, extending \cite{Kotrbaty2022}*{Lem.~6.7}. Let us note that we denote $|\zeta_{i}|^2 = \zeta_{i} \overline{\zeta_{i}} = \frac{1}{2}\|(\xi_{2i-1},\xi_{2i})\|^2$, where the last term denotes the standard norm on $\CC\cong \RR^2$.
\begin{lemma}\label{lem:pairOmkmOmkm1m}
	Let $r,k,m \in \NN$ such that $1\leq r \leq n-1$, $1\leq k \leq \min\{r,n-r\}$, $m \geq 2$. If also $k+1 \leq \min\{r,n-r\}$, then
	\begin{align}\label{eq:pairOmkmOmkm1m}
		\overline{\omega_{r,k,m}} \wedge D\omega_{n-r,k+1,m} =& 
		(r+m-2)|\zeta_{\overline{1}}|^{2(m-2)} \nu \zeta_{\overline{k+1}}^2 (-1)^{k+r}\\
		& \times (n+m-k-1)\binom{n-2k-2}{r-k-1}\vol_{S\RR^n}.\nonumber
	\end{align}
\end{lemma}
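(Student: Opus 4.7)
The plan is to reduce the stated identity to a computation purely about products of translation invariant double forms, mirroring (and extending) the strategy of \cite{Kotrbaty2022}*{Lem.~6.7}. First, since $\omega_{r,k,m}=\zeta_{\overline{1}}^{m-2}\omega_{r,k}$ and $\zeta_{\overline{1}}\overline{\zeta_{\overline{1}}}=|\zeta_{\overline{1}}|^{2}$, the scalar factor $|\zeta_{\overline{1}}|^{2(m-2)}$ separates out immediately upon forming $\overline{\omega_{r,k,m}}$. Next, I would substitute the formula \eqref{eq:DOmrkm} for $D\omega_{n-r,k+1,m}$ (with indices $r\leftrightarrow n-r$, $k\leftrightarrow k+1$); this gives the constant $c_{n-r,m}=(-1)^{n+1}(r+m-2)$ together with the factor $(m+k)$ multiplying $\sigma_{n-r,k+1}\wedge\alpha$ and the factor $(-1)^{k}(r-k)$ multiplying $\tau_{n-r,k+1}\wedge\alpha$. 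After these reductions the entire problem becomes: compute
\[
\overline{\omega_{r,k}}\wedge\bigl[(m+k)\sigma_{n-r,k+1}+(-1)^{k}(r-k)\tau_{n-r,k+1}\bigr]\wedge\alpha
\]
as a top form on $S\RR^n$.

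To evaluate this, I would work at the level of double forms, replacing the $\Theta_{1}$-factor appearing on the right-hand tensor slot of $\sigma_{n-r,k+1}$ and $\tau_{n-r,k+1}$ by $\Theta_{2}$ (i.e.\ using the $w_I,\eta_I,dw_I,d\eta_I$ variants, as in \eqref{eq:sigmarkSepDual}--\eqref{eq:taurkSepDual}) so that the full wedge $\Theta_{K}\Theta_{L}\otimes\Theta_{K}\Theta_{L}$ has a chance of appearing. I would then insert the block decompositions \eqref{eq:OmSepL}, \eqref{eq:sigmarkSepDual}, \eqref{eq:taurkSepDual} splitting the indices into the three groups $K$, $\{k+1,\overline{k+1}\}$ (embedded in the size-$(k+1)$ block for $\sigma$ and $\tau$, but in $L$ for $\omega_{r,k}$), and the remaining indices $L^-=L\setminus\{k+1,\overline{k+1}\}$. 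The factor $\zeta_{\overline{k+1}}^{2}$ in the final answer is forced by the mismatch between the $K$-blocks of sizes $k$ and $k+1$: one slot in $\overline{d\zeta_{K\cup\{k+1\}}}$ must be paired with a $\zeta_{\overline{k+1}}$ coming out of the $\omega_{r,k}$-side (and an additional $\overline{\zeta_{\overline{k+1}}}$ from the $\sigma/\tau$ side supplies the conjugate), and the contact form $\alpha$ contributes its $k+1$-component.

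The $K$- and $\overline{K}$-factors (of total size $k$ on each side of the tensor, with one extra $\zeta_{\overline{k+1}}\overline{\zeta_{\overline{k+1}}}$ pulled out) then collapse by \eqref{eq:redFullKTerms} to $\Theta_{K}\otimes\Theta_{K}$, while the $L^-$-block (of even size $n-2k-2$) is split between $d\zeta$- and $dz$-wedges in $\binom{n-2k-2}{r-k-1}$ ways by \eqref{eq:redFullLTerms}, producing the binomial coefficient in the answer up to the sign $(-1)^{n+l+r}$. The $\alpha$ factor contributes a $\gamma$-type piece $\alpha_{L^-}$ together with the $\alpha_{K\cup\overline{K}\cup\{k+1,\overline{k+1}\}}$ pieces that recombine with the leftover $\zeta_{L^-}$'s via \eqref{eq:redAddLAlpGamTerms1}--\eqref{eq:redAddLAlpGamTerms2}; this is where the two terms $(m+k)\sigma_{n-r,k+1}\alpha$ and $(-1)^{k}(r-k)\tau_{n-r,k+1}\alpha$ combine into a single multiple of $\nu$ (playing the role of $\nu_L$ via $\nu=\nu_{K}+\nu_{\{k+1,\overline{k+1}\}}+\nu_{L^-}$ on the support of the integration). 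Combining the constants $c_{n-r,m}(m+k-1+\text{correction})=(n+m-k-1)(r+m-2)$ up to an overall sign $(-1)^{k+r}$ gives the stated expression.

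The main obstacle is purely bookkeeping: tracking the numerous signs arising from commuting $\overline{\omega_{r,k}}$ past the $\sigma$ and $\tau$ factors, from the conjugations of $dz$- versus $d\zeta$-wedges, and from the reductions of Proposition \ref{prop:fillFullZetaDZeta}, and then checking that the cross terms produced by $\alpha_{L^-}$ and $\gamma_{L^-}$ in $\sigma_{n-r,k+1}\wedge\alpha$ and $\tau_{n-r,k+1}\wedge\alpha$ cancel or combine cleanly by means of \eqref{eq:redAddLAlpGamTerms1}--\eqref{eq:redAddLAlpGamTerms2}. Every individual step is a direct generalization of the analogous computation in \cite{Kotrbaty2022}*{Lem.~6.7}, the difference being that here the two $K$-blocks have unequal size $k$ and $k+1$, which is precisely what produces the extra factor $\zeta_{\overline{k+1}}^{2}$.
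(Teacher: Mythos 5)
Your proposal takes essentially the same route as the paper's proof: substitute Eq.~\eqref{eq:DOmrkm} for $D\omega_{n-r,k+1,m}$ (with the constants $c_{n-r,m}=(-1)^{n+1}(r+m-2)$, $(m+k)$ and $(-1)^k(r-k)$ exactly as you state), pass to double forms with the $\Theta_2$-variants \eqref{eq:sigmarkSepDual}--\eqref{eq:taurkSepDual}, decompose all factors along the blocks $K\cup\overline{K}$, $\{k+1,\overline{k+1}\}$, $L^-$, and finish with the reductions of \autoref{prop:fillFullZetaDZeta}; the paper carries out the bookkeeping you defer by additionally wedging with $\gamma$ (so that everything becomes an identity on $\RR^n\times\RR^n$ rather than a restriction to $S\RR^n$), tabulating the vanishing products, and combining the two resulting coefficients via $(m+k)\binom{n-2k-2}{r-k-1}+(r-k)\binom{n-2k-1}{r-k}=(n+m-k-1)\binom{n-2k-2}{r-k-1}$. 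One small correction to your heuristic: the factor $\zeta_{\overline{k+1}}^2$ comes from a $\zeta_{\overline{k+1}}$ on the $\overline{\omega_{r,k}}$-side multiplying a second $\zeta_{\overline{k+1}}$ (not its conjugate, which would give $\nu_{k+1}$) supplied by the $\eta_{\overline{K^+}}$- resp.\ $\overline{\zeta_{K^+}}$-factors of $\sigma_{n-r,k+1}$ and $\tau_{n-r,k+1}$.
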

\begin{proof}
	The proof is a lengthy calculation using double forms. As before, we first split the terms of $\overline{\omega_{r,k}}\otimes \Theta_1$ from Eq.~\eqref{eq:OmSepL} according to the partition $K \cup \overline{K}$, $\{k+1, \overline{k+1}\}$ and $L^-$ of the index set $\Iind$. This yields
	\begin{align*}
		\overline{\omega_{r,k}} \otimes \Theta_1
		&= \overline{\zeta_{\overline{K}}}\,\overline{d\zeta_{\overline{K}}}^{[k-1]}dz_K^{[k]} (d\zeta_{\overline{k+1}} \otimes dz_{k+1})(d\zeta_{k+1} \otimes dz_{\overline{k+1}}) \overline{d\zeta_{L^-}}^{[n-r-k-2]} \overline{dz_{L^-}}^{[r-k]} \displaybreak[1]\\
		&+\overline{\zeta_{\overline{K}}}\,\overline{d\zeta_{\overline{K}}}^{[k-1]}dz_K^{[k]} (d\zeta_{\overline{k+1}} \otimes dz_{k+1})(dz_{k+1} \otimes dz_{\overline{k+1}}) \overline{d\zeta_{L^-}}^{[n-r-k-1]} \overline{dz_{L^-}}^{[r-k-1]} \displaybreak[1]\\
		&+\overline{\zeta_{\overline{K}}}\,\overline{d\zeta_{\overline{K}}}^{[k-1]}dz_K^{[k]} (dz_{\overline{k+1}} \otimes dz_{k+1})(d\zeta_{k+1} \otimes dz_{\overline{k+1}}) \overline{d\zeta_{L^-}}^{[n-r-k-1]} \overline{dz_{L^-}}^{[r-k-1]} \displaybreak[1]\\
		&+\overline{\zeta_{\overline{K}}}\,\overline{d\zeta_{\overline{K}}}^{[k-1]}dz_K^{[k]} (dz_{\overline{k+1}} \otimes dz_{k+1})(dz_{k+1} \otimes dz_{\overline{k+1}}) \overline{d\zeta_{L^-}}^{[n-r-k]} \overline{dz_{L^-}}^{[r-k-2]} \displaybreak[1]\\
		\displaybreak[1]\\
		&+ \overline{d\zeta_{\overline{K}}}^{[k]}dz_K^{[k]}(\zeta_{\overline{k+1}} \otimes dz_{k+1})(d\zeta_{k+1} \otimes dz_{\overline{k+1}})\overline{d\zeta_{L^-}}^{[n-r-k-2]}\overline{dz_{L^-}}^{[r-k]}\displaybreak[1]\\
		&+ \overline{d\zeta_{\overline{K}}}^{[k]}dz_K^{[k]}(\zeta_{\overline{k+1}} \otimes dz_{k+1})(dz_{k+1} \otimes dz_{\overline{k+1}})\overline{d\zeta_{L^-}}^{[n-r-k-1]}\overline{dz_{L^-}}^{[r-k-1]}\displaybreak[1]\\
		&+ \overline{d\zeta_{\overline{K}}}^{[k]}dz_K^{[k]}(\zeta_{k+1} \otimes dz_{\overline{k+1}})(d\zeta_{\overline{k+1}} \otimes dz_{k+1})\overline{d\zeta_{L^-}}^{[n-r-k-2]}\overline{dz_{L^-}}^{[r-k]}\displaybreak[1]\\
		&+ \overline{d\zeta_{\overline{K}}}^{[k]}dz_K^{[k]}(\zeta_{k+1} \otimes dz_{\overline{k+1}})(dz_{\overline{k+1}} \otimes dz_{k+1})\overline{d\zeta_{L^-}}^{[n-r-k-1]}\overline{dz_{L^-}}^{[r-k-1]}\displaybreak[1]\\
		\displaybreak[1]\\
		&+ \overline{d\zeta_{\overline{K}}^{[k]}}dz_K^{[k]}(d\zeta_{\overline{k+1}} \otimes dz_{k+1})(d\zeta_{k+1} \otimes dz_{\overline{k+1}})\overline{\zeta_{L^-}}\,\overline{d\zeta_{L^-}}^{[n-r-k-3]}\overline{dz_{L^-}}^{[r-k]}\displaybreak[1]\\
		&+ \overline{d\zeta_{\overline{K}}^{[k]}}dz_K^{[k]}(d\zeta_{\overline{k+1}} \otimes dz_{k+1})(dz_{k+1} \otimes dz_{\overline{k+1}})\overline{\zeta_{L^-}}\,\overline{d\zeta_{L^-}}^{[n-r-k-2]}\overline{dz_{L^-}}^{[r-k-1]}\displaybreak[1]\\
		&+ \overline{d\zeta_{\overline{K}}^{[k]}}dz_K^{[k]}(dz_{\overline{k+1}} \otimes dz_{k+1})(d\zeta_{k+1} \otimes dz_{\overline{k+1}})\overline{\zeta_{L^-}}\,\overline{d\zeta_{L^-}}^{[n-r-k-2]}\overline{dz_{L^-}}^{[r-k-1]}\displaybreak[1]\\
		&+ \overline{d\zeta_{\overline{K}}^{[k]}}dz_K^{[k]}(dz_{\overline{k+1}} \otimes dz_{k+1})(dz_{k+1} \otimes dz_{\overline{k+1}})\overline{\zeta_{L^-}}\,\overline{d\zeta_{L^-}}^{[n-r-k-1]}\overline{dz_{L^-}}^{[r-k-2]}\displaybreak[1]\\
		&=: \Omega_1 + \dots + \Omega_{12},
	\end{align*}
	where we label the twelve appearing terms $\Omega_1, \dots, \Omega_{12}$. Note that this is just the conjugate of the formula in the proof of \autoref{lem:actYkp1OmegaRk} and that we abuse notation by reusing the label $\Omega_i$ for the conjugated terms. We next do the same splitting for the forms $\sigma_{n-r,k+1} \otimes \Theta_2$ and $\tau_{n-r,k+1} \otimes \Theta_2$ from Eq.s~\eqref{eq:sigmarkSepDual} and \eqref{eq:taurkSepDual}:
	\begin{align*}
		\sigma_{n-r,k+1} \otimes \Theta_2 &= \overline{d\eta_{K^+}}^{[k+1]}\eta_{\overline{K^+}}dw_{\overline{K^+}}^{[k]} d\eta_{L^-}^{[r-k-1]} dw_{L^-}^{[n-r-k-1]}\\
		& + (-1)^{k+1}\overline{d\eta_{K^+}}^{[k+1]}dw_{\overline{K^+}}^{[k+1]}\eta_{L^-} d\eta_{L^-}^{[r-k-1]}dw_{L^-}^{[n-r-k-2]}\\
		&=(-1)^{k}\overline{d\eta_{K}}^{[k]}dw_{\overline{K}}^{[k]}(d\zeta_{\overline{k+1}} \otimes d\zeta_{k+1})(\zeta_{\overline{k+1}}\otimes d\zeta_{\overline{k+1}}) d\eta_{L^-}^{[r-k-1]} dw_{L^-}^{[n-r-k-1]}\\
		&-\overline{d\eta_{K}}^{[k]}\eta_{\overline{K}}dw_{\overline{K}}^{[k-1]}(d\zeta_{\overline{k+1}} \otimes d\zeta_{k+1})(dz_{\overline{k+1}}\otimes d\zeta_{\overline{k+1}}) d\eta_{L^-}^{[r-k-1]} dw_{L^-}^{[n-r-k-1]}\\
		&+ (-1)^{k+1}\overline{d\eta_{K}}^{[k]}dw_{\overline{K}}^{[k]}(d\zeta_{\overline{k+1}}\otimes d\zeta_{k+1})(dz_{\overline{k+1}} \otimes d\zeta_{\overline{k+1}})\eta_{L^-} d\eta_{L^-}^{[r-k-1]}dw_{L^-}^{[n-r-k-2]}\\
		&=:\Xi_1 + \Xi_2 + \Xi_3,
	\end{align*}
	\begin{align*}
		\tau_{n-r,k+1} \otimes \Theta_2 &= (-1)^{k}\overline{d\eta_{K^+}}^{[k+1]}\eta_{\overline{K^+}}dw_{\overline{K^+}}^{[k]} d\eta_{L^-}^{[r-k-1]} dw_{L^-}^{[n-r-k-1]} \\
		&+ \overline{\eta_{K^+}}\overline{d\eta_{K^+}}^{[k]}dw_{\overline{K^+}}^{[k+1]}d\eta_{L^-}^{[r-k]}dw_{L^-}^{[n-r-k-2]}\\
		&=\overline{d\eta_{K}}^{[k]}dw_{\overline{K}}^{[k]}(d\zeta_{\overline{k+1}} \otimes d\zeta_{k+1})(\zeta_{\overline{k+1}}\otimes d\zeta_{\overline{k+1}}) d\eta_{L^-}^{[r-k-1]} dw_{L^-}^{[n-r-k-1]}\\
		&+(-1)^{k-1}\overline{d\eta_{K}}^{[k]}\eta_{\overline{K}}dw_{\overline{K}}^{[k-1]}(d\zeta_{\overline{k+1}} \otimes d\zeta_{k+1})(dz_{\overline{k+1}}\otimes d\zeta_{\overline{k+1}}) d\eta_{L^-}^{[r-k-1]} dw_{L^-}^{[n-r-k-1]}\\
		&+\overline{d\eta_{K}}^{[k]}dw_{\overline{K}}^{[k]}(\zeta_{\overline{k+1}} \otimes d\zeta_{k+1})(dz_{\overline{k+1}}\otimes d\zeta_{\overline{k+1}})d\eta_{L^-}^{[r-k]}dw_{L^-}^{[n-r-k-2]}\\
		&+\overline{\eta_{K}}\overline{d\eta_{K}}^{[k-1]}dw_{\overline{K}}^{[k]}(d\zeta_{\overline{k+1}} \otimes d\zeta_{k+1})(dz_{\overline{k+1}}\otimes d\zeta_{\overline{k+1}})d\eta_{L^-}^{[r-k]}dw_{L^-}^{[n-r-k-2]}\\
		&=: (-1)^{k}\Xi_1 + (-1)^{k}\Xi_2 + \Upsilon_3 + \Upsilon_4.
	\end{align*}
	Here, we label the terms of $\sigma_{n-r,k+1} \otimes \Theta_2$ by $\Xi_1$, $\Xi_1$ and $\Xi_3$, whereas the first two terms of $\tau_{n-r,k+1} \otimes \Theta_2$ are given by $(-1)^k\Xi_1$ and $(-1)^k\Xi_2$ and we label the last two terms by $\Upsilon_3$ and $\Upsilon_4$.
	
	In order to calculate the wedge product in Eq.~\eqref{eq:pairOmkmOmkm1m}, note that, by Eq.~\eqref{eq:DOmrkm} in  \autoref{prop:OmSigTauSepL}, we need to calculate all products of the form $\Omega_i \wedge \Xi_j \wedge \alpha$ and $\Omega_i \wedge \Upsilon_j \wedge \alpha$. Since we are considering the restriction of these forms to $S\RR^n$, we are only interested in these products up to multiples of $\gamma$, i.e. we will exploit that the restriction of two forms on $\RR^n\times\RR^n$ to $S\RR^n$ coincide if and only if their products with $\gamma$ coincide on $\RR^n\times\RR ^n$, compare \cite{Kotrbaty2022}*{Lem.~4.1}. We will therefore multiply all of these terms with $\gamma$, which will simplify the calculation.
	
	As it turns out, most of the products vanish. \autoref{tab:pair0termsFirst} and \autoref{tab:pairWedgAlphGam} list the products which vanish due to the following reasons:
	
	First, note that $\Xi_1, \Xi_2, \Xi_3$, and $\Upsilon_4$ contain the factor $d\zeta_{\overline{k+1}} \otimes 1$. Hence their product vanishes with other such terms, marked by $0^\dagger$ in \autoref{tab:pair0termsFirst}.
	
	Second, as $\Xi_2, \Xi_3, \Upsilon_3$, and $\Upsilon_4$ contain the factor $dz_{\overline{k+1}} \otimes 1$, their product vanishes with other such terms, marked by $0^\ddagger$ (if not already marked by $0^\dagger$).
	
	Third, we count the appearances of $d\zeta_j$ resp.\ $dz_j$, $j \in L^-$, in the first component (that is, e.g., the factors $d\zeta_j \otimes 1$). If there are more than $|L^-| = n-2k-2$ terms of one kind, there must be a repetition, so the wedge product vanishes. We mark such situations by $0^{\zeta,L^-}$ if they appear for some $d\zeta_j$, and by $0^{z,L^-}$ if they appear for some $dz_j$, in case the specific term is not already zero by the other reasons above (that is, marked by $0^\dagger$ or $0^\ddagger$).\\
	
	\begin{table}
	\begin{tabular}{|c||c|c|c|c|c|}
		\hline
		$\wedge$& $\Xi_1$ & $\Xi_2$ & $\Xi_3$ & $\Upsilon_3$ & $\Upsilon_4$ \\
		\hline\hline
		$\Omega_1$ & $0^\dagger$ & $0^\dagger$ & $0^\dagger$ & &$0^\dagger$\\
		\hline
		$\Omega_2$& $0^\dagger$ & $0^\dagger$ & $0^\dagger$ & $0^{\zeta,L^-}$ &$0^\dagger$\\
		\hline
		$\Omega_3$&  & $0^\ddagger$ & $0^\ddagger$ & $0^\ddagger$ &$0^\ddagger$\\
		\hline
		$\Omega_4$& $0^{\zeta,{L^-}}$ & $0^\ddagger$ & $0^\ddagger$ &$0^\ddagger$ &$0^\ddagger$ \\
		\hline
		$\Omega_5$& $0^{z,L^-}$ & $0^{z,L^-}$ &  &  &\\
		\hline
		$\Omega_6$&  &  &  & $0^{\zeta,L^-}$ &$0^{\zeta,L^-}$\\
		\hline
		$\Omega_7$& $0^\dagger$ & $0^\dagger$ & $0^\dagger$ &  &$0^\dagger$\\
		\hline
		$\Omega_8$&  & $0^\ddagger$ & $0^\ddagger$ & $0^\ddagger$ &$0^\ddagger$ \\
		\hline
		$\Omega_9$& $0^\dagger$ & $0^\dagger$ & $0^\dagger$ &  &$0^\dagger$\\
		\hline
		$\Omega_{10}$& $0^\dagger$ & $0^\dagger$ & $0^\dagger$ &  &$0^\dagger$\\
		\hline
		$\Omega_{11}$&  & $0^\ddagger$ & $0^\ddagger$ & $0^\ddagger$ &$0^\ddagger$\\
		\hline
		$\Omega_{12}$&  & $0^\ddagger$ & $0^\ddagger$ & $0^\ddagger$ &$0^\ddagger$\\
		\hline
	\end{tabular}
	\centering
	\caption{trivial terms}
	\label{tab:pair0termsFirst}
	\end{table}
	
	By \autoref{tab:pair0termsFirst}, $14$ products remain, and we need to multiply the corresponding terms with $\alpha \wedge \gamma$. We split $\alpha$ and $\gamma$ according to the blocks $K$, $\overline{K}$, $\{k+1\}$, $\{\overline{k+1}\}$, and $L^-$, using that
	\begin{align*}
		\alpha = \alpha_K + \alpha_{\overline{K}} + \alpha_{k+1} + \alpha_{\overline{k+1}} + \alpha_{L^-},
	\end{align*} 
	and similarly for $\gamma$.
	
	As before, many of the wedge products are zero for trivial reasons, that is, because a $1$-form appears twice. We have listed the remaining terms in \autoref{tab:pairWedgAlphGam} and marked the zero terms as indicated below. Here, we denote by a single symbol those terms which vanish due to the product of the $1$-form with $\Omega_j$, and by a double symbol those which vanish due to the product with $\Xi_j$ resp. $\Upsilon_j$. The following products vanish trivially:
	\begin{itemize}
		\item $\Omega_j \wedge \alpha_K$ for all $j$, hence the column is omitted in \autoref{tab:pairWedgAlphGam};
		\item $\Xi_j \wedge \alpha_{\overline{K}}$, $j =1, 3$, marked by $0^*$;
		\item $\Upsilon_j \wedge \alpha_{\overline{K}}$, for all $j$, marked by $0^{**}$;
		\item $\Omega_j\wedge \gamma_K$ for $5 \leq j \leq 12$, marked by $0^x$;
		\item $\Xi_j \wedge \gamma_{\overline{K}}$, for all $j$, and $\Upsilon_3 \wedge \gamma_{\overline{K}}$, marked by $0^{xx}$;
		\item $\Omega_j \wedge \alpha_{k+1}$, $j=6, 10, 12$, marked by $0^\circ$;
		\item $\Omega_j \wedge \alpha_{\overline{k+1}}$, $j=3, 8, 11, 12$ marked by $0^\dagger$;
		\item $\Xi_j \wedge \alpha_{\overline{k+1}}$, $j=2, 3$, and $\Upsilon_i \wedge \alpha_{\overline{k+1}}$, for all $i$, marked by $0^\ddagger$;
		\item $\Omega_j \wedge \gamma_{k+1}$, $j=1, 3, 5, 9, 11$, marked by $0^\times$;
		\item $\Omega_j \wedge \gamma_{\overline{k+1}}$, $j=1, 7, 9, 10$, marked by $0^\star$;
		\item $\Xi_j \wedge \gamma_{\overline{k+1}}$, for all $j$, and $\Upsilon_4 \wedge \gamma_{\overline{k+1}}$, marked by $0^{\star\star}$.
	\end{itemize}
	Next, we count the $d\zeta$- resp. $dz$-terms in $L^-$ in the first component. As $|L^-| = n-2k-2$, all products with more than $n-2k-2$ such terms vanish:
	\begin{itemize}
		\item $\Xi_j \wedge \gamma_{L^-}$ , $j=1,2,3$, have $r-k$ terms in $d\zeta$. Hence, their product with $\Omega_i$, $i=3, 6, 8, 12$, vanishes, marked by $0^{\zeta}$.
		\item $\Upsilon_j \wedge \gamma_{L^-}$ , $j=3, 4$, have $r-k+1$ terms in $d\zeta$. Hence, their product with $\Omega_i$, $i\neq 9$, vanishes, marked by $0^{\zeta'}$.
		\item $\Xi_j \wedge \alpha_{L^-}$ , $j=1,2$, have $n-r-k$ terms in $dz$. Hence, their product with $\Omega_i$, $i\neq 4, 12$, vanishes, marked by $0^{z}$.
		\item $\Xi_3 \wedge \alpha_{L^-}$ and $\Upsilon_j \wedge \alpha_{L^-}$ , $j=3, 4$, have $n-r-k-1$ terms in $dz$. Hence, their product with $\Omega_i$, $i=1, 5, 7, 9$, vanishes, marked by $0^{z'}$.
	\end{itemize}
	
	\begin{table}[h]
		\begin{tabular}{|c||c|c|c|c||c|c|c|c|c|}
			\hline
			$\wedge$ & $\alpha_{\overline{K}}$ & $\alpha_{k+1}$ & $\alpha_{\overline{k+1}}$ & $\alpha_{L^-}$& $\gamma_K$ & $\gamma_{\overline{K}}$ & $\gamma_{k+1}$ & $\gamma_{\overline{k+1}}$ & $\gamma_{L^-}$ \\
			\hline\hline
			$\Omega_3 \wedge \Xi_1$ & $0^*$ & & $0^\dagger$ & $0^{z}$ && $0^{xx}$ & $0^\times$ & $0^{\star\star}$ & $0^{\zeta}$\\
			\hline
			$\Omega_6 \wedge \Xi_1$ & $0^*$ & $0^\circ$  && $0^{z}$& $0^x$ & $0^{xx}$ && $0^{\star\star}$ & $0^{\zeta}$\\
			\hline
			$\Omega_8 \wedge \Xi_1$ & $0^*$ && $0^\dagger$ & $0^{z}$& $0^x$ & $0^{xx}$ && $0^{\star\star}$ & $0^{\zeta}$\\
			\hline
			$\Omega_{11} \wedge \Xi_1$ & $0^*$ && $0^\dagger$ & $0^{z}$& $0^x$ & $0^{xx}$ & $0^\times$ & $0^{\star\star}$ & \\
			\hline
			$\Omega_{12} \wedge \Xi_1$ & $0^*$ & $0^\circ$  & $0^\dagger$ && $0^x$ & $0^{xx}$ && $0^{\star\star}$ & $0^{\zeta}$\\
			\hline
			$\Omega_6 \wedge \Xi_2$& & $0^\circ$  & $0^\ddagger$ & $0^{z}$& $0^x$ & $0^{xx}$ && $0^{\star\star}$ & $0^{\zeta}$\\
			\hline
			$\Omega_5 \wedge \Xi_3$ & $0^*$ && $0^\ddagger$ & $0^{z'}$& $0^x$ & $0^{xx}$ & $0^\times$ & $0^{\star\star}$ &\\
			\hline
			$\Omega_6 \wedge \Xi_3$ & $0^*$ & $0^\circ$  & $0^\ddagger$ && $0^x$ & $0^{xx}$ && $0^{\star\star}$ & $0^{\zeta}$\\
			\hline
			$\Omega_1 \wedge\Upsilon_3$& $0^{**}$ && $0^\ddagger$ & $0^{z'}$&& $0^{xx}$ & $0^\times$ & $0^\star$ & $0^{\zeta'}$\\
			\hline
			$\Omega_5 \wedge \Upsilon_3$ & $0^{**}$ && $0^\ddagger$ & $0^{z'}$& $0^x$ & $0^{xx}$ & $0^\times$ && $0^{\zeta'}$\\
			\hline
			$\Omega_7 \wedge \Upsilon_3$ & $0^{**}$ && $0^\ddagger$ & $0^{z'}$& $0^x$ & $0^{xx}$ && $0^\star$ & $0^{\zeta'}$\\
			\hline
			$\Omega_9 \wedge \Upsilon_3$ & $0^{**}$ && $0^\ddagger$ & $0^{z'}$& $0^x$ & $0^{xx}$ & $0^\times$ & $0^\star$ & \\
			\hline
			$\Omega_{10} \wedge \Upsilon_3$ & $0^{**}$ & $0^\circ$  & $0^\ddagger$ && $0^x$ & $0^{xx}$ && $0^\star$ & $0^{\zeta'}$\\
			\hline
			$\Omega_5 \wedge \Upsilon_4$ & $0^{**}$ && $0^\ddagger$ & $0^{z'}$& $0^x$ && $0^\times$ & $0^{\star\star}$ & $0^{\zeta'}$\\
			\hline
		\end{tabular}
		\centering
		\caption{Wedge products with $\alpha$ and $\gamma$}
		\label{tab:pairWedgAlphGam}
	\end{table}
Let us point out that in every row of the table there are exactly two non-zero entries, that is, every row yields only one term in the total product.

To reduce the necessary calculations further, note that the following relations hold (using the first part of \autoref{prop:fillFullZetaDZeta}):
\begin{align}
	\nu_{k+1} \Omega_1 \wedge \gamma_K &= \nu_K \Omega_7 \wedge \gamma_{k+1}\label{eq:pairRelOm1Om5}\\
	\nu_{k+1}\Omega_3 \wedge \gamma_K &= \nu_{K} \Omega_8 \wedge \gamma_{k+1} \label{eq:pairRelOm3Om8}\\
	\nu_{k+1} \Xi_2 \wedge \alpha_{\overline{K}} &= \nu_{K}\Xi_1 \wedge \alpha_{\overline{k+1}} \label{eq:pairRelXi1Xi2}\\
	\nu_{k+1}\Upsilon_4 \wedge\gamma_{\overline{K}} &= \nu_{K} \Upsilon_3 \wedge\gamma_{\overline{k+1}} \label{eq:pairRelUps4Ups3}
\end{align}
Using \eqref{eq:pairRelOm1Om5} to \eqref{eq:pairRelUps4Ups3}, we can replace terms with $\alpha_{\overline{K}}$, $\gamma_K$ or $\gamma_{\overline{K}}$ by terms with $\alpha_{k+1}$, $\alpha_{\overline{k+1}}$, $\gamma_{k+1}$ and $\gamma_{\overline{k+1}}$.

As the proofs of the relations \eqref{eq:pairRelOm1Om5} to \eqref{eq:pairRelUps4Ups3} are very similar, we will only prove Eq.~\eqref{eq:pairRelOm1Om5} here and omit the other proofs. For $\Omega_1 \wedge \gamma_K$, Eq.~\eqref{eq:redZetaGamma} in \autoref{prop:fillFullZetaDZeta} implies
\begin{align*}
	\Omega_1 \wedge \gamma_K &=(-1)^{n+k}\overline{\zeta_{\overline{K}}\,d\zeta_{\overline{K}}^{[k-1]}\gamma_{\overline{K}}} dz_K^{[k]} (d\zeta_{\overline{k+1}} \otimes dz_{k+1})(d\zeta_{k+1} \otimes dz_{\overline{k+1}}) \overline{d\zeta_{L^-}}^{[n-r-k-2]} \overline{dz_{L^-}}^{[r-k]}\\
	&=(-1)^{n+1}\nu_K\overline{d\zeta_{\overline{K}}}^{[k]} dz_K^{[k]} (d\zeta_{\overline{k+1}} \otimes dz_{k+1})(d\zeta_{k+1} \otimes dz_{\overline{k+1}}) \overline{d\zeta_{L^-}}^{[n-r-k-2]} \overline{dz_{L^-}}^{[r-k]}.
\end{align*}
However, a short calculation for $\Omega_7 \wedge \gamma_{k+1}$ shows that
\begin{align*}
	\Omega_7 \wedge \gamma_{k+1} &=(-1)^{n+1}\overline{d\zeta_{\overline{K}}}^{[k]}dz_K^{[k]}\zeta_{\overline{k+1}}d\zeta_{k+1}(\zeta_{k+1} \otimes dz_{\overline{k+1}})(d\zeta_{\overline{k+1}} \otimes dz_{k+1})\overline{d\zeta_{L^-}}^{[n-r-k-2]}\overline{dz_{L^-}}^{[r-k]}\\
	&=(-1)^{n+1}\nu_{k+1}\overline{d\zeta_{\overline{K}}}^{[k]}dz_K^{[k]}(d\zeta_{k+1} \otimes dz_{\overline{k+1}})(d\zeta_{\overline{k+1}} \otimes dz_{k+1})\overline{d\zeta_{L^-}}^{[n-r-k-2]}\overline{dz_{L^-}}^{[r-k]},
\end{align*}
which implies Eq.~\eqref{eq:pairRelOm1Om5}.

\begin{extracalc}
	\begin{align*}
		\Omega_3 \wedge \gamma_K &=(-1)^{n+k} \overline{\zeta_{\overline{K}}\,d\zeta_{\overline{K}}^{[k-1]}\gamma_{\overline{K}}}dz_K^{[k]} (dz_{\overline{k+1}} \otimes dz_{k+1})(d\zeta_{k+1} \otimes dz_{\overline{k+1}}) \overline{d\zeta_{L^-}}^{[n-r-k-1]} \overline{dz_{L^-}}^{[r-k-1]}\\
		&=(-1)^{n+1}\nu_K\overline{d\zeta_{\overline{K}}}^{[k]}dz_K^{[k]} (dz_{\overline{k+1}} \otimes dz_{k+1})(d\zeta_{k+1} \otimes dz_{\overline{k+1}}) \overline{d\zeta_{L^-}}^{[n-r-k-1]} \overline{dz_{L^-}}^{[r-k-1]}
	\end{align*}
	\begin{align*}
		\Omega_8 \wedge \gamma_{k+1} &=(-1)^{n+1} \overline{d\zeta_{\overline{K}}}^{[k]}dz_K^{[k]}\zeta_{\overline{k+1}}d\zeta_{k+1}(\zeta_{k+1} \otimes dz_{\overline{k+1}})(dz_{\overline{k+1}} \otimes dz_{k+1})\overline{d\zeta_{L^-}}^{[n-r-k-1]}\overline{dz_{L^-}}^{[r-k-1]}\\
		&=(-1)^{n+1}\nu_{k+1} \overline{d\zeta_{\overline{K}}}^{[k]}dz_K^{[k]}(d\zeta_{k+1} \otimes dz_{\overline{k+1}})(dz_{\overline{k+1}} \otimes dz_{k+1})\overline{d\zeta_{L^-}}^{[n-r-k-1]}\overline{dz_{L^-}}^{[r-k-1]}
	\end{align*}
	Hence, Eq.~\eqref{eq:pairRelOm3Om8} follows.
	
	\begin{align*}
		\Xi_1 \wedge \alpha_{\overline{k+1}} &=(-1)^{n}(-1)^{k}\overline{d\eta_{K}}^{[k]}dw_{\overline{K}}^{[k]}(d\zeta_{\overline{k+1}} \otimes d\zeta_{k+1})\zeta_{k+1}dz_{\overline{k+1}}(\zeta_{\overline{k+1}}\otimes d\zeta_{\overline{k+1}}) d\eta_{L^-}^{[r-k-1]} dw_{L^-}^{[n-r-k-1]}\\
		&=(-1)^{n+k}\nu_{k+1}\overline{d\eta_{K}}^{[k]}dw_{\overline{K}}^{[k]}(d\zeta_{\overline{k+1}} \otimes d\zeta_{k+1})(dz_{\overline{k+1}}\otimes d\zeta_{\overline{k+1}}) d\eta_{L^-}^{[r-k-1]} dw_{L^-}^{[n-r-k-1]}
	\end{align*}
	\begin{align*}
		\Xi_2 \wedge \alpha_{\overline{K}} &= (-1)^{n}(-1)\overline{d\eta_{K}}^{[k]}\eta_{\overline{K}}dw_{\overline{K}}^{[k-1]}\alpha_{\overline{K}}(d\zeta_{\overline{k+1}} \otimes d\zeta_{k+1})(dz_{\overline{k+1}}\otimes d\zeta_{\overline{k+1}}) d\eta_{L^-}^{[r-k-1]} dw_{L^-}^{[n-r-k-1]}\\
		&=(-1)^{n+k}\nu_{\overline{K}}\overline{d\eta_{K}}^{[k]}dw_{\overline{K}}^{[k]}(d\zeta_{\overline{k+1}} \otimes d\zeta_{k+1})(dz_{\overline{k+1}}\otimes d\zeta_{\overline{k+1}}) d\eta_{L^-}^{[r-k-1]} dw_{L^-}^{[n-r-k-1]}
	\end{align*}
	Hence, Eq.~\eqref{eq:pairRelXi1Xi2} follows
	\begin{align*}
		\Upsilon_4 \wedge \gamma_{\overline{K}} &= (-1)^{n+k}
		\overline{\eta_{K}d\eta_{K}^{[k-1]}\gamma_K}dw_{\overline{K}}^{[k]}(d\zeta_{\overline{k+1}} \otimes d\zeta_{k+1})(dz_{\overline{k+1}}\otimes d\zeta_{\overline{k+1}})d\eta_{L^-}^{[r-k]}dw_{L^-}^{[n-r-k-2]}\\
		&=(-1)^{n+1}\nu_{K}
		\overline{d\eta_{K}}^{[k]}dw_{\overline{K}}^{[k]}(d\zeta_{\overline{k+1}} \otimes d\zeta_{k+1})(dz_{\overline{k+1}}\otimes d\zeta_{\overline{k+1}})d\eta_{L^-}^{[r-k]}dw_{L^-}^{[n-r-k-2]}
	\end{align*}
	\begin{align*}
		\Upsilon_3 \wedge \gamma_{\overline{k+1}} &= (-1)^{n+1}	\overline{d\eta_{K}}^{[k]}dw_{\overline{K}}^{[k]}\zeta_{k+1}d\zeta_{\overline{k+1}}(\zeta_{\overline{k+1}} \otimes d\zeta_{k+1})(dz_{\overline{k+1}}\otimes d\zeta_{\overline{k+1}})d\eta_{L^-}^{[r-k]}dw_{L^-}^{[n-r-k-2]}\\
		&= (-1)^{n+1}\nu_{k+1}	\overline{d\eta_{K}}^{[k]}dw_{\overline{K}}^{[k]}(d\zeta_{\overline{k+1}} \otimes d\zeta_{k+1})(dz_{\overline{k+1}}\otimes d\zeta_{\overline{k+1}})d\eta_{L^-}^{[r-k]}dw_{L^-}^{[n-r-k-2]}
	\end{align*}
	Hence, Eq.~\eqref{eq:pairRelUps4Ups3} follows.
\end{extracalc}

In addition, the following equalities can be seen directly from the definitions:
\begin{align}
	\Omega_5 \wedge \gamma_{\overline{k+1}} &= \Omega_7 \wedge \gamma_{k+1}\\
	\Omega_6 \wedge \alpha_{\overline{k+1}} &= \Omega_8 \wedge \alpha_{k+1}
\end{align}
We therefore obtain the following relations between the rows of \autoref{tab:pairWedgAlphGam}:
\begin{align*}
	\nu_{k+1}\Omega_3 \wedge \Xi_1 \wedge \alpha_{k+1}\gamma_{K} =& \nu_{K} \Omega_8 \wedge \Xi_1 \wedge \alpha_{k+1} \gamma_{k+1} &= \nu_{K} \Omega_6 \wedge \Xi_1 \wedge \alpha_{\overline{k+1}} \gamma_{k+1}\\
	&\nu_{k+1}\Omega_6 \wedge \Xi_2 \wedge \alpha_{\overline{K}} \gamma_{k+1} &= \nu_{K} \Omega_6 \wedge \Xi_1 \wedge \alpha_{\overline{k+1}} \gamma_{k+1}\\
	\nu_{k+1}\Omega_1 \wedge \Upsilon_3 \wedge \alpha_{k+1} \gamma_{K} =& \nu_{K} \Omega_7 \wedge \Upsilon_3 \wedge \alpha_{k+1} \gamma_{k+1} &= \nu_{K}\Omega_5 \wedge \Upsilon_3 \wedge \alpha_{k+1} \gamma_{\overline{k+1}}\\
	&\nu_{k+1}\Omega_5 \wedge \Upsilon_4 \wedge \alpha_{k+1} \gamma_{\overline{K}} &= \nu_{K}\Omega_5 \wedge \Upsilon_3 \wedge \alpha_{k+1} \gamma_{\overline{k+1}} 
\end{align*}
We will calculate the terms on the right hand side of these equations. Note that these have the property that the $\alpha$- and $\gamma$-terms fill up the first component of the double form in the $\{k+1,\overline{k+1}\}$-part of the index set. Interchanging the order and using Eq.~\eqref{eq:redFullKTerms} and Eq.~\eqref{eq:redFullLTerms} then immediately yields
\begin{align*}
	&\Omega_6 \wedge \Xi_1 \wedge \alpha_{\overline{k+1}} \wedge \gamma_{k+1}\\
	=& \nu_{k+1} (-1)^{n+1} (\Omega_6 \wedge dz_{\overline{k+1}})\wedge (\Xi_1 \wedge d\zeta_{k+1}) \\
	=&\nu_{k+1} \zeta_{\overline{k+1}}^2 (-1)^{n+1}(-1)^{n+1}(-1)^{k}(-1)^{n}\overline{d\zeta_{\overline{K}}}^{[k]}dz_K^{[k]} \overline{d\eta_{K}}^{[k]}dw_{\overline{K}}^{[k]}\\
	&\wedge(dz_{\overline{k+1}} \otimes dz_{k+1})(dz_{k+1} \otimes dz_{\overline{k+1}})(d\zeta_{\overline{k+1}} \otimes d\zeta_{k+1})(d\zeta_{k+1}\otimes d\zeta_{\overline{k+1}})\\
	&\wedge\overline{d\zeta_{L^-}}^{[n-r-k-1]}\overline{dz_{L^-}}^{[r-k-1]}d\eta_{L^-}^{[r-k-1]} dw_{L^-}^{[n-r-k-1]} \\
	=&\nu_{k+1} \zeta_{\overline{k+1}}^2 (-1)^{n+k} (\Theta_{K^+} \otimes \Theta_{K^+}) \left((-1)^{n+l+r}\binom{n-2k-2}{r-k-1}\Theta_{L^-} \otimes \Theta_{L^-}\right) \\
	=&\nu_{k+1} \zeta_{\overline{k+1}}^2 (-1)^{k+l+r}\binom{n-2k-2}{r-k-1}\Theta\otimes \Theta.
\end{align*}
Similarly, we obtain
\begin{align*}
	\Omega_5 \wedge \Upsilon_3 \wedge \alpha_{k+1} \wedge \gamma_{\overline{k+1}} = \nu_{k+1}\zeta_{\overline{k+1}}^2 (-1)^{l+r}\binom{n-2k-2}{r-k} \Theta \otimes \Theta.
\end{align*}
The remaining rows of \autoref{tab:pairWedgAlphGam} (containing $\alpha_{L^-}$- and $\gamma_{L^-}$-terms) will be paired as follows:
\begin{align*}
	\Omega_{11} \wedge \Xi_1 \wedge \alpha_{k+1} \wedge \gamma_{L^-} &+ \Omega_{12} \wedge \Xi_1 \wedge \alpha_{L^-} \wedge \gamma_{k+1}\\
	\Omega_5 \wedge \Xi_3 \wedge \alpha_{k+1} \wedge \gamma_{L^-} &+ \Omega_6 \wedge \Xi_3 \wedge \alpha_{L^-} \wedge \gamma_{k+1}\\
	\Omega_9 \wedge \Upsilon_3 \wedge \alpha_{k+1} \wedge \gamma_{L^-} &+
	\Omega_{10} \wedge \Upsilon_3 \wedge \alpha_{L^-} \wedge \gamma_{k+1}
\end{align*}
By a short calculation using Eq.s~\eqref{eq:redAddLAlpGamTerms1} and \eqref{eq:redAddLAlpGamTerms2}, we obtain
\begin{align*}
	\Omega_{11} \wedge \Xi_1 \wedge \alpha_{k+1} \wedge \gamma_{L^-} &+ \Omega_{12} \wedge \Xi_1 \wedge \alpha_{L^-} \wedge \gamma_{k+1} &=&\zeta_{\overline{k+1}}^2\nu_{L^-}(-1)^{k+l+r}\binom{n-2k-2}{r-k-1}\Theta \otimes \Theta,\\
	\Omega_5 \wedge \Xi_3 \wedge \alpha_{k+1} \wedge \gamma_{L^-} &+ \Omega_6 \wedge \Xi_3 \wedge \alpha_{L^-} \wedge \gamma_{k+1} &=& 0,\\
	\Omega_9 \wedge \Upsilon_3 \wedge \alpha_{k+1} \wedge \gamma_{L^-} &+
	\Omega_{10} \wedge \Upsilon_3 \wedge \alpha_{L^-} \wedge \gamma_{k+1} &=&\zeta_{\overline{k+1}}^2\nu_{L^-}(-1)^{l+r}\binom{n-2k-2}{r-k}\Theta \otimes \Theta.
\end{align*}
\begin{extracalc}
	By \cite{Kotrbaty2022}*{Prop.~6.4(37)} for $\alpha = r+1$
	\begin{align*}
		\Omega_5 \wedge \Upsilon_3 \wedge \alpha_{k+1} \wedge \gamma_{\overline{k+1}} =& \nu_{k+1} (-1)^{n+1}(\Omega_5 \wedge dz_{k+1}) \wedge (\Upsilon_3 \wedge \gamma_{\overline{k+1}})\\
		=&\nu_{k+1}\zeta_{\overline{k+1}}^2 (-1)^{n+1}(-1)^{n+1}(-1)^{n+1}\\
		&\overline{d\zeta_{\overline{K}}}^{[k]}dz_K^{[k]}\overline{d\eta_{K}}^{[k]}dw_{\overline{K}}^{[k]}\\
		\wedge&(dz_{k+1} \otimes dz_{k+1})(d\zeta_{k+1} \otimes dz_{\overline{k+1}})(d\zeta_{\overline{k+1}} \otimes d\zeta_{k+1})(dz_{\overline{k+1}}\otimes d\zeta_{\overline{k+1}})\\
		\wedge& \overline{d\zeta_{L^-}}^{[n-r-k-2]}\overline{dz_{L^-}}^{[r-k]}
		d\eta_{L^-}^{[r-k]}dw_{L^-}^{[n-r-k-2]}\\
		=&\nu_{k+1}\zeta_{\overline{k+1}}^2 (-1)^{n+1} (\Theta_{K^+} \otimes \Theta_{K^+})\left((-1)^{n+l+r+1}\binom{n-2k-2}{r-k} \Theta_{L^-} \otimes \Theta_{L^-}\right)\\
		=&\nu_{k+1}\zeta_{\overline{k+1}}^2 (-1)^{l+r}\binom{n-2k-2}{r-k} \Theta \otimes \Theta
	\end{align*}
	By \cite{Kotrbaty2022}*{Prop.~5.2(19)} and \cite{Kotrbaty2022}*{Prop.~6.4(37)}
	\begin{align*}
		&\Omega_{11} \wedge \Xi_1 \wedge \alpha_{k+1} \wedge \gamma_{L^-} + \Omega_{12} \wedge \Xi_1 \wedge \alpha_{L^-} \wedge \gamma_{k+1}\\
		&=\zeta_{\overline{k+1}}\overline{d\zeta_{\overline{K}}^{[k]}}dz_K^{[k]}(dz_{k+1}dz_{\overline{k+1}} \otimes dz_{k+1})(d\zeta_{k+1} \otimes dz_{\overline{k+1}})\\
		&\left( (-1)^{n+1}\overline{\zeta_{L^-}}\,\overline{d\zeta_{L^-}}^{[n-r-k-2]}\overline{dz_{L^-}}^{[r-k-1]}\gamma_{L^-}
		+(-1)^{n+1}\overline{\zeta_{L^-}}\,\overline{d\zeta_{L^-}}^{[n-r-k-1]}\overline{dz_{L^-}}^{[r-k-2]}\alpha_{L^-}\right) \wedge \Xi_1\\
		&=(-1)^{n+1}\zeta_{\overline{k+1}}\overline{d\zeta_{\overline{K}}^{[k]}}dz_K^{[k]}(dz_{k+1}dz_{\overline{k+1}} \otimes dz_{k+1})(d\zeta_{k+1} \otimes dz_{\overline{k+1}})\\
		& (-1)^{n+1}\nu_{L^-}\overline{d\zeta_{L^-}}^{[n-r-k-1]}\overline{dz_{L^-}}^{[r-k-1]}
		 \wedge \Xi_1\\
		 &=\zeta_{\overline{k+1}}^2\nu_{L^-}(-1)^{k}(-1)^n\\
		 &\overline{d\zeta_{\overline{K}}^{[k]}}dz_K^{[k]}\overline{d\eta_{K}}^{[k]}dw_{\overline{K}}^{[k]}\\
		 \wedge & (dz_{k+1} \otimes dz_{k+1})(d\zeta_{k+1} \otimes dz_{\overline{k+1}})(d\zeta_{\overline{k+1}} \otimes d\zeta_{k+1})(dz_{\overline{k+1}}\otimes d\zeta_{\overline{k+1}})\\
		 \wedge &\overline{d\zeta_{L^-}}^{[n-r-k-1]}\overline{dz_{L^-}}^{[r-k-1]} d\eta_{L^-}^{[r-k-1]} dw_{L^-}^{[n-r-k-1]}\\
		 &=\zeta_{\overline{k+1}}^2\nu_{L^-}(-1)^{n+k} (\Theta_{K^+} \otimes \Theta_{K^+})(-1)^{n+l+r}\binom{n-2k-2}{r-k-1}\Theta_{L^-} \otimes \Theta_{L^-} \\
		 &=\zeta_{\overline{k+1}}^2\nu_{L^-}(-1)^{k+l+r}\binom{n-2k-2}{r-k-1}\Theta \otimes \Theta
	\end{align*}
	By \cite{Kotrbaty2022}*{Prop.~5.2(18)}
	\begin{align*}
		&\Omega_5 \wedge \Xi_3 \wedge \alpha_{k+1} \wedge \gamma_{L^-} + \Omega_6 \wedge \Xi_3 \wedge \alpha_{L^-} \wedge \gamma_{k+1}\\
		&= \zeta_{\overline{k+1}}\\
		&\overline{d\zeta_{\overline{K}}}^{[k]}dz_K^{[k]}(dz_{k+1} \otimes dz_{k+1})(d\zeta_{k+1} \otimes dz_{\overline{k+1}})\\
		&\left( (-1)^{n+1}\overline{d\zeta_{L^-}}^{[n-r-k-2]}\overline{dz_{L^-}}^{[r-k]}\wedge \gamma_{L^-} + (-1)^{n+1}\overline{d\zeta_{L^-}}^{[n-r-k-1]}\overline{dz_{L^-}}^{[r-k-1]}\wedge \alpha_{L^-}\right)\wedge \Xi_3\\
		&=0 
	\end{align*}
	By \cite{Kotrbaty2022}*{Prop.~5.2(19)} for $r+1$ and \cite{Kotrbaty2022}*{Prop.~6.4(37)} for $r+1$
	\begin{align*}
		&\Omega_9 \wedge \Upsilon_3 \wedge \alpha_{k+1} \wedge \gamma_{L^-} +
		\Omega_{10} \wedge \Upsilon_3 \wedge \alpha_{L^-} \wedge \gamma_{k+1}\\
		&=\zeta_{\overline{k+1}}\overline{d\zeta_{\overline{K}}}^{[k]}dz_K^{[k]}(d\zeta_{\overline{k+1}} \otimes dz_{k+1})(d\zeta_{k+1}dz_{k+1} \otimes dz_{\overline{k+1}})\\
		&\left( (-1)^{n+1}\overline{\zeta_{L^-}}\,\overline{d\zeta_{L^-}}^{[n-r-k-3]}\overline{dz_{L^-}}^{[r-k]} \gamma_{L^-} + (-1)^{n+1}\overline{\zeta_{L^-}}\,\overline{d\zeta_{L^-}}^{[n-r-k-2]}\overline{dz_{L^-}}^{[r-k-1]}\wedge \alpha_{L^-}\right) \wedge \Upsilon_3\\
		&=\zeta_{\overline{k+1}}(-1)^{n+1}\overline{d\zeta_{\overline{K}}}^{[k]}dz_K^{[k]}(d\zeta_{\overline{k+1}} \otimes dz_{k+1})(d\zeta_{k+1}dz_{k+1} \otimes dz_{\overline{k+1}})\\
		&(-1)^{n-1}\nu_{L^-}\overline{d\zeta_{L^-}}^{[n-r-k-2]}\overline{dz_{L^-}}^{[r-k]}  \wedge \Upsilon_3\\
		&=\zeta_{\overline{k+1}}^2\nu_{L^-}(-1)^{n+1}\\
		&\overline{d\zeta_{\overline{K}}}^{[k]}dz_K^{[k]}\overline{d\eta_{K}}^{[k]}dw_{\overline{K}}^{[k]}\\
		\wedge &(d\zeta_{\overline{k+1}} \otimes dz_{k+1})(dz_{k+1} \otimes dz_{\overline{k+1}})(d\zeta_{k+1} \otimes d\zeta_{k+1})(dz_{\overline{k+1}}\otimes d\zeta_{\overline{k+1}})\\
		\wedge&\overline{d\zeta_{L^-}}^{[n-r-k-2]}\overline{dz_{L^-}}^{[r-k]}d\eta_{L^-}^{[r-k]}dw_{L^-}^{[n-r-k-2]}\\
		&= \zeta_{\overline{k+1}}^2\nu_{L^-}(-1)^{n+1} (\Theta_{K^+} \otimes \Theta_{K^+})(-1)^{n+l+r+1}\binom{n-2k-2}{r-k}\Theta_{L^-} \otimes \Theta_{L^-}\\
		&=\zeta_{\overline{k+1}}^2\nu_{L^-}(-1)^{l+r}\binom{n-2k-2}{r-k}\Theta \otimes \Theta
	\end{align*}
\end{extracalc}
In order to finish the proof, it remains to collect the terms. Since
\begin{align*}
	(\Omega_1 + \dots + \Omega_{10}) \wedge (\Xi_1 + \Xi_2)\wedge \alpha \wedge \gamma &= \left(2\nu_K + 2\nu_{k+1}\right) \zeta_{\overline{k+1}}^2 (-1)^{k+l+r}\binom{n-2k-2}{r-k-1}\Theta\otimes \Theta\\
	&=\nu_{K^+ \cup \overline{K^+}} \zeta_{\overline{k+1}}^2 (-1)^{k+l+r}\binom{n-2k-2}{r-k-1}\Theta\otimes \Theta
\end{align*}
and
\begin{align*}
	(\Omega_{11} + \Omega_{12})\wedge (\Xi_1 + \Xi_2)\wedge \alpha \wedge \gamma &=\zeta_{\overline{k+1}}^2\nu_{L^-}(-1)^{k+l+r}\binom{n-2k-2}{r-k-1}\Theta \otimes \Theta,
\end{align*}
we obtain
\begin{align*}
	(\overline{\omega_{r,k}} \otimes \Theta_1)\wedge (\Xi_1 + \Xi_2)\wedge \alpha \wedge \gamma = \nu\zeta_{\overline{k+1}}^2(-1)^{k+l+r}\binom{n-2k-2}{r-k-1}\Theta \otimes \Theta.
\end{align*}
Moreover,
\begin{align*}
	(\Omega_1 + \dots + \Omega_{12}) \wedge \Xi_3 \wedge \alpha \wedge \gamma = 0.
\end{align*}
Next, since
\begin{align*}
	(\Omega_1 + \dots + \Omega_8) \wedge (\Upsilon_3 + \Upsilon_4) \wedge \alpha \wedge \gamma &= (2\nu_K + 2\nu_{k+1})\zeta_{\overline{k+1}}^2 (-1)^{l+r}\binom{n-2k-2}{r-k} \Theta \otimes \Theta\\
	&=\nu_{K^+ \cup \overline{K^+}}\zeta_{\overline{k+1}}^2 (-1)^{l+r}\binom{n-2k-2}{r-k} \Theta \otimes \Theta,
\end{align*}
and
\begin{align*}
	(\Omega_9 + \dots + \Omega_{12}) \wedge (\Upsilon_3 + \Upsilon_4) \wedge \alpha \wedge \gamma &=\nu_{L^-}\zeta_{\overline{k+1}}^2(-1)^{l+r}\binom{n-2k-2}{r-k}\Theta \otimes \Theta,
\end{align*}
we have
\begin{align*}
	(\overline{\omega_{r,k}} \otimes \Theta_1)\wedge (\Upsilon_3 + \Upsilon_4)\wedge \alpha \wedge \gamma =\nu\zeta_{\overline{k+1}}^2(-1)^{l+r}\binom{n-2k-2}{r-k}\Theta \otimes \Theta.
\end{align*}
Therefore we obtain for the double forms
\begin{align*}
	(\overline{\omega_{r,k}} \otimes \Theta_1)\wedge (\sigma_{n-r,k+1} \otimes \Theta_2)\wedge \alpha \wedge \gamma &= \nu\zeta_{\overline{k+1}}^2(-1)^{k+l+r}\binom{n-2k-2}{r-k-1}\Theta \otimes \Theta,\\
	(\overline{\omega_{r,k}} \otimes \Theta_1)\wedge (\tau_{n-r,k+1} \otimes \Theta_2)\wedge \alpha \wedge \gamma &= \nu\zeta_{\overline{k+1}}^2(-1)^{l+r}\left(\binom{n-2k-2}{r-k-1} + \binom{n-2k-2}{r-k}\right)\Theta \otimes \Theta\\
	&=\nu\zeta_{\overline{k+1}}^2(-1)^{l+r}\binom{n-2k-1}{r-k}\Theta \otimes \Theta,
\end{align*}
which implies
\begin{align}
	\overline{\omega_{r,k}}\wedge \sigma_{n-r,k+1}\wedge \alpha \wedge \gamma &= \nu\zeta_{\overline{k+1}}^2(-1)^{k+l+r}\binom{n-2k-2}{r-k-1}\Theta,\label{eq:pairPrfSigm}\\
	\overline{\omega_{r,k}}\wedge \tau_{n-r,k+1}\wedge \alpha \wedge \gamma &= \nu\zeta_{\overline{k+1}}^2(-1)^{l+r}\binom{n-2k-1}{r-k}\Theta. \label{eq:pairPrfTau}
\end{align}
Next, since by Eq.~\eqref{eq:DOmrkm}
\begin{align*}
	D\omega_{n-r,k+1,m} = c_{n-r,m}\zeta_{\overline{1}}^{m-2}\left((m+k)\sigma_{n-r,k+1} + (-1)^k(r-k)\tau_{n-r,k+1} \right) \alpha,
\end{align*}
where $c_{n-r,m} = (-1)^{n+1}(r+m-2)$, and $\overline{\omega_{r,k,m}} = \zeta_1^{m-2}\overline{\omega_{r,k}}$, we obtain from Eq.~\eqref{eq:pairPrfSigm} and Eq.~\eqref{eq:pairPrfTau}
\begin{align*}
	\overline{\omega_{r,k,m}} \wedge D\omega_{n-r,k+1,m}  \wedge \gamma =& 
	c_{n-r,m}|\zeta_{\overline{1}}|^{2(m-2)} \nu \zeta_{\overline{k+1}}^2 (-1)^{k+l+r}\\
	& \left( (m+k)\binom{n-2k-2}{r-k-1} + (r-k) \binom{n-2k-1}{r-k} \right) \Theta\\
	=&c_{n-r,m}|\zeta_{\overline{1}}|^{2(m-2)} \nu \zeta_{\overline{k+1}}^2 (-1)^{k+l+r}\\
	& \left( (m+k)\binom{n-2k-2}{r-k-1} + (n-2k-1) \binom{n-2k-2}{r-k-1} \right) \Theta\\
	=&c_{n-r,m}|\zeta_{\overline{1}}|^{2(m-2)} \nu \zeta_{\overline{k+1}}^2 (-1)^{k+l+r}(n+m-k-1)\binom{n-2k-2}{r-k-1} \Theta.
\end{align*}
Using $\Theta = (-1)^{n+l+1} \vol_{S\RR^n}\gamma$ and \cite{Kotrbaty2022}*{Lem.~4.1}, the claim follows.
\end{proof}

Next, we are going to calculate the integral in the pairing from \autoref{sec:pairing}. Recall that for $a,b \in \RR$,  
\begin{align}\label{eq:intCosaSinb}
	\int_0^{\frac{\pi}{2}} \cos(t)^a \sin(t)^b dt = \frac{\Gamma(\frac{a+1}{2})\Gamma(\frac{b+1}{2})}{2\Gamma(\frac{a+b}{2}+1)} = \frac{s_{a+b+1}}{s_as_b},
\end{align}
where $s_k = \frac{2\pi^{\frac{k+1}{2}}}{\Gamma(\frac{k+1}{2})}$ is the volume of the $k$-dimensional unit sphere. Note that
\begin{align}
	\label{eq:recursion_sn}
	(n-1)s_n = 2\pi s_{n-2} = s_1s_{n-2}.
\end{align}
Using the substitution $u = (u_1 \cos(\phi), u_2 \sin(\phi)) \in \S^{n-1}$, where $u_1 \in \S^{i-1}(E)$, $u_2 \in \S^{n-i-1}(E^\perp)$ and  $0 \leq \phi \leq \frac{\pi}{2}$, and $E \subset \RR^n$ is an $i$-dimensional subspace (see, e.g., \cite{Grinberg1999}*{Sec.~6}), several times, as well as Eq.~\eqref{eq:intCosaSinb}, one can further show the following lemma.
\begin{lemma}\label{lem:spherIntZetas}
	Let $a,b,c \in \RR$, then
	\begin{align*}
		\int_{\S^{n-1}} |\zeta_1|^{2a} |\zeta_{k+1}|^{2b} \nu_{K^+}^c du= 2^{-(a+b+c)}\frac{s_{n + 2(a+b+c)-1}s_{2(k+a+b)+1}}{s_{2(k+a+b+c)+1}} \frac{s_1^2}{s_{2a+1}s_{2b+1}}.
	\end{align*}
\end{lemma}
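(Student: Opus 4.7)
The plan is to evaluate the integral by iterating the polar-type substitution
\begin{align*}
u = (u_1\cos\phi,\, u_2\sin\phi), \quad u_1\in\S^{i-1}(E),\ u_2\in\S^{n-i-1}(E^\perp),\ \phi\in[0,\tfrac{\pi}{2}],
\end{align*}
three times, each time adapted to one of the three quantities appearing in the integrand. The key observation is that on $\S^{n-1}$, the functions $\tfrac{1}{2}\nu_{K^+}$, $|\zeta_1|^2$ and $|\zeta_{k+1}|^2$ are the squared norms of the restriction of $u\in\S^{n-1}$ to three nested coordinate subspaces
\begin{align*}
F_1 = \linspan\{e_1,e_2\} \subset E_1' = F_1 \oplus \linspan\{e_{2k+1},e_{2k+2}\} \subset E_1 = \linspan\{e_1,\dots,e_{2k+2}\},
\end{align*}
so each substitution can be chosen to turn exactly one of these quantities into a single trigonometric factor.

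First, I would split $\RR^n = E_1 \oplus E_1^\perp$ to write $\nu_{K^+} = \cos^2\phi$; this yields a factor $\cos^{2(a+b+c)}\phi$ multiplied by the Jacobian $\cos^{2k+1}\phi\sin^{n-2k-3}\phi$, while the dependence on $|\zeta_1|^2,|\zeta_{k+1}|^2$ reduces to a spherical integral over $\S^{2k+1}(E_1)$. Applying \eqref{eq:intCosaSinb}, the $\phi$-integral evaluates to
\begin{align*}
\int_0^{\pi/2}\cos^{2k+1+2(a+b+c)}\phi\,\sin^{n-2k-3}\phi\,d\phi \;=\; \frac{s_{n+2(a+b+c)-1}}{s_{2(k+a+b+c)+1}\,s_{n-2k-3}},
\end{align*}
and the free $\S^{n-2k-3}$-factor from $u_2$ contributes $s_{n-2k-3}$, which cancels.

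Next, inside $\S^{2k+1}(E_1)$, I would split off $F_1$ via $u_1 = (v_1\cos\psi, v_2\sin\psi)$ with $v_1\in\S^1(F_1)$, so that $|\zeta_1|^2 = \tfrac12\cos^2\psi$, while $|\zeta_{k+1}|^2 = \tfrac12\sin^2\psi\,|v_2''|^2$ where $v_2''$ is the projection of $v_2\in\S^{2k-1}$ onto the last two coordinates. The $\psi$-integral gives $s_{2(a+b+k)+1}/(s_{2a+1}\,s_{2b+2k-1})$ and the trivial $v_1$-integral contributes $s_1$. A third, fully analogous substitution inside $\S^{2k-1}$ separates off the two coordinates supporting $\zeta_{k+1}$, producing
\begin{align*}
\int_{\S^{2k-1}}|v_2''|^{2b}\,dv_2 \;=\; \frac{s_{2b+2k-1}\,s_1}{s_{2b+1}},
\end{align*}
where once again the intermediate sphere volume $s_{2k-3}$ cancels between the Jacobian and the base-sphere factor.

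Multiplying the three pieces together with the overall $2^{-(a+b+c)}$ extracted from $|\zeta_1|^{2a}|\zeta_{k+1}|^{2b}\nu_{K^+}^c$ and observing the cancellation of $s_{n-2k-3}$ and $s_{2b+2k-1}$, I land exactly on the stated formula. The work is entirely bookkeeping; the only real obstacle is to keep track of which ambient sphere each substitution is applied to and which exponent in the Jacobian comes from each substitution, but no clever identity beyond \eqref{eq:intCosaSinb} is required.
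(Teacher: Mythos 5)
Your proposal is correct and follows essentially the same route as the paper's own proof: three iterations of the polar substitution (first splitting off the $2(k+1)$-dimensional block carrying $\nu_{K^+}$, then the coordinate planes of $\zeta_1$ and $\zeta_{k+1}$) combined with Eq.~\eqref{eq:intCosaSinb}, with the same cancellations of the intermediate sphere volumes $s_{n-2k-3}$ and $s_{2(k+b)-1}$. The only blemish is the introductory phrase calling $\tfrac12\nu_{K^+}$, $|\zeta_1|^2$, $|\zeta_{k+1}|^2$ squared norms of projections (each of $\nu_{K^+}$, $|\zeta_1|^2$, $|\zeta_{k+1}|^2$ is \emph{half} such a squared norm), but your subsequent bookkeeping of the overall factor $2^{-(a+b+c)}$ and all intermediate integrals is consistent with the correct normalization, so the computation stands.
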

\begin{extracalc}
	\begin{proof}
		We will use the following substitution for an $i$-dimensional subspace $E \subset \RR^n$ (see, e.g., \cite{Grinberg1999}):
		\begin{align*}
			u &= (u_1 \cos(\phi), u_2 \sin(\phi)), \quad u_1 \in \S^{i-1}(E), u_2 \in \S^{n-i-1}(E^\perp), 0 \leq \phi \leq \frac{\pi}{2}\\
			du &= \cos(\phi)^{i-1}\sin(\phi)^{n-i-1} d\phi du_1 du_2.
		\end{align*}
		
		Using this, we obtain for $E = \mathrm{span}\{\xi_1, \dots, \xi_{2(k+1)}\}$
		\begin{align*}
			&\int_{\S^{n-1}} |\zeta_1|^{2a} |\zeta_{k+1}|^{2b} \nu_{K^+}^c du \\
			&= \int_{\S^{n-2k-3}}\int_{\S^{2k+1}}\int_{0}^{\frac{\pi}{2}} (|\zeta_1|^{2a} |\zeta_{k+1}|^{2b} \nu_{K^+}^c)(u_1 \cos(\phi), u_2 \sin(\phi))\cos(\phi)^{2k+1}\sin(\phi)^{n-2k-3} d\phi du_1 du_2\\
			&= 2^{-c} s_{n-2k-3}\left(\int_{\S^{2k+1}}|\zeta_1|^{2a} |\zeta_{k+1}|^{2b}du_1 \right) \left(\int_{0}^{\frac{\pi}{2}} \cos(\phi)^{2(k+a+b+c)+1}\sin(\phi)^{n-2k-3} d\phi\right)\\
			&= 2^{-c} s_{n-2k-3}\left(\int_{\S^{2k+1}}|\zeta_1|^{2a} |\zeta_{k+1}|^{2b}du_1 \right) \frac{s_{n + 2(a+b+c)-1}}{s_{2(k+a+b+c)+1}s_{n-2k-3}}\\
		\end{align*}
		
		Using again the substitution for $E = \mathrm{span}\{\xi_1, \xi_2\}$,
		\begin{align*}
			&\int_{\S^{2k+1}}|\zeta_1|^{2a} |\zeta_{k+1}|^{2b}du \\
			&= \int_{\S^{2k-1}}\int_{\S^1} \int_{0}^{\frac{\pi}{2}} (|\zeta_1|^{2a} |\zeta_{k+1}|^{2b})(u_1 \cos(\phi), u_2 \sin(\phi)) \cos(\phi) \sin(\phi)^{2k-1} d\phi du_1 du_2\\
			&= \left(\int_{\S^1}|\zeta_1|^{2a} du_1 \right) \left(\int_{\S^{2k-1}} |\zeta_{k+1}|^{2b} du_2\right)\left(\int_{0}^{\frac{\pi}{2}}  \cos(\phi)^{2a+1} \sin(\phi)^{2(k+b)-1} d\phi\right)\\
			&= \left(\int_{\S^1}|\zeta_1|^{2a} du_1 \right) \left(\int_{\S^{2k-1}} |\zeta_{k+1}|^{2b} du_2\right)\frac{s_{2(k+a+b)+1}}{s_{2a+1}s_{2(k+b)-1}}.
		\end{align*}
		
		Finally,
		\begin{align*}
			\int_{\S^1}|\zeta_1|^{2a} du = \frac{s_1}{2^a},
		\end{align*}
		and for $2k-1 \geq 3$, using again the substitution
		\begin{align*}
			\int_{\S^{2k-1}} |\zeta_{k+1}|^{2b} du &=  \int_{\S^{2k-3}}\int_{\S^1}\int_0^{\frac{\pi}{2}}|\zeta_{k+1}|^{2b}(u_1 \cos(\phi), u_2 \sin(\phi)) \cos(\phi) \sin(\phi)^{2k-3} d\phi du_1 du_2\\
			&=\frac{s_1}{2^b} s_{2k-3} \int_0^{\frac{\pi}{2}}\cos(\phi)^{2b+1} \sin(\phi)^{2k-3} d\phi\\
			&=\frac{s_1}{2^b} s_{2k-3} \frac{s_{2(b+k)-1}}{s_{2b+1}s_{2k-3}}.
		\end{align*}
		Hence, in total,
		\begin{align*}
			&\int_{\S^{n-1}} |\zeta_1|^{2a} |\zeta_{k+1}|^{2b} \nu_{K^+}^c du \\
			&= \frac{s_{n-2k-3}}{2^c}\frac{s_1}{2^a}\frac{s_1}{2^b} s_{2k-3} \frac{s_{2(b+k)-1}}{s_{2b+1}s_{2k-3}}\frac{s_{2(k+a+b)+1}}{s_{2a+1}s_{2(k+b)-1}} \frac{s_{n + 2(a+b+c)-1}}{s_{2(k+a+b+c)+1}s_{n-2k-3}}\\
			&= 2^{-(a+b+c)} \frac{s_{n + 2(a+b+c)-1}s_{2(k+a+b)+1}}{s_{2(k+a+b+c)+1}} \frac{s_1^2}{s_{2a+1}s_{2b+1}}.
		\end{align*}
	\end{proof}
	\phantom{a}
\end{extracalc}

Moreover, we need the following relation.
\begin{lemma}[\cite{Kotrbaty2022}*{Lem.~6.7}]\label{lem:pairKWForms}
	Let $m, r, k \in \NN$ such that $1\leq r \leq n-1$, $1\leq k \leq \min\{r, n-r\}$ and $m \geq 2$. Then 
	\begin{align}
		\overline{\omega_{r,k,m}} \wedge D\omega_{n-r,k,m} =& (-1)^{r+k} |\zeta_1|^{2(m-2)}(m+r-2)\binom{n-2k}{r-k}\\
		&\times \left((m+r)\nu_K^2 + (m+k-1)\frac{n-r-k}{n-2k}\nu_L\right) \vol_{S\RR^n}. \nonumber
	\end{align}
\end{lemma}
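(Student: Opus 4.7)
The plan is to mirror the double-form computation already carried out in the proof of \autoref{lem:pairOmkmOmkm1m}, exploiting the same framework: replace the factor $\Theta_1$ by $\Theta_2$ in one of the two forms so that the resulting products of double forms do not annihilate trivially, reduce everything modulo $\gamma$ (via \cite{Kotrbaty2022}*{Lem.~4.1}), and then apply the reductions of \autoref{prop:fillFullZetaDZeta}. The structural simplification here is that both factors are indexed by the same $k$, so the block $\{k+1,\overline{k+1}\}$ plays no distinguished role; instead the natural decomposition of each form runs along just the two blocks $K\cup\overline{K}$ and $L$. This is precisely what produces the two contributions $\nu_K^2$ and $\nu_L$ in the final answer.

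As a first step I would pull out $|\zeta_1|^{2(m-2)}$ using \eqref{eq:DOmrkm}: with $c_{n-r,m}=(-1)^{n+1}(r+m-2)$,
\begin{align*}
\overline{\omega_{r,k,m}}\wedge D\omega_{n-r,k,m}=c_{n-r,m}|\zeta_1|^{2(m-2)}\bigl[(m+k-1)\overline{\omega_{r,k}}\wedge\sigma_{n-r,k}+(-1)^{k+1}(r-k+1)\overline{\omega_{r,k}}\wedge\tau_{n-r,k}\bigr]\wedge\alpha,
\end{align*}
so it suffices to compute the two products $\overline{\omega_{r,k}}\wedge\sigma_{n-r,k}\wedge\alpha\wedge\gamma$ and $\overline{\omega_{r,k}}\wedge\tau_{n-r,k}\wedge\alpha\wedge\gamma$ as scalar multiples of $\Theta$. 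To this end I would expand $\overline{\omega_{r,k}}\otimes\Theta_1$ using Eq.~\eqref{eq:OmSepL}, and $\sigma_{n-r,k}\otimes\Theta_2$, $\tau_{n-r,k}\otimes\Theta_2$ using the $\Theta_2$-analogues \eqref{eq:sigmarkSepDual}, \eqref{eq:taurkSepDual}. This produces a handful of cross-terms, most of which vanish because some $1$-form is repeated either in the first or the second component of the resulting double form; the surviving ones fall into exactly two types, corresponding either to a fully filled $K$-block (contributing a factor $\nu_K$, produced by \eqref{eq:redZetaGamma}/\eqref{eq:redZetaZAlpha} when a $\gamma_K$ or $\alpha_K$ piece meets a $\zeta_{\overline{K}}$ factor) or to a fully filled $L$-block (contributing $\nu_L$, after combining the $\alpha_L$ and $\gamma_L$ contributions via \eqref{eq:redAddLAlpGamTerms1}, \eqref{eq:redAddLAlpGamTerms2}).

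Once these reductions are made, every surviving product has completely filled $K\cup\overline{K}$ and $L$ blocks in both double-form components, so \eqref{eq:redFullKTerms} and \eqref{eq:redFullLTerms} collapse each term into an explicit scalar multiple of $\Theta$. Summing over the surviving products and using the elementary identities
\begin{align*}
\tbinom{n-2k-1}{r-k-1}+\tbinom{n-2k-1}{r-k}=\tbinom{n-2k}{r-k},\qquad \tbinom{n-2k-1}{r-k}=\tfrac{n-r-k}{n-2k}\tbinom{n-2k}{r-k},
\end{align*}
one obtains $\overline{\omega_{r,k}}\wedge\sigma_{n-r,k}\wedge\alpha\wedge\gamma$ and $\overline{\omega_{r,k}}\wedge\tau_{n-r,k}\wedge\alpha\wedge\gamma$ as combinations of $\nu_K^2\Theta$ and $\nu_L\Theta$ with explicit rational coefficients. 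Feeding these back into the formula for $D\omega_{n-r,k,m}$, collecting the $\nu_K^2$- and $\nu_L$-coefficients separately, and converting $\Theta$ to $\vol_{S\RR^n}$ via \cite{Kotrbaty2022}*{Lem.~4.1} (which produces the sign $(-1)^{n+l+1}$ that combines with $c_{n-r,m}$ to give the global $(-1)^{r+k}$), yields exactly the claimed identity.

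The main obstacle, as in the proof of \autoref{lem:pairOmkmOmkm1m}, is the bookkeeping: one has to keep careful track of all cross-terms, of the signs accumulated when commuting $1$-forms, and of which block and which component of the double form each factor belongs to. Conceptually the computation is elementary — essentially a repeated application of \autoref{prop:fillFullZetaDZeta} — but the number of terms is large enough that a systematic table-based enumeration (as in \autoref{tab:pair0termsFirst} and \autoref{tab:pairWedgAlphGam}) is essentially indispensable; the two binomial identities above are the only nontrivial algebraic input needed at the end.
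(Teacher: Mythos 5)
You should be aware that the paper does not prove this lemma at all: it is imported verbatim from \cite{Kotrbaty2022}*{Lem.~6.7}, and the only thing the authors do with it is quote the formula. Your proposal therefore differs from the paper simply by actually carrying out the computation, and the route you sketch is the natural one: it is precisely the double-form machinery that the cited source uses, and that this paper itself deploys for the neighbouring pairing in \autoref{lem:pairOmkmOmkm1m} (expand via \eqref{eq:DOmrkm} specialized at $n-r$, which you correctly reduce to $c_{n-r,m}=(-1)^{n+1}(r+m-2)$ and the coefficient $(-1)^{k+1}(r-k+1)$ for $\tau_{n-r,k}$; replace one $\Theta_1$ by $\Theta_2$; work modulo $\gamma$; reduce with \autoref{prop:fillFullZetaDZeta} and collapse full blocks via \eqref{eq:redFullKTerms}--\eqref{eq:redFullLTerms}; finish with the two binomial identities, both of which are correct). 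What redoing the computation buys is self-containedness, at the cost of several pages of bookkeeping; what the citation buys is brevity, which is why the authors only compute the genuinely new pairing $\overline{\omega_{r,k,m}}\wedge D\omega_{n-r,k+1,m}$ themselves. Two caveats on your sketch: first, the decisive content — the enumeration of which cross-terms survive and with which signs and coefficients, i.e.\ the analogues of the tables in the proof of \autoref{lem:pairOmkmOmkm1m} — is exactly the part you leave unexecuted, so as written this is a strategy outline rather than a proof; second, your description of the surviving terms is loose, since a "filled $K$-block" must occur in \emph{both} double-form factors (one reduction in the $\omega_{r,k}$-factor and one in the $\sigma$- or $\tau$-factor, together with terms where $\alpha_{\overline K}$ or $\gamma_{\overline K}$ completes the second factor), which is how the quadratic coefficient $\nu_K^2$ in the statement arises; a single $\nu_K$, as your middle paragraph suggests, would not reproduce the claimed right-hand side.
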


We now combine \autoref{lem:pairOmkmOmkm1m}, \autoref{lem:spherIntZetas} and \autoref{lem:pairKWForms}.
\begin{corollary}\label{cor:pairKp1NotZero}
	Let $m\ge 2$, $r \leq n-1$, and $1\leq k<\min\{r,n-r\}$. If $\vol$ is the euclidean volume form on $\RR^n$, then
	\begin{align}
		\label{eq:ValuePairing}
		\begin{split}
			&\int_{\{0\} \times \S^{n-1}} \pair{\overline{\LieDer_{\widetilde{Y}_{k+1,k+1}} \omega_{r,k,m}} \wedge D\omega_{n-r,k+1,m}}{\vol}\\
			&= (m+r-2)(-1)^{k+r}2^{-m+2}\frac{s_{n + 2m+1}}{s_1s_{2m-1}}\binom{n-2k-2}{r-k-1} C(n,m,r,k)
		\end{split}
	\end{align}
	where 
	\begin{align*}
		C(n,m,r,k)=&-(m+n-r-2)(m+k-1)(m+n-k-1)
	\end{align*}
	is strictly negative.
\end{corollary}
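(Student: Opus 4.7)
The plan is to expand the Lie derivative via \autoref{cor:YkkOnOmegarkm} and then reduce the entire computation to two pointwise pairing formulas, already available on the differential form side, followed by an explicit integration over $\S^{n-1}$. Concretely, I would use that
\begin{align*}
\overline{\LieDer_{\widetilde{Y}_{k+1,k+1}}\omega_{r,k,m}}\wedge D\omega_{n-r,k+1,m}
&= (n-r+m-2)\,\overline{\zeta_{\overline{k+1}}^{\,2}}\,\overline{\omega_{r,k,m}}\wedge D\omega_{n-r,k+1,m}\\
&\quad + 2\,\overline{\omega_{r,k+1,m}}\wedge D\omega_{n-r,k+1,m},
\end{align*}
so that \autoref{lem:pairOmkmOmkm1m} handles the first summand and \autoref{lem:pairKWForms}, applied with $k$ replaced by $k+1$, handles the second. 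In the first summand the extra factor $\overline{\zeta_{\overline{k+1}}^{\,2}}\zeta_{\overline{k+1}}^{\,2}$ becomes $\nu_{k+1}^{2}$, and upon restriction to $\{0\}\times \S^{n-1}$ the global factor $\nu$ is identically $1$.

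Next, after applying the pairing $\langle\,\cdot\,,\vol\rangle$ (which simply extracts the coefficient of $\vol_{S\RR^n}$ as a top-form on $\S^{n-1}$), the integration reduces to three sphere integrals that can all be evaluated in closed form by \autoref{lem:spherIntZetas}. Specifically, I would apply that lemma with $(a,b,c)=(m-2,2,0)$ for the first summand (contribution of type $|\zeta_1|^{2(m-2)}\nu_{k+1}^{2}$), and, after writing $\nu_{L^-}=1-\nu_{K^+}$, with $(a,b,c)=(m-2,0,2),(m-2,0,1),(m-2,0,0)$ for the three pieces of the second summand.

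The final step is a purely algebraic consolidation: each of the resulting integrals is a rational expression in the sphere volumes $s_j$, and the recursion $(j-1)s_j=s_1 s_{j-2}$ from Eq.~\eqref{eq:recursion_sn} converts every occurring $s_{n+2m-1}$, $s_{n+2m-3}$, $s_{n+2m-5}$, $s_{2k+2m\pm 1}$, etc., into the common normalization $\frac{s_{n+2m+1}}{s_1 s_{2m-1}}$ that appears on the right-hand side of \eqref{eq:ValuePairing}. The remaining rational factor must then be shown to equal $C(n,m,r,k)=-(m+n-r-2)(m+k-1)(m+n-k-1)$; this is the factorization I would expect from the shape of \autoref{lem:pairOmkmOmkm1m} (which already contributes $(m+n-r-2)(m+n-k-1)$ up to sign) and of \autoref{lem:pairKWForms} (which contributes the $(m+k-1)$ via the combination $(m+r)\binom{n-2k-2}{r-k-1}$ together with $(m+k)\frac{n-r-k-1}{n-2k-2}$). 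Finally, strict negativity of $C(n,m,r,k)$ is immediate from the hypotheses $m\ge 2$, $1\le k<\min\{r,n-r\}$, $r\le n-1$, $n\ge 3$, which make each of the three factors strictly positive.

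The main obstacle is not conceptual but bookkeeping: the cancellation that produces the clean three-factor product $C(n,m,r,k)$ combines a positive contribution from the first summand (scaled by $n-r+m-2$ and the prefactor $(n+m-k-1)\binom{n-2k-2}{r-k-1}$ from \autoref{lem:pairOmkmOmkm1m}) with a negative contribution from the second summand (the overall sign $(-1)^{r+k+1}$ times $(m+r)\nu_{K^+}^{2}+(m+k)\frac{n-r-k-1}{n-2k-2}\nu_{L^-}$). Verifying that the $\frac{s_{n+2m-1}}{s_{2m-3}}$, $\frac{s_{n+2m-3}s_{2k+2m-3}}{s_{2k+2m-1}s_{2m-3}}$, and $\frac{s_{n+2m-5}}{s_{2m-3}}$ combinations conspire, after applying the recursion, into a single $\frac{s_{n+2m+1}}{s_1 s_{2m-1}}$ times the factored polynomial $-(m+n-r-2)(m+k-1)(m+n-k-1)$ is the delicate step, and I would carry it out by organizing the terms according to their $\nu_{K^+}$-degree before applying the recursion.
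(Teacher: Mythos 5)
Your plan follows the paper's proof essentially verbatim: expand $\LieDer_{\widetilde{Y}_{k+1,k+1}}\omega_{r,k,m}$ via \autoref{cor:YkkOnOmegarkm}, evaluate the first summand with \autoref{lem:pairOmkmOmkm1m} and the second with \autoref{lem:pairKWForms} applied to $k+1$, then integrate with \autoref{lem:spherIntZetas} using exactly the parameter triples $(m-2,2,0)$, $(m-2,0,2)$, $(m-2,0,1)$, $(m-2,0,0)$ and consolidate with the recursion \eqref{eq:recursion_sn}. The one correction needed is the identity you use for the $\nu_{L^-}$-term: on $\S^{n-1}$ one has $\nu = 2\nu_{K^+}+\nu_{L^-}$, hence $\nu_{L^-}=1-2\nu_{K^+}$ rather than $1-\nu_{K^+}$; without this factor $2$ the $c=1$ contribution is halved and the final bracket does not collapse to $C(n,m,r,k)$, whereas with it your computation reproduces the paper's.
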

\begin{proof}
	By \autoref{cor:YkkOnOmegarkm},
	\begin{align*}
		\LieDer_{\widetilde{Y}_{k+1,k+1}} \omega_{r,k,m} = (n-r+m-2)\zeta_{\overline{k+1}}^2 \omega_{r,k,m} + 2\omega_{r,k+1,m},
	\end{align*}
	Hence, by \autoref{lem:pairKWForms},
	\begin{align*}
		&\overline{\LieDer_{\widetilde{Y}_{k+1,k+1}} \omega_{r,k,m}} \wedge D\omega_{n-r,k+1,m} \\
		&=(n-r+m-2)\zeta_{k+1}^{2} \overline{\omega_{r,k,m}} \wedge D\omega_{n-r,k+1,m} + 2 \overline{\omega_{r,k+1,m}}\wedge D\omega_{n-r,k+1,m}\\
	&=(m+r-2)(-1)^{k+r}|\zeta_{1}|^{2(m-2)}\binom{n-2k-2}{r-k-1}\vol_{S\RR^n} \\
	&\left((n-r+m-2)(n+m-k-1)\nu \nu_{k+1}^2-2\nu_{K^+}^2(m+r) - 2\nu_{L^-}(m+k)\frac{n-k-r-1}{n-2k-2}\right),
	\end{align*}
	where $\nu=1$ on $S\RR^n$. Moreover, note that $\pair{\vol_{S\RR^n}}{\vol} = \vol_{\S^{n-1}}$ is the standard volume form on $\S^{n-1}$. We thus need to calculate the following integrals:\\
	 First, by \autoref{lem:spherIntZetas} with $a=m-2$, $b=2$ and $c=0$,
	\begin{align*}
		\int_{\S^{n-1}}|\zeta_{1}|^{2(m-2)}|\zeta_{k+1}|^4 du=2^{-m} \frac{s_{n + 2m-1}s_1^2}{s_{2m-3}s_{5}}=2^{-m+3} \frac{s_{n + 2m-1}}{s_{2m-3}s_{1}},
	\end{align*}
	where we used that $8s_5=2s_3s_1=s_1^3$, due to Eq.~\eqref{eq:recursion_sn}. Second, again by \autoref{lem:spherIntZetas} with $a=m-2$, $b=0$ and $c=2$,
	\begin{align*}
		\int_{\S^{n-1}}|\zeta_{1}|^{2(m-2)}\nu_{K^+}^2 du
		&=2^{-m}\frac{s_{n + 2m-1}s_{2(k+m)-3}}{s_{2(k+m)+1}} \frac{s_1}{s_{2m-3}}\\	
		&=2^{-m+2}\frac{s_{n + 2m-1}}{s_{2m-3}s_1}(k+m)(k+m-1),
	\end{align*}
	where we used that, by Eq.~\eqref{eq:recursion_sn},
	\begin{align*}
		s_{2(k+m)-3}s_1=(2(k+m)-2)s_{2(k+m)-1}=2(k+m-1)(2(k+m))\frac{s_{2(k+m)+1}}{s_1}.
	\end{align*}
	Finally, by \autoref{lem:spherIntZetas} with $a=m-2$, $b=0$ and $c=0$ resp. $c=1$, and Eq.~\eqref{eq:recursion_sn},
	\begin{align*}
		\int_{\S^{n-1}} |\zeta_{1}|^{2(m-1)}\nu_{L^-} du&=\int_{\S^{n-1}}|\zeta_{1}|^{2(m-2)}(1-2\nu_{K^+}) du\\
		&= \frac{2^{-(m-2)}}{s_{2m-3}s_1}\left(s_{n + 2m-5}s_1^2 - \frac{s_{n + 2m-3}s_{2(k+m)-3}}{s_{2(k+m)-1}} s_1^2\right)\\
		&=2^{-m+2}\frac{s_{n + 2m-1}}{s_{2m-3}s_1}(n+2m-2)(n-2k-2).
	\end{align*}
	Collecting all terms, we therefore obtain
	\begin{align*}
		&\int_{\{0\} \times \S^{n-1}} \pair{\overline{\LieDer_{\widetilde{Y}_{k+1,k+1}} \omega_{r,k,m}} \wedge D\omega_{n-r,k+1,m}}{\vol}\\
		=&(m+r-2)(-1)^{k+r}2^{-m+3}\frac{s_{n + 2m-1}}{s_1s_{2m-3}}\binom{n-2k-2}{r-k-1}\\
		&\times \left((n-r+m-2)(n+m-k-1)-(m+r)(k+m)(k+m-1)\right.\\
		&\left.- (n+2m-2)(m+k)(n-k-r-1)\right).
	\end{align*}
	A direct computation shows that
	\begin{align*}
		&(n-r+m-2)(n+m-k-1)-(m+r)(k+m)(k+m-1)\\
		&- (n+2m-2)(m+k)(n-k-r-1)\\
		=&-(m+n-r-2)(m+k-1)(m+n-k-1)=C(n,m,r,k).
	\end{align*}
	This completes the proof of Eq. \eqref{eq:ValuePairing}. In order to see that $C(n,m,r,k)$ is strictly negative, note that each factor in $C(n,m,r,k)$ is strictly positive since $1\leq k<\min\{r,n-r\}$, $1\leq r\leq n-1$, and $m \geq 2$.
\begin{extracalc}
	\begin{align*}
		&\overline{\LieDer_{\widetilde{Y}_{k+1,k+1}} \omega_{r,k,m}} \wedge D\omega_{n-r,k+1,m} \\
		&=(n-r+m-2)\zeta_{k+1}^{2} \overline{\omega_{r,k,m}} \wedge D\omega_{n-r,k+1,m} \\
		&+ 2 \overline{\omega_{r,k+1,m}}\wedge D\omega_{n-r,k+1,m}\\
		&=(r+m-2)(n-r+m-2)(-1)^{k+r}(n+m-k-1)\binom{n-2k-2}{r-k-1} \\
		& |\zeta_{\overline{1}}|^{2(m-2)} \nu \nu_{k+1}^2\vol_{S\RR^n}\\
		&+ 2(-1)^{k+r+1}|\zeta_{1}|^{2(m-2)}(m+r-2)\\
		&\left(\nu_{K^+}^2(m+r)\binom{n-2k-2}{r-k-1} + \nu_{L^-}(m+k)\binom{n-2k-3}{r-k-1}\right)\vol_{S\RR^n}\\
		&=(m+r-2)(-1)^{k+r}|\zeta_{1}|^{2(m-2)}\vol_{S\RR^n} \\
		&\left((n-r+m-2)(n+m-k-1)\nu \nu_{k+1}^2\binom{n-2k-2}{r-k-1}\right.\\
		&\left.-2\nu_{K^+}^2(m+r)\binom{n-2k-2}{r-k-1} - 2\nu_{L^-}(m+k)\binom{n-2k-3}{r-k-1}\right)\\
		&=(m+r-2)(-1)^{k+r}|\zeta_{1}|^{2(m-2)}\vol_{S\RR^n} \\
		&\left((n-r+m-2)(n+m-k-1)\nu \nu_{k+1}^2\binom{n-2k-2}{r-k-1}\right.\\
		&\left.-2\nu_{K^+}^2(m+r)\binom{n-2k-2}{r-k-1} - 2\nu_{L^-}(m+k)\frac{n-2k-2-(r-k-1)}{n-2k-2}\binom{n-2k-2}{r-k-1}\right)\\
		&=(m+r-2)(-1)^{k+r}|\zeta_{1}|^{2(m-2)}\binom{n-2k-2}{r-k-1}\vol_{S\RR^n} \\
		&\left((n-r+m-2)(n+m-k-1)\nu \nu_{k+1}^2-2\nu_{K^+}^2(m+r) - 2\nu_{L^-}(m+k)\frac{n-k-r-1}{n-2k-2}\right)\\
	\end{align*}	
	
	By \autoref{lem:spherIntZetas} with $a=m-2$, $b=2$ and $c=0$,
	\begin{align*}
		\int_{\{0\}\times \S^{n-1}} |\zeta_{1}|^{2(m-2)}\nu \nu_{k+1}^2 \pair{\vol_{S\RR^n}}{\vol} &= \int_{\S^{n-1}}|\zeta_{1}|^{2(m-2)}|\zeta_{k+1}|^4 du\\
		&=2^{-m}\frac{s_{n + 2m-1}s_{2(k+m)+1}}{s_{2(k+m)+1}} \frac{s_1^2}{s_{2(m-2)+1}s_{4+1}}\\
		&=2^{-m} \frac{s_{n + 2m-1}s_1^2}{s_{2m-3}s_{5}}\\
		&=2^{-m} \frac{s_{n + 2m-1}2s_3}{s_{2m-3}s_{5}}\\
		&=2^{-m} \frac{s_{n + 2m-1}8}{s_{2m-3}s_{1}} = 2^{-m+3} \frac{s_{n + 2m-1}}{s_{2m-3}s_{1}}
	\end{align*}
	By \autoref{lem:spherIntZetas} with $a=m-2$, $b=0$ and $c=2$,
	\begin{align*}
		\int_{\{0\}\times \S^{n-1}} |\zeta_{1}|^{2(m-2)}\nu_{K^+}^2 \pair{\vol_{S\RR^n}}{\vol} &= \int_{\S^{n-1}}|\zeta_{1}|^{2(m-2)}\nu_{K^+}^2 du\\
		&=2^{-m}\frac{s_{n + 2m-1}s_{2(k+m-2)+1}}{s_{2(k+m)+1}} \frac{s_1^2}{s_{2(m-2)+1}s_{1}}\\
		&=2^{-m}\frac{s_{n + 2m-1}s_{2(k+m)-3}}{s_{2(k+m)+1}} \frac{s_1}{s_{2m-3}}\\
		&=2^{-m}\frac{s_{n + 2m-1}s_{2(k+m)-1}(2(k+m)-2)}{s_{2(k+m)+1}s_{2m-3}}\\	
		&=2^{-m}\frac{s_{n + 2m-1}s_{2(k+m)+1}2(k+m)}{s_{2(k+m)+1}s_{2m-3}s_1}(2(k+m)-2)\\	
		&=2^{-m+2}\frac{s_{n + 2m-1}}{s_{2m-3}s_1}(k+m)((k+m)-1)
	\end{align*}
	By \autoref{lem:spherIntZetas} with $a=m-2$, $b=0$ and $c=0$ resp. $c=1$,
	\begin{align*}
		&\int_{\{0\}\times \S^{n-1}} |\zeta_{1}|^{2(m-2)}\nu_{L^-} \pair{\vol_{S\RR^n}}{\vol}\\
		 &= \int_{\S^{n-1}}|\zeta_{1}|^{2(m-2)}(1-2\nu_{K^+}) du\\
		&=2^{-(m-2)}\frac{s_{n + 2(m-2)-1}s_{2(k+m-2)+1}}{s_{2(k+m-2)+1}} \frac{s_1^2}{s_{2(m-2)+1}s_{1}}\\
		&-2^{-(m-2)}\frac{s_{n + 2(m-1)-1}s_{2(k+m-2)+1}}{s_{2(k+m-1)+1}} \frac{s_1^2}{s_{2(m-2)+1}s_{1}}\\
		&= \frac{2^{-(m-2)}}{s_{2m-3}s_1}\left(s_{n + 2m-5}s_1^2 - \frac{s_{n + 2m-3}s_{2(k+m)-3}}{s_{2(k+m)-1}} s_1^2\right)\\
		&= \frac{2^{-(m-2)}}{s_{2m-3}s_1}\left(s_{n + 2m-1}(n+2m-4)(n+2m-2) - \frac{s_{n + 2m-1}(n+2m-2)s_{2(k+m)-1}(2(k+m)-2)}{s_{2(k+m)-1}} \right)\\
		&=\frac{2^{-(m-2)}}{s_{2m-3}s_1}\left(s_{n + 2m-1}(n+2m-4)(n+2m-2) - s_{n + 2m-1}(n+2m-2)(2(k+m)-2) \right)\\
		&=2^{-(m-2)}\frac{s_{n + 2m-1}}{s_{2m-3}s_1}(n+2m-2) \left((n+2m-4)- (2(k+m)-2) \right)\\
		&=2^{-(m-2)}\frac{s_{n + 2m-1}}{s_{2m-3}s_1}(n+2m-2)(n-2k-2)
	\end{align*}
	
	
	\begin{align*}
		&\int_{\{0\} \times \S^{n-1}} \pair{\overline{\LieDer_{\widetilde{Y}_{k+1,k+1}} \omega_{r,k,m}} \wedge D\omega_{n-r,k+1,m}}{\vol} = (m+r-2)(-1)^{k+r}2^{-m+3}\frac{s_{n + 2m-1}}{s_1s_{2m-3}}\binom{n-2k-2}{r-k-1}\\
		&\times \left((n-r+m-2)(n+m-k-1)-(m+r)(k+m)(k+m-1)\right.\\
		&\left.- (n+2m-2) (n-2k-2)(m+k)\frac{n-k-r-1}{n-2k-2}\right)	\\
		&=(m+r-2)(-1)^{k+r}2^{-m+3}\frac{s_{n + 2m-1}}{s_1s_{2m-3}}\binom{n-2k-2}{r-k-1}\\
		&\times \left((n-r+m-2)(n+m-k-1)-(m+r)(k+m)(k+m-1)\right.\\
		&\left.- (n+2m-2)(m+k)(n-k-r-1)\right)
	\end{align*}
		We have
	\begin{align*}
		&(n-r+m-2)(n+m-k-1)\\
		&-(m+r)(m+k)(k+m-1)\\
		&-(n+2m-2)(m+k)(n-k-r-1)\\
		=&\textcolor{red}{(n-r+m-2)(n-r-k-1)}+(n-r+m-2)(m+r)\\
		&\textcolor{blue}{-(m+r)(m+k)(k+m-1)}\\
		&\textcolor{red}{-(n+m-r-2)(m+k)(n-k-r-1)}	\textcolor{blue}{-(m+r)(m+k)(n-k-r-1)}\\
		=&\textcolor{red}{-(n+m-r-2)(m+k-1)(n-k-r-1)}	+(n-r+m-2)(m+r)\\
		&\textcolor{blue}{-(m+r)(m+k)(k+m-1+n-k-r-1)}\\
		=&-(n+m-r-2)(m+k-1)(n-k-r-1)	+\textcolor{green}{(n-r+m-2)(m+r)}\\
		&\textcolor{green}{-(m+r)(m+k)(m+n-r-2)}\\
		=&-(n+m-r-2)(m+k-1)(n-k-r-1)\\
		&\textcolor{green}{-(m+r)(m+k-1)(m+n-r-2)}\\
		=&-(m+n-r-2)(m+k-1)(m+n-k-1)
	\end{align*}
	Since $k<\min (r,n-r)$, $m\ge 2$, this is strictly negative.
\end{extracalc}
\end{proof}

\subsection{Calculations for $\lambda_{-l,m}$}
Suppose that $n=2l$ is even. In this section we are considering the forms
\begin{align*}
	\omega_{l,-l} \otimes \Theta_1 = \zeta_{\overline{M}} d\zeta_{\overline{M}}^{[l-1]}\overline{dz_M}^{[l]}, \qquad \omega_{l,-l, m} = \zeta_{\overline{1}}^{m-2}\omega_{l,-l}\in \Omega^{l,l-1}(S\RR^{n})^{\mathrm{\mathrm{tr}}},
\end{align*}
where $M = \{1, \dots, l-1, \overline{l}\}$.


\begin{lemma}\label{lem:RonOmegarkm}
	Suppose that $n=2l$ is even and denote by $R \in \OO(n)$ the reflection along $e_n^\perp$. Then
	\begin{align*}
		G_R^\ast \omega_{l,-l,m} = -\omega_{l,l,m}, \quad \text{ and } \quad 
		G_R^\ast \omega_{r,k,m} = -\omega_{r,k,m}
	\end{align*}
	for all $k<r$, where $G_R(x,v) = \left(Rx, \frac{R^{-T}v}{\|R^{-T}v\|}\right)$, $(x,v) \in S\RR^n$.
\end{lemma}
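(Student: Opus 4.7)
The plan is to reduce the claim to an elementary algebraic identity for the double forms of \autoref{sec:doubleforms}. Since $R$ is an orthogonal involution, $R^{-T}=R$ and $G_R(x,v)=(Rx,Rv)$, so in the complex coordinates $R$ acts as complex conjugation in the $l$-th factor: it fixes every $z_i,\zeta_i$ with $i\notin\{l,\overline{l}\}$ and interchanges $z_l\leftrightarrow z_{\overline{l}}$, $\zeta_l\leftrightarrow \zeta_{\overline{l}}$. Writing $\iota$ for the involution of $\Iind$ swapping $l$ and $\overline{l}$, the pullback $G_R^\ast$ therefore sends the basic building blocks $\zeta_I,d\zeta_I,dz_I,\overline{dz_I}$ to $\zeta_{\iota(I)},d\zeta_{\iota(I)},dz_{\iota(I)},\overline{dz_{\iota(I)}}$ for every $I\subset\Iind$. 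Moreover, $G_R^\ast\Theta_1=-\Theta_1$, since the only effect on $\Theta_1=dz_1\,dz_{\overline{1}}\cdots dz_l\,dz_{\overline{l}}$ is the transposition of $dz_l$ and $dz_{\overline{l}}$. Finally, the prefactor $\zeta_{\overline{1}}^{m-2}$ is $G_R^\ast$-invariant (we may assume $n\geq 4$, so $l\geq 2$), and it suffices to prove $G_R^\ast\omega_{l,-l}=-\omega_{l,l}$ and $G_R^\ast\omega_{r,k}=-\omega_{r,k}$ for $k<r$.

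For the second identity I observe that $k\leq\min\{r,n-r\}$ and $k<r$ force $k<l$, since $k\leq r-1$ and $k\leq n-r$ yield $2k\leq n-1<2l$. Hence $K$ and $\overline{K}$ are pointwise fixed by $\iota$, while $J=\Iind\setminus K$ and $L=J\setminus\overline{K}$ each contain $\{l,\overline{l}\}$ and are merely stable as sets. Every factor on the right-hand side of $\omega_{r,k}\otimes\Theta_1=\zeta_J d\zeta_J^{[n-r-1]}dz_J^{[r-k]}\overline{dz_K}^{[k]}$ is therefore $G_R^\ast$-invariant. Pulling back both sides and using $G_R^\ast\Theta_1=-\Theta_1$ yields $(G_R^\ast\omega_{r,k})\otimes(-\Theta_1)=\omega_{r,k}\otimes\Theta_1$, whence $G_R^\ast\omega_{r,k}=-\omega_{r,k}$.

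For the first identity I note that $\iota$ sends $M=\{1,\dots,l-1,\overline{l}\}$ to $\{1,\dots,l-1,l\}=K$ (the $K$-set associated with $k=l$), and hence $\overline{M}$ to $\overline{K}$. Applying $G_R^\ast$ to $\omega_{l,-l}\otimes\Theta_1=\zeta_{\overline{M}}d\zeta_{\overline{M}}^{[l-1]}\overline{dz_M}^{[l]}$ transforms the right-hand side into $\zeta_{\overline{K}}d\zeta_{\overline{K}}^{[l-1]}\overline{dz_K}^{[l]}=\omega_{l,l}\otimes\Theta_1$, while the left-hand side becomes $(G_R^\ast\omega_{l,-l})\otimes(-\Theta_1)$. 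Comparing gives $G_R^\ast\omega_{l,-l}=-\omega_{l,l}$, and multiplication by the invariant scalar $\zeta_{\overline{1}}^{m-2}$ completes the proof of both identities. The only minor obstacle is to justify that $G_R^\ast$ really does pull back the second-factor forms as well; this is built into the conventions of \autoref{sec:actGlncVal}, where second-factor forms are identified with translation-invariant forms on $S\RR^n$. With this in hand, no serious computational difficulties remain.
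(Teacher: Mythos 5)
Your proof is correct and follows essentially the same route as the paper: compute how $G_R^\ast$ permutes the coordinates $z_l,z_{\overline l},\zeta_l,\zeta_{\overline l}$, observe that the double forms $\omega_{r,k}\otimes\Theta_1$ (for $k<r$) are invariant while $\omega_{l,-l}\otimes\Theta_1$ maps to $\omega_{l,l}\otimes\Theta_1$, and extract the sign from $G_R^\ast\Theta_1=-\Theta_1$. Your explicit verifications that $k<r$ forces $k<l$, that $\iota(\overline M)=\overline K$, and that $\zeta_{\overline 1}^{m-2}$ is fixed (using $l\ge 2$) are exactly the details the paper leaves implicit.
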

\begin{proof}
	Note that $G_R^\ast \zeta_{l} = \zeta_{\overline{l}}$ and $G_R^\ast \zeta_j = \zeta_j$, for  $j\in \Iind\setminus\{l,\bar{l}\}$, and similarly for $z_l$ and $z_j$. Hence, $G_R^\ast (\omega_{r,k,m} \otimes \Theta_1) = \omega_{r,k,m} \otimes \Theta_1$ for all $k < r$, and $G_R^\ast (\omega_{l,-l,m} \otimes \Theta_1) = \omega_{l,l,m} \otimes \Theta_1$. Moreover, since $G_R^\ast$ interchanges the positions of $l$ and $\overline{l}$, we have $G_R^\ast \Theta_1 = -\Theta_1$. As $G_R^\ast (\tau \otimes \Theta_1) = (G_R^\ast \tau) \otimes (G_R^\ast \Theta_1)$ for any form $\tau$, this yields the claim.
	\begin{extracalc}
	$K = \{1, \dots, l\}$.
	\begin{align*}
		\omega_{l,-l} \otimes \Theta_1 = \zeta_{\overline{K}} d\zeta_{\overline{K}}^{[l-1]}\overline{dz_K}^{[l]}
	\end{align*}
	\begin{align*}
		\omega_{l,-l} \otimes \Theta_1 = \zeta_{\overline{M}} d\zeta_{\overline{M}}^{[l-1]}\overline{dz_M}^{[l]}
	\end{align*}
\end{extracalc}
\end{proof}
\begin{remark}
	Let us note that while the differential forms $\omega_{r,k,m}$ change sign under $G_R^\ast$, the double forms $\omega_{r,k,m} \otimes \Theta_1$ and also the corresponding valuations (and, in particular the pairing in \cite{Kotrbaty2022}) do not. This is due to the fact that $G_R$ reverses the orientation of the normal cycle of a convex body (see also the sign in \eqref{eq:actGLOnVal}), which is compensated by the sign change in the differential form.
	
	Let us further point out that there is a factor of $(-1)$ missing in \cite{Kotrbaty2022}*{Cor.~5.6}.
\end{remark}

\begin{lemma}\label{lem:adYll}
	If $n=2l$, then we have $\Ad_R(Y_{l,l}) = \overline{Y_{l,l}}$ for the adjoint representation $\Ad_g(X) = g X g^{-1}$ of $\GL(n,\RR)$ on $\gl(n)_\CC$.
\end{lemma}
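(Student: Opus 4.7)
The proof will be a direct block-wise calculation. Since $R$ acts as the diagonal matrix $\mathrm{diag}(1,\dots,1,-1)$ on $\RR^n = \RR^{2l}$, the conjugation $\Ad_R$ respects the block decomposition into $l$ blocks of size $2\times 2$: it acts trivially on the first $l-1$ blocks (which are fixed by $R$) and acts as conjugation by $D := \mathrm{diag}(1,-1)$ on the $l$-th block. Since $Y_{l,l}$ is zero outside its $l$-th diagonal block, we have $\Ad_R(Y_{l,l}) = R Y_{l,l} R^{-1}$ is again zero outside the $l$-th block, and inside the $l$-th block it is given by $D \cdot (-\tfrac12 x_+) \cdot D^{-1}$.

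The key (and only) computation is then
\begin{align*}
    D x_+ D^{-1} = \begin{pmatrix} 1 & 0 \\ 0 & -1 \end{pmatrix}\begin{pmatrix} 1 & -\I \\ -\I & -1 \end{pmatrix}\begin{pmatrix} 1 & 0 \\ 0 & -1 \end{pmatrix} = \begin{pmatrix} 1 & \I \\ \I & -1 \end{pmatrix} = \overline{x_+}.
\end{align*}
Since the only nonzero entries of $Y_{l,l}$ appear in this block, complex conjugation of the matrix $Y_{l,l}$ coincides with replacing $x_+$ by $\overline{x_+}$ in its $l$-th block. Combining this with the block-wise description of $\Ad_R(Y_{l,l})$ obtained above yields
\begin{align*}
    \Ad_R(Y_{l,l}) = \overline{Y_{l,l}},
\end{align*}
as claimed. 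No obstacle is expected; the statement is essentially the elementary observation that reflection in the $e_n^{\perp}$-hyperplane intertwines the complex structure of the $l$-th $\RR^2$-block with its conjugate.
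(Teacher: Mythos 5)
Your proof is correct and is essentially the same as the paper's, which simply states that a short matrix multiplication gives $R\,Y_{l,l}\,R^{-1}=\overline{Y_{l,l}}$; you have merely spelled out that multiplication block-wise, correctly reducing it to $D x_+ D^{-1}=\overline{x_+}$ with $D=\mathrm{diag}(1,-1)$ acting on the $l$-th $2\times 2$ block.
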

\begin{proof}
	A short matrix multiplication shows that $R Y_{l,l} R^{-1} = \overline{Y_{l,l}}$.
\end{proof}

\begin{proposition}\label{prop:pairYllOmlml}
	Suppose that $n=2l$ is even, $m\ge 2$. Then
	\begin{align}
		(\LieDer_{\widetilde{Y}_{l,l}} D\omega_{l,-l,m}, D\omega_{l,l-1,m}) =- (-1)^{n}\overline{(\LieDer_{\widetilde{Y}_{l,l}}D\omega_{l,l-1,m},D\omega_{l,l,m})}.
	\end{align}
\end{proposition}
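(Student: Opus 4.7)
The plan is to combine the sesquilinear sym\-metry of the pairing with the $O(n)$-symmetry provided by the reflection $R$ along $e_n^\perp$. The key point is that via \autoref{lem:RonOmegarkm} and \autoref{lem:adYll}, the map $G_R^\ast$ essentially acts as an involution that simultaneously swaps $\omega_{l,\pm l,m}$ (up to a sign), flips the sign of $\omega_{l,l-1,m}$, and conjugates the Lie algebra element $Y_{l,l}$.

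I would proceed in four steps. First, applying \autoref{prop:pairingFormsSLinv} with $g=R$ (noting $|\det R|=1$) and using that $G_R$ is a contactomorphism so $G_R^\ast D = D G_R^\ast$, combined with $G_R^\ast \omega_{l,l,m}=-\omega_{l,-l,m}$ (which follows by applying $G_R^\ast$ to the identity in \autoref{lem:RonOmegarkm} and using $R^2=\id$) gives
\begin{align*}
(\LieDer_{\widetilde{Y_{l,l}}}D\omega_{l,l-1,m},D\omega_{l,l,m})=(G_R^\ast\LieDer_{\widetilde{Y_{l,l}}}D\omega_{l,l-1,m},-D\omega_{l,-l,m}).
\end{align*}
Second, the standard identity for pushforwards of fundamental vector fields, $(G_R)_\ast\widetilde{W}=\widetilde{\Ad_R W}$, together with the naturality of the Lie derivative yields the intertwining $G_R^\ast\LieDer_{\widetilde{\Ad_R W}}=\LieDer_{\widetilde{W}}G_R^\ast$. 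Taking $W=\overline{Y_{l,l}}$ and using \autoref{lem:adYll} (which gives $\Ad_R\overline{Y_{l,l}}=Y_{l,l}$ since $R^2=\id$) and $G_R^\ast\omega_{l,l-1,m}=-\omega_{l,l-1,m}$, I obtain
\begin{align*}
G_R^\ast\LieDer_{\widetilde{Y_{l,l}}}D\omega_{l,l-1,m}=-\LieDer_{\widetilde{\overline{Y_{l,l}}}}D\omega_{l,l-1,m}.
\end{align*}

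Third, substituting into the previous identity collapses the two sign changes and produces
\begin{align*}
(\LieDer_{\widetilde{Y_{l,l}}}D\omega_{l,l-1,m},D\omega_{l,l,m})=(\LieDer_{\widetilde{\overline{Y_{l,l}}}}D\omega_{l,l-1,m},D\omega_{l,-l,m}).
\end{align*}
Fourth, applying the sesquilinear symmetry \eqref{eq:pairingSymm} to swap the arguments and then \autoref{cor:pairFormsSLnInvLiealg} to move $\LieDer_{\widetilde{\overline{Y_{l,l}}}}$ to the other side (converting it back to $\LieDer_{\widetilde{Y_{l,l}}}$, with a further sign) yields
\begin{align*}
(\LieDer_{\widetilde{Y_{l,l}}}D\omega_{l,l-1,m},D\omega_{l,l,m})=(-1)^{n+1}\overline{(\LieDer_{\widetilde{Y_{l,l}}}D\omega_{l,-l,m},D\omega_{l,l-1,m})},
\end{align*}
which, after taking complex conjugates and rearranging, is equivalent to the claim.

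There is no genuine computational obstacle here; the proof is essentially a symmetry argument that avoids direct calculation. The main difficulty is purely in bookkeeping the various signs produced by (i) \autoref{lem:RonOmegarkm} (which gives opposite signs for $\omega_{l,\pm l,m}$ versus $\omega_{l,l-1,m}$), (ii) the pairing identity from \autoref{cor:pairFormsSLnInvLiealg}, and (iii) the sesquilinear symmetry. The conceptual content is that under the involution $G_R^\ast$ the forms $D\omega_{l,l,m}$ and $D\omega_{l,-l,m}$ are interchanged (up to sign), and $Y_{l,l}$ is conjugated to $\overline{Y_{l,l}}$, so that the $R$-invariance of the pairing converts one side of the identity into a complex conjugate of the other.
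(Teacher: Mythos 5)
Your proof is correct and uses exactly the same ingredients as the paper's: the reflection symmetry of the forms from \autoref{lem:RonOmegarkm}, the conjugation identity $\Ad_R Y_{l,l}=\overline{Y_{l,l}}$ from \autoref{lem:adYll}, the intertwining of $\LieDer$ with $G_R^\ast$, the $\OO(n)$-invariance of the pairing from \autoref{prop:pairingFormsSLinv}, and the Lie algebra invariance plus sesquilinear symmetry. The only difference is cosmetic: you start from the right-hand side $(\LieDer_{\widetilde{Y_{l,l}}}D\omega_{l,l-1,m},D\omega_{l,l,m})$ and push $G_R^\ast$ through, whereas the paper starts from $\LieDer_{\widetilde{Y}_{l,l}} D\omega_{l,-l,m}$ on the left.
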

\begin{proof}
	By \autoref{lem:RonOmegarkm}, and since $D$ intertwines pullbacks by contactomorphisms, 
	\begin{align*}
		-\LieDer_{\widetilde{Y}_{l,l}} D\omega_{l,-l,m} = \LieDer_{\widetilde{Y}_{l,l}} G_{R^{-1}}^\ast D\omega_{l,l,m} = G_{R^{-1}}^\ast G_{R}^\ast\LieDer_{\widetilde{Y}_{l,l}} G_{R^{-1}}^\ast D\omega_{l,l,m} = G_{R^{-1}}^\ast \LieDer_{\widetilde{\Ad_R(Y_{l,l})}} D\omega_{l,l,m}.
	\end{align*}
	Thus \autoref{lem:adYll} shows that
	\begin{align*}
		(\LieDer_{\widetilde{Y}_{l,l}} D\omega_{l,-l,m}, D\omega_{l,l-1,m}) = -(G_{R^{-1}}^\ast \LieDer_{\widetilde{\overline{Y_{l,l}}}} D\omega_{l,l,m}, D\omega_{l,l-1,m}),
	\end{align*}
	so by \autoref{prop:pairingFormsSLinv} and \autoref{lem:RonOmegarkm} and since $R^{-1}=R$,
	\begin{align*}
		(\LieDer_{\widetilde{Y}_{l,l}} D\omega_{l,-l,m}, D\omega_{l,l-1,m}) = -(\LieDer_{\widetilde{\overline{Y_{l,l}}}} D\omega_{l,l,m}, G_{R}^\ast D\omega_{l,l-1,m}) = (\LieDer_{\widetilde{\overline{Y_{l,l}}}} D\omega_{l,l,m},  D\omega_{l,l-1,m}).
	\end{align*}
	It therefore remains to apply \autoref{cor:pairFormsSLnInvLiealg} and \autoref{lem:pairFormWellDef} to obtain the claim,
	\begin{align*}
		(\LieDer_{\widetilde{Y}_{l,l}} D\omega_{l,-l,m}, D\omega_{l,l-1,m}) =& -(D\omega_{l,l,m},  \LieDer_{\widetilde{Y}_{l,l}}D\omega_{l,l-1,m}) \\
		=& -(-1)^{n}\overline{( \LieDer_{\widetilde{Y}_{l,l}}D\omega_{l,l-1,m}, D\omega_{l,l,m})}.
	\end{align*}
\end{proof}

\begin{extracalc}

\begin{align}
	\omega_{l,-l} \otimes \Theta_1 =& d\zeta_{\overline{N}}^{[l-1]}\overline{dz_N}^{[l-1]}(\zeta_l \otimes dz_{l})(dz_l \otimes dz_{\overline{l}})\\
	&+\zeta_{\overline{N}}d\zeta_{\overline{N}}^{[l-2]}\overline{dz_N}^{[l-1]}(d\zeta_l \otimes dz_l)(dz_l \otimes dz_{\overline{l}})\\
	=:& \Omega_1 + \Omega_2
\end{align}

\begin{lemma}\label{lem:actYll_simpleDf}
	\begin{align}
		\LieDer_{\widetilde{Y}_{l,l}} \zeta_{\overline{N}} &= \zeta_{\overline{l}}^2 \zeta_{\overline{N}},
		& \LieDer_{\widetilde{Y}_{l,l}} d\zeta_{\overline{N}} &=\zeta_{\overline{l}}^2 d\zeta_{\overline{N}} + 2 \zeta_{\overline{l}}d\zeta_{\overline{l}} \zeta_{\overline{N}},\\
		\LieDer_{\widetilde{Y}_{l,l}} \overline{dz_N}^{[l-1]} &= 0.
	\end{align}
\end{lemma}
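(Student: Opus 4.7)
The plan is to prove each identity by a direct computation: decompose the double form into its basic summands indexed by $j \in N = \{1,\dots,l-1\}$, then apply the Leibniz rule of Eq.~\eqref{eq:actionProductRule} together with the formulas of \autoref{lem:lieDerYabzZeta} specialized to $a = b = l$. All ingredients are already at hand from earlier in the section.

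First I would specialize \autoref{lem:lieDerYabzZeta} to $a = b = l$, obtaining $\LieDer_{\widetilde{Y_{ll}}} \zeta_k = -\delta_{kl}\zeta_{\overline{l}} + \zeta_{\overline{l}}^{2}\zeta_k$ and $\LieDer_{\widetilde{Y_{ll}}} dz_k = \delta_{kl} dz_{\overline{l}}$, together with their exterior derivatives. The key observation is that for $k = \overline{j}$ with $j \in N$, neither $k = l$ nor $k = \overline{l}$ holds, so every Kronecker delta vanishes. This reduces the relevant formulas to $\LieDer_{\widetilde{Y_{ll}}} \zeta_{\overline{j}} = \zeta_{\overline{l}}^{2}\zeta_{\overline{j}}$, $\LieDer_{\widetilde{Y_{ll}}} dz_{\overline{j}} = 0$, and $\LieDer_{\widetilde{Y_{ll}}} dz_j = 0$ for every $j \in N$.

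With these reductions the three claims become essentially one-line calculations. For $\zeta_{\overline{N}} = \sum_{j \in N} \zeta_{\overline{j}} \otimes dz_{\overline{j}}$, Eq.~\eqref{eq:lieDerDoubleForm} and linearity yield $\LieDer_{\widetilde{Y_{ll}}} \zeta_{\overline{N}} = \sum_{j \in N} \zeta_{\overline{l}}^{2}\zeta_{\overline{j}} \otimes dz_{\overline{j}} = \zeta_{\overline{l}}^{2}\zeta_{\overline{N}}$. The identity for $d\zeta_{\overline{N}}$ follows at once from this one, because the $\sln(n)_\CC$-action on double forms commutes with the exterior differential on the first factor (the Lie derivative and $d$ commute, and the second factor carries constant forms): $\LieDer_{\widetilde{Y_{ll}}} d\zeta_{\overline{N}} = d\bigl(\zeta_{\overline{l}}^{2}\zeta_{\overline{N}}\bigr) = \zeta_{\overline{l}}^{2}\, d\zeta_{\overline{N}} + 2 \zeta_{\overline{l}}\, d\zeta_{\overline{l}}\, \zeta_{\overline{N}}$ by the ordinary Leibniz rule for $d$. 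Finally, $\overline{dz_N}^{[l-1]}$ is a wedge power of terms $dz_{\overline{j}} \otimes dz_j$ with $j \in N$, each of which is annihilated in both factors by $\LieDer_{\widetilde{Y_{ll}}}$, so Eq.~\eqref{eq:actionProductRule} forces $\LieDer_{\widetilde{Y_{ll}}} \overline{dz_N}^{[l-1]} = 0$.

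There is no substantive obstacle here: the lemma is a bookkeeping exercise completely analogous in structure to \autoref{lem:LieDerY11simple} and \autoref{lem:actYkp1_simpleDf}. The only point deserving attention is the index convention, namely the verification that $\{l,\overline{l}\} \cap N = \emptyset$, which is precisely what causes every Kronecker-delta contribution in \autoref{lem:lieDerYabzZeta} to vanish and produces the clean formulas as stated.
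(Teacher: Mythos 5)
Your proof is correct and follows the same route as the paper: the paper's own proof is simply an appeal to \autoref{lem:lieDerYabzZeta} (together with Eq.~\eqref{eq:lieDerDoubleForm}, the Leibniz rule, and the fact that the Lie derivative commutes with $d$), exactly as in the analogous \autoref{lem:LieDerY11simple} and \autoref{lem:actYkp1_simpleDf}. Your observation that all Kronecker-delta terms vanish because $\{l,\overline{l}\}\cap N=\emptyset$ is precisely the point the paper leaves implicit.
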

\begin{proof}
	By \autoref{lem:lieDerYabzZeta}
\end{proof}

\begin{align*}
	\LieDer_{\widetilde{Y}_{l,l}} (\zeta_l \otimes dz_l)(dz_l \otimes dz_{\overline{l}}) =& \left((-\zeta_{\overline{l}} + \zeta_{\overline{l}}^2 \zeta_l) \otimes dz_l + \zeta_l \otimes dz_{\overline{l}}\right)(dz_l \otimes dz_{\overline{l}})\\
	&+(\zeta_l \otimes dz_l)(dz_{\overline{l}} \otimes dz_{\overline{l}})\\
	=&\left((-\zeta_{\overline{l}} + \zeta_{\overline{l}}^2 \zeta_l) \otimes dz_l\right) (dz_l \otimes dz_{\overline{l}})
	+(\zeta_l \otimes dz_l)(dz_{\overline{l}} \otimes dz_{\overline{l}})\\
	=&\zeta_{\overline{l}}^2(\zeta_l \otimes dz_l)(dz_l \otimes dz_{\overline{l}})\\
	&-(\zeta_{\overline{l}} \otimes dz_l) (dz_l \otimes dz_{\overline{l}})
	+(\zeta_l \otimes dz_l)(dz_{\overline{l}} \otimes dz_{\overline{l}})
\end{align*}
\begin{align*}
	\LieDer_{\widetilde{Y}_{l,l}} (d\zeta_l \otimes dz_l)(dz_l \otimes dz_{\overline{l}}) =&\zeta_{\overline{l}}^2(d\zeta_l \otimes dz_l)(dz_l \otimes dz_{\overline{l}})\\
	&+(2\nu_{l} -1)(d\zeta_{\overline{l}} \otimes dz_l) (dz_l \otimes dz_{\overline{l}})
	+(d\zeta_l \otimes dz_l)(dz_{\overline{l}} \otimes dz_{\overline{l}})
\end{align*}

\begin{align*}
	\LieDer_{\widetilde{Y}_{l,l}} d\zeta_{\overline{N}}^{[l-1]} =& d\zeta_{\overline{N}}^{[l-2]} (\zeta_{\overline{l}}^2 d\zeta_{\overline{N}} + 2 \zeta_{\overline{l}}d\zeta_{\overline{l}} \zeta_{\overline{N}})\\
	=& \zeta_{\overline{l}}^2(l-1) d\zeta_{\overline{N}}^{[l-1]} + 2\zeta_{\overline{l}}d\zeta_{\overline{l}} \zeta_{\overline{N}}d\zeta_{\overline{N}}^{[l-2]}
\end{align*}
\begin{align*}
	\LieDer_{\widetilde{Y}_{l,l}} \zeta_{\overline{N}}d\zeta_{\overline{N}}^{[l-2]} =& \zeta_{\overline{l}}^2 \zeta_{\overline{N}}d\zeta_{\overline{N}}^{[l-2]} + \zeta_{\overline{N}}d\zeta_{\overline{N}}^{[l-3]}\left(\zeta_{\overline{l}}^2 d\zeta_{\overline{N}} + 2 \zeta_{\overline{l}}d\zeta_{\overline{l}} \zeta_{\overline{N}} \right)\\
	=&(1+l-2)\zeta_{\overline{l}}^2 \zeta_{\overline{N}}d\zeta_{\overline{N}}^{[l-2]}
\end{align*}

\begin{table}[h]
	\begin{center}
		\begin{tabular}{|c|c|c|}
			\hline
			$\tau$ & $\lambda$ & $\LieDer_{\widetilde{Y}_{l,l}}\tau -\lambda\zeta_{\overline{l}}^2 \tau$\\
			\hline\hline
			$\zeta_{\overline{N}}d\zeta_{\overline{N}}^{[l-2]}$  & $l-1$ & -- \\
			\hline
			$d\zeta_{\overline{N}}^{[l-1]}$ & $l-1$ & $2\zeta_{\overline{l}}d\zeta_{\overline{l}} \zeta_{\overline{N}}d\zeta_{\overline{N}}^{[l-2]}$ \\
			\hline\hline
			$(\zeta_l \otimes dz_l)(dz_l \otimes dz_{\overline{l}})$ & $1$ & $-(\zeta_{\overline{l}} \otimes dz_l) (dz_l \otimes dz_{\overline{l}})
			+(\zeta_l \otimes dz_l)(dz_{\overline{l}} \otimes dz_{\overline{l}})$ \\
			\hline
			$(d\zeta_l \otimes dz_l)(dz_l \otimes dz_{\overline{l}})$ & $1$ & $(2\nu_{l} -1)(d\zeta_{\overline{l}} \otimes dz_l) (dz_l \otimes dz_{\overline{l}})
			+(d\zeta_l \otimes dz_l)(dz_{\overline{l}} \otimes dz_{\overline{l}})$ \\
			\hline
		\end{tabular}
	\end{center}
\end{table}

\begin{align*}
	\LieDer_{\widetilde{Y}_{l,l}} \Omega_1 =& (l-1+1) \zeta_{\overline{l}}^2\Omega_1\\
	&+2\zeta_{\overline{l}}d\zeta_{\overline{l}} \zeta_{\overline{N}}d\zeta_{\overline{N}}^{[l-2]}\overline{dz_N}^{[l-1]}(\zeta_l \otimes dz_{l})(dz_l \otimes dz_{\overline{l}})\\
	&+d\zeta_{\overline{N}}^{[l-1]}\overline{dz_N}^{[l-1]}\left(-(\zeta_{\overline{l}} \otimes dz_l) (dz_l \otimes dz_{\overline{l}})
	+(\zeta_l \otimes dz_l)(dz_{\overline{l}} \otimes dz_{\overline{l}})\right)\\
	=&l\zeta_{\overline{l}}^2\Omega_1 \\
	&+2\nu_{l} \zeta_{\overline{N}}d\zeta_{\overline{N}}^{[l-2]}\overline{dz_N}^{[l-1]}(d\zeta_{\overline{l}} \otimes dz_{\overline{l}})(dz_l \otimes dz_{l})\\
	&+d\zeta_{\overline{N}}^{[l-1]}\overline{dz_N}^{[l-1]}\left((\zeta_{\overline{l}} \otimes dz_{\overline{l}}) (dz_l \otimes dz_l)
	+(\zeta_l \otimes dz_l)(dz_{\overline{l}} \otimes dz_{\overline{l}})\right)
\end{align*}
\begin{align*}
	\LieDer_{\widetilde{Y}_{l,l}} \Omega_2 =& (l-1+1) \zeta_{\overline{l}}^2\Omega_2\\
	&+ \zeta_{\overline{N}}d\zeta_{\overline{N}}^{[l-2]}\overline{dz_N}^{[l-1]}\left((2\nu_{l} -1)(d\zeta_{\overline{l}} \otimes dz_l) (dz_l \otimes dz_{\overline{l}})
	+(d\zeta_l \otimes dz_l)(dz_{\overline{l}} \otimes dz_{\overline{l}})\right)\\
	=& l \zeta_{\overline{l}}^2\Omega_2\\
	&+ \zeta_{\overline{N}}d\zeta_{\overline{N}}^{[l-2]}\overline{dz_N}^{[l-1]}
	\left((1-2\nu_{l})(d\zeta_{\overline{l}} \otimes dz_{\overline{l}}) (dz_l \otimes dz_l)
	+(d\zeta_l \otimes dz_l)(dz_{\overline{l}} \otimes dz_{\overline{l}})\right)
\end{align*}

in the current notation
\begin{align*}
	\omega_{l,l-1} \otimes \Theta_1 =& \zeta_{\overline{N}} d\zeta_{\overline{N}}^{[l-2]}\overline{dz_N}^{[l-1]} \left(d\zeta_l \otimes dz_l + d\zeta_{\overline{l}}\otimes dz_{\overline{l}}\right)\left(dz_l \otimes dz_l + dz_{\overline{l}}\otimes dz_{\overline{l}}\right)\\
	&+ d\zeta_{\overline{N}}^{[l-1]}\overline{dz_N}^{[l-1]}\left(\zeta_l \otimes dz_l + \zeta_{\overline{l}}\otimes dz_{\overline{l}}\right)\left(dz_l \otimes dz_l + dz_{\overline{l}}\otimes dz_{\overline{l}}\right)\\
	=&\zeta_{\overline{N}} d\zeta_{\overline{N}}^{[l-2]}\overline{dz_N}^{[l-1]} 
	\left((d\zeta_l \otimes dz_l)(dz_{\overline{l}}\otimes dz_{\overline{l}}) + (d\zeta_{\overline{l}}\otimes dz_{\overline{l}})(dz_l \otimes dz_l)\right)\\
	&+ d\zeta_{\overline{N}}^{[l-1]}\overline{dz_N}^{[l-1]}
	\left((\zeta_l \otimes dz_l)(dz_{\overline{l}}\otimes dz_{\overline{l}}) + (\zeta_{\overline{l}}\otimes dz_{\overline{l}})(dz_l \otimes dz_l)\right)
\end{align*}

in total
\begin{align*}
	\LieDer_{\widetilde{Y}_{l,l}} \omega_{l,-l} \otimes \Theta_1 =& l \zeta_{\overline{l}}^2 \omega_{l,-l} \otimes \Theta_1 + \omega_{l,l-1} \otimes \Theta_1
\end{align*}

\begin{align*}
	\LieDer_{\widetilde{Y}_{l,l}} \omega_{l,-l,m} =& (m-2)\zeta_{\overline{1}}^{m-3}\zeta_{\overline{l}}^2 \zeta_{\overline{1}} \omega_{l,-l} + \zeta_{\overline{1}}^{m-2}(l \zeta_{\overline{l}}^2 \omega_{l,-l} + \omega_{l,l-1})\\
	=&(m-2+l) \zeta_{\overline{l}}^2 \omega_{l,-l,m} + \omega_{l,l-1,m}
\end{align*}

\hrule

\begin{align*}
	\overline{\omega_{l,-l}} \otimes \Theta_2 =&
	\overline{d\eta_{\overline{N}}}^{[l-1]}dw_N^{[l-1]}(\zeta_{\overline{l}} \otimes d\zeta_{l})(dz_{\overline{l}} \otimes d\zeta_{\overline{l}})\\
	&+\overline{\eta_{\overline{N}}}\overline{d\eta_{\overline{N}}}^{[l-2]}dw_N^{[l-1]}(d\zeta_{\overline{l}} \otimes d\zeta_l)(dz_{\overline{l}} \otimes d\zeta_{\overline{l}})\\
	=:& \Omega_1 + \Omega_2
\end{align*}
\begin{align*}
	\sigma_{l,l-1} \otimes \Theta_1 =&\overline{d\zeta_{N}}^{[l-1]}\zeta_{\overline{N}} d\zeta_{\{l,\overline{l}\}}^{[1]} dz_{\{l,\overline{l}\}}^{[1]}dz_{\overline{N}}^{[l-2]}\\
	 &+ \overline{d\zeta_N}^{[l-1]}\zeta_{\{l,\overline{l}\}} d\zeta_{\{l,\overline{l}\}}^{[1]}dz_{\overline{N}}^{[l-1]}\\
	 =&\overline{d\zeta_{N}}^{[l-1]}\zeta_{\overline{N}}dz_{\overline{N}}^{[l-2]} d\zeta_{\{l,\overline{l}\}} dz_{\{l,\overline{l}\}}\\
	 &+ (-1)^{l-1}\overline{d\zeta_N}^{[l-1]}dz_{\overline{N}}^{[l-1]}\zeta_{\{l,\overline{l}\}} d\zeta_{\{l,\overline{l}\}}\\
	 =&\overline{d\zeta_{N}}^{[l-1]}\zeta_{\overline{N}}dz_{\overline{N}}^{[l-2]} \left((d\zeta_l \otimes dz_l)(dz_{\overline{l}}\otimes dz_{\overline{l}}) + (d\zeta_{\overline{l}}\otimes dz_{\overline{l}})(dz_l \otimes dz_l)\right)\\
	 &+ (-1)^{l-1}\overline{d\zeta_N}^{[l-1]}dz_{\overline{N}}^{[l-1]}\left((\zeta_l \otimes dz_l)(d\zeta_{\overline{l}}\otimes dz_{\overline{l}}) + (\zeta_{\overline{l}}\otimes dz_{\overline{l}})(d\zeta_l \otimes dz_l)
	 \right)\\
	 =:& \Xi_1^1 + \Xi_1^2 + \Xi_2^1 + \Xi_2^2
\end{align*}
The coefficient of $\tau$ is zero!!!!

\begin{table}
	\begin{tabular}{|c||c|c|c|c|}
		\hline
		$\wedge$& $\Xi_1^1$ & $\Xi_1^2$ & $\Xi_2^1$ & $\Xi_2^2$ \\
		\hline\hline
		$\Omega_1$ & $0^\ddagger$ &  &  & \\
		\hline
		$\Omega_2$ & $0^\ddagger$ & $0^\dagger$ & $0^\dagger$ & \\
		\hline
	\end{tabular}
	\centering
	\caption{$0$ terms}
	\label{tab:pairlm1_0termsFirst}
\end{table}

terms containing $d\zeta_{\overline{l}} \otimes (\cdot)$ twice: $0^\dagger$

terms containing $dz_{\overline{l}} \otimes (\cdot)$ twice: $0^\ddagger$

\begin{table}[h]
	\begin{tabular}{|c||c|c||c|c|c|}
		\hline
		$\wedge$ & $\alpha_{\overline{N}}$ & $\alpha_{l}$ & $\gamma_{N}$ & $\gamma_{l}$ & $\gamma_{\overline{l}}$ \\
		\hline\hline
		$\Omega_1 \wedge \Xi_1^2$ & & $0^{xx}$ & $0^\dagger$ & & $0^{\star\star}$ \\
		\hline
		$\Omega_1 \wedge \Xi_2^1$ & $0^{\ast\ast}$ & & $0^\dagger$ & & $0^{\star\star}$ \\
		\hline
		$\Omega_1 \wedge \Xi_2^2$ & $0^{\ast\ast}$ & & $0^\dagger$ & $0^{\times\times}$ & \\
		\hline
		$\Omega_2 \wedge \Xi_2^2$ & $0^{\ast\ast}$ & & & $0^{\times\times}$ & $0^{\star}$ \\
		\hline
	\end{tabular}
	\centering
	\caption{Wedge products with $\alpha$ and $\gamma$}
	\label{tab:pairWedglm1AlphGam}
\end{table}

\begin{itemize}
	\item $\Omega_j \wedge \alpha_{\overline{l}} = 0$ for all $j$, therefore the column is omitted
	\item $\Omega_j \wedge \alpha_N = 0$ for all $j$, therefore the column is omitted
	\item $\Xi_j^k \wedge \gamma_{\overline{N}} = 0$ for all $j,k$, therefore the column is omitted
	\item $\Xi_2^j \wedge \alpha_{\overline{N}} = 0$ for all $j$: $0^{\ast\ast}$
	\item $\Xi_1^2 \wedge \alpha_l = 0$: $0^{xx}$
	\item $\Omega_1 \wedge \gamma_N = 0$: $0^\dagger$
	\item $\Xi_2^2 \wedge \gamma_l = 0$: $0^{\times\times}$
	\item $\Omega_2 \wedge \gamma_{\overline{l}} = 0$: $0^{\star}$
	\item $\Xi_1^2 \wedge \gamma_{\overline{l}} = 0 = \Xi_2^1 \wedge \gamma_{\overline{l}}$: $0^{\star\star}$
\end{itemize}

hurra, only one term per line is non-zero!

By Eq.~\eqref{eq:redZetaZAlpha}
\begin{align*}
	\zeta_{\overline{N}}dz_{\overline{N}}^{[l-2]} \alpha_{\overline{N}} = (-1)^{l-2}\nu_{\overline{N}}dz_{\overline{N}}^{[l-1]}
\end{align*}
Therefore
\begin{align*}
	\Xi_1^2 \wedge \alpha_{\overline{N}} = (-1)^{l}\nu_{\overline{N}}\overline{d\zeta_{N}}^{[l-1]}dz_{\overline{N}}^{[l-1]}(d\zeta_{\overline{l}}\otimes dz_{\overline{l}})(dz_l \otimes dz_l)
\end{align*}
\begin{align*}
	\Xi_2^1 \wedge \alpha_l = (-1)^l\nu_l \overline{d\zeta_{N}}^{[l-1]}dz_{\overline{N}}^{[l-1]}(d\zeta_{\overline{l}}\otimes dz_{\overline{l}})(dz_l \otimes dz_l)
\end{align*}

By Eq.~\eqref{eq:redZetaGamma}
\begin{align*}
	\overline{\eta_{\overline{N}}}\overline{d\eta_{\overline{N}}}^{[l-2]} \gamma_N = (-1)^{l-2} \nu_N\overline{d\eta_{\overline{N}}}^{[l-1]}
\end{align*}

\begin{align*}
	\Omega_2 \wedge \gamma_N =& (-1)^{l-1}\overline{\eta_{\overline{N}}}\overline{d\eta_{\overline{N}}}^{[l-2]}\gamma_Ndw_N^{[l-1]}(d\zeta_{\overline{l}} \otimes d\zeta_l)(dz_{\overline{l}} \otimes d\zeta_{\overline{l}})\\
	=&(-1)^{l-1+l-2}\nu_N\overline{d\eta_{\overline{N}}}^{[l-1]}dw_N^{[l-1]}(d\zeta_{\overline{l}} \otimes d\zeta_l)(dz_{\overline{l}} \otimes d\zeta_{\overline{l}})
\end{align*}
\begin{align*}
	\Omega_1 \wedge \gamma_{\overline{l}} =&\overline{d\eta_{\overline{N}}}^{[l-1]}dw_N^{[l-1]}(\zeta_{\overline{l}} \otimes d\zeta_{l})(dz_{\overline{l}} \otimes d\zeta_{\overline{l}})\gamma_{\overline{l}}\\
	=&-\nu_l\overline{d\eta_{\overline{N}}}^{[l-1]}dw_N^{[l-1]}(d\zeta_{\overline{l}} \otimes d\zeta_{l})(dz_{\overline{l}} \otimes d\zeta_{\overline{l}})
\end{align*}
\begin{align*}
	\Omega_1 \wedge \gamma_l =&\overline{d\eta_{\overline{N}}}^{[l-1]}dw_N^{[l-1]}(\zeta_{\overline{l}} \otimes d\zeta_{l})(dz_{\overline{l}} \otimes d\zeta_{\overline{l}})\gamma_l \\
	=&-\zeta_{\overline{l}}^2\overline{d\eta_{\overline{N}}}^{[l-1]}dw_N^{[l-1]}(d\zeta_l \otimes d\zeta_{l})(dz_{\overline{l}} \otimes d\zeta_{\overline{l}})
\end{align*}

\begin{align*}
	\nu_l\Xi_1^2 \wedge \alpha_{\overline{N}} =& \nu_{\overline{N}}\Xi_2^1 \wedge \alpha_l\\
	\nu_l\Omega_2 \wedge \gamma_N =& \nu_N \Omega_1 \wedge \gamma_{\overline{l}}
\end{align*}

\begin{align*}
	\Omega_1 \wedge \Xi_2^1 \wedge \alpha_l \wedge \gamma_l =& 
	(-1)^{l+0}\nu_l (\Omega_1 \wedge \gamma_l) \overline{d\zeta_{N}}^{[l-1]}dz_{\overline{N}}^{[l-1]}(d\zeta_{\overline{l}}\otimes dz_{\overline{l}})(dz_l \otimes dz_l)\\
	=&(-1)^{l+1}\nu_l \zeta_{\overline{l}}^2\overline{d\eta_{\overline{N}}}^{[l-1]}dw_N^{[l-1]}(d\zeta_l \otimes d\zeta_{l})(dz_{\overline{l}} \otimes d\zeta_{\overline{l}})\\ &\overline{d\zeta_{N}}^{[l-1]}dz_{\overline{N}}^{[l-1]}(d\zeta_{\overline{l}}\otimes dz_{\overline{l}})(dz_l \otimes dz_l)\\
	=&(-1)^{l+1+1}\nu_l \zeta_{\overline{l}}^2 \Theta \otimes \Theta
\end{align*}

\begin{align*}
	\Omega_1 \wedge \Xi_1^2 \wedge \alpha_{\overline{N}} \wedge \gamma_l =& \frac{\nu_{\overline{N}}}{\nu_l} \Omega_1 \wedge \Xi_2^1 \wedge \alpha_{l} \wedge \gamma_l\\
	=&(-1)^{l}\nu_N \zeta_{\overline{l}}^2 \Theta \otimes \Theta
\end{align*}

\begin{align*}
	\Omega_1  \wedge \Xi_2^2 \wedge \alpha_l \wedge \gamma_{\overline{l}} =& (-1)^{0}(\Omega_1 \wedge \gamma_{\overline{l}})(\Xi_2^2 \wedge \alpha_l)\\
	=&(-1)^{1}\nu_l\overline{d\eta_{\overline{N}}}^{[l-1]}dw_N^{[l-1]}(d\zeta_{\overline{l}} \otimes d\zeta_{l})(dz_{\overline{l}} \otimes d\zeta_{\overline{l}})\\
	&(-1)^{l-1}\overline{d\zeta_N}^{[l-1]}dz_{\overline{N}}^{[l-1]}(\zeta_{\overline{l}}\otimes dz_{\overline{l}})(d\zeta_l \otimes dz_l)\alpha_l\\
	=&(-1)^{l+1}\nu_l\zeta_{\overline{l}}^2\overline{d\eta_{\overline{N}}}^{[l-1]}dw_N^{[l-1]}(d\zeta_{\overline{l}} \otimes d\zeta_{l})(dz_{\overline{l}} \otimes d\zeta_{\overline{l}})\\
	&\overline{d\zeta_N}^{[l-1]}dz_{\overline{N}}^{[l-1]}(dz_l\otimes dz_{\overline{l}})(d\zeta_l \otimes dz_l)\\
	=&(-1)^{l+1+1}\nu_l\zeta_{\overline{l}}^2 \Theta \otimes \Theta
\end{align*}

\begin{align*}
	\Omega_2 \wedge \Xi_2^2 \wedge \alpha_l \wedge \gamma_N =& \frac{\nu_N}{\nu_l}\Omega_1 \wedge \Xi_2^2 \wedge \alpha_l \wedge \gamma_{\overline{l}}\\
	=&(-1)^l\nu_N\zeta_{\overline{l}}^2 \Theta \otimes \Theta
\end{align*}

in total as $\Theta = (-1)^{n+l+1}\vol_{S\RR^n} \gamma$
\begin{align*}
	\overline{\omega_{l,-l}} \wedge \sigma_{l,l-1}\wedge \alpha \wedge \gamma =& (-1)^l\zeta_{\overline{l}}^2 (2\nu_N + 2\nu_l) \Theta\\
	=&(-1)^{n+1}\zeta_{\overline{l}}^2 \nu \vol_{S\RR^n} \gamma \otimes \Theta
\end{align*}
hence, as we can factor $\gamma$
\begin{align*}
	\overline{\omega_{l,-l,m}} \wedge D\omega_{l,l-1,m} =& (-1)^{n+1}|\zeta_{\overline{1}}|^{2(m-2)}\zeta_{\overline{l}}^2\nu(-1)^{n+1}(n+m-l-2)(m+l-2)\vol_{S\RR^n}\\
	=& (l+m-2)(l+m-2)|\zeta_{\overline{1}}|^{2(m-2)}\zeta_{\overline{l}}^2\nu\vol_{S\RR^n}
\end{align*}
this is Eq.~\eqref{eq:pairOmkmOmkm1m} for $r=l$ and $k=l-1$. We can therefore conclude as before that
\begin{align*}
	\int \pair{\overline{\LieDer_{\widetilde{Y}_{l,l}} \omega_{l,-l,m}} \wedge D \omega_{l,l-1,m}}{\vol} \neq 0
\end{align*}
\end{extracalc}

\subsection{Calculations for $\lambda=0$}\label{sec:weight0Form}

Throughout this section, we set $K=\{1\}$, $L=\Iind \setminus K\cup \overline{K}$. Complementing the notation in \autoref{sec:doubleforms}, we set $\lambda_{1,m}=(m,0,\dots,0)\in\ZZ^l$ for $m\in\NN_0$. In particular, $\lambda_{1,0}=0$.
\begin{theorem}
	\label{thm:HWVintrinsicVolumes}
	For any $m\in\NN_0$, $0\leq r\leq n-1$, the form
	\begin{align*}		\omega_{r,0,m}=\zeta_{\bar{1}}^m\omega_{r,0}\in\Omega^{n-1}(S\RR^n)
	\end{align*}
	is a highest weight vector with weight $\lambda_{1,m}$ in $\Omega^{r,n-r-1}(S\RR^n)^{\mathrm{\mathrm{tr}}}$, where
	\begin{align*}
		\omega_{r,0}\otimes\Theta_1=\zeta_{\Iind} d\zeta_{\Iind}^{[n-r-1]}dz_{\Iind}^{[r]}.
	\end{align*}
\end{theorem}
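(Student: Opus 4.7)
The plan is to verify conditions~\eqref{eq:defHWV} directly. Writing $\omega_{r,0,m}=\zeta_{\bar 1}^m\omega_{r,0}$ and applying the Leibniz rule
\[
\LieDer_{\widetilde X}(\zeta_{\bar 1}^m\,\omega_{r,0})=(\LieDer_{\widetilde X}\zeta_{\bar 1}^m)\,\omega_{r,0}+\zeta_{\bar 1}^m\,\LieDer_{\widetilde X}\omega_{r,0},\qquad X\in\sln(n)_\CC,
\]
the problem decouples, and it suffices to show that $\omega_{r,0}$ is $\SO(n)$-invariant while $\zeta_{\bar 1}^m$ is a highest weight vector of weight $m\epsilon_1$.

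The factor $\zeta_{\bar 1}^m$ is straightforward. A short computation modelled on \autoref{lem:lieDerYabzZeta}, using that $H_j^T=-H_j$ and $\langle v,H_jv\rangle=0$ for $H_j\in\mathfrak{t}_\CC$, yields $\LieDer_{\widetilde{H_j}}\zeta_{\bar k}=\delta_{jk}\zeta_{\bar k}$, so $\zeta_{\bar 1}$ is a weight vector of weight $\epsilon_1$ for the standard representation of $\SO(n)_\CC$ on linear functions on $\RR^n$. Since the weights of this representation are $\{\pm\epsilon_i:\,1\le i\le l\}$ (together with $0$ in the odd case) and $\epsilon_1$ is the maximal one with respect to the ordering induced by $\Delta^+$, the vector $\zeta_{\bar 1}$ is necessarily annihilated by $\mathfrak{n}^+$: applying any positive root vector would produce a vector of weight strictly larger than $\epsilon_1$, which does not occur. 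Another application of the Leibniz rule then shows that $\zeta_{\bar 1}^m$ is a weight vector of weight $m\epsilon_1$ and is likewise annihilated by $\mathfrak{n}^+$.

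The remaining and main point is the $\SO(n)$-invariance of $\omega_{r,0}$. The most direct route is conceptual: $\omega_{r,0}$ agrees, up to a non-zero scalar, with the classical $\SO(n)$-invariant translation-invariant $(n-1)$-form on $S\RR^n$ whose integration over the normal cycle recovers the $r$-th intrinsic volume; see the explicit descriptions of rotation-invariant translation-invariant valuations in \cite{Bernig2007} and \cite{Alesker2011}. The invariance of $\omega_{r,0}$ is then immediate from the rotational invariance of the intrinsic volumes combined with the $\SO(n)$-equivariance of $\nc(\cdot)$. Alternatively, one can verify invariance by direct calculation: extend \autoref{lem:lieDerYabzZeta} to the Cartan generators $H_j$ and to the positive root vectors of $\so(n)_\CC$, apply the Leibniz rule~\eqref{eq:actionProductRule} to $\omega_{r,0}\otimes\Theta_1=\zeta_{\Iind}d\zeta_{\Iind}^{[n-r-1]}dz_{\Iind}^{[r]}$, and exploit that all indices are summed over the complete set $\Iind$, so that the ``non-linear'' contributions of the form $\zeta_{\bar a}\zeta_{\bar b}\zeta_k$ arising from \autoref{lem:lieDerYabzZeta} recombine into the full sums $\zeta_{\Iind}$, $d\zeta_{\Iind}$, $dz_{\Iind}$ and cancel against one another. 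We expect this step to be the main obstacle, as the bookkeeping is delicate; this is why the identification with the classical invariant intrinsic-volume form is preferable. Once invariance is in hand, combining it with the previous paragraph via the Leibniz rule immediately yields $\LieDer_{\widetilde H}\omega_{r,0,m}=m\epsilon_1(H)\omega_{r,0,m}=\lambda_{1,m}(H)\omega_{r,0,m}$ for $H\in\mathfrak{t}_\CC$ and $\LieDer_{\widetilde X}\omega_{r,0,m}=0$ for $X\in\mathfrak{n}^+$, verifying~\eqref{eq:defHWV}.
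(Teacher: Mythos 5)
Your overall reduction is the same as the paper's: factor $\omega_{r,0,m}=\zeta_{\bar 1}^m\omega_{r,0}$, check that $\zeta_{\bar 1}^m$ is a highest weight vector of weight $\lambda_{1,m}$ (your argument via the standard representation and maximality of $\epsilon_1$ is correct; for $W\in\so(n)_\CC$ the cubic terms in \autoref{lem:lieDerYabzZeta} drop out since $\langle v,W^Tv\rangle=0$, so the $\zeta$'s do carry the standard representation), and reduce the theorem to the $\SO(n)$-invariance of $\omega_{r,0}$, using that a product of highest weight vectors is a highest weight vector of the summed weight.

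The gap is precisely in the step you yourself flag as the main one. Your ``conceptual'' route does not work as stated: knowing that $\int_{\nc(\cdot)}\omega_{r,0}$ is (a multiple of) the $r$-th intrinsic volume, together with the equivariance of the normal cycle, only tells you that $G_g^\ast\omega_{r,0}-\omega_{r,0}$ lies in the kernel of the normal cycle map; by \autoref{thm:BernigBroeckerKernel} that kernel is large (all forms with vanishing Rumin differential and vanishing $0$-homogeneous part), so invariance of the induced \emph{valuation} does not imply invariance of the \emph{form}, which is what the highest weight statement in $\Omega^{r,n-r-1}(S\RR^n)^{\mathrm{tr}}$ requires. Moreover, the identification of $\omega_{r,0}$ with ``the classical invariant form'' is exactly the assertion to be proved, and citing the descriptions in the references does not discharge it, since $\omega_{r,0}$ is given in complex double-form coordinates. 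Your alternative route (a direct Lie-derivative computation on $\omega_{r,0}\otimes\Theta_1$) would work but is only sketched and acknowledged as incomplete. The paper closes this step with a short algebraic observation that avoids all bookkeeping: writing
\begin{align*}
	\omega_{r,0}\otimes\Theta_1=\zeta_{\Iind}\,d\zeta_{\Iind}^{[n-r-1]}dz_{\Iind}^{[r]}=(-1)^l\,\overline{\zeta_{\Iind}}\;\overline{d\zeta_{\Iind}}^{[n-r-1]}\;\overline{dz_{\Iind}}^{[r]},
\end{align*}
one checks that $\overline{\zeta_{\Iind}}=\sum_{j=1}^n\xi_j\otimes dx_j$, $\overline{d\zeta_{\Iind}}=\sum_{j=1}^n d\xi_j\otimes dx_j$ and $\overline{dz_{\Iind}}=\sum_{j=1}^n dx_j\otimes dx_j$, which are manifestly $\SO(n)$-invariant double forms (and $\Theta_1$ is $\SL(n,\RR)$-invariant), so $\omega_{r,0}$ itself is $\SO(n)$-invariant. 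Supplying either this identity or the full Lie-derivative computation is what your proposal still needs.
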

\begin{proof}
	Note that $\zeta_{\overline{1}}^m$ is a highest weight vector of weight $\lambda_{1,m}$. As a product of two highest weight vectors is again a highest weight vector with weight equal to the sum of their weights, we therefore need to show that $\omega_{r,0}$ is a highest weight vector of weight $(0, \dots, 0)$, that is, $\omega_{r,0}$ is invariant under the action of $\SO(n)$.
	
	Note that
	\begin{align*}
		\omega_{r,0}\otimes \Theta_1 = \zeta_{\Iind} d\zeta_{\Iind}^{[n-r-1]}dz_{\Iind}^{[r]} = (-1)^l \overline{\zeta_{\Iind}} \overline{d\zeta_{\Iind}}^{[n-r-1]}\overline{dz_{\Iind}}^{[r]},
	\end{align*}
	and a short calculation shows that
	\begin{align*}
		\overline{\zeta_{\Iind}} = \sum_{j=1}^n \xi_j \otimes dx_j, \quad
		\overline{d\zeta_{\Iind}} = \sum_{j=1}^n d\xi_j \otimes dx_j, \quad
		\overline{dz_{\Iind}} = \sum_{j=1}^n dx_j \otimes dx_j
	\end{align*}
	are $\SO(n)$-invariant forms. Hence, $\omega_{r,0}\otimes \Theta_1$ is a product of $\SO(n)$-invariant forms and therefore $\SO(n)$-invariant, which shows that $\omega_{r,0}$ is $\SO(n)$-invariant.
	\begin{extracalc}
		\begin{align*}
			\overline{\zeta_{\Iind}} =& \sum_{i =1}^{l} \left( \zeta_{i} \otimes dz_{\overline{i}} + \zeta_{\overline{i}} \otimes dz_i \right) + \mathbbm{1}_{n \text{ odd}} \xi_n \otimes dz_n\\
			=&\sum_{i =1}^{l} \frac{1}{2}\left( (\xi_{2i-1} + i \xi_{2i}) \otimes (dx_{2i-1} - i dx_{2i}) + (\xi_{2i-1} - i \xi_{2i}) \otimes (dx_{2i-1} + i dx_{2i}) \right) + \mathbbm{1}_{n \text{ odd}} \xi_n \otimes dz_n\\
			=&\sum_{i =1}^{l} \left( \xi_{2i-1}\otimes dx_{2i-1} + \xi_{2i}\otimes dx_{2i}\right) + \mathbbm{1}_{n \text{ odd}} \xi_n \otimes dz_n\\
			=& \sum_{j=1}^n \xi_j \otimes dx_j
		\end{align*}
		For $g=(g_{ab})_{a,b} \in \SO(n)$
		\begin{align*}
			g^\ast \overline{\zeta_{\Iind}} =& \sum_{j=1}^n \sum_{a,b} g_{ja} g_{jb} \xi_a \otimes dx_b \\
			=&\sum_{a,b} \left(\sum_{j=1}^n  g_{ja} g_{jb}\right) \xi_a \otimes dx_b\\
			=&\sum_{a,b} \delta_{ab} \xi_a \otimes dx_b = \overline{\zeta_{\Iind}}
		\end{align*}
	\end{extracalc}
\end{proof}

Next, we show that $\omega_{r,0,m}$, $m\ne 1$, defines a nontrivial valuation by showing that its Rumin differential $D\omega_{r,0,m}$ is non-zero. We establish the required relations with a similar reasoning as in \cite{Kotrbaty2022}*{Sec.~5}. Formally, the formulas below correspond to the case $k=0$ in \cite{Kotrbaty2022}*{Sec.~5}, however, this case is not covered by their calculations. The relations below thus complement the picture.
\medskip

For $0\leq r\leq n$, consider the form $\delta_{r,0}$ on $\RR^n\times\RR^n$ given by
\begin{align*}
	\delta_{r,0} \otimes \Theta_1 =d\zeta_{\Iind}^{[n-r]}dz_{\Iind}^{[r]}.
\end{align*}

\begin{lemma}
	For $0\leq r\leq n-1$, the following relations hold on $\RR^n\times\RR^n$:
	\begin{align}
		\label{eq:differenitalOmega_0}
		d\omega_{r,0}=&(n-r)\delta_{r,0},\\
		\label{eq:relationDelta_0Omega_0}
		\nu\delta_{r,0}=&\begin{cases}
			\alpha\omega_{r-1,0}+\gamma\omega_{r,0}, &r>0,\\
			\gamma\omega_{r,0}, & r=0.
		\end{cases}
	\end{align}
\end{lemma}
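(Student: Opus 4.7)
For Eq.~\eqref{eq:differenitalOmega_0}, I would apply the exterior derivative directly to the double form $\omega_{r,0}\otimes\Theta_1 = \zeta_\Iind\, d\zeta_\Iind^{[n-r-1]}\,dz_\Iind^{[r]}$, recalling that $d$ acts only on the first tensor factor and that $\zeta_\Iind$ has first-component degree $0$. Since $d\zeta_\Iind^{[n-r-1]}$ and $dz_\Iind^{[r]}$ are closed (being wedge powers of exact forms), Leibniz leaves only the term obtained by differentiating $\zeta_\Iind$, so $d(\omega_{r,0}\otimes\Theta_1) = d\zeta_\Iind \wedge d\zeta_\Iind^{[n-r-1]} \wedge dz_\Iind^{[r]}$. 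The general identity $\omega\wedge\omega^{[k]}=(k+1)\omega^{[k+1]}$ applied to $\omega=d\zeta_\Iind$ with $k=n-r-1$ then yields $(n-r)\,d\zeta_\Iind^{[n-r]}dz_\Iind^{[r]} = (n-r)\,\delta_{r,0}\otimes\Theta_1$, which is the desired equation.

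For Eq.~\eqref{eq:relationDelta_0Omega_0} with $r\geq 1$, the plan is to invoke Eq.~\eqref{eq:redAddLAlpGamTerms2} of \autoref{prop:fillFullZetaDZeta} with $L=\Iind$ (i.e.\ with $k=0$ in the notation of that lemma) and with its parameter $\alpha$ taken to be $r$. Although the stated hypothesis there excludes $k=0$, the proof in \cite{Kotrbaty2022}*{Prop.~5.2} only uses the abstract combinatorial structure of the index set $L$ and extends verbatim to $L=\Iind$; the resulting identity reads
\begin{align*}
\zeta_\Iind d\zeta_\Iind^{[n-r]}dz_\Iind^{[r-1]}\,\alpha + \zeta_\Iind d\zeta_\Iind^{[n-r-1]}dz_\Iind^{[r]}\,\gamma = (-1)^{n-1}\nu\, d\zeta_\Iind^{[n-r]}dz_\Iind^{[r]}.
\end{align*}
In each summand on the left, the factor $\zeta_\Iind d\zeta_\Iind^{[\bullet]}dz_\Iind^{[\bullet]}$ has bidegree $(n-1,n)$, while $\alpha,\gamma$ are of bidegree $(1,0)$; the double-form sign rule therefore produces a factor $(-1)^{n-1}$ when moving $\alpha$ and $\gamma$ to the front. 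After multiplying through by $(-1)^{n-1}$, these signs cancel the one on the right, giving $\alpha\,\omega_{r-1,0}\otimes\Theta_1 + \gamma\,\omega_{r,0}\otimes\Theta_1 = \nu\,\delta_{r,0}\otimes\Theta_1$, as required.

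The edge case $r=0$ (where the argument $dz_\Iind^{[r-1]}$ would be ill-defined) is handled separately from Eq.~\eqref{eq:redZetaGamma} applied with $M=\Iind$ and $i=n$, which directly gives $\zeta_\Iind d\zeta_\Iind^{[n-1]}\gamma = (-1)^{n-1}\nu\, d\zeta_\Iind^{[n]}$; the same sign cancellation from commuting $\gamma$ to the front then yields $\gamma\,\omega_{0,0}\otimes\Theta_1 = \nu\,\delta_{0,0}\otimes\Theta_1$. The only genuinely delicate point throughout is the careful bookkeeping of the double-form sign rule when commuting the $1$-forms $\alpha$ and $\gamma$ past the long factor $\zeta_\Iind d\zeta_\Iind^{[\bullet]}dz_\Iind^{[\bullet]}$; an additional minor auxiliary step is to verify that Eq.~\eqref{eq:redAddLAlpGamTerms2} remains valid in the edge case $L=\Iind$, which requires no more than inspecting the proof of that lemma.
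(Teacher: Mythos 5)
Your proof of \eqref{eq:differenitalOmega_0} is correct and is essentially the paper's (the paper simply states it follows from the definition; your Leibniz-plus-$\omega\wedge\omega^{[k]}=(k+1)\omega^{[k+1]}$ argument is exactly what is behind that remark). For \eqref{eq:relationDelta_0Omega_0} you take a genuinely different route. The paper proves the $r>0$ identity by a self-contained computation: for each $i\in\Iind$ it expands $\zeta_{\bar i}dz_i\,\omega_{r-1,0}\otimes\Theta_1$ with the binomial formula \eqref{eq:binomDFCalc}, obtains $|\zeta_i|^2\,\delta_{r,0}\otimes\Theta_1-\gamma_{\{i\}}\,\omega_{r,0}\otimes\Theta_1$, and sums over $i$. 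You instead deduce the same identity from the case $k=0$, $L=\Iind$ of \eqref{eq:redAddLAlpGamTerms2}, which is excluded by the stated hypothesis and which you justify only by asserting that the proof in \cite{Kotrbaty2022}*{Prop.~5.2} extends verbatim. That assertion is plausible (the identity involves only $L$-indexed forms, and your degree and sign bookkeeping — the factor $(-1)^{n-1}$ from commuting the $(1,0)$-forms $\alpha,\gamma$ past the $(n-1,n)$-form, cancelling the sign on the right — is correct), but note that the paper explicitly points out at the start of \autoref{sec:weight0Form} that the $k=0$ case is \emph{not} covered by the calculations in \cite{Kotrbaty2022}, which is precisely why it proves these relations from scratch; so in your route the entire content of the $r>0$ case rests on an inspection of the external proof that you have not actually carried out, and writing it out would amount to redoing a computation of the same kind as the paper's. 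Your treatment of the edge case $r=0$ via \eqref{eq:redZetaGamma} with $M=\Iind$ is fine without any extension, since that relation is stated and proved in the paper for arbitrary $M\subset\Iind$, and is in fact slightly more explicit than the paper's proof, whose displayed computation is written only for $r>0$.
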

\begin{proof}
	Eq.~\eqref{eq:differenitalOmega_0} follows directly from the definition. For Eq.~\eqref{eq:relationDelta_0Omega_0}, we note that, using Eq.~\eqref{eq:binomDFCalc} multiple times and recalling that $\Iind_i = \Iind \setminus\{i\}$, we obtain for each $i\in\Iind$,
	\begin{align*}
		\zeta_{\bar i}dz_i\omega_{r-1,0}\otimes \Theta_1=&\zeta_{\bar i}dz_i\zeta_{\Iind} d\zeta_{\Iind}^{[n-r]}dz_{\Iind}^{[r-1]}\\
		=&\zeta_{\bar i}dz_i(\zeta_i\otimes dz_i)d\zeta_{\Iind_i}^{[n-r]}dz_{\Iind_i}^{[r-1]}+\zeta_{\bar i}dz_i\zeta_{\Iind_i} (d\zeta_i\otimes dz_i)d\zeta_{\Iind_i}^{[n-r-1]}dz_{\Iind_i}^{[r-1]}\\
		=&|\zeta_i|^2 (dz_i\otimes dz_i)d\zeta_{\Iind_i}^{[n-r]}dz_{\Iind_i}^{[r-1]}-\zeta_{\bar i}d\zeta_i\zeta_{\Iind_i} (dz_i\otimes dz_i)d\zeta_{\Iind_i}^{[n-r-1]}dz_{\Iind_i}^{[r-1]}\\
		=&\left(|\zeta_i|^2 d\zeta_{\Iind}^{[n-r]}dz_{\Iind}^{[r]}-|\zeta_i|^2 (d\zeta_i\otimes dz_i)d\zeta_{\Iind_i}^{[n-r-1]}dz_{\Iind}^{[r]}\right)\\
		&-\left(\gamma_{\{i\}} \zeta_{\Iind}d\zeta_{\Iind}^{[n-r]}dz_{\Iind}^{[r]}-\zeta_{\bar i}d\zeta_i (\zeta_{i}\otimes dz_i)d\zeta_{\Iind}^{[n-r-1]}dz_{\Iind}^{[r-1]}\right)\\
		=&|\zeta_i|^2 d\zeta_{\Iind}^{[n-r]}dz_{\Iind}^{[r]}-\gamma_{\{i\}} \zeta_{\Iind}d\zeta_{\Iind}^{[n-r]}dz_{\Iind}^{[r]}.
	\end{align*}
	Summing over $i\in\Iind$, the desired result follows.
\end{proof}

We have the following relation to the forms $\tau_{r,k}$ from Eq.~\eqref{eq:defTau}.
\begin{lemma}
	For $1\leq r\leq n-1$, the following relation holds on $\RR^n\times\RR^n$:
	\begin{align}
		\label{eq:relationTauDelta_0}
		\zeta_{\bar1}^2\delta_{r,0}=\alpha\tau_{r,1}+ \gamma\tau_{r+1,1}.
	\end{align}
\end{lemma}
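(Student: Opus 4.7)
The plan is to tensor both sides with $\Theta_1$ and work entirely at the level of double forms on $\RR^n\times\RR^n$. Setting $K=\{1\}$ and $J=\Iind\setminus K$, we have $\overline{\zeta_K} = \zeta_{\overline 1}\otimes dz_1$, so the target identity becomes
\[
\zeta_{\overline 1}^2\, d\zeta_\Iind^{[n-r]}dz_\Iind^{[r]} = \alpha\,(\zeta_{\overline 1}\otimes dz_1)d\zeta_J^{[n-r]}dz_J^{[r-1]} + \gamma\,(\zeta_{\overline 1}\otimes dz_1)d\zeta_J^{[n-r-1]}dz_J^{[r]}.
\]
I would split $\alpha = \zeta_{\overline 1}dz_1 + \alpha_J$ and $\gamma = \zeta_{\overline 1}d\zeta_1 + \gamma_J$, where $\alpha_J=\sum_{i\in J}\zeta_{\overline i}dz_i$ and $\gamma_J=\sum_{i\in J}\zeta_{\overline i}d\zeta_i$; this decomposes the right-hand side into a ``diagonal'' part proportional to $\zeta_{\overline 1}^2$ and an ``off-diagonal'' part with mixed coefficients $\zeta_{\overline 1}\zeta_{\overline i}$, $i\in J$.

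For the diagonal part, decompose $d\zeta_\Iind = (d\zeta_1\otimes dz_1) + d\zeta_J$ and $dz_\Iind = (dz_1\otimes dz_1) + dz_J$ inside $d\zeta_\Iind^{[n-r]}dz_\Iind^{[r]}$. Of the four resulting products, the one with no $(\,\cdot\,\otimes dz_1)$ factor has second-factor degree $n$ built from indices of $J$ (of size $n-1$) and vanishes by pigeonhole, while $(d\zeta_1\otimes dz_1)(dz_1\otimes dz_1)(\cdots)$ vanishes because $dz_1$ appears twice in the second factor. The two surviving terms give
\[
\delta_{r,0}\otimes\Theta_1 = (dz_1\otimes dz_1)d\zeta_J^{[n-r]}dz_J^{[r-1]} + (d\zeta_1\otimes dz_1)d\zeta_J^{[n-r-1]}dz_J^{[r]},
\]
and using $(dz_1)(\zeta_{\overline 1}\otimes dz_1) = \zeta_{\overline 1}(dz_1\otimes dz_1)$ and the analogous identity for $d\zeta_1$, one sees that the diagonal part of the right-hand side equals $\zeta_{\overline 1}^2\,\delta_{r,0}\otimes\Theta_1$.

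The main technical step is the off-diagonal identity $\alpha_J\tau_{r,1} + \gamma_J\tau_{r+1,1} = 0$, which I would reduce to a per-index cancellation: for each fixed $i\in J$, the combination $dz_i\tau_{r,1} + d\zeta_i\tau_{r+1,1}$ vanishes. To see this, separate the index $i$ by writing $J = \{i\}\cup J'$ with $J'=J\setminus\{i\}$, $|J'|=n-2$, and expand $d\zeta_J^{[n-r]}dz_J^{[r-1]}$ and $d\zeta_J^{[n-r-1]}dz_J^{[r]}$; the same pigeonhole argument reduces each to two summands of the form $(dz_i\otimes dz_i)(\cdots)+(d\zeta_i\otimes dz_i)(\cdots)$. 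Multiplying the first expression by $dz_i$ kills the $(dz_i\otimes dz_i)$ summand (since $dz_i\wedge dz_i=0$), and multiplying the second by $d\zeta_i$ kills the $(d\zeta_i\otimes dz_i)$ summand (since $d\zeta_i\wedge d\zeta_i=0$); the two surviving contributions are $\pm((d\zeta_i\wedge dz_i)\otimes dz_i)\,d\zeta_{J'}^{[n-r-1]}dz_{J'}^{[r-1]}$ with opposite signs coming from $dz_i\wedge d\zeta_i = -d\zeta_i\wedge dz_i$, and therefore cancel.

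Combining the two halves yields $\alpha\tau_{r,1}+\gamma\tau_{r+1,1} = \zeta_{\overline 1}^2\delta_{r,0}$, as desired. The argument is essentially elementary once the setup is in place; the principal obstacle is careful bookkeeping of super-commutativity signs in the double-form algebra (where swapping bi-degrees $(p_1,q_1)$ and $(p_2,q_2)$ produces the sign $(-1)^{p_1p_2+q_1q_2}$), but because the index $1$ appears in the second factor only through the prefactor $(\zeta_{\overline 1}\otimes dz_1)$ while the indices $i\in J$ populate the second factors of $d\zeta_J$ and $dz_J$, the relevant commutations involve disjoint index sets and most of the signs turn out to be trivial.
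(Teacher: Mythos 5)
Your proof is correct, and it takes a slightly different route from the paper's. The paper splits the index set as $K\cup\overline{K}\cup L$ with $K=\{1\}$, $L=\Iind\setminus\{1,\bar 1\}$: it starts from the decomposition \eqref{eq:taurkSep} of $\tau_{r,1}$ and $\tau_{r+1,1}$, uses the pointwise identities $\alpha_K\overline{\zeta_K}=\zeta_{\bar1}^2 dz_K$, $\gamma_K\overline{\zeta_K}=\zeta_{\bar1}^2 d\zeta_K$, $\alpha_{\overline K}d\zeta_{\overline K}=-\gamma_{\overline K}dz_{\overline K}$, and invokes the borrowed relation \eqref{eq:redAddLAlpGamTerms1} to cancel the $L$-contributions, before reassembling $\zeta_{\bar1}^2 d\zeta_{\Iind}^{[n-r]}dz_{\Iind}^{[r]}$. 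You instead split only as $\{1\}\cup J$ with $J=\Iind\setminus\{1\}$ (so $\bar 1$ is absorbed into $J$), match the ``diagonal'' $\zeta_{\bar 1}^2$-part against the binomial expansion of $\delta_{r,0}\otimes\Theta_1$, and prove the off-diagonal cancellation $\alpha_J\tau_{r,1}+\gamma_J\tau_{r+1,1}=0$ from scratch by the per-index argument: for each $i\in J$ the two surviving terms are $\bigl((dz_i\wedge d\zeta_i)\otimes dz_i\bigr)d\zeta_{J'}^{[n-r-1]}dz_{J'}^{[r-1]}$ and $\bigl((d\zeta_i\wedge dz_i)\otimes dz_i\bigr)d\zeta_{J'}^{[n-r-1]}dz_{J'}^{[r-1]}$, which cancel. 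This per-index cancellation is in effect a self-contained reproof of what the paper obtains from \eqref{eq:redAddLAlpGamTerms1} together with the $\overline K$-identity, so your argument is more self-contained (it does not lean on \eqref{eq:taurkSep} or the quoted Kotrbat\'y--Wannerer relations), at the cost of redoing a cancellation the paper already has on the shelf; the signs you need are only the commutations of $(1,1)$-blocks with $(a,a)$-blocks and of $(1,0)$- with $(0,1)$-forms, all of which are indeed $+1$, so the bookkeeping you flag is genuinely harmless.
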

\begin{proof}
	From Eq.~\eqref{eq:taurkSep}, we obtain using $K = \{1\}$, $L = \Iind \setminus\{1, \overline{1}\}$ and $\overline{d\zeta_{K}}\zeta_{\overline{K}} = \overline{\zeta_K}d\zeta_{\overline{K}}$,
	\begin{align*}
		\alpha\tau_{r,1}\otimes \Theta_1=&(\alpha_K+\alpha_{\overline{K}}+\alpha_L)\overline{\zeta_K}d\zeta_{\overline{K}}d\zeta_L^{[n-r-1]}dz_L^{[r-1]}\\
		&+(\alpha_K+\alpha_L)\overline{\zeta_K}dz_{\overline{K}}d\zeta_L^{[n-r]}dz_L^{[r-2]}.
	\end{align*}
	As $\overline{\zeta_{K}} = \zeta_{\overline{1}} \otimes dz_1$, and similarly for $d\zeta_{\overline{K}}$, $dz_{\overline{K}}$ and $\alpha$, clearly
	\begin{align*}
		\alpha_K\overline{\zeta_K}=\zeta_{\bar1}^2 dz_K, \quad
		\alpha_{\overline{K}}d\zeta_{\overline{K}}=-\gamma_{\overline{K}}dz_{\overline{K}}, \quad \text{ and } \quad
		\gamma_K\overline{\zeta_K}=\zeta_{\bar1}^2d\zeta_K.
	\end{align*}
	Thus, in combination with \eqref{eq:redAddLAlpGamTerms1}, we obtain
	\begin{align*}
		\alpha\tau_{r,1}\otimes \Theta_1=&\zeta_{\bar1}^2 dz_Kd\zeta_{\overline{K}}d\zeta_L^{[n-r-1]}dz_L^{[r-1]}-\gamma_{\overline{K}}\overline{\zeta_K}dz_{\overline{K}}d\zeta_L^{[n-r-1]}dz_L^{[r-1]}\\
		&-\gamma_L\overline{\zeta_K}d\zeta_{\overline{K}}d\zeta_L^{[n-r-2]}dz_L^{[r]} +\zeta_{\bar1}^2 dz_Kdz_{\overline{K}}d\zeta_L^{[n-r]}dz_L^{[r-2]}\\
		&-\gamma_L\overline{\zeta_K}dz_{\overline{K}}d\zeta_L^{[n-r-1]}dz_L^{[r-1]}.
	\end{align*}
	We similarly obtain
	\begin{align*}
		\gamma\tau_{r+1,1}\otimes\Theta_1=&(\gamma_K+\gamma_L)\overline{\zeta_{K}}d\zeta_{\overline{K}}d\zeta_L^{[n-r-2]}dz_L^{[r]}\\
		&+(\gamma_K+\gamma_{\overline{K}}+\gamma_L)\overline{\zeta_{K}}dz_{\overline{K}}d\zeta_L^{[n-r-1]}dz_L^{[r-1]}\\
		=&\zeta_{\bar1}^2d\zeta_Kd\zeta_{\overline{K}}d\zeta_L^{[n-r-2]}dz_L^{[r]} +\gamma_L\overline{\zeta_{K}}d\zeta_{\overline{K}}d\zeta_L^{[n-r-2]}dz_L^{[r]}\\
		&+\zeta_{\bar1}^2d\zeta_Kdz_{\overline{K}}d\zeta_L^{[n-r-1]}dz_L^{[r-1]}+\gamma_{\overline{K}}\overline{\zeta_{K}}dz_{\overline{K}}d\zeta_L^{[n-r-1]}dz_L^{[r-1]}\\
		&+\gamma_L\overline{\zeta_{K}}dz_{\overline{K}}d\zeta_L^{[n-r-1]}dz_L^{[r-1]}.
	\end{align*}
	Combining the two equations, we obtain the claim:
	\begin{align*}
		\alpha\tau_{r,1}\otimes \Theta_1+\gamma\tau_{r+1,1}\Theta_1
		=&\zeta_{\bar1}^2 \left(dz_Kd\zeta_{\overline{K}}d\zeta_L^{[n-r-1]}dz_L^{[r-1]}
		+dz_Kdz_{\overline{K}}d\zeta_L^{[n-r]}dz_L^{[r-2]} \right.\\
		&\left.d\zeta_Kd\zeta_{\overline{K}}d\zeta_L^{[n-r-2]}dz_L^{[r]}
		+d\zeta_Kdz_{\overline{K}}d\zeta_L^{[n-r-1]}dz_L^{[r-1]}\right)\\
		=&\zeta_{\bar1}^2d\zeta_{\Iind}^{[n-r]}dz_{\Iind}^{[r]}.
	\end{align*}
	\begin{extracalc}
		The last equality follows from
		\begin{align*}
			d\zeta_{\Iind}^{[n-r]} dz_{\Iind}^{[r]} =& (d\zeta_{\Iind_1}^{[n-r]} + d\zeta_{K} d\zeta_{\Iind_1}^{[n-r-1]})(dz_{\Iind_1}^{[r]} + dz_K dz_{\Iind_1}^{[r-1]})\\
			=&d\zeta_{\Iind_1}^{[n-r]}dz_K dz_{\Iind_1}^{[r-1]} + d\zeta_{K} d\zeta_{\Iind_1}^{[n-r-1]}dz_{\Iind_1}^{[r]}\\
			=&dz_K\left(d\zeta_{L}^{[n-r]} + d\zeta_{\overline{K}}d\zeta_{L}^{[n-r-1]}\right) \left(dz_{L}^{[r-1]} + dz_{\overline{K}}dz_{L}^{[r-2]}\right)\\
			&+ d\zeta_{K} \left(d\zeta_{L}^{[n-r-1]} + d\zeta_{\overline{K}}d\zeta_{L}^{[n-r-2]}\right)\left(dz_{L}^{[r]} + dz_{\overline{K}}dz_{L}^{[r-1]}\right)\\
			=&dz_K\left(d\zeta_{L}^{[n-r]}dz_{\overline{K}}dz_{L}^{[r-2]} + d\zeta_{\overline{K}}d\zeta_{L}^{[n-r-1]}dz_{L}^{[r-1]} \right)\\
			&+ d\zeta_{K} \left(d\zeta_{L}^{[n-r-1]}dz_{\overline{K}}dz_{L}^{[r-1]} + d\zeta_{\overline{K}}d\zeta_{L}^{[n-r-2]}dz_{L}^{[r]} \right).
		\end{align*}
	\end{extracalc}
\end{proof}

\begin{lemma}
	The following relation holds on $\RR^n\times\RR^n$:
	\begin{align}
		\label{eq:OmegaNull_dzeta}
		\zeta_{\bar1}d\zeta_{\bar 1}\omega_{r,0}=-\nu\delta_{r,1}+\alpha\sigma_{r,1}+\gamma\sigma_{r+1,1}.
	\end{align}
\end{lemma}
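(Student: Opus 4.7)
The strategy mirrors that used for Eq.~\eqref{eq:relationTauDelta_0}: convert the claim into an identity of double forms by tensoring both sides with $\Theta_1$, then verify that identity by expanding according to the partition $\Iind = K \cup \overline{K} \cup L$ with $K=\{1\}$ and matching coefficients.

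First, I would expand the left-hand side. Writing $\omega_{r,0}\otimes\Theta_1 = \zeta_\Iind\, d\zeta_\Iind^{[n-r-1]}\,dz_\Iind^{[r]}$ and splitting each of $\zeta_\Iind$, $d\zeta_\Iind$, $dz_\Iind$ into its $K$-, $\overline K$-, and $L$-parts, I would use the binomial formula \eqref{eq:binomDFCalc} to get a sum indexed by the assignment of one of $\zeta_K,\zeta_{\overline K},\zeta_L$ and powers of $d\zeta_K, d\zeta_{\overline K},d\zeta_L$ (and analogously for $dz$). Since $|K|=|\overline K|=1$, these powers are restricted to $\{0,1\}$, and repeated occurrences of $dz_1$ (resp.\ $dz_{\bar 1}$) in the second slot collapse terms. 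Multiplying this expansion by $\zeta_{\bar 1}\,d\zeta_{\bar 1}$ (a bi-degree $(1,0)$ double form via tensoring with $1$) kills every contribution containing $d\zeta_{\overline K} = d\zeta_{\bar 1}\otimes dz_{\bar 1}$, because $d\zeta_{\bar 1}\wedge d\zeta_{\bar 1}=0$; this dramatically shortens the list of surviving terms.

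Next, I would expand the right-hand side using Eq.~\eqref{eq:sigmarkSep} and Eq.~\eqref{eq:deltarkSep} for $\sigma_{r,1}$, $\sigma_{r+1,1}$, $\delta_{r,1}$ (with $k=1$), and then decompose $\alpha = \alpha_K+\alpha_{\overline K}+\alpha_L$ and $\gamma = \gamma_K+\gamma_{\overline K}+\gamma_L$. Using elementary identities (reading off from the definitions) such as $\alpha_K\overline{\zeta_K}=\zeta_{\bar 1}^2\,dz_K$, $\gamma_K\overline{d\zeta_K}=0$, $\alpha_{\overline K}\overline{d\zeta_K}=0$, and their counterparts, most products collapse. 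After these simplifications, the residual $\alpha_L$-- and $\gamma_L$--terms would be rewritten using the two reduction formulas \eqref{eq:redAddLAlpGamTerms1} and \eqref{eq:redAddLAlpGamTerms2} from \autoref{prop:fillFullZetaDZeta}; these convert the pairings $d\zeta_L^{[n-r-1]}dz_L^{[r-k]}\alpha_L + d\zeta_L^{[n-r-2]}dz_L^{[r-k+1]}\gamma_L$ (and the $\zeta_L$--weighted variant) into multiples of $\nu_L$, exactly as in the proof of \eqref{eq:relationTauDelta_0}.

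Finally I would collect terms. The $\alpha_K, \alpha_{\overline K}$ and $\gamma_K, \gamma_{\overline K}$ contributions from $\alpha\sigma_{r,1}+\gamma\sigma_{r+1,1}$ will produce the $\nu_K\delta_{r,1}$ and $\nu_{\overline K}\delta_{r,1}$ pieces together with the surviving $\zeta_{\bar 1}d\zeta_{\bar 1}$--multiples of the expansion of $\omega_{r,0}\otimes\Theta_1$, while the combined $\alpha_L$--$\gamma_L$ piece will yield the $\nu_L\delta_{r,1}$ contribution; summing these three gives precisely $\nu\delta_{r,1}\otimes\Theta_1$, cancelling the $-\nu\delta_{r,1}$ on the right and leaving exactly the claimed $\zeta_{\bar 1}d\zeta_{\bar 1}\omega_{r,0}\otimes\Theta_1$.

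The main obstacle is the combinatorial bookkeeping rather than any conceptual difficulty: the RHS now has three summands instead of two (as in the proof of Eq.~\eqref{eq:relationTauDelta_0}), so more terms need to be tracked and more signs have to be checked when swapping factors past one another. Using the same organizing principle — decomposing according to the three blocks $K$, $\overline K$, $L$ and handling $\alpha_L, \gamma_L$ via \eqref{eq:redAddLAlpGamTerms1}--\eqref{eq:redAddLAlpGamTerms2} — keeps the calculation manageable and parallels the argument already carried out for $\tau_{r,1}$.
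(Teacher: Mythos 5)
Your overall plan---tensor with $\Theta_1$, split over the blocks $K$, $\overline{K}$, $L$, expand via \eqref{eq:sigmarkSep} and \eqref{eq:deltarkSep}, and treat the $L$-block with \eqref{eq:redAddLAlpGamTerms1}--\eqref{eq:redAddLAlpGamTerms2}---is the same toolkit the paper uses; the paper merely organizes it differently, first proving the un-multiplied identity $d\zeta_{\bar 1}\omega_{r,0}=-\zeta_1\delta_{r,1}+dz_1\sigma_{r,1}+d\zeta_1\sigma_{r+1,1}$ and then multiplying by $\zeta_{\bar1}$ and completing the resulting $K$-parts $-\nu_K\delta_{r,1}+\alpha_K\sigma_{r,1}+\gamma_K\sigma_{r+1,1}$ to the full $\nu,\alpha,\gamma$. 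Your observation that multiplication by $\zeta_{\bar1}d\zeta_{\bar1}$ kills every term of $\omega_{r,0}\otimes\Theta_1$ containing $d\zeta_{\overline{K}}$ is correct, as is $\alpha_K\overline{\zeta_K}=\zeta_{\bar1}^2 dz_K$.

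There is, however, a concrete error in the simplifications you propose, and it sits exactly where the identity is non-trivial. Since $\overline{d\zeta_K}=d\zeta_{\bar1}\otimes dz_1$, $\gamma_K=\zeta_{\bar1}d\zeta_1$ and $\alpha_{\overline{K}}=\zeta_1\,dz_{\bar1}$, neither $\gamma_K\overline{d\zeta_K}$ nor $\alpha_{\overline{K}}\overline{d\zeta_K}$ vanishes; the vanishing that is actually available is $\gamma_{\overline{K}}\sigma_{r+1,1}=0$ (both factors carry $d\zeta_{\bar1}$ in the first slot). In particular $\alpha_{\overline{K}}\sigma_{r,1}$ must not be discarded: one has $\nu_K\delta_{r,1}+(-1)^n\sigma_{r,1}\alpha_{\overline{K}}=0$, i.e.\ $\alpha_{\overline{K}}\sigma_{r,1}=\nu_K\delta_{r,1}$, which the paper imports from \cite{Kotrbaty2022}*{Lem.~5.3(b)}, and this term is precisely what supplies the second copy of $\nu_K\delta_{r,1}$ needed to pass from the $-\nu_K\delta_{r,1}$ produced by the $K$-part of the left-hand side to the full $-\nu\delta_{r,1}=-(2\nu_K+\nu_L)\delta_{r,1}$; the reductions \eqref{eq:redAddLAlpGamTerms1}--\eqref{eq:redAddLAlpGamTerms2} only account for the $\nu_L$-portion. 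If you delete the $\alpha_{\overline{K}}$-contribution as your listed identities suggest, the coefficients of $\delta_{r,1}$ on the two sides do not match and the final collection fails (indeed your closing paragraph, which has the $\alpha_{\overline{K}}$-piece producing a $\nu$-term, contradicts the earlier claim that it vanishes). The gap is repairable: the identity $\alpha_{\overline{K}}\sigma_{r,1}=\nu_K\delta_{r,1}$ can be verified by the same kind of block expansion (the second summand of $\sigma_{r,1}$ dies against $dz_{\bar1}$, the first produces $\nu_K\delta_{r,1}$), or simply quoted; but it must be added to your toolkit for the coefficient matching to close.
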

\begin{proof}
	\begin{extracalc}
		\begin{align*}
			d\zeta_{\overline{1}} \zeta_{\Iind} d\zeta_{\Iind}^{[n-r-1]}dz_{\Iind}^{[r]} =&d\zeta_{\overline{1}} \left(\zeta_{K} d\zeta_{\Iind_1}^{[n-r-1]}dz_{\Iind_1}^{[r]} + \zeta_{\Iind_1}d\zeta_K d\zeta_{\Iind_1}^{[n-r-2]}dz_{\Iind_1}^{[r]} + \zeta_{\Iind_1} d\zeta_{\Iind_1}^{[n-r-1]}dz_Kdz_{\Iind_1}^{[r-1]}\right)\\
			=&d\zeta_{\overline{1}} \left(\zeta_{K} d\zeta_{L}^{[n-r-1]}dz_{\Iind_1}^{[r]} + \zeta_{\Iind_1}d\zeta_K d\zeta_{L}^{[n-r-2]}dz_{\Iind_1}^{[r]} + \zeta_{\Iind_1} d\zeta_{L}^{[n-r-1]}dz_Kdz_{\Iind_1}^{[r-1]}\right)\\
			=&d\zeta_{\overline{1}} \left(\zeta_{K} d\zeta_{L}^{[n-r-1]}dz_{\overline{K}}dz_{L}^{[r-1]} \right.\\
			& + \zeta_{\overline{K}}d\zeta_K d\zeta_{L}^{[n-r-2]}dz_{L}^{[r]} + \zeta_{L}d\zeta_K d\zeta_{L}^{[n-r-2]}dz_{\overline{K}}dz_{L}^{[r-1]}\\
			& \left. + \zeta_{\overline{K}} d\zeta_{L}^{[n-r-1]}dz_Kdz_{L}^{[r-1]} + \zeta_{L} d\zeta_{L}^{[n-r-1]}dz_Kdz_{\overline{K}}dz_{L}^{[r-2]}\right)
		\end{align*}
	\end{extracalc}
	Writing $K=\{1\}$, $L = \Iind\setminus\{1,\overline{1}\}$, a short calculation shows
	\begin{align*}
		d\zeta_{\overline{1}}\omega_{r,0}\otimes \Theta_1
		=&d\zeta_{\overline{1}}\zeta_K dz_{\overline{K}}d\zeta_L^{[n-r-1]}dz_L^{[r-1]}\\
		&+d\zeta_{\overline{1}}\zeta_{\overline{K}}dz_Kd\zeta_L^{[n-r-1]}dz_L^{[r-1]}+d\zeta_{\overline{1}}\zeta_{\overline{K}}d\zeta_Kd\zeta_L^{[n-r-2]}dz_L^{[r]}\\
		&+d\zeta_{\overline{1}}d\zeta_{K}dz_{\overline{K}}\zeta_{L}d\zeta_L^{[n-r-2]}dz_L^{[r-1]}+d\zeta_{\overline{1}}dz_{K}dz_{\overline{K}}\zeta_{L}d\zeta_L^{[n-r-1]}dz_L^{[r-2]}\\
		=&-dz_{\overline{1}}(\zeta_1\otimes dz_1)(d\zeta_{\overline{1}}\otimes dz_{\overline{1}})d\zeta_L^{[n-r-1]}dz_L^{[r-1]}\\
		&-dz_1\zeta_{\overline{K}}\overline{d\zeta_K}d\zeta_L^{[n-r-1]}dz_L^{[r-1]}-d\zeta_{1}\zeta_{\overline{K}}\overline{d\zeta_K}d\zeta_L^{[n-r-2]}dz_L^{[r]}\\
		&-d\zeta_{1}\overline{d\zeta_{K}}dz_{\overline{K}}\zeta_{L}d\zeta_L^{[n-r-2]}dz_L^{[r-1]}-dz_1\overline{d\zeta_{K}}dz_{\overline{K}}\zeta_{L}d\zeta_L^{[n-r-1]}dz_L^{[r-2]}\\
		=&-dz_{\overline{1}}(\zeta_1\otimes dz_1)(d\zeta_{\overline{1}}\otimes dz_{\overline{1}})d\zeta_L^{[n-r-1]}dz_L^{[r-1]}\\
		&+dz_1\sigma_{r,1}\otimes\Theta_1+d\zeta_1\sigma_{r+1,1}\otimes\Theta_1,
	\end{align*}
	where we used Eq.~\eqref{eq:sigmarkSep} in the last step. On the other hand, Eq.\eqref{eq:deltarkSep} shows that
	\begin{align*}
		\delta_{r,1}\otimes\Theta_1=&-\overline{d\zeta_K}d\zeta_l^{[n-r-1]}dz_L^{[r-1]}dz_{\overline{K}}\\
		=&(dz_{\overline{1}}\otimes dz_1)(d\zeta_{\overline{1}}\otimes dz_{\overline{1}})d\zeta_L^{[n-r-1]}dz_L^{[r-1]},
	\end{align*}
	so
	\begin{align*}
		d\zeta_{\bar 1}\omega_{r,0}=-\zeta_1 \delta_{r,1}+dz_1\sigma_{r,1}+d\zeta_1\sigma_{r+1,1}.
	\end{align*}
	In particular, and since by \cite{Kotrbaty2022}*{Lem.~5.3(b)} $\delta_{r,1}\nu_{K} + (-1)^n \sigma_{r,1} \alpha_{\overline{K}} = 0$,
	\begin{align}
		\label{eq:intermediateStepRumin1}
		\begin{split}
			\zeta_{\overline{1}}d\zeta_{\overline{1}}\omega_{r,0}=&-\nu_K \delta_{r,1}+\alpha_K\sigma_{r,1}+\gamma_K\sigma_{r+1,1}\\
			=&-2\nu_K \delta_{r,1}-\sigma_{r,1}\alpha_{\overline{K}}(-1)^n+\alpha_K\sigma_{r,1}+\gamma_K\sigma_{r+1,1}.
		\end{split}
	\end{align}
	We may further apply Eq.s~\eqref{eq:redAddLAlpGamTerms1} and \eqref{eq:redAddLAlpGamTerms2}, as well as Eq.s~\eqref{eq:deltarkSep} and \eqref{eq:sigmarkSep}, to obtain
	\begin{align*}
		\gamma_L\sigma_{r+1,1}\otimes\Theta_1=&\gamma_L\overline{d\zeta_K}\zeta_{\overline{K}}d\zeta_L^{[n-r-2]}dz_L^{[r]}-\gamma_L\overline{d\zeta_{K}}dz_{\overline{K}}\zeta_{L}d\zeta_L^{[n-r-2]}dz_L^{[r-1]}\\
		=&-\alpha_L\overline{d\zeta_K}\zeta_{\overline{K}}d\zeta_L^{[n-r-1]}dz_L^{[r-1]}+\alpha_L\overline{d\zeta_{K}}dz_{\overline{K}}\zeta_{L}d\zeta_L^{[n-r-1]}dz_L^{[r-2]}\\
		&-\nu_L\overline{d\zeta_{K}}dz_{\overline{K}}d\zeta_L^{[n-r-1]}dz_L^{[r-1]}\\
		=&-\alpha_L\sigma_{r,1}\otimes \Theta_1+(-1)^{n-1}\nu_L\delta_{r,1}\otimes\Theta_1,
	\end{align*}
	so using that $\gamma_{\overline{K}}\sigma_{r+1,1}=0$, we may rewrite Eq.~\eqref{eq:intermediateStepRumin1} to obtain
	\begin{align*}
		\zeta_{\overline{1}}d\zeta_{\overline{1}}\omega_{r,0}=&-2\nu_K \delta_{r,1}-\sigma_{r,1}\alpha_{\overline{K}}(-1)^n+\alpha_K\sigma_{r,1}+(\gamma-\gamma_L)\sigma_{r+1,1}\\
		=&-(2\nu_K+\nu_L)\delta_{r,1}+(\alpha_K+\alpha_{\overline{K}}+\alpha_L)\sigma_{r,1}+\gamma\sigma_{r+1,1}\\
		=&-\nu\delta_{r,1}+\alpha\sigma_{r,1}+\gamma\sigma_{r+1,1}.
	\end{align*}
\end{proof}

Next, we need the following relations for $\theta_{r,k}$ (as defined in Eq.~\eqref{eq:defThetark}) from \cite{Kotrbaty2022}.
\begin{lemma}[\cite{Kotrbaty2022}]
	Let $1\leq k\le\min\{r,n-r\}$. Then the following relation holds in $\Omega^*(S\RR^n)$:
	\begin{align}
		\label{eq:theta_dalpha}
		\theta_{r,k}d\alpha=&\delta_{r,k}+(-1)^n\sigma_{r,k}\alpha
	\end{align}
	The following relations hold on $\RR^n\times\RR^n$:
	\begin{align}	
		\label{eq:dzeta_theta}
		d\zeta_{\bar1}\theta_{r,k}=&\zeta_{\bar1}\sigma_{r,k}\\
		\label{eq:dTheta}
		d\theta_{r,k}=&k\sigma_{r,k}+(-1)^{k+1}(n-r-k+1)\tau_{r,k}.
	\end{align}
\end{lemma}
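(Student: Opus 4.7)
All three identities are local algebraic relations between products of the double forms introduced in \autoref{sec:doubleforms}, and my plan is to prove each by direct computation using the explicit formulas for $\theta_{r,k}$, $\sigma_{r,k}$, $\tau_{r,k}$, and $\delta_{r,k}$ together with the Leibniz rule for double forms and the standard cancellations coming from $d^2 = 0$.

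For \eqref{eq:dzeta_theta}, I would first note that the factor $\zeta_J d\zeta_J^{[n-r-k]}dz_J^{[r-1]}$ of $\theta_{r,k}\otimes\Theta_1$ involves only indices from $J$, so it commutes up to sign with $d\zeta_{\bar 1}$. It therefore suffices to verify the simpler identity
\begin{align*}
    d\zeta_{\bar 1}\cdot\overline{\zeta_K}\,\overline{d\zeta_K}^{[k-1]} \;=\; \zeta_{\bar 1}\,\overline{d\zeta_K}^{[k]}.
\end{align*}
Expanding $\overline{\zeta_K} = \sum_{i\in K}\zeta_{\bar i}\otimes dz_i$ and $\overline{d\zeta_K}^{[k-1]}$ as a sum over $(k-1)$-element subsets of $\overline K$, the term with $i=1$ reassembles into $\zeta_{\bar 1}\,\overline{d\zeta_K}^{[k]}$, while the terms with $i\in K\setminus\{1\}$ vanish because the factor $dz_i$ already appears in the unique surviving summand of $\overline{d\zeta_K}^{[k-1]}$ (namely the one missing $d\zeta_{\bar 1}$, since all other summands are killed by $d\zeta_{\bar 1}$).

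For \eqref{eq:dTheta}, I would apply Leibniz to $\theta_{r,k}\otimes\Theta_1$; since $\overline{d\zeta_K}$, $d\zeta_J$, $dz_J$ and $\Theta_1$ are all closed, only the derivatives of $\overline{\zeta_K}$ and $\zeta_J$ contribute. Using $d\overline{\zeta_K}=\overline{d\zeta_K}$ and $\overline{d\zeta_K}\wedge\overline{d\zeta_K}^{[k-1]} = k\,\overline{d\zeta_K}^{[k]}$, the first contribution is $k\sigma_{r,k}\otimes\Theta_1$. The second contribution acquires a sign $(-1)^{k-1}$ from commuting $d$ past the factor $\overline{d\zeta_K}^{[k-1]}$ (which has first-component degree $k-1$), and combined with $d\zeta_J\wedge d\zeta_J^{[n-r-k]}=(n-r-k+1)d\zeta_J^{[n-r-k+1]}$ yields $(-1)^{k+1}(n-r-k+1)\tau_{r,k}\otimes\Theta_1$.

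The identity \eqref{eq:theta_dalpha} is the most delicate, since it only holds after restriction to $S\RR^n$. I would compute $d\alpha = \sum_{i\in\Iind} d\zeta_{\bar i}\wedge dz_i$ and wedge each summand with $\theta_{r,k}$, splitting the sum according to the partition $\Iind = K\cup \overline K\cup L$ (plus the middle index in odd dimensions). The indices in $K$ and $\overline K$ fill in the missing $d\zeta$- and $dz$-factors inside $\overline{\zeta_K}\overline{d\zeta_K}^{[k-1]}\zeta_J d\zeta_J^{[n-r-k]}dz_J^{[r-1]}$ and will (together with the reduction identities \eqref{eq:redZetaGamma} and \eqref{eq:redZetaZAlpha}) reconstruct the form $\delta_{r,k}$; the sum over $i\in L$ will yield $(-1)^n\sigma_{r,k}\alpha$ after applying the combined identities \eqref{eq:redAddLAlpGamTerms1} and \eqref{eq:redAddLAlpGamTerms2}. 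Since the identity is only asserted on $S\RR^n$, I would check equality after multiplying both sides by $\gamma$ on $\RR^n\times\RR^n$, using $\nu = 1$ and invoking \cite{Kotrbaty2022}*{Lem.~4.1}. The main obstacle in the whole lemma is the bookkeeping for \eqref{eq:theta_dalpha}: keeping track of the signs produced by rearranging wedge products of double forms of different bidegrees, and applying the right reduction identity from \autoref{prop:fillFullZetaDZeta} to each sub-sum is where most of the work sits — but no single step is conceptually difficult.
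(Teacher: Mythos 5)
The paper gives no proof here: all three identities are cited as Corollary~5.4 and Proposition~5.5(25)--(26) of \cite{Kotrbaty2022}. You are therefore attempting to reconstruct proofs that the paper chooses not to reproduce, so the comparison is really against the reference, not the paper.

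Your arguments for \eqref{eq:dzeta_theta} and \eqref{eq:dTheta} are correct. For \eqref{eq:dzeta_theta}, the reduction to $d\zeta_{\bar 1}\overline{\zeta_K}\,\overline{d\zeta_K}^{[k-1]} = \zeta_{\bar 1}\overline{d\zeta_K}^{[k]}$ is legitimate — in fact one simply multiplies both sides by the common right factor $\zeta_J d\zeta_J^{[n-r-k]}dz_J^{[r-1]}$, so no commuting is needed — and your cardinality argument that the $i\ne 1$ terms vanish is exactly right. For \eqref{eq:dTheta}, the Leibniz computation with only $\overline{\zeta_K}$ and $\zeta_J$ contributing, the coefficient $k$ from $\overline{d\zeta_K}\,\overline{d\zeta_K}^{[k-1]} = k\,\overline{d\zeta_K}^{[k]}$, the sign $(-1)^{k-1}$ from moving $d$ across $\overline{d\zeta_K}^{[k-1]}$, and the coefficient $n-r-k+1$ from $d\zeta_J\, d\zeta_J^{[n-r-k]}$ all line up with the claim.

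The sketch for \eqref{eq:theta_dalpha} has a genuine gap. You propose to handle the $i\in L$ contribution via \eqref{eq:redAddLAlpGamTerms1} and \eqref{eq:redAddLAlpGamTerms2}, but those identities are about wedging with $\alpha_L = \sum_{i\in L}\zeta_{\bar i}\,dz_i$ and $\gamma_L = \sum_{i\in L}\zeta_{\bar i}\,d\zeta_i$, whereas $d\alpha = \sum_{i\in\Iind} d\zeta_{\bar i}\,dz_i$ has bidegree $(2,0)$ and carries no $\zeta_{\bar i}$ factor. To bring those reductions into play you first have to produce a $\zeta_{\bar i}$ from somewhere in $\theta_{r,k}$ to contract against one of the two differentials in $d\zeta_{\bar i}\,dz_i$, and the factors available ($\overline{\zeta_K}$, $\zeta_J$) only supply $\zeta_{\bar i}$ for indices in $K$ or $J$, not for all of $\Iind$ uniformly — so the index-block bookkeeping is more delicate than your sketch admits. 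You should also expect to need a trade-off identity of the type used in the proof of \eqref{eq:OmegaNull_dzeta}, where $\nu_K\delta_{r,1}$ is exchanged against $\sigma_{r,1}\alpha_{\overline K}$ via \cite{Kotrbaty2022}*{Lem.~5.3(b)}; that ingredient is absent from your plan. The overall strategy (split along $K\cup\overline K\cup L$, multiply by $\gamma$, invoke \cite{Kotrbaty2022}*{Lem.~4.1}) is the right one, but as written the middle of the computation does not close.
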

\begin{proof}
	Eq.~\eqref{eq:theta_dalpha} is the content of \cite{Kotrbaty2022}*{Cor.~5.4}, Eq.~\eqref{eq:dzeta_theta} is \cite{Kotrbaty2022}*{Prop.~5.5(25)}, and Eq.~\eqref{eq:dTheta} is \cite{Kotrbaty2022}*{Prop.~5.5(26)}.
\end{proof}

We are now in position to determine the Rumin differential of $\omega_{r,0,m}=\zeta_{\bar 1}^m\omega_{r,0}$.
\begin{lemma}\label{lem:DintrinsicVols}
	For every $m\in\NN_0$,
	\begin{align*}
		D\omega_{r,0,m}=&d(\omega_{r,0,m}+m(-1)^n\zeta_{\bar 1}^{m-2}\theta_{r,1}\alpha)\\
		=&(m-1)(-1)^n\zeta_{\bar1}^{m-2}\left[m\sigma_{r,1}+(n-r)\tau_{r,1}\right]\alpha.
	\end{align*}
	In particular, for $m\ge 2$,
	\begin{align*}
		D\omega_{r,0,m}=-\frac{m-1}{n+m-r-2}D\omega_{r,1,m}.
	\end{align*}
\end{lemma}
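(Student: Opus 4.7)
The plan is to verify the first displayed identity by direct computation: once $d(\omega_{r,0,m}+m(-1)^n\zeta_{\bar 1}^{m-2}\theta_{r,1}\alpha)$ is shown to be a multiple of $\alpha$, uniqueness of the Rumin primitive forces it to equal $D\omega_{r,0,m}$. All manipulations take place on $\RR^n\times\RR^n$ and are then restricted to $S\RR^n$, where $\nu=1$ and $\gamma=0$; the final ratio will be read off by comparison with Eq.~\eqref{eq:DOmrkm} applied with $k=1$. I will work under the implicit assumption $r\geq 1$, since the forms $\sigma_{r,1}$, $\tau_{r,1}$, and $\theta_{r,1}$ in the statement require it.

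First I would apply the Leibniz rule to $\omega_{r,0,m}=\zeta_{\bar 1}^m\omega_{r,0}$ and substitute \eqref{eq:differenitalOmega_0} together with the $S\RR^n$-restriction of \eqref{eq:OmegaNull_dzeta}, namely $\zeta_{\bar 1}d\zeta_{\bar 1}\omega_{r,0}=-\delta_{r,1}+\alpha\sigma_{r,1}$, to obtain
\[
d\omega_{r,0,m} = m\zeta_{\bar 1}^{m-2}(-\delta_{r,1}+\alpha\sigma_{r,1}) + (n-r)\zeta_{\bar 1}^m\delta_{r,0}.
\]
Next, expanding $d(\zeta_{\bar 1}^{m-2}\theta_{r,1}\alpha)$ by Leibniz (noting that $\theta_{r,1}$ has degree $n-2$) and applying \eqref{eq:dzeta_theta}, \eqref{eq:dTheta} for $k=1$ (giving $d\theta_{r,1}=\sigma_{r,1}+(n-r)\tau_{r,1}$), and \eqref{eq:theta_dalpha}, a routine regrouping produces
\[
d(\zeta_{\bar 1}^{m-2}\theta_{r,1}\alpha) = \zeta_{\bar 1}^{m-2}\bigl[m\sigma_{r,1}\alpha+(n-r)\tau_{r,1}\alpha+(-1)^n\delta_{r,1}\bigr].
\]

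Adding $d\omega_{r,0,m}$ and $m(-1)^n$ times this expression, the $\delta_{r,1}$ contributions cancel exactly, and one is left with $m\zeta_{\bar 1}^{m-2}\alpha\sigma_{r,1}+(n-r)\zeta_{\bar 1}^m\delta_{r,0}+m(-1)^n\zeta_{\bar 1}^{m-2}[m\sigma_{r,1}+(n-r)\tau_{r,1}]\alpha$. The key step is to absorb the remaining $\delta_{r,0}$ contribution: combining \eqref{eq:relationDelta_0Omega_0} and \eqref{eq:relationTauDelta_0} on $S\RR^n$ yields $\zeta_{\bar 1}^2\delta_{r,0}=\alpha\tau_{r,1}$, so $(n-r)\zeta_{\bar 1}^m\delta_{r,0}=(n-r)\zeta_{\bar 1}^{m-2}\alpha\tau_{r,1}$. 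Swapping $\alpha$ past $\sigma_{r,1}$ and $\tau_{r,1}$ (both $(n-1)$-forms) contributes signs $(-1)^{n-1}$, and the three terms coalesce into
\[
(-1)^n\zeta_{\bar 1}^{m-2}\bigl[m(m-1)\sigma_{r,1}+(m-1)(n-r)\tau_{r,1}\bigr]\alpha = (m-1)(-1)^n\zeta_{\bar 1}^{m-2}\bigl[m\sigma_{r,1}+(n-r)\tau_{r,1}\bigr]\alpha,
\]
as claimed. This is manifestly a multiple of $\alpha$, hence equals $D\omega_{r,0,m}$; comparison with Eq.~\eqref{eq:DOmrkm} (with $k=1$) and the value $c_{r,m}=(-1)^{n+1}(n+m-r-2)$ gives $D\omega_{r,0,m}=-\frac{m-1}{n+m-r-2}D\omega_{r,1,m}$ for $m\geq 2$.

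The main obstacle is the disposal of the stray $\delta_{r,0}$ term, accomplished via $\zeta_{\bar 1}^2\delta_{r,0}=\alpha\tau_{r,1}$. Without this identity the coefficients of $\sigma_{r,1}$ and $\tau_{r,1}$ in the sum would not coalesce into the common factor $(m-1)$ that appears in the statement, the expression would not be a scalar multiple of $D\omega_{r,1,m}$, and in fact the exterior differential would not even be a multiple of $\alpha$. Every other step is routine application of the product rule and the identities established in the previous subsection.
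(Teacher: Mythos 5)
Your proposal is correct and follows essentially the same route as the paper's proof: Leibniz expansion of $d(\omega_{r,0,m}+m(-1)^n\zeta_{\bar 1}^{m-2}\theta_{r,1}\alpha)$ using Eqs.~\eqref{eq:OmegaNull_dzeta}, \eqref{eq:dzeta_theta}, \eqref{eq:differenitalOmega_0}, \eqref{eq:dTheta}, \eqref{eq:theta_dalpha}, cancellation of the $\delta_{r,1}$ terms, disposal of the $\delta_{r,0}$ term via Eq.~\eqref{eq:relationTauDelta_0} restricted to $S\RR^n$ (your additional citation of Eq.~\eqref{eq:relationDelta_0Omega_0} is superfluous but harmless), and comparison with Eq.~\eqref{eq:DOmrkm} for $k=1$. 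The only cosmetic differences are that you substitute $\nu\equiv 1$, $\gamma|_{S\RR^n}=0$ earlier than the paper does and you make explicit the uniqueness argument identifying the result with $D\omega_{r,0,m}$, which the paper leaves implicit.
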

\begin{proof}
	First, since $\omega_{r,0,m} = \zeta_{\overline{1}}^m \omega_{r,0}$, the Leibniz rule for differential forms implies
	\begin{align*}
		d(\omega_{r,0,m}+m(-1)^n\zeta_{\bar 1}^{m-2}\theta_{r,1}\alpha)
		=&m\zeta_{\bar 1}^{m-1}d\zeta_{\bar1}\omega_{r,0} + m(m-2)(-1)^n\zeta_{\bar1}^{m-3}d\zeta_{\bar1}\theta_{r,1}\alpha\\
		& +\zeta_{\bar1}^m d\omega_{r,0}+m(-1)^n\zeta_{\bar 1}^{m-2}d\theta_{r,1}\alpha+m\zeta_{\bar 1}^{m-2}\theta_{r,1}d\alpha.
	\end{align*}
	Concentrating on the single terms, Eqs.~\eqref{eq:OmegaNull_dzeta}, \eqref{eq:dzeta_theta}, \eqref{eq:differenitalOmega_0}, \eqref{eq:dTheta}, and \eqref{eq:theta_dalpha} yield
	\begin{align*}
		m\zeta_{\bar 1}^{m-1}d\zeta_{\bar1}\omega_{r,0} &= m\zeta_{\bar 1}^{m-2}\left(-\nu \delta_{r,1} + \alpha \sigma_{r,1} + \gamma \sigma_{r+1,1} \right),\\
		m(m-2)(-1)^n\zeta_{\bar1}^{m-3}d\zeta_{\bar1}\theta_{r,1}\alpha &=m(m-2)(-1)^n\zeta_{\bar1}^{m-2}\sigma_{r,1}\alpha, \\
		\zeta_{\bar1}^m d\omega_{r,0}&=(n-r)\zeta_{\bar1}^m \delta_{r,0},\\
		m(-1)^n\zeta_{\bar 1}^{m-2}d\theta_{r,1}\alpha &=m(-1)^n\zeta_{\bar 1}^{m-2}\left(\sigma_{r,1}+(n-r)\tau_{r,1}\right)\alpha,\\
		m\zeta_{\bar 1}^{m-2}\theta_{r,1}d\alpha &=m\zeta_{\bar 1}^{m-2}\left(\delta_{r,1}+(-1)^n\sigma_{r,1}\alpha\right).
	\end{align*}
	We therefore obtain by summing up and combining the terms,
	\begin{align*}
		&d(\omega_{r,0,m}+m(-1)^n\zeta_{\bar 1}^{m-2}\theta_{r,1}\alpha)\\
		=&m\zeta_{\bar 1}^{m-2} (-1)^n\left( -\sigma_{r,1} + (m-2)\sigma_{r,1} + \sigma_{r,1}+(n-r)\tau_{r,1} + \sigma_{r,1} \right)\alpha\\
		&+ \zeta_{\bar 1}^{m-2}\left(-m \nu \delta_{r,1} + m\gamma \sigma_{r+1,1} + (n-r)\zeta_{\bar1}^2 \delta_{r,0} + m\delta_{r,1} \right)\\
		=&m\zeta_{\bar 1}^{m-2} (-1)^n\left( (m-1)\sigma_{r,1}+(n-r)\tau_{r,1} \right)\alpha\\
		&+ \zeta_{\bar 1}^{m-2}\left((n-r)\zeta_{\bar1}^2 \delta_{r,0} \right),
	\end{align*}
	where we used that $\nu \equiv 1$ and $\gamma|_{S\RR^n} = 0$ on $S\RR^n$. Next, by Eq.~\eqref{eq:relationTauDelta_0},
	\begin{align*}
		\zeta_{\bar1}^2 \delta_{r,0} = \alpha\tau_{r,1}+ \gamma\tau_{r+1,1} = \alpha\tau_{r,1}, \quad \text{ on $S\RR^n$},
	\end{align*}
	which shows the claim:
	\begin{align*}
		d(\omega_{r,0,m}+m(-1)^n\zeta_{\bar 1}^{m-2}\theta_{r,1}\alpha)
		=(m-1)\zeta_{\bar 1}^{m-2} (-1)^n\left( m\sigma_{r,1}+(n-r)\tau_{r,1} \right)\alpha.
	\end{align*}
\end{proof}

\begin{remark}
	\label{remark:holomorphicDomega0}
	As in \autoref{remark:holomorphicDomegakm}, we may define the relevant forms for arbitrary $m\in\CC$ on the open set of $S\RR^n$ where $\zeta_1\notin (-\infty,0]$. In this case \autoref{lem:DintrinsicVols} holds for all $m\in\mathbb{C}$.
\end{remark}

\begin{corollary}
	\label{cor:MovingM_intrinsicVolume}
	For $1\leq r\leq n-1$ and $m\in\NN_0$,
	\begin{align*}
		\mathcal{L}_{\widetilde{Y_{11}}}D\omega_{r,0,m}=(m-1)\frac{n+m-r}{m+1}D\omega_{r,0,m+2}=-(m-1)D\omega_{r,1,m+2}.
	\end{align*}
\end{corollary}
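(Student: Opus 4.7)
Note that the element $Y_{11} \in \sln(n,\RR)$ acts on $S\RR^n$ through the one-parameter family of contactomorphisms $G_{\exp(-tY_{11})}$ from \autoref{sec:GLRnrepOnVal}. Since the Rumin differential is intertwined with pullbacks along contactomorphisms (as recalled in \autoref{sec:ValRepByIntNC}), differentiating this intertwining relation at $t=0$ gives $\LieDer_{\widetilde{Y}_{11}} \circ D = D \circ \LieDer_{\widetilde{Y}_{11}}$ on $\Omega^{*}(S\RR^n)$. The second equality in the statement is a direct consequence of \autoref{lem:DintrinsicVols} applied at $m+2 \geq 2$, which rearranges to $\frac{n+m-r}{m+1}\, D\omega_{r,0,m+2} = -D\omega_{r,1,m+2}$. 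It therefore suffices to establish the equivalent identity
\begin{align*}
	\LieDer_{\widetilde{Y}_{11}} D\omega_{r,0,m} = -(m-1)\, D\omega_{r,1,m+2}.
\end{align*}

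For $m \geq 2$, the identity $D\omega_{r,0,m} = -\tfrac{m-1}{n+m-r-2}\, D\omega_{r,1,m}$ from \autoref{lem:DintrinsicVols}, combined with the commutation relation and \autoref{cor:LieY11onOmegarkm} applied at $k=1$, gives
\begin{align*}
	\LieDer_{\widetilde{Y}_{11}} D\omega_{r,0,m}
	&= -\frac{m-1}{n+m-r-2}\, D\LieDer_{\widetilde{Y}_{11}} \omega_{r,1,m}\\
	&= -\frac{m-1}{n+m-r-2}\,(n-r+m-2)\, D\omega_{r,1,m+2}
	= -(m-1)\, D\omega_{r,1,m+2},
\end{align*}
as required. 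Note that no division by zero occurs here since $r \leq n-1$ and $m \geq 2$ force $n+m-r-2 \geq 1$.

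The main obstacle is the remaining cases $m \in \{0,1\}$: the auxiliary form $\zeta_{\bar 1}^{m-2}\theta_{r,1}\alpha$ used to express $D\omega_{r,0,m}$ in \autoref{lem:DintrinsicVols} is singular along $\{\zeta_{\bar 1} = 0\}$ when $m < 2$, so \autoref{lem:DintrinsicVols} cannot be applied directly to the left-hand side. To handle this, the plan is to invoke the holomorphic extensions from \autoref{remark:holomorphicExtensionOmega_rkm}, \autoref{remark:holomorphicDomegakm}, \autoref{remark:actionY11omegak}, and \autoref{remark:holomorphicDomega0}: on the open dense subset $U = \{(x,v) \in S\RR^n : \zeta_{\bar 1}(v) \notin (-\infty,0]\}$, the forms $\omega_{r,k,m}$ and the three identities used in the case $m \geq 2$ all extend to arbitrary $m \in \CC$. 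The computation above therefore holds verbatim on $U$ for every $m \in \CC$, and in particular specializes to the desired identity for $m \in \{0,1\}$. Since both sides of the claim define smooth forms on all of $S\RR^n$ whenever $m \in \NN_0$, and $U$ is dense in $S\RR^n$, the equality extends from $U$ to $S\RR^n$ by continuity.
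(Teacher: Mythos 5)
Your route is essentially the paper's: the paper's proof likewise avoids a second brute-force Lie-derivative computation and instead combines the commutation of $D$ with $\LieDer_{\widetilde{Y}_{11}}$, the formula of \autoref{lem:DintrinsicVols}, \autoref{cor:LieY11onOmegarkm}, and the holomorphic-in-$m$ extensions on the set $\zeta_{\bar 1}\notin(-\infty,0]$, concluding by density and continuity; your extension-free treatment of $m\ge 2$ and your derivation of the second equality from \autoref{lem:DintrinsicVols} applied at $m+2$ are correct. (A small imprecision: $Y_{11}$ lies in $\sln(n)_\CC$, not in $\sln(n,\RR)$; the commutation with $D$ follows from the real case by complex-linear extension, exactly as the paper uses it elsewhere.)

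The one step that does not hold ``verbatim on $U$ for every $m\in\CC$'' is the division by $n+m-r-2$: the relation $D\omega_{r,0,m}=-\tfrac{m-1}{n+m-r-2}D\omega_{r,1,m}$ is only available for $m\neq r+2-n$, and this excluded value lies in $\{0,1\}$ precisely when $r\in\{n-2,n-1\}$, i.e.\ inside the range your extension argument is supposed to cover. For $m=1$ this is harmless, since \autoref{lem:DintrinsicVols} gives $D\omega_{r,0,1}=0$, so both sides of the claim vanish. For $(r,m)=(n-2,0)$, however, the coefficient $c_{r,m}=(-1)^{n+1}(n+m-r-2)$ in Eq.~\eqref{eq:DOmrkm} vanishes and the intermediate identity degenerates, so you need one extra observation: at each fixed point of $U$, both sides of $\LieDer_{\widetilde{Y}_{11}}D\omega_{r,0,m}=-(m-1)D\omega_{r,1,m+2}$ are holomorphic functions of $m$ (they are polynomial-in-$m$ coefficients times $\zeta_{\bar 1}^{m-2}$), and since the identity holds for all $m$ off the single point $m=r+2-n$, it extends to that point by continuity in $m$. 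The paper's own proof treats this only implicitly (it excludes $m\in\{-(n-r-2),-1\}$ and then specializes to $m\in\NN_0$), so this is an easily closed gap rather than a flaw in the approach; with that one-line addition your argument is complete.
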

\begin{proof}
	This can in principle be calculated as in \autoref{sec:Ykkaction}, however, we will present a shorter argument. As mentioned in \autoref{remark:holomorphicDomegakm}, \autoref{remark:actionY11omegak} and \autoref{remark:holomorphicDomega0}, we can define the relevant forms on the open subset of $S\RR^n$ given by $\zeta_1\notin (-\infty,0]$ for arbitrary $m\in\CC$ and then the formulas for the Rumin differential in Eq.~\eqref{eq:DOmrkm} and \autoref{lem:DintrinsicVols} hold for all $m\in\CC$. Combining both formulas, we obtain
	\begin{align*}
		D(\zeta_{\bar1}^m\omega_{r,0})=&(-1)^n(m-1)\zeta_{\bar1}^{m-2}\left[m\sigma_{r,1}+(n-r)\tau_{r,1}\right]\alpha\\
		=&-\frac{m-1}{n+m-r-2}D(\zeta_{\bar1}^{m-2}\omega_{r,1})
	\end{align*}
	for all $m\in \CC\setminus\{-(n-r-2)\}$ for $\zeta_{\bar 1}\notin (-\infty,0]$. \autoref{remark:actionY11omegak} thus implies that
	\begin{align*}
		\mathcal{L}_{\widetilde{Y_{11}}}D(\zeta_{\bar1}^m\omega_{r,0})=&-\frac{m-1}{n+m-r-2}\mathcal{L}_{\widetilde{Y_{11}}}D(\zeta_{\bar1}^{m-2}\omega_{r,1})\\
		=&-(m-1)D(\zeta_{\bar1}^{m}\omega_{r,1})\\
		=&(m-1)\frac{n+m-r}{m+1}D(\zeta_{\bar1}^{m+2}\omega_{r,0})
	\end{align*}
	for $m\in \CC\setminus\{-(n-r-2),-1\}$ and $\zeta_{\bar 1}\notin (-\infty,0]$. In particular, the equations hold for $m\in\NN_0$ for $\zeta_{\bar1}\notin (-\infty,0]$, and then for all $\zeta_{\bar1}$ by continuity.
\end{proof}


\section{A new proof of Alesker's Irreducibility theorem}\label{sec:prfIrredThm}
This section contains the proofs of \autoref{mthm:imDIrred} and \autoref{mcor:ValslIrred}. We first recall some further background on infinite dimensional representations. Then we use \autoref{mthm:smoothValsByDiffform} and results from \cite{Bernig2007} to reduce the problem to showing that certain spaces of differential forms are algebraically irreducible $(\sln(n),\SO(n))$-modules. For these spaces, we first survey some known results, including the decomposition into $\SO(n)$-types and corresponding highest weight vectors in \autoref{sec:prelimImD}, and then use the calculations from \autoref{sec:actLieAlgdf} to show that they are algebraically irreducible $(\sln(n),\SO(n))$-modules in \autoref{sec:IrredDiffForm}. 

\subsection{Some remarks on infinite dimensional representations}
\label{sec:infDimRep}

We call a representation $\pi$ of a Lie group $G$ on a locally convex vector space $E$ continuous if the map
\begin{align*}
	G\times E&\rightarrow E\\
	(g,v)&\mapsto \pi(g)v
\end{align*}
is continuous. Then $v\in E$ is called a smooth vector if $G\to E$, $g\mapsto \pi(g)v$, is a smooth map. We will denote the space of smooth vectors by $E^\infty$. This space is $G$-invariant and naturally equipped with the Garding topology, which is stronger than the topology induced from $E$ and has the property that any element in $E^\infty$ is a smooth vector (see \cite{Warner1972}*{Section~4.4.1}). If $E$ is complete, then $E^\infty$ is dense in $E$ (compare \cite{Warner1972}*{Prop.~4.4.1.1}). Moreover, if $E$ is a Banach space, then $E^\infty$ is a Fr\'echet space. $E^\infty$ can naturally be equipped with an action of the Lie algebra $\mathfrak{g}$ of $G$ by setting
\begin{align*}
	d\pi(X)v:=\frac{d}{dt}\Big|_0\pi(\exp(tX))v
\end{align*}
for $v\in E^\infty$ and $X\in \mathfrak{g}$. In general, there is no direct correspondence between the representation of $G$ on $E$ and the representation of $\mathfrak{g}$ on $E^\infty$ -- for example the closure in $E$ of a $\mathfrak{g}$-invariant subspace of $E^\infty$ is not necessarily $G$-invariant. Nevertheless, the action of $\mathfrak{g}$ on $E^\infty$ may be used to simplify the study of the representation of $G$ on $E$.\\
If $K\subset G$ is a compact subgroup, then a vector $v\in E$ is called $K$-finite if $\{\pi(g)v:g\in K\}$ spans a finite dimensional subspace of $E$. We denote by $E^{K-\mathrm{fin}}$ the subspace of $K$-finite vectors. If $E$ is complete, then 
\begin{align*}
	E_K:=E^{K-\mathrm{fin}}\cap E^\infty
\end{align*}
is dense in $E$ (see  \cite{Warner1972}*{Thm.~4.4.3.1}) and invariant with respect to the representation of $\mathfrak{g}$ on $E^\infty$. Moreover, by construction $(E^\infty)_K=E_K$, so $E_K$ is also dense in $E^\infty$ with respect to the Garding topology. Since it is also invariant with respect to $K$, it thus naturally carries the structure of a $(\mathfrak{g},K)$-module (see \cite{Wallach1988}*{Section~3.3} for the general definition).\\
Recall that a continuous representation of $G$ on a locally convex vector space is called topologically irreducible if every nontrivial $G$-invariant subspace is dense in $E$. Similarly, a $(\mathfrak{g},K)$-module is called algebraically irreducible if it does not contain any nontrivial subspaces that are invariant under the action of both $\mathfrak{g}$ and $K$. \\
The following simple result is well-known. We include the argument for the convenience of the reader.
\begin{lemma}
	\label{lemma:EquivIrredu}
	Let $G$ be Lie group with Lie algebra $\mathfrak{g}$, $K\subset G$ a compact subgroup, and $E$ a continuous representation of $G$ on a complete locally convex vector space. 
	If $E_K$ is an algebraically irreducible $(\mathfrak{g},K)$-module, then $E$ is topologically irreducible.
\end{lemma}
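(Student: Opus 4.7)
The plan is to deduce irreducibility of the $G$-representation on $E$ from irreducibility of the $(\mathfrak{g},K)$-module $E_K$ by a standard density argument. I would start by passing from a nontrivial $G$-invariant subspace to a nontrivial closed $G$-invariant subspace (taking closures), and then show that such a subspace must already be all of $E$.

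Concretely, let $V\subset E$ be a nontrivial closed $G$-invariant subspace. Since $V$ is closed in a complete locally convex space, $V$ itself is complete, and since the $G$-action on $E$ is continuous, its restriction to $V$ defines a continuous representation of $G$ on $V$. The main preliminary step is to identify $V_K$ with $V\cap E_K$. The inclusion $V\hookrightarrow E$ is continuous, so every smooth (resp.\ $K$-finite) vector of $V$ is a smooth (resp.\ $K$-finite) vector of $E$; conversely, if $v\in V$ is a smooth vector of $E$, then $g\mapsto g\cdot v$ is a smooth map $G\to E$ taking values in the closed subspace $V$, hence is smooth as a map $G\to V$, and analogously $K$-finiteness is clearly inherited from $E$ to $V$. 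Therefore $V_K=V\cap E_K$.

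Applying the density statement cited from \cite{Warner1972}*{Thm.~4.4.3.1} to the continuous representation of $G$ on the complete space $V$, we obtain that $V_K$ is dense in $V$. Since $V\ne\{0\}$, this forces $V\cap E_K=V_K\ne\{0\}$. Next, one checks that $V\cap E_K$ is a $(\mathfrak{g},K)$-submodule of $E_K$: invariance under $K$ follows because $V$ is $G$-invariant, and invariance under $\mathfrak{g}$ follows because for $v\in V\cap E_K$ and $X\in\mathfrak{g}$ the curve $t\mapsto \pi(\exp(tX))v$ lies in the closed subspace $V$, so its derivative at $t=0$ also lies in $V$.

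The irreducibility hypothesis on $E_K$ now yields $V\cap E_K=E_K$, i.e.\ $E_K\subset V$. Since $E_K$ is dense in $E$ (again by \cite{Warner1972}*{Thm.~4.4.3.1}) and $V$ is closed, we conclude $V=E$, proving that $E$ is topologically irreducible. The only point that requires genuine care is the identification $V_K=V\cap E_K$, i.e.\ the compatibility of smooth and $K$-finite vectors with the closed $G$-invariant subspace $V$; once this is in place the argument is a direct density chase.
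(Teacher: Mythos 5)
Your proposal is correct and follows essentially the same route as the paper's proof: pass to a nontrivial closed $G$-invariant subspace $F$, note that $F_K$ is a nonzero $(\mathfrak{g},K)$-submodule of $E_K$ dense in $F$, and conclude $F_K=E_K$, hence $F=E$ by density. The extra details you supply (the identification $V_K=V\cap E_K$ and the completeness of the closed subspace needed to invoke the density theorem) are exactly the points the paper leaves implicit, and they are handled correctly.
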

\begin{proof}
	If $F\subset E$ is a nontrivial and closed $G$-invariant subspace, then $F_K$ is a nontrivial $(\mathfrak{g},K)$-invariant subspace of $E_K$ and dense in $F$. Since $E_K$ is algebraically irreducible, $F_K=E_K$, and thus $F\subset E$ is dense, which shows the claim.
\end{proof}
\autoref{lemma:EquivIrredu} is usually applied to so called admissible representations. Let $\widehat{K}$ denote the set of equivalence classes of irreducible finite dimensional representations of $K$. Given an irreducible representation  $\delta\in\widehat{K}$, we let $E[\delta]\subset E^{K-\mathrm{fin}}$ denote the sum of all irreducible $K$-subrepresentations in $E$ equivalent to $\delta$, and call $\overline{E[\delta]}$ the $\delta$-isotypical component of $E$. Then $E_K=\bigoplus_{\delta\in \widehat{K}}E^\infty[\delta]$. We call $E$ a $K$-admissible representation if $\dim E[\delta]<\infty$ for every $\delta\in \widehat{K}$. In this case, $\overline{E[\delta]}=E[\delta]=E^\infty[\delta]$ (compare \cite{Warner1972}*{Cor.~4.4.3.3}).\\
Note that for a $K$-admissible representation, the structure of the $(\mathfrak{g},K)$-module $E_K$ may be understood by investigating how the different components $E[\delta]$ are related under the action of the Lie algebra $\mathfrak{g}$, which for each individual component is a finite dimensional problem. This approach has been used for a variety of groups, compare \cite{Howe1999} and the references therein.

\begin{remark}
	\label{remarks:representationtheory}
	\begin{enumerate}
		\item In general, $E_K$ is not necessarily algebraically irreducible even if $E$ is topologically irreducible. However, if  $G$ is a real reductive Lie group with associated maximal compact subgroup $K$, and $E$ is a $K$-admissible representation on a Banach space, these two notions are in fact equivalent, compare \cite{Warner1972}*{Thm. 4.5.5.4}. 
		\item This applies in particular to the representation of $G=\GL(n,\RR)$ on $\Val_r^\pm(\RR^n)$. In this case $K=\mathrm{O}(n)$, and the fact that $\Val^{\pm}_r(\RR^n)$ is admissible can be deduced from the existence and properties of the Goodey--Weil distributions, compare \cite{Alesker2001}*{Prop.~2.10}.
		\item If $G$ is a real reductive Lie group with associated maximal compact subgroup $K$, then an algebraically irreducible $(\mathfrak{g},K)$-module is always $K$-admissible, compare \cite{Wallach1988}*{Cor.~3.4.8}. In contrast, a topologically irreducible Banach representation of a real reductive group is not necessarily $K$-admissible, see \cite{Soergel1988}.
		\item The proof of \autoref{thm:AleskerIrredThm} given in \cite{Alesker2001} uses the Beilinson--Bernstein localization theorem to establish the algebraic irreducibility of the $(\gl(n),\mathrm{O}(n))$-module $\Val_r^{\pm}(\RR^n)^{\mathrm{O}(n)-\mathrm{fin}}$, which directly implies that   $\Val^\pm_r(\RR^n)$ is topologically irreducible (due to \autoref{lemma:EquivIrredu}). However, the same reasoning shows that the corresponding space of smooth valuations is topologically irreducible (with respect to the Garding topology). 
		\item More generally, if $E$ is a continuous representation of $G$ on a complete locally convex vector space, then $E$ is topologically irreducible if and only if $E^\infty$ is topologically irreducible with respect to the Garding topology, compare the discussion in \cite{Warner1972}*{Section~4.4.1}. In particular, the topological irreducibility of $\Val_r^\pm(\RR^n)$ is equivalent to the topological irreducibility of the corresponding space of smooth valuations (with respect to the Garding topology).
	\end{enumerate}
\end{remark}

\subsection{Preliminaries on $V_\bullet$ and $\mathcal{V}^{\infty,\mathrm{tr}}$}\label{sec:prelimImD}
Recall that we defined the space 
\begin{align*}
	V_r=\im (D:\Omega^{r,n-1-r}(S\RR^n)^{\mathrm{tr}}\rightarrow\Omega^{r,n-r}(S\RR^n)^{\mathrm{tr}})
\end{align*}
in \autoref{sec:pairing}. Then $V_r\subset (\im D)^{\mathrm{tr}}_{r,n-r}$. The reason to consider this space comes from the following result due to Bernig and Bröcker from \cite{Bernig2006}. Let $\mathcal{V}^{\infty,\mathrm{tr}}_r\subset \Val_r(\RR^n)$ denote the subspace of all $r$-homogeneous valuations in $\Val(\RR^n)$ that are representable by integration with respect to the normal cycle.
\begin{theorem}[\cite{Bernig2007}*{Thm.~3.3}]
	\label{thm:BernigBroeckerImageRumin}
	\begin{enumerate}
		\item For $1\leq r\leq n-1$, there is an injective map
		\begin{align*}
			\mathcal{V}^{\infty,\mathrm{tr}}_r\rightarrow (\im D)^{\mathrm{tr}}_{r,n-r}.
		\end{align*}
		\item For $2\leq r\leq n-1$, this map is also surjective, i.e., an isomorphism.
		\item For $r=1$, the above map induces an isomorphism
		\begin{align*}
			\mathcal{V}^{\infty,\mathrm{tr}}_1\cong \left\{f\wedge\alpha\wedge \vol_{\S^{n-1}}: f\in C^\infty(\S^{n-1}), \int_{\S^{n-1}}v f(v)d\mathcal{H}^{n-1}(v)=0\right\},
		\end{align*}
		where $\vol_{\S^{n-1}}$ denotes the standard volume form on $\S^{n-1}$.
		\item For every $\varphi\in \mathcal{V}^{\infty,\mathrm{tr}}_r$ there exists a differential form $\omega\in \Omega^{r,n-1-r}(S\RR^n)^{\mathrm{tr}}$ such that $\varphi(K)=\int_{\nc(K)}\omega$ for all $K\in\mathcal{K}(\RR^n)$.
	\end{enumerate}
\end{theorem}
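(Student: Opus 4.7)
The plan is to deduce \autoref{thm:BernigBroeckerImageRumin} from the kernel theorem (\autoref{thm:BernigBroeckerKernel}) together with a key primitive-existence lemma for the Rumin operator on translation-invariant forms. I would first define a canonical map $\Phi: \mathcal{V}^{\infty,\mathrm{tr}}_r \to (\im D)^{\mathrm{tr}}_{r,n-r}$, prove (1), and then reduce (2), (3), (4) to an analysis of the image of $D$ restricted to translation-invariant forms of bidegree $(r, n-1-r)$.

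To define $\Phi$ and prove (1): given $\varphi \in \mathcal{V}^{\infty,\mathrm{tr}}_r$ with a representation $\varphi = \int_{\nc(\cdot)}\omega$, first decompose $\omega$ by bidegree. Since $\varphi$ is $r$-homogeneous, only the $(r, n-1-r)$-component contributes (by \autoref{thm:BernigBroeckerKernel} and the fact that the vertical-integral condition there is automatic in this bidegree when $r \geq 1$, since the restriction of any such form to $\{x\}\times\S^{n-1}$ vanishes), so I may assume $\omega \in \Omega^{r, n-1-r}(S\RR^n)$. Translation invariance of $\varphi$ together with $\nc(K+x) = (T_x)_\ast \nc(K)$, where $T_x(y,v) = (y+x,v)$, gives $\int_{\nc(K)}(T_x^\ast\omega - \omega) = 0$ for every $K \in \K(\RR^n)$, and \autoref{thm:BernigBroeckerKernel} then forces $D(T_x^\ast\omega - \omega) = 0$, so that $D\omega$ is translation-invariant. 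Setting $\Phi(\varphi) := D\omega$, both well-definedness and injectivity follow at once from \autoref{thm:BernigBroeckerKernel}.

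The key ingredient for (2) and (4) is the primitive-existence lemma: the restriction $D^{\mathrm{tr}}: \Omega^{r, n-1-r}(S\RR^n)^{\mathrm{tr}} \to \Omega^{r, n-r}(S\RR^n)^{\mathrm{tr}}$ has image equal to $(\im D)^{\mathrm{tr}}_{r, n-r}$ whenever $2 \leq r \leq n-1$, i.e., $V_r = (\im D)^{\mathrm{tr}}_{r, n-r}$. To prove this, I would use the decomposition $\Omega^\ast(S\RR^n)^{\mathrm{tr}} \cong \Lambda^\ast(\RR^n)^\ast \otimes \Omega^\ast(\S^{n-1})$; since $\alpha = \sum_i v_i dx_i$ is translation-invariant and the exterior differential preserves translation invariance, uniqueness of the Rumin correction $\alpha \wedge \xi$ implies that $D$ descends to an operator on translation-invariant forms. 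Surjectivity onto $(\im D)^{\mathrm{tr}}_{r,n-r}$ can then be verified by decomposing source and target into $\SO(n)$-isotypical components (each of finite dimension) and using the $\GL(n,\RR)$-equivariance of $D$ to reduce to checking surjectivity component by component, via the kind of explicit double-form computations appearing in \autoref{sec:actLieAlgdf}. This establishes (2), and (4) for $r \geq 2$ follows by combining (1) with (2): the translation-invariant preimage of $\Phi(\varphi)$ is automatically a translation-invariant representative of $\varphi$.

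For (3), the case $r = 1$, a direct analysis suffices. Because $D\omega$ is always divisible by $\alpha$ and the complementary factor has bidegree $(0, n-1)$ on $S\RR^n$, it must equal $g \cdot \vol_{\S^{n-1}}$ for some $g \in C^\infty(\S^{n-1})$, so the image of $\Phi$ is contained in $\{f \alpha \wedge \vol_{\S^{n-1}} : f \in C^\infty(\S^{n-1})\}$. A direct computation of $D$ on a translation-invariant $\omega \in \Omega^{1, n-2}(S\RR^n)^{\mathrm{tr}}$, combined with Stokes's theorem on the sphere, shows that the resulting density $f$ necessarily satisfies $\int_{\S^{n-1}} v f(v)\, d\mathcal{H}^{n-1}(v) = 0$. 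Conversely, for every such $f$ one can explicitly construct a translation-invariant $\omega$ with $D\omega = f\alpha\wedge\vol_{\S^{n-1}}$, completing (3) together with (4) in the case $r = 1$. The main obstacle throughout is the primitive-existence lemma: no averaging argument over the non-compact group $\RR^n$ is available, so one is forced into a fine $\SO(n)$-equivariant analysis of the Rumin complex, which is the technical heart of the argument.
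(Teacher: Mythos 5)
You should first note that the paper itself does not prove this statement: it is quoted from Bernig--Bröcker (with only the remark that their arguments use nothing beyond representability by the normal cycle), so your sketch has to stand on its own. The first gap is in your construction of $\Phi$: you ``may assume $\omega\in\Omega^{r,n-1-r}(S\RR^n)$'' because $\varphi$ is $r$-homogeneous, but for a representing form that is \emph{not} translation invariant the bidegree components do not correspond to the homogeneous components of the valuation -- homogeneity under $K\mapsto tK$ sees the $x$-dependence of the coefficients, not just the number of $dx$-factors (e.g. $\omega=f(x)\,\pi_2^*\vol_{\S^{n-1}}$ has bidegree $(0,n-1)$ yet induces a non-homogeneous valuation) -- and $D$ does not preserve bidegree on non-invariant forms. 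So as written you have not shown that $D\omega$ lies in $(\im D)^{\mathrm{tr}}_{r,n-r}$. This step is repairable without modifying $\omega$: apply \autoref{thm:BernigBroeckerKernel} not only to translations but also to the scalings $G_{t\,\id}(x,v)=(tx,v)$, $t>0$, which are contactomorphisms; $r$-homogeneity gives $G_{t\,\id}^\ast D\omega=t^r D\omega$, and since translation-invariant $n$-forms of bidegree $(s,n-s)$ are precisely the $t^s$-eigenvectors of $G_{t\,\id}^\ast$, this together with the translation invariance you did establish pins $D\omega$ to bidegree $(r,n-r)$.

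The essential gap, however, is the ``primitive-existence lemma'' $V_r=(\im D)^{\mathrm{tr}}_{r,n-r}$ for $2\leq r\leq n-1$ (and its $r=1$ variant), which you rightly identify as the heart of parts (2)--(4) but do not prove. ``Decompose source and target into $\SO(n)$-isotypical components and check surjectivity component by component'' is not an argument: you have no independent description of the target $(\im D)^{\mathrm{tr}}_{r,n-r}$ against which to check anything -- knowing the $\SO(n)$-types of the source gives no control of the target -- and invoking the known decomposition of $V_r$ (\autoref{thm:AleskerBernigSchuster}) would be circular, since that decomposition is obtained through the very isomorphism $\mathcal{V}^{\infty,\mathrm{tr}}_r\cong V_r$ you are trying to prove. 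What is actually required (and is how Bernig and Bröcker proceed) is, first, a characterization of the image of $D$ in top degree -- via the fact that the Rumin complex computes the de Rham cohomology of $S\RR^n\simeq \S^{n-1}$, the image consists of the exact vertical $n$-forms -- and, second, the solution of $D\omega=\tau$ \emph{within} translation-invariant forms of fixed bidegree, which reduces to exactness questions in the complex $\Lambda^r(\RR^n)^*\otimes\Omega^\bullet(\S^{n-1})$ governed by $H^\bullet(\S^{n-1})$; the only obstruction occurs in bidegree $(1,n-1)$ and is exactly the $\RR^n$-worth of conditions $\int_{\S^{n-1}}v f(v)\,d\mathcal{H}^{n-1}(v)=0$ in part (3), matching the paper's remark that $(\im D)^{\mathrm{tr}}_{1,n-1}/V_1\cong\RR^n$. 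Your $r=1$ discussion (the Stokes computation and the converse construction) and part (4) for $r=1$ are likewise asserted rather than carried out; without these two ingredients the sketch does not establish (2)--(4).
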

\begin{remark}
	In \cite{Bernig2006} the result is stated for $\Val^\infty_r(\RR^n)$ instead of $\mathcal{V}^{\infty,\mathrm{tr}}_r$, however, the proofs only use that the given valuations are representable by integration with respect to the normal cycle. Although \autoref{mthm:smoothValsByDiffform} (which is proved in \autoref{sec:RepSmoothVal} without relying on the Irreducibilty \autoref{thm:AleskerIrredThm}) implies that this is equivalent, we prefer to state the result without using this equivalence in order to clearly distinguish these notions.
\end{remark}
The maps above are induced by associating to a valuation $\varphi=\int_{\nc(\cdot)}\omega$ the differential form $D\omega$, which is well defined due \autoref{thm:BernigBroeckerKernel}. In particular, \autoref{thm:BernigBroeckerImageRumin} shows that a translation invariant valuation in $\mathcal{V}^{\infty,\mathrm{tr}}$ admits an integral representation with a translation invariant form.
Let us reformulate it in the following way.
\begin{corollary}\label{cor:isomorphismV}
	There exists an isomorphism $S:\mathcal{V}^{\infty,\mathrm{tr}}_{r}\rightarrow V_r$ such that that for all $g\in\GL(n,\RR)$ and $\varphi\in\mathcal{V}^{\infty,\mathrm{tr}}_r$,
	\begin{align*}
		S(g\cdot \varphi)=\sign(\det g)\ G_{g^{-1}}^\ast S(\varphi).
	\end{align*}
	
\end{corollary}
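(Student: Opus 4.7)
The plan is to define $S$ by combining the representation of valuations by integration with respect to the normal cycle with \autoref{thm:BernigBroeckerImageRumin}. Specifically, given $\varphi \in \mathcal{V}^{\infty,\mathrm{tr}}_r$, part~(iv) of \autoref{thm:BernigBroeckerImageRumin} furnishes a translation invariant differential form $\omega \in \Omega^{r,n-1-r}(S\RR^n)^{\mathrm{tr}}$ with $\varphi(K) = \int_{\nc(K)} \omega$ for all $K \in \K(\RR^n)$. I would set $S(\varphi) := D\omega$, which lies in $V_r$ by the very definition of $V_r$.

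To check that $S$ is well-defined, suppose $\omega_1, \omega_2 \in \Omega^{r,n-1-r}(S\RR^n)^{\mathrm{tr}}$ both represent $\varphi$. Then $\int_{\nc(K)}(\omega_1-\omega_2)=0$ for all $K$, so \autoref{thm:BernigBroeckerKernel} yields $D(\omega_1-\omega_2) = 0$; here condition~(ii) of that theorem is automatic because $\omega_1-\omega_2$ has bi-degree $(r,n-1-r)$ with $r\ge 1$, so its $0$-homogeneous component vanishes. Thus $D\omega_1 = D\omega_2$, and the same reasoning gives injectivity. Surjectivity is essentially built in: given $\tau \in V_r$, choose $\omega \in \Omega^{r,n-1-r}(S\RR^n)^{\mathrm{tr}}$ with $D\omega = \tau$; the valuation $\varphi := \int_{\nc(\cdot)}\omega$ is translation invariant (since $\omega$ is) and $r$-homogeneous (by bi-degree), hence lies in $\mathcal{V}^{\infty,\mathrm{tr}}_r$, and clearly $S(\varphi)=\tau$.

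For the equivariance, I would start from Eq.~\eqref{eq:actGLOnVal}, which yields
\begin{align*}
	(g\cdot \varphi)(K) = \sign(\det g) \int_{\nc(K)} G_{g^{-1}}^\ast \omega.
\end{align*}
The pullback $G_{g^{-1}}^\ast \omega$ is again translation invariant: a short calculation shows $G_g \circ T_{g^{-1}t} = T_t \circ G_g$ for the translation $T_t$ by $t \in \RR^n$, so translation invariance of $\omega$ pulls back to translation invariance of $G_{g^{-1}}^\ast\omega$. Applying $S$ and using that $G_{g^{-1}}$ is a contactomorphism, so that $D \circ G_{g^{-1}}^\ast = G_{g^{-1}}^\ast \circ D$ (as recalled in \autoref{sec:ValRepByIntNC}), we obtain
\begin{align*}
	S(g\cdot \varphi) = \sign(\det g)\, D(G_{g^{-1}}^\ast \omega) = \sign(\det g)\, G_{g^{-1}}^\ast S(\varphi).
\end{align*}

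There is no single hard step: the statement is really a repackaging of \autoref{thm:BernigBroeckerKernel} and \autoref{thm:BernigBroeckerImageRumin} together with the naturality of the Rumin differential under contactomorphisms. The only point requiring a genuine verification is the compatibility of $G_g^\ast$ with translation invariance, which is immediate from the commutation of $G_g$ with translations.
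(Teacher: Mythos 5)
Your proposal is correct and follows the same route as the paper's proof: both define $S$ via $D$ on a translation-invariant representing form from \autoref{thm:BernigBroeckerImageRumin}(iv), establish the bijection from parts (i)--(iv) together with \autoref{thm:BernigBroeckerKernel}, and derive equivariance from Eq.~\eqref{eq:actGLOnVal} and the fact that $D$ intertwines pullbacks by the contactomorphism $G_{g^{-1}}$. You merely spell out the well-definedness/injectivity/surjectivity checks and the commutation $G_g \circ T_{g^{-1}t} = T_t \circ G_g$ more explicitly than the paper does.
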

\begin{proof}
	\autoref{thm:BernigBroeckerImageRumin} shows that the image of $\mathcal{V}_r^{\infty,\mathrm{tr}}$ under the map induced by $D$ is contained in $V_r$. By construction, this map is surjective, so \autoref{thm:BernigBroeckerImageRumin} shows that it is an isomorphism. In order to show the stated equivariance with respect to the action of $\GL(n,\RR)$ on both spaces, note that $D$ commutes with the pullback by $G_{g^{-1}}$ since this is a contactomorphism (compare the discussion in \autoref{sec:GLRnrepOnVal}). Thus the statement follows from Eq.~\eqref{eq:actGLOnVal}.
\end{proof}

\begin{remark}
	\autoref{thm:BernigBroeckerImageRumin} shows that $(\im D)^{\mathrm{tr}}_{r,n-r}=V_r$ for $2\leq r\leq n-1$. For $r=1$, $V_1\subset (\im D)^{\mathrm{tr}}_{1,n-1}$  is a proper subspace with $(\im D)^{\mathrm{tr}}_{1,n-1}/V_1\cong \CC^n$. 
\end{remark}
The decomposition of the spaces $\mathcal{V}^{\mathrm{tr}}_{r,n-r}\cong V_r$ into $\SO(n)$-types was obtained in \cite{Alesker2011}.
\begin{theorem}[\cite{Alesker2011}*{Thm.~1}]\label{thm:AleskerBernigSchuster}
	Let $n\ge 3$, $l =\lfloor\frac{n}{2}\rfloor$ and  $1 \leq r \leq l$. The nontrivial $\SO(n)$-types in $\mathcal{V}^{\infty,\mathrm{tr}}_r\cong V_r$ and $\mathcal{V}^{\infty,\mathrm{tr}}_{n-r}\cong V_{n-r}$ are the same and given by the following set of highest weights:
	%
	\begin{align}\label{eq:ABSweights}
		\begin{cases}
			\{\lambda_{k,m}: \, m \geq 2, 1 \leq k \leq r\} \cup \{0\} \cup \{\lambda_{-l,m}: \, m \geq 2\},& \text{ if } n=2l=2r,\\
			\{\lambda_{k,m}: \, m \geq 2, 1 \leq k \leq r\} \cup \{0\},& \text{ otherwise},
		\end{cases}
	\end{align}
	where 
	\begin{align*}
		\lambda_{k,m} = \begin{cases}
			(m, \underbrace{2, \dots, 2}_{k-1}, 0, \dots, 0) \in \ZZ^l & \text{ for } k=1, \dots, l,\\
			(m, 2 \dots, 2, -2) \in \ZZ^l & \text{ for } k=-l.
		\end{cases}
	\end{align*}
	Moreover, each appears with multiplicity one.
\end{theorem}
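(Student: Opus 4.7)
The strategy is to use the Bernig--Bröcker identification to replace $\mathcal{V}^{\infty,\mathrm{tr}}_r$ by the concrete space $V_r$ of translation-invariant forms and then decompose $V_r$ into $\SO(n)$-types by combining a representation-theoretic decomposition of the ambient form space with an analysis of the kernel of the Rumin operator. By \autoref{cor:isomorphismV} the map $S:\mathcal{V}^{\infty,\mathrm{tr}}_r\to V_r$ is $\SO(n)$-equivariant (since $\sign(\det g)=1$ on $\SO(n)$), so the $\SO(n)$-types of the two sides coincide and we may work entirely with $V_r$. By construction $V_r$ is the image of the Rumin operator $D:\Omega^{r,n-1-r}(S\RR^n)^{\mathrm{tr}}\to\Omega^{r,n-r}(S\RR^n)^{\mathrm{tr}}$, so $V_r$ is an $\SO(n)$-equivariant quotient of $\Omega^{r,n-1-r}(S\RR^n)^{\mathrm{tr}}$ by the translation-invariant part of $\ker D$, which by \autoref{thm:BernigBroeckerKernel} coincides (for $r\ge 1$) with the translation-invariant closed forms.

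To decompose the ambient space I would use the identification $\Omega^{r,n-1-r}(S\RR^n)^{\mathrm{tr}}\cong \Lambda^r(\RR^n)^{*}\otimes\Omega^{n-1-r}(\S^{n-1})$. The $\SO(n)$-types of $\Omega^{n-1-r}(\S^{n-1})$ are obtained from Frobenius reciprocity on the homogeneous space $\S^{n-1}=\SO(n)/\SO(n-1)$, after which one combines with $\Lambda^r(\RR^n)^{*}$ via the standard Pieri/branching rules. Next I would isolate the $\SO(n)$-subrepresentation formed by the closed forms, whose decomposition can be read off from the translation-invariant part of the de Rham complex of $S\RR^n$, and subtract it to obtain the $\SO(n)$-types of $V_r$.

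At this stage one must verify that the resulting list matches \eqref{eq:ABSweights}. A lower bound for the appearing weights is provided by the explicit highest weight vectors $\omega_{r,k,m}$ and (in the case $n=2l$, $r=l$) $\omega_{l,-l,m}$ of \eqref{eq:defOmegaRkm_m} and \eqref{eq:defOmegalmlm}, whose Rumin differentials were shown in \cite{Kotrbaty2022} to be non-zero, together with the trivial $\SO(n)$-type realized by the invariant form corresponding to the Euler characteristic/volume component. The matching upper bound and the multiplicity-one assertion then follow from the explicit decomposition of the ambient space in the previous paragraph. The equality of the $\SO(n)$-types of $V_r$ and $V_{n-r}$ is the final ingredient; it can be extracted either from the $\SO(n)$-equivariant Alesker--Fourier-type duality, or, in keeping with the spirit of this paper, directly from the non-degenerate $\SO(n)$-invariant pairing of \autoref{prop:pairingFormsSLinv}, which identifies $V_{n-r}$ with the $\SO(n)$-dual of $V_r$; since $\SO(n)$-representations are self-dual, the two sets of $\SO(n)$-types coincide.

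The main technical obstacle is the precise identification of $\ker D \cap \Omega^{r,n-1-r}(S\RR^n)^{\mathrm{tr}}$ as an $\SO(n)$-representation: one must carefully track which closed forms survive the quotient and match the resulting cancellations against the explicit highest weight vectors. The double-form formalism recalled in \autoref{sec:doubleforms} and developed in \cite{Kotrbaty2022} appears to be essential for carrying out this bookkeeping cleanly, since it provides both the explicit weight bases and the Rumin-differential identities (\autoref{prop:OmSigTauSepL}) needed to control the quotient.
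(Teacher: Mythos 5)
This statement is cited in the paper from \cite{Alesker2011}*{Thm.~1}; the paper itself offers no proof, so your proposal is a from-scratch reconstruction rather than something to be matched against an existing argument. That said, your reconstruction contains a genuine error at its foundation.

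You claim that the translation-invariant part of $\ker D$ in bidegree $(r,n-1-r)$ coincides with the translation-invariant \emph{closed} forms, citing \autoref{thm:BernigBroeckerKernel} for this. But \autoref{thm:BernigBroeckerKernel} says only that $\int_{\nc(\cdot)}\omega=0$ iff $D\omega=0$ (plus the $0$-homogeneous condition); it says nothing about $\ker D$ equalling $\ker d$. In fact these two kernels are genuinely different. If $\omega=\alpha\wedge\psi$, then choosing $\xi=-\psi$ gives $\omega+\alpha\wedge\xi=0$, so $D\omega=0$; thus the whole contact ideal $\alpha\wedge\Omega^{n-2}(S\RR^n)^{\mathrm{tr}}$ lies in $\ker D$, and these forms are generally not closed. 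Conversely every closed form lies in $\ker D$ (take $\xi=0$), so $\ker D$ is strictly larger than the closed forms, and the quotient that you propose to compute is not the right one. Since your plan for computing the $\SO(n)$-types of $V_r$ is precisely ``decompose the ambient form space and subtract the closed forms,'' this misidentification poisons the rest of the argument: the resulting multiset of $\SO(n)$-types would be wrong unless one separately accounted for the contact-ideal and image-of-$d$ contributions.

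Beyond that error, the remaining steps — the Frobenius-reciprocity/branching computation of $\Lambda^r(\RR^n)^*\otimes\Omega^{n-1-r}(\S^{n-1})$, the bookkeeping of which types survive the quotient, the multiplicity-one assertion — are all stated as ``would use'' or ``can be read off'' rather than actually carried out. These are not routine: the branching computation alone produces a sizeable list of candidate weights, and identifying exactly which cancel against $\ker D$ is the substance of the Alesker--Bernig--Schuster proof. The parts of your sketch that are sound (the reduction to $V_r$ via \autoref{cor:isomorphismV}, the use of \autoref{prop:pairingFormsSLinv} to identify $V_r$ and $V_{n-r}$ as $\SO(n)$-representations via the non-degenerate pairing, and the role of the Kotrbat\'y--Wannerer forms as a lower bound on the set of types) are all correct, but by themselves they only give you a lower bound; the hard direction (that no other types occur, each with multiplicity one) is precisely what the unexecuted decomposition-and-subtraction step was supposed to supply.
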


Recently, this result was refined in \cite{Kotrbaty2022} by explicitly determining the highest weight vectors of the nontrivial $\SO(n)$-types. For $\mathcal{V}_r^{\infty,\mathrm{tr}}$, the corresponding valuations are induced by the differential forms $\omega_{r,k,m}$ defined in \eqref{eq:defOmegaRkm} and $\omega_{l,-l, m}$ defined in \eqref{eq:defOmegalmlm}. We use the isomorphism in \autoref{cor:isomorphismV} to restate the result.
\begin{theorem}[\cite{Kotrbaty2022}*{Thm.~1.3}]\label{thm:KotrbatyWannererHighestWeights}
	For any $r,k,m \in \NN$ with $1\le r \leq n-1$, $1\le k \leq \min\{r,n-r\}$, and $m \geq 2$, as well as for $(r,k,m) = (l,-l,m)$ if $n=2l$ is even, the differential form 
	\begin{align*}
		D\omega_{r,k,m}
	\end{align*}
	is a nontrivial highest weight vector of weight $\lambda_{k,m}$ of the $\SO(n)$-representation $V_r$. 
\end{theorem}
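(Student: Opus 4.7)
The plan is to verify the defining properties \eqref{eq:defHWV} of a highest weight vector directly for $\omega_{r,k,m}$ in the ambient space $\Omega^{r,n-r-1}(S\RR^n)^{\mathrm{tr}}$, then transfer them to $D\omega_{r,k,m}$ via the equivariance of the Rumin differential, and finally check non-triviality through the invariant pairing. Concretely, I would first compute the action of the Cartan generators $H_i \in \mathfrak{t}_\CC$ by Lie derivative along $\widetilde{H_i}$. A short computation from Eq.~\eqref{eq:LieAlgActVecField}, analogous to the one in \autoref{lem:lieDerYabzZeta} but for the diagonal elements, shows that the basic building blocks are weight vectors: $\zeta_{\bar j}, dz_{\bar j}, d\zeta_{\bar j}$ have weight $\epsilon_j$ while $\zeta_j, dz_j, d\zeta_j$ have weight $-\epsilon_j$. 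Although the compound forms $\zeta_J, d\zeta_J$ etc.\ are sums of different weight vectors, the Leibniz rule \eqref{eq:actionProductRule} together with the fact that a maximal wedge product $d\zeta_I^{[|I|]}$ is $\SO(I)$-invariant (hence of zero weight on indices in $I$) shows that the expanded form $\omega_{r,k,m}$ is a single weight vector, and a direct count using \eqref{eq:defOmegaRkm}--\eqref{eq:defOmegalmlm} gives the weight $\lambda_{k,m}$.

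Second, I would verify $\mathcal{L}_{\widetilde X} \omega_{r,k,m} = 0$ for every $X$ in the positive root spaces of $\so(n)_\CC$. Each such raising operator acts on the basic forms by a rule like \autoref{lem:lieDerYabzZeta}, essentially replacing an index $\bar j$ by an index $\bar i$ with $i<j$ (for roots $\epsilon_i - \epsilon_j$), an index $j$ by $\bar i$ (for roots $\epsilon_i + \epsilon_j$), or a similar replacement in the odd case. After applying the Leibniz rule, each resulting term either produces a repeated $1$-form in a wedge product --- vanishing immediately --- or falls into the already saturated block $\overline{dz_K}^{[k]}$ of indices $K = \{1,\dots,k\}$, again vanishing. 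The specific structure of $\omega_{r,k,m}$, with its conjugated block $\overline{dz_K}^{[k]}$ together with the choice of $J = \Iind \setminus K$ for the remaining factors, is designed precisely so that every such term collapses. The case $(l,-l,m)$ is handled by the same argument modulo the index substitution $l \leftrightarrow \bar l$ encoded in \autoref{lem:RonOmegarkm}.

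To transfer these properties to $V_r$, I would use that every $g \in \SO(n)$ induces a contactomorphism $G_g$ of $S\RR^n$, so $G_g^* D = D G_g^*$; differentiating yields $\mathcal{L}_{\widetilde X} D\omega_{r,k,m} = D \mathcal{L}_{\widetilde X} \omega_{r,k,m}$ for all $X \in \so(n)_\CC$. Hence $D\omega_{r,k,m}$ inherits the same weight and annihilation properties and is a highest weight vector of weight $\lambda_{k,m}$ in $V_r$. For non-triviality I would invoke \autoref{lem:pairKWForms}, which evaluates $\overline{\omega_{r,k,m}} \wedge D\omega_{r,k,m}$ as an explicit non-zero multiple of $\vol_{S\RR^n}$; integrating over $\{0\}\times \S^{n-1}$ gives a non-zero value of the pairing and thus forces $D\omega_{r,k,m} \neq 0$. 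The exceptional case $(l,-l,m)$ for $n=2l$ reduces to $(l,l,m)$ via the pullback by the reflection $R$ in \autoref{lem:RonOmegarkm}, since $G_R^\ast$ intertwines with $D$.

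The main obstacle is the annihilation verification in the second step: although conceptually each case is a Leibniz-rule computation entirely parallel to the ones appearing in \autoref{sec:actLieAlgdf}, there is one case per positive root, and keeping track of signs, repeated wedge factors, and the interplay between the blocks $K$, $\overline K$, and $L$ in the double-form expansion is tedious. The conceptual content, however, is clean: the rather elaborate definition of $\omega_{r,k,m}$ is dictated exactly by the requirement that every raising operator kill it, and the non-triviality argument comes for free from the explicit pairing computation carried out in \autoref{sec:actLieAlgdf}.
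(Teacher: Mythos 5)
The paper does not prove this statement: it is a straight citation of \cite{Kotrbaty2022}*{Thm.~1.3}, translated from the valuation side $\mathcal{V}_r^{\infty,\mathrm{tr}}$ to the differential-form side $V_r$ via the $\SL(n,\RR)$-equivariant (hence in particular $\SO(n)$-equivariant) isomorphism $S$ of \autoref{cor:isomorphismV}. So there is no internal proof to compare against; what you have written is an independent reconstruction of the argument from \cite{Kotrbaty2022}, using only tools that also appear in this paper.

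Your outline is essentially correct and matches the strategy of the cited source: compute the weight, verify annihilation by the raising operators, transfer along $D$ using that $G_g$ is a contactomorphism, and then use the pairing to rule out $D\omega_{r,k,m}=0$. Two points deserve sharpening. First, the weight computation cannot be read off directly from the compact expression $\zeta_J d\zeta_J^{[n-r-1]}dz_J^{[r-k]}\overline{dz_K}^{[k]}$, because the contracted double forms $\zeta_J$, $d\zeta_J$, $dz_J$ are sums and not individual weight vectors; one must first pass to the block decomposition \eqref{eq:OmSepL}, after which each summand is a weight vector, and the key point is that $d\zeta_L^{[n-r-k]}dz_L^{[r-k]}$ together with the $L$-part of $\Theta_1$ is saturated (degree $|L|$ on both sides), so the $L$-block contributes weight zero regardless of how the partition of $L$ falls out. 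Your invariance heuristic for maximal wedges is the right instinct, but it applies to the combined saturated product, not to $d\zeta_L^{[n-r-k]}$ alone. Second, there is a slip in the non-triviality step: \autoref{lem:pairKWForms} computes $\overline{\omega_{r,k,m}}\wedge D\omega_{n-r,k,m}$, not $\overline{\omega_{r,k,m}}\wedge D\omega_{r,k,m}$ (the latter is not of complementary bidegree unless $r=n-r$). The argument still works: the pairing $(D\omega_{r,k,m},D\omega_{n-r,k,m})$ is nonzero because the coefficient in \autoref{lem:pairKWForms} has constant sign on $\S^{n-1}$, which forces $D\omega_{r,k,m}\neq 0$. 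Alternatively, one can simply cite Eq.~\eqref{eq:DOmrkm}: the coefficient $c_{r,m}=(-1)^{n+1}(n+m-r-2)$ is nonzero in the given parameter range, and the explicit right-hand side is visibly nonzero, which is in fact how \cite{Kotrbaty2022} argues.
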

Let us point out that the weight $\lambda = 0$ is not covered by \autoref{thm:KotrbatyWannererHighestWeights}. As the authors remark in \cite{Kotrbaty2022}, this weight corresponds to $\SO(n)$-invariant valuations, that is, by Hadwiger's characterization~\cite{Hadwiger1957}, to the well-studied family of intrinsic volumes. For the sake of completeness, we provided a description by differential forms of the intrinsic volumes in \autoref{thm:HWVintrinsicVolumes} and \autoref{lem:DintrinsicVols}.

Set 
\begin{align*}
	V_r^{\pm}:=\{\tau\in V_r:G_{-\id}^\ast \tau=\pm (-1)^n \tau\}.
\end{align*}
Under the isomorphism $V_r\cong \mathcal{V}_r^{\infty,\mathrm{tr}}$ from \autoref{cor:isomorphismV}, this subspace corresponds to even/odd valuations in $\mathcal{V}_r^{\infty,\mathrm{tr}}$, and we will consequently call differential forms belonging to these spaces even/odd. In particular, these are $\GL(n,\RR)$-invariant subspaces of $V_r$. Note further that the $\SO(n)$-type with highest weight $\lambda_{k,m}$ contains only even (resp.\ odd) forms if $m$ is even (resp.\ odd). Indeed, this follows from \autoref{thm:KotrbatyWannererHighestWeights} and the fact that $G_{-\id}^\ast \omega_{r,k,m} = (-1)^{n+m-2} \omega_{r,k,m}$. In particular, \autoref{thm:AleskerBernigSchuster} shows that $V_r^\pm$ is a multiplicity free representation of $\SO(n)$ and that the $\SO(n)$-types occurring in $V_r^+$ and $V_r^-$ are indexed by the subset of Eq.~\eqref{eq:ABSweights} of weights $\lambda$ such that $\lambda_1$ is even and odd respectively.

\subsection{Algebraic irreducibility of $(V_r^\pm)^{\SO(n)-\mathrm{fin}}$}
\label{sec:IrredDiffForm}
In this section we show that the $\SO(n)$-finite vectors of $V_r^\pm$, $1 \leq r \leq n-1$, form an algebraically irreducible $(\sln(n), \SO(n))$-module, where we assume $n\ge 3$ throughout the section. In the proof, we will use the computations from \autoref{sec:actLieAlgdf} to relate the $\SO(n)$-types using the action of $\sln(n)_\CC$ on the highest weight vectors from \autoref{thm:KotrbatyWannererHighestWeights}. Since \autoref{thm:AleskerBernigSchuster} provides the decomposition into $\SO(n)$-types of these spaces, this is sufficient to determine whether this space is algebraically irreducible as an $(\sln(n),\SO(n))$-module. The pairing considered in \autoref{sec:pairing} will be used in combination with the following result, which is a simple consequence of Schur's Lemma.
\begin{lemma}
	\label{lem:NonvanishingPairingImpliesSubset}
	Let $W$ be a direct sum of irreducible representations of a compact Lie group $K$, $U$ an irreducible representation of $K$, and $(\cdot,\cdot):W\times U\rightarrow\CC$ a $K$-invariant sesquilinear form. If $(\cdot,\cdot)$ is nontrivial, then $W$ contains an irreducible subrepresentation equivalent to $U$.
\end{lemma}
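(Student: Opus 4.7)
The plan is to reduce the statement to Schur's Lemma. First I will decompose $W = \bigoplus_{i \in I} W_i$ into irreducible $K$-subrepresentations. The restriction $(\cdot, \cdot)_i := (\cdot, \cdot)|_{W_i \times U}$ is again a $K$-invariant sesquilinear form, and since $(\cdot,\cdot)$ is assumed non-trivial, at least one $(\cdot,\cdot)_i$ must be non-zero. It therefore suffices to prove the statement under the additional assumption that $W$ is itself irreducible.

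To apply Schur's Lemma, I will convert the sesquilinear form into a $K$-equivariant linear map. For each $w \in W$, the map $u \mapsto (w,u)$ is conjugate-linear, hence defines an element of $\overline{U}^*$, the dual of the complex-conjugate representation $\overline{U}$. This produces a $\CC$-linear map $\Phi: W \to \overline{U}^*$, $w \mapsto (w, \cdot)$, whose $K$-equivariance is exactly the $K$-invariance of the form. Both $W$ and $\overline{U}^*$ are irreducible $K$-representations, so Schur's Lemma implies that $\Phi$ is either zero or a $K$-equivariant isomorphism; the non-triviality of $(\cdot,\cdot)$ forces the latter, yielding $W \cong \overline{U}^*$ as $K$-representations.

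It remains to identify $\overline{U}^* \cong U$. By compactness of $K$, the representation $U$ carries a $K$-invariant Hermitian inner product $\langle \cdot, \cdot \rangle$, and the map $u \mapsto \langle \cdot, u \rangle$ provides a $K$-equivariant $\CC$-linear isomorphism $\overline{U} \to U^*$. Dualizing yields $\overline{U}^* \cong U^{**} \cong U$, which combined with $W \cong \overline{U}^*$ completes the proof. There is no serious obstacle here: the lemma is essentially a repackaging of Schur's Lemma together with the standard unitarizability of compact group representations.
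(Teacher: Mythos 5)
Your proof is correct. The paper does not actually include a written proof of this lemma; it is stated without proof, with only the preceding remark that it ``is a simple consequence of Schur's Lemma.'' Your argument is a clean and complete implementation of exactly that strategy: reduce to irreducible $W$ by decomposing the direct sum, reinterpret the sesquilinear form as a $K$-equivariant linear map $W \to \overline{U}^\ast$, invoke Schur to get $W \cong \overline{U}^\ast$, and then use unitarizability (compactness of $K$) to identify $\overline{U}^\ast \cong U$.

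One small point on conventions, which does not affect correctness: you take the form to be linear in $W$ and conjugate-linear in $U$, whereas the pairing the paper actually feeds into this lemma, defined in Eq.~\eqref{eq:defPairingForm}, is conjugate-linear in its \emph{first} argument. Under the paper's convention the map $u \mapsto (w,u)$ is honestly linear, so one instead gets a conjugate-linear map $W \to U^\ast$, i.e.\ a $\CC$-linear $K$-equivariant map $\overline{W} \to U^\ast$; Schur then gives $\overline{W} \cong U^\ast$, hence $W \cong \overline{U^\ast} \cong \overline{U}^\ast$, and the rest proceeds identically. Either way the conclusion $W \cong U$ (for the irreducible summand) holds, so the proof is fine as written.
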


\begin{theorem}\label{thm:imDslnSonirred}
	For $1\leq r\leq n-1$, the space $\left(V_r^\pm\right)^{\SO(n)-\mathrm{fin}}$ is an algebraically irreducible $(\sln(n),\SO(n))$-module.
\end{theorem}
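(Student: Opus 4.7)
The plan is to show that any nontrivial $(\sln(n),\SO(n))$-submodule $W \subset (V_r^\pm)^{\SO(n)-\mathrm{fin}}$ equals the entire space. By \autoref{thm:AleskerBernigSchuster}, $V_r^\pm$ is a multiplicity-free direct sum of $\SO(n)$-isotypical components indexed by the weights in \eqref{eq:ABSweights} of the correct parity, so $W$ is itself a direct sum of a subset of these components. Multiplicity-freeness further implies that $W$ contains the $\lambda$-isotypical component if and only if it contains any nonzero weight-$\lambda$ vector; by \autoref{thm:KotrbatyWannererHighestWeights}, the highest weight vectors may be taken to be the explicit forms $D\omega_{r,k,m}$, together with $D\omega_{r,0,0}$ for the weight $0$ (via \autoref{thm:HWVintrinsicVolumes}) and $D\omega_{l,-l,m}$ for the exceptional case $n=2l$, $r=l$. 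Thus it suffices to show that for any two allowed weights $\lambda, \lambda'$, the action of $\sln(n)_\CC$ on the highest weight vector of weight $\lambda$ produces some vector with a nonzero component in the $\lambda'$-isotypical component.

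To move forward through the weight grid, we apply the Lie derivative computations of \autoref{sec:actLieAlgdf} directly: \autoref{cor:LieY11onOmegarkm} gives $\LieDer_{\widetilde{Y_{11}}} D\omega_{r,k,m} = (n-r+m-2)\,D\omega_{r,k,m+2}$, a nonzero scalar multiple, while \autoref{cor:YkkOnOmegarkm} shows that $\LieDer_{\widetilde{Y_{k+1,k+1}}} D\omega_{r,k,m}$ is a weight-$\lambda_{k+1,m}$ vector. To see that its projection onto the $\lambda_{k+1,m}$-isotypical component of $V_r$ is nonzero, we invoke the pairing from \autoref{sec:pairing}: this pairing between $V_r$ and $V_{n-r}$ is $\SO(n)$-invariant, and by multiplicity-freeness and Schur's lemma it decomposes as a sum of pairings of isotypical components with matching weights. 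Therefore, the non-vanishing $(\LieDer_{\widetilde{Y_{k+1,k+1}}} D\omega_{r,k,m}, D\omega_{n-r,k+1,m}) \neq 0$ furnished by \autoref{cor:pairKp1NotZero}, combined with \autoref{lem:NonvanishingPairingImpliesSubset}, shows that the $(\sln(n),\SO(n))$-orbit of $D\omega_{r,k,m}$ meets the $\lambda_{k+1,m}$-isotypical component nontrivially.

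To move backward, we use the invariance identity $(\LieDer_{\widetilde{W}}\tau_1,\tau_2) = -(\tau_1, \LieDer_{\widetilde{\overline{W}}}\tau_2)$ from \autoref{cor:pairFormsSLnInvLiealg}. Setting $W = \overline{Y_{11}}$ and applying \autoref{cor:LieY11onOmegarkm} on the right yields $(\LieDer_{\widetilde{\overline{Y_{11}}}} D\omega_{r,k,m+2}, D\omega_{n-r,k,m}) = -(r+m-2)(D\omega_{r,k,m+2}, D\omega_{n-r,k,m+2})$, which is nonzero by \autoref{lem:pairKWForms}; the analogous argument with $\overline{Y_{k+1,k+1}}$ connects $\lambda_{k+1,m}$ back to $\lambda_{k,m}$. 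The weight-$0$ component (present only in $V_r^+$) is linked to $\lambda_{1,2}$ via \autoref{cor:MovingM_intrinsicVolume}, which produces $\LieDer_{\widetilde{Y_{11}}} D\omega_{r,0,0}$ as a nonzero multiple of $D\omega_{r,1,2}$, with the reverse direction again obtained from the invariance identity and \autoref{lem:pairKWForms}. Finally, when $n=2l$ and $r=l$, the exceptional $\lambda_{-l,m}$-component is connected to $\lambda_{l-1,m}$ using \autoref{prop:pairYllOmlml}, whose right-hand side is nonzero by \autoref{cor:pairKp1NotZero} applied at $k=l-1$; this yields $(\LieDer_{\widetilde{Y_{l,l}}} D\omega_{l,-l,m}, D\omega_{l,l-1,m}) \neq 0$.

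These moves chain together into a path through all allowed weights of the correct parity, so any nonzero $W$ must contain every isotypical component of $(V_r^\pm)^{\SO(n)-\mathrm{fin}}$. The main obstacle is already resolved in \autoref{sec:actLieAlgdf}: the nontriviality of the pairings between Lie-derived highest weight vectors and their would-be images, particularly for the $Y_{k+1,k+1}$ step, requires long and careful double-form computations; once these are in hand, the representation-theoretic argument above is essentially formal.
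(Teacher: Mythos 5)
Your proposal is correct and follows essentially the same route as the paper's own proof: decompose any invariant subspace into multiplicity-free $\SO(n)$-types via \autoref{thm:AleskerBernigSchuster}, then connect all allowed weights by applying $Y_{11}$ and $Y_{k+1,k+1}$ to the Kotrbat\'y--Wannerer highest weight vectors and detecting nonzero components with the invariant pairing (\autoref{cor:pairKp1NotZero}, \autoref{lem:pairKWForms}, \autoref{lem:NonvanishingPairingImpliesSubset}), including the weight-$0$ and $\lambda_{-l,m}$ cases via \autoref{cor:MovingM_intrinsicVolume} and \autoref{prop:pairYllOmlml}. The only cosmetic difference is that the paper spells out both directions of the $\lambda_{-l,m}\leftrightarrow\lambda_{l-1,m}$ link explicitly, which your citation of \autoref{prop:pairYllOmlml} together with the invariance identity also yields.
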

\begin{proof}
	Let $W\subset \left(V_r^\pm\right)^{\SO(n)-\mathrm{fin}}$ be an $(\sln(n),\SO(n))$-invariant subspace. Then $W=\bigoplus_{\lambda\in \Lambda(W)}W[\lambda]$ for a nonempty subset $\Lambda(W)\subset \Lambda^\pm_r$, where $\Lambda^+_r$ consists of all weights $\lambda = (\lambda_1, \dots, \lambda_l)$, $l = \left\lfloor \frac{n}{2}\right\rfloor$, contained in the index set from \eqref{eq:ABSweights} where $\lambda_1$ is even, and $\Lambda^-_r$ to those where $\lambda_1$ is odd.
	Since, by \autoref{thm:AleskerBernigSchuster}, $\left(V_r^\pm\right)^{\SO(n)-\mathrm{fin}}$ is a multiplicity free representation of $\SO(n)$ with highest weights indexed by $\Lambda_r^\pm$, the claim follows if we can show that $\Lambda(W)=\Lambda^\pm_r$.

	To this end, we will show the following statements, where $\kappa := \min\{r,n-r\}$:
	\begin{enumerate}
		\item\label{it:moveInmUp} If $\lambda_{k,m}\in \Lambda(W)$ for $1\leq k\leq \kappa$, $m\ge 2$, then $\lambda_{k,m+2}\in\Lambda(W)$.
		\item\label{it:moveInkUp} If $\lambda_{k,m}\in \Lambda(W)$ for $1\leq k<\kappa$, $m\ge 2$, then $\lambda_{k+1,m}\in\Lambda(W)$.
		\item\label{it:moveInmDown} If $\lambda_{k,m+2}\in \Lambda(W)$ for $1\leq k\leq \kappa$, $m\ge 2$, then $\lambda_{k,m}\in\Lambda(W)$.
		\item\label{it:moveInkDown} If $\lambda_{k+1,m}\in \Lambda(W)$ for $1\leq k< \kappa$, then $\lambda_{k,m}\in\Lambda(W)$. 
		\item\label{it:movemltol} If $n=2l=2r$ and $l \geq 2$, then $\lambda_{-l,m}\in\Lambda(W)$ if and only if $\lambda_{l-1,m}\in\Lambda(W)$.
	\end{enumerate}
	In the even case, that is, when $\Lambda(W) \subset \Lambda^+_r$, we will also show the following:
	\begin{enumerate}
		\setcounter{enumi}{5}
		\item\label{it:moveintrUp} If $0\in \Lambda(W)$, then $\lambda_{1,2}\in\Lambda(W)$.
		\item\label{it:moveintrDown} If $\lambda_{1,2}\in \Lambda(W)$, then $0\in\Lambda(W)$.
	\end{enumerate}
	Clearly, any nonempty subset of $\Lambda_r^\pm$ with these properties must coincide with the whole set. Let us show these claims.
	\begin{enumerate}
		\item[\eqref{it:moveInmUp}] If $\lambda_{k,m}\in \Lambda(W)$ for $1\leq k\leq \kappa$, $m\ge 2$, then $Dw_{r,k,m}\in W$, since this is the unique highest weight vector corresponding to this representation, compare \autoref{thm:KotrbatyWannererHighestWeights}. Since $W$ is in particular $\sln(n)_\CC$-invariant, and the Lie derivative and the Rumin differential commute, \autoref{cor:LieY11onOmegarkm} shows that
		\begin{align*}
			\LieDer_{\widetilde{Y}_{11}} D \omega_{r,k,m} = (n-r+m-2)D\omega_{r,k,m+2} \in W.
		\end{align*}
		As $n-r\ge 1$ and $m\ge 2$, this implies $D\omega_{r,k,m+2}\in W$ and thus the corresponding $\SO(n)$-type is contained in $W$. Hence, $\lambda_{k,m+2}\in\Lambda(W)$.
		
		\item[\eqref{it:moveInkUp}] By \autoref{lem:pairFormWellDef}, we have a well-defined paring $(\cdot,\cdot)$ between $V_r^\pm$ and $V_{n-r}^\pm$, which is $\SL(n,\RR)$-invariant by \autoref{prop:pairingFormsSLinv}. In particular, it is $\SO(n)$-invariant.

		If $\lambda_{k,m}\in \Lambda(W)$ for $1\leq k< \kappa$, $m\ge 2$, then $D\omega_{r,k,m}\in W$, so $\LieDer_{\widetilde{Y}_{k+1,k+1}}D\omega_{r,k,m}\in W$, and \autoref{cor:pairKp1NotZero} shows that
		\begin{align*}
			\left(\LieDer_{\widetilde{Y}_{k+1,k+1}} D\omega_{r,k,m}, D\omega_{n-r,k+1,m}\right) \neq 0.
		\end{align*}
		Thus, since $D\omega_{n-r,k+1,m}$ is the highest weight vector of an irreducible $\SO(n)$-representation with weight $\lambda_{k+1,m}$ (in $V_{n-r}^\pm$), \autoref{lem:NonvanishingPairingImpliesSubset} implies that the $\SO(n)$-type with weight $\lambda_{k+1,m}$ belongs to $W$, so $\lambda_{k+1,m}\in \Lambda(W)$.
		
		\item[\eqref{it:moveInmDown}] By assumption, $D\omega_{r,k,m+2} \in W$ and therefore also $\LieDer_{\widetilde{\overline{Y_{11}}}} D \omega_{r,k,m+2} \in W$. As, by \autoref{cor:LieY11onOmegarkm}, $\LieDer_{\widetilde{Y}_{11}}D \omega_{n-r,k,m} = (r+m-2)D\omega_{n-r,k,m+2}$, we can apply \autoref{cor:pairFormsSLnInvLiealg} to obtain
		\begin{align*}
			\left(\LieDer_{\widetilde{\overline{Y_{11}}}} D \omega_{r,k,m+2}, D\omega_{n-r,k,m}\right) =& -\left(D \omega_{r,k,m+2}, \LieDer_{\widetilde{Y}_{11}}  D\omega_{n-r,k,m}\right)\\
			=&-(r+m-2) \left(D \omega_{r,k,m+2}, D\omega_{n-r,k,m+2}\right) \neq 0.
		\end{align*}
		where we used \cite{Kotrbaty2022}*{Thm.~6.1} in the last step (see also \autoref{lem:pairKWForms}). Thus, the pairing is nontrivial and \autoref{lem:NonvanishingPairingImpliesSubset} implies as in the proof of \eqref{it:moveInkUp} that the $\SO(n)$-type with weight $\lambda_{k,m}$ belongs to $W$, so $\lambda_{k,m}\in \Lambda(W)$.

		\item[\eqref{it:moveInkDown}] We will use the same reasoning as in \eqref{it:moveInmDown}. By assumption, $D\omega_{r,k+1,m} \in W$ and, hence, also $\LieDer_{\widetilde{\overline{Y_{k+1,k+1}}}} D\omega_{r,k+1,m}\in W$. By the invariance of the pairing (\autoref{cor:pairFormsSLnInvLiealg}), \autoref{lem:pairFormWellDef}, and \autoref{cor:pairKp1NotZero}, we thus have
		\begin{align*}
			\left(\LieDer_{\widetilde{\overline{Y_{k+1,k+1}}}} D\omega_{r,k+1,m}, D\omega_{n-r,k,m}\right) =& - \left( D\omega_{r,k+1,m},\LieDer_{\widetilde{Y}_{k+1,k+1}} D\omega_{n-r,k,m}\right)\\
			=& -(-1)^{n}\overline{\left(\LieDer_{\widetilde{Y}_{k+1,k+1}} D\omega_{n-r,k,m},D\omega_{r,k+1,m}\right)} \neq 0.
		\end{align*}
		Thus, as before, \autoref{lem:NonvanishingPairingImpliesSubset} implies that the $\SO(n)$-type with weight $\lambda_{k,m}$ belongs to $W$, so $\lambda_{k,m}\in \Lambda(W)$.
		
		\item[\eqref{it:movemltol}] Note that, by \autoref{cor:pairFormsSLnInvLiealg} and \autoref{prop:pairYllOmlml},
		\begin{align*}
			-\left(D\omega_{l,-l,m}, \LieDer_{\widetilde{\overline{Y_{l,l}}}} D\omega_{l,l-1,m}\right) =& \left(\LieDer_{\widetilde{Y}_{l,l}} D\omega_{l,-l,m}, D\omega_{l,l-1,m}\right)\\
			 =& -(-1)^{n}\overline{\left(\LieDer_{\widetilde{Y}_{l,l}}D\omega_{l,l-1,m},D\omega_{l,l,m}\right)}.
		\end{align*}
		Since the right hand side of this equation is non-zero by \autoref{cor:pairKp1NotZero}, the same reasoning as in \eqref{it:moveInkUp} using \autoref{lem:NonvanishingPairingImpliesSubset} shows \eqref{it:movemltol}.
		
		\item[\eqref{it:moveintrUp}] This follows with the same reasoning as \eqref{it:moveInmUp} using \autoref{cor:MovingM_intrinsicVolume}.
		
		\item[\eqref{it:moveintrDown}] This follows with the same reasoning as \eqref{it:moveInmDown} using \autoref{cor:MovingM_intrinsicVolume}.
	\end{enumerate}
	This concludes the proof.
\end{proof}

\subsection{Alesker's Irreducibility theorem}\label{sec:prfAlirred}

\begin{proof}[Proof of \autoref{mthm:imDIrred}]
	Recall that $n\ge 3$ and fix $1 \leq r \leq n-1$. By \autoref{cor:isomorphismV}, we have an $\SL(n,\RR)$-equivariant isomorphism $\mathcal{V}^{\infty,\mathrm{tr}}_r\cong V_r$. In particular, $\mathcal{V}^{\infty,\mathrm{tr}}_r$ is an admissible representation of $\SO(n)$, and this map restricts to an isomorphism of $(\sln(n),\SO(n))$-modules \begin{align*}
		\left(\mathcal{V}^{\infty,\mathrm{tr}}_{r,\pm}\right)^{\SO(n)-\mathrm{fin}}\cong (V_r^\pm)^{\SO(n)-\mathrm{fin}}.
	\end{align*}
	Since the space on the right hand side is an algebraically irreducible $(\sln(n),\SO(n))$-module by \autoref{thm:imDslnSonirred}, so is the space on the left hand side.
\end{proof}

\begin{proof}[Proof of \autoref{mcor:ValslIrred}]
	By \autoref{mthm:smoothValsByDiffform}, $\Val^\infty(\RR^n)=\mathcal{V}^{\infty,\mathrm{tr}}$, so 
	\begin{align}
		\label{eq:lemValsmoothForRemark}
		\Val^\infty(\RR^n)\cap \Val_r^{\pm}(\RR^n)^{\SO(n)-\mathrm{fin}}=\left(\mathcal{V}_{r,\pm}^{\infty,\mathrm{tr}}\right)^{\SO(n)-\mathrm{fin}}.
	\end{align}
	Since this is an algebraically irreducible $(\sln(n),\SO(n))$-module by \autoref{mthm:imDIrred}, \autoref{lemma:EquivIrredu} shows that $\Val_r^\pm(\RR^n)$ is a topologically irreducible representation of $\SL(n,\RR)$.
\end{proof}

\begin{remark}
	Since $\Val(\RR^n)$ is an admissible representation of $\SO(n)$ (as $\Val^\infty(\RR^n)=\mathcal{V}^{\infty,\mathrm{tr}}$ is admissible), $\Val(\RR^n)^{\SO(n)-\mathrm{fin}}\subset \Val^\infty(\RR^n)$ as discussed in \autoref{sec:infDimRep} (compare \cite{Warner1972}*{Cor.~4.4.3.3}), so the intersection in Eq.~\eqref{eq:lemValsmoothForRemark} is not really necessary. Since the proof does not need this additional fact, we have omitted it from the argument.
\end{remark}

Let us add some comments on the case $n=2$, $k=1$. By a result due to McMullen \cite{McMullen1980}, $\Val_1(\RR^2)$ is isomorphic as a representation of $\SO(2)$ to the subspace of $C(\S^1)$ of all continuous functions orthogonal to linear functions (with respect to the standard $L^2$ inner product). The irreducible representations of $\SO(2)$ are one dimensional and indexed by their weight $m\in\mathbb{Z}$. In the case of $C(\S^1)$, the corresponding isotypical components are spanned by $\zeta_{\bar1}^m$ for $m\in\ZZ$. In particular, $\Val_1(\RR^2)$ is a multiplicity free representation of $\SO(2)$ and the nontrivial $\SO(2)$-types are given by the weights $m\in \ZZ\setminus\{\pm1\}$. Using the forms from \autoref{sec:weight0Form}, it is not difficult to see that a nontrivial weight vector is given by the differential form
\begin{align*}
	\tilde{\omega}_{m}:=\sqrt{-1}\zeta_{\bar 1}^m \omega_{1,0}, \quad m\in\mathbb{Z}\setminus\{\pm1\},
\end{align*}
where the additional factor $\sqrt{-1}$ guarantees that $\overline{\tilde{\omega}_m}=\tilde{\omega}_{-m}$, compare the definition of $\omega_{1,0}$ in \autoref{sec:weight0Form}. Using the same arguments as in \autoref{thm:imDslnSonirred}, it is easy to check that
\begin{align*}
	(V^+_1)^{\SO(2)-\mathrm{fin}}=\mathrm{span}\{D\tilde{\omega}_m: m\in \mathbb{Z}~\text{even}\} 
\end{align*}
is an algebraically irreducible $(\sln(2),\SO(2))$-module. In particular, as in the proof of \autoref{mcor:ValslIrred}, \autoref{lemma:EquivIrredu} can be used to shows that $\Val^+_1(\RR^2)$ is a topologically irreducible representation of $\SL(2,\RR)$.\\

For odd valuations, the situation is different: In this case, the same reasoning as in \autoref{thm:imDslnSonirred} can be used to show that
\begin{align*}
	\mathrm{span}\{D\tilde{\omega}_m: m\in \mathbb{Z}~\text{odd}, m\ge 3\}\subset (V^-_1)^{\SO(2)-\mathrm{fin}},\\
	\mathrm{span}\{D\tilde{\omega}_m: m\in \mathbb{Z}~\text{odd}, m\le -3\}\subset (V^-_1)^{\SO(2)-\mathrm{fin}}
\end{align*}
are two algebraically irreducible $(\sln(2),\SO(2))$-modules. In terms of McMullen's characterization of $\Val_1^-(\RR^2)$ from \cite{McMullen1980}, this corresponds to continuous odd functions on $\S^1$ with Fourier expansion containing either only positive or negative powers of $\zeta_1$. The corresponding spaces of valuations are closed and $\SL(2,\RR)$-invariant, so $\Val^-_1(\RR^2)$ is in particular not topologically irreducible with respect to $\SL(2,\RR)$. However, the orthogonal reflection along $\mathrm{span}(e_1)$ (which belongs to $\mathrm{O}(2)$) acts on $\tilde{\omega}_m$ by complex conjugation. In particular, since $\overline{\tilde{\omega}_m}=\tilde{\omega}_{-m}$ for $m\in\ZZ$, 
\begin{align*}
	(V^-_1)^{\mathrm{O}(2)-\mathrm{fin}}=(V^-_1)^{\SO(2)-\mathrm{fin}}=\mathrm{span}\{D\tilde{\omega}_m: m\in \mathbb{Z}~\text{odd}, m\ne \pm 1\}
\end{align*}
is an algebraically irreducible $(\gl(2),\mathrm{O}(2))$-module. As in the proof of \autoref{mcor:ValslIrred}, this implies that $\Val^-_1(\RR^2)$ is a topologically irreducible representation of $\GL(2,\RR)$. Thus \autoref{thm:AleskerIrredThm} also holds for $n=2$.
\begin{extracalc}
	Congratulations for making it through this article! I guess you understand now why we didn't try to sell this proof as "easier than Alesker's original proof".
\end{extracalc}

\subsection*{Acknowledgments}
The authors would like to thank Martin Rubey for his help in their attempt to use sagemath in the calculations.

\bibliographystyle{abbrv}
\bibliography{./booksLocPolyVal}

\end{document}